\documentclass[11pt,a4paper]{amsart}
\pdfoutput=1
\usepackage[centering, margin=1in]{geometry}
\usepackage{amsmath,amsfonts,amsthm,amssymb,enumitem,mathrsfs,mathtools}
\usepackage[stretch=10]{microtype}
\usepackage[unicode]{hyperref}
\usepackage{xcolor}
\definecolor{dark-red}{rgb}{0.4,0.15,0.15}
\definecolor{dark-blue}{rgb}{0.15,0.15,0.4}
\definecolor{medium-blue}{rgb}{0,0,0.5}
\hypersetup{
	pdftitle={Strong Hybrid Subconvexity for Twisted Selfdual \texorpdfstring{$\GL_3$}{GL\textthreeinferior} \texorpdfstring{$L$}{L}-Functions},
	pdfauthor={Soumendra Ganguly, Peter Humphries, Yongxiao Lin, and Ramon Nunes},
	pdfnewwindow=true,
	colorlinks, linkcolor={dark-red},
	citecolor={dark-blue}, urlcolor={medium-blue}
}
\allowdisplaybreaks

\newcommand{\BB}{\mathcal{B}}
\newcommand{\C}{\mathbb{C}}
\newcommand{\CC}{\mathcal{C}}
\newcommand{\dee}{\partial}
\newcommand{\Dscr}{\mathscr{D}}
\newcommand{\e}{\varepsilon}
\newcommand{\Fscr}{\mathscr{F}}
\newcommand{\Gscr}{\mathscr{G}}
\newcommand{\Hb}{\mathbb{H}}
\newcommand{\HH}{\mathcal{H}}
\newcommand{\hol}{\mathrm{hol}}
\newcommand{\II}{\mathcal{I}}
\newcommand{\JJ}{\mathcal{J}}

\newcommand{\Kscr}{\mathscr{K}}
\newcommand{\N}{\mathbb{N}}

\newcommand{\R}{\mathbb{R}}

\newcommand{\VV}{\mathcal{V}}
\newcommand{\Z}{\mathbb{Z}}
\newcommand{\ZZ}{\mathcal{Z}}
\DeclareMathOperator{\ad}{ad}
\DeclareMathOperator{\arsinh}{arsinh}
\DeclareMathOperator{\artanh}{artanh}
\DeclareMathOperator{\cosech}{cosech}
\DeclareMathOperator{\GL}{GL}
\DeclareMathOperator{\PGL}{PGL}
\DeclareMathOperator*{\Res}{Res}
\DeclareMathOperator{\sech}{sech}
\DeclareMathOperator{\sgn}{sgn}
\DeclareMathOperator{\SL}{SL}
\DeclareMathOperator{\vol}{vol}
\numberwithin{equation}{section}
\newtheorem{theorem}[equation]{Theorem}

\newtheorem{corollary}[equation]{Corollary}

\newtheorem{lemma}[equation]{Lemma}
\newtheorem{proposition}[equation]{Proposition}
\theoremstyle{remark}
\newtheorem{remark}[equation]{Remark}
\theoremstyle{definition}

\begin{document}

\title{Strong Hybrid Subconvexity for Twisted Selfdual $\GL_3$ $L$-Functions}

\author{Soumendra Ganguly}

\address{Department of Mathematics, Aarhus University, Ny Munkegade 118, 8000 Aarhus C, Denmark}

\email{\href{mailto:sg@math.au.dk}{sg@math.au.dk}}

\urladdr{\href{https://soumendraganguly.com/}{https://soumendraganguly.com/}}

\author{Peter Humphries}

\address{Department of Mathematics, University of Virginia, Charlottesville, VA 22904, USA}

\email{\href{mailto:pclhumphries@gmail.com}{pclhumphries@gmail.com}}

\urladdr{\href{https://sites.google.com/view/peterhumphries/}{https://sites.google.com/view/peterhumphries/}}

\author{Yongxiao Lin}

\address{Data Science Institute, Shandong University, Jinan 250100, China; State Key Laboratory of Cryptography and Digital Economy Security, Shandong University, Jinan 250100, China}

\email{\href{mailto:yongxiao.lin@sdu.edu.cn}{yongxiao.lin@sdu.edu.cn}}

\urladdr{\href{https://faculty.sdu.edu.cn/linyongxiao/}{https://faculty.sdu.edu.cn/linyongxiao/}}

\author{Ramon Nunes}

\address{Departamento de Matem\'{a}tica, Universidade Federal do Cear\'{a}, Campus do Pici, Bloco 914, 60440-900 Fortaleza-CE, Brasil}

\email{\href{mailto:ramonmnunes@gmail.com}{ramonmnunes@gmail.com}}

\urladdr{\href{https://www.ramonmn.com/}{https://www.ramonmn.com/}}

\subjclass[2020]{11F67 (primary); 11F12, 11F66 (secondary)}

\thanks{The second author was supported by the National Science Foundation (grant DMS-2302079) and by the Simons Foundation (award 965056). The third author was partially supported by the National Key R\&D Program of China (No. 2021YFA1000700). The fourth author was supported by Instituto Serrapilheira (grant no. 8277).}

\begin{abstract}
We prove strong hybrid subconvex bounds simultaneously in the $q$ and $t$ aspects for $L$-functions of selfdual $\GL_3$ cusp forms twisted by primitive Dirichlet characters. We additionally prove analogous hybrid subconvex bounds for central values of certain $\GL_3 \times \GL_2$ Rankin--Selberg $L$-functions. The subconvex bounds that we obtain are strong in the sense that, modulo current knowledge on estimates for the second moment of $\GL_3$ $L$-functions, they are the natural limit of the first moment method pioneered by Li and by Blomer.

The method of proof relies on an explicit $\GL_3 \times \GL_2 \leftrightsquigarrow \GL_4 \times \GL_1$ spectral reciprocity formula, which relates a $\GL_2$ moment of $\GL_3 \times \GL_2$ Rankin--Selberg $L$-functions to a $\GL_1$ moment of $\GL_4 \times \GL_1$ Rankin--Selberg $L$-functions. A key additional input is a Lindel\"{o}f-on-average upper bound for the second moment of Dirichlet $L$-functions restricted to a coset, which is of independent interest.
\end{abstract}

\maketitle

\section{Introduction}

\subsection{Hybrid Subconvex Bounds for \texorpdfstring{$L$}{L}-Functions}

The main result of this paper concerns strong subconvex bounds for certain degree $3$ and degree $6$ $L$-functions involving selfdual Hecke--Maa\ss{} cusp forms for $\SL_3(\Z)$, with an emphasis on bounds that are uniform in several aspects simultaneously.

\begin{theorem}
\label{thm:subconvexbounds}
Let $F$ be a fixed selfdual Hecke--Maa\ss{} cusp form for $\SL_3(\Z)$ and let $\chi$ be a primitive Dirichlet character of conductor $q$, where $q$ is an arbitrary positive integer. Let $q_1$ be a divisor of $q$ for which $(q_1,\frac{q}{q_1}) = 1$.
\begin{enumerate}[leftmargin=*,label=\textup{(\arabic*)}]
\item\label{thmno:subconvexbounds1} For $t \in \R$, we have that
\[L\left(\frac{1}{2} + it, F \otimes \chi\right) \ll_{F,\e} (q(|t| + 1))^{\frac{3}{5} + \e} \left(1 + \frac{q^{2/5}}{q_1^{1/2} (|t| + 1)^{1/10}} + \frac{q_1^{1/8}}{(q(|t| + 1))^{1/10}}\right).\]
\item\label{thmno:subconvexbounds2} Let $f$ be a Hecke--Maa\ss{} newform of weight $0$, level $q^2$, principal nebentypus, and Laplacian eigenvalue $\frac{1}{4} + t_f^2$, and suppose that $f \otimes \overline{\chi}$ has level dividing $q$. We have that
\[L\left(\frac{1}{2},F \otimes f\right) \ll_{F,\e} (q(|t_f| + 1))^{\frac{6}{5} + \e} \left(1 + \frac{q^{4/5}}{q_1 (|t_f| + 1)^{1/5}} + \frac{q_1^{1/4}}{(q(|t_f| + 1))^{1/5}}\right).\]
\item\label{thmno:subconvexbounds3} Let $f$ be a holomorphic Hecke newform of even weight $k_f$, level $q^2$, and principal nebentypus, and suppose that $f \otimes \overline{\chi}$ has level dividing $q$. We have that
\[L\left(\frac{1}{2},F \otimes f\right) \ll_{F,\e} (q k_f)^{\frac{6}{5} + \e} \left(1 + \frac{q^{4/5}}{q_1 k_f^{1/5}} + \frac{q_1^{1/4}}{(q k_f)^{1/5}}\right).\]
\end{enumerate}
\end{theorem}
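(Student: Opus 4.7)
I plan to unify the three parts under a common framework. In parts \ref{thmno:subconvexbounds2} and \ref{thmno:subconvexbounds3}, the conditions that $f$ has level $q^2$, principal nebentypus, and $f \otimes \overline{\chi}$ has level dividing $q$ force $f = g \otimes \chi$ for some newform $g$ of level dividing $q$ and nebentypus $\overline{\chi}^2$, whence
\[L(s, F \otimes f) = L\bigl(s, (F \otimes \chi) \times g\bigr)\]
is a $\GL_3 \times \GL_2$ Rankin--Selberg central value with $\GL_3$ factor $F \otimes \chi$ of conductor $q^3$. For part \ref{thmno:subconvexbounds1}, a suitable Eisenstein series $E = E_{it}^{(\chi, \overline{\chi})}$ on a congruence subgroup whose level is divisible by $q^2$ satisfies
\[L\bigl(\tfrac{1}{2}, (F \otimes \chi) \times E\bigr) = L\bigl(\tfrac{1}{2}+it, F \otimes \chi\bigr)\, L\bigl(\tfrac{1}{2}-it, F \otimes \overline{\chi}\bigr) = \bigl|L\bigl(\tfrac{1}{2}+it, F \otimes \chi\bigr)\bigr|^2\]
by selfduality of $F$. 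Thus all three targets appear as $\GL_3 \times \GL_2$ Rankin--Selberg central values with the common $\GL_3$ factor $F \otimes \chi$, the cuspidal contributions corresponding to parts \ref{thmno:subconvexbounds2}/\ref{thmno:subconvexbounds3} and the Eisenstein contribution to part \ref{thmno:subconvexbounds1}.

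Since $F$ is selfdual, Lapid's theorem gives $L(\tfrac{1}{2}, (F \otimes \chi) \times g') \geq 0$ for every selfdual $\GL_2$ cuspidal newform $g'$. Following the first moment method of Li and of Blomer, I would bound each target by a nonnegative first moment over the $\GL_2$ spectrum of appropriate level and nebentypus, with an archimedean weight concentrated near the relevant spectral parameter and, if needed, a short amplifier isolating the target. Next, apply the explicit $\GL_3 \times \GL_2 \leftrightsquigarrow \GL_4 \times \GL_1$ spectral reciprocity developed in the paper: this rewrites the $\GL_2$ first moment as a character sum of $\GL_4 \times \GL_1$ Rankin--Selberg central values $L(\tfrac{1}{2}+i\tau, \Pi \otimes \psi)$, where $\Pi$ is a degree $4$ isobaric object constructed from $F$ and $\chi$ whose $L$-function factorises into $\GL_3$ and $\GL_1$ parts. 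Each dual central value therefore splits as $L(\tfrac{1}{2}+i\tau, F \otimes \chi\psi) \cdot L(\tfrac{1}{2}+i\tau, \psi\omega)$ for a suitable auxiliary character $\omega$, and Cauchy--Schwarz reduces the dual sum to the product of a second moment of $\GL_3 \times \GL_1$ $L$-values in the character aspect (bounded by the existing Lindel\"{o}f-on-average estimates for the second moment of $\GL_3$ $L$-functions) and a second moment of Dirichlet $L$-values with $\psi$ restricted to a coset encoding the factorisation $q = q_1 \cdot (q/q_1)$. The crucial control of this last sum is the paper's other major input: a Lindel\"{o}f-on-average upper bound for the second moment of Dirichlet $L$-functions on a coset.

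The concluding step is to balance the archimedean and arithmetic parameters. Optimising the length of the amplifier, the width of the spectral weight, and the choice of divisor $q_1$ produces the three terms in the hybrid bounds, each reflecting the regime in which a particular reciprocity trade-off is most efficient. I expect the main obstacles to be (i) making the spectral reciprocity explicit and sharp enough in the archimedean variables to transfer the $t$, $t_f$, and $k_f$ savings cleanly across the moment, and (ii) establishing the coset second moment bound for Dirichlet $L$-functions with full uniformity in both modulus and coset representative. Ancillary difficulties include handling the ramified places where the Rankin--Selberg identification $L(s, F \otimes f) = L(s, (F \otimes \chi) \times g)$ requires care, treating any exceptional $\GL_2$ spectrum, and verifying that the required $\GL_3$ second moment estimate is available at the necessary conductor and spectral range.
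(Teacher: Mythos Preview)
Your overall strategy matches the paper's approach closely: the paper indeed packages all three parts as nonnegative $\GL_3 \times \GL_2$ Rankin--Selberg central values in a spectral first moment, proves the moment bound via $\GL_3 \times \GL_2 \leftrightsquigarrow \GL_4 \times \GL_1$ spectral reciprocity (stated separately as \hyperref[thm:firstmomentbounds]{Theorem \ref*{thm:firstmomentbounds}}), bounds the dual side by Cauchy--Schwarz using a $\GL_3$ second moment via the hybrid large sieve together with the coset second moment for Dirichlet $L$-functions, and then drops to a single term by Lapid nonnegativity.

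One correction: there is \emph{no amplifier}. The paper does not isolate the target form arithmetically; it simply bounds the individual $L$-value by the full nonnegative moment. The optimization producing the three terms in the bound is purely over the spectral window width $U$ (with the auxiliary level taken as $q_2 = (q/q_1)^2$): the paper sets $U = 1$, $U = q_1 T^{1/5} q^{-4/5}$, or $U = T$ according to which of three ranges $q_1$ falls into relative to $q$ and $T$, and each regime yields one of the three terms. So your phrase ``optimising the length of the amplifier'' should be replaced by ``optimising the width $U$ of the archimedean localisation,'' and the parameter $q_1$ is not optimised in the proof of \hyperref[thm:subconvexbounds]{Theorem \ref*{thm:subconvexbounds}} --- it is given in the hypothesis, and the stated bound records the outcome for each admissible $q_1$.
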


These bounds imply hybrid subconvexity simultaneously in the $q$ and $t$, $t_f$, or $k_f$ aspects. Focusing in particular on \hyperref[thm:subconvexbounds]{Theorem \ref*{thm:subconvexbounds}} \ref{thmno:subconvexbounds1}, we note that the convexity bound for $L(1/2 + it,F \otimes \chi)$ is $O_{F,\e}((q(|t| + 1))^{3/4 + \e})$; upon taking $q_1 = q$ (as we are free to do), \hyperref[thm:subconvexbounds]{Theorem \ref*{thm:subconvexbounds}} \ref{thmno:subconvexbounds1} gives the hybrid subconvex bound $O_{F,\e}(q^{5/8 + \e} (|t| + 1)^{3/5 + \e})$. Notably, this extends and improves upon several previous subconvexity results in the literature, such as the seminal works of Li \cite{Li11} and of Blomer \cite{Blo12}, as we discuss further in \hyperref[sect:previous]{Section \ref*{sect:previous}}. Moreover, we obtain a stronger subconvex bound if $q$ has a divisor $q_1$ with $(q_1,\frac{q}{q_1}) = 1$ and $q^{3/4 + \eta} (|t| + 1)^{-1/5} \ll q_1 \ll q^{1 - \eta} (|t| + 1)^{4/5}$ for some $\eta > 0$, which is guaranteed if $q$ is squarefree and $q^{\delta}$-smooth for some $\delta \in (0,1/4)$. The result is strongest when $q$ has a divisor $q_1$ with $(q_1,\frac{q}{q_1}) = 1$ and $q^{4/5} (|t| + 1)^{-1/5} \ll q_1 \ll q^{4/5} (|t| + 1)^{4/5}$, where we obtain the subconvex bound $L(1/2 + it,F \otimes \chi) \ll_{F,\e} (q(|t| + 1))^{3/5 + \e}$.

These subconvex bounds are similar in nature to a classical result of Heath-Brown \cite[Theorem 2]{H-B78} (now superseded by recent work of Petrow and Young \cite{PY20,PY23}), who proves that if $\chi$ is a primitive Dirichlet character modulo $q$ and if $q_1 \mid q$, then
\[L\left(\frac{1}{2} + it,\chi\right) \ll_{\e} (q(|t| + 1))^{\frac{1}{6} + \e} \left(1 + \frac{q^{1/3}}{q_1^{1/2} (|t| + 1)^{1/6}} + \frac{q_1^{1/4}}{q^{1/6} (|t| + 1)^{1/6}}\right).\]
This bound is strongest when $q$ has a divisor $q_1$ with $q^{2/3} (|t| + 1)^{-1/3} \ll q_1 \ll q^{2/3} (|t| + 1|)^{2/3}$, where it yields the Weyl-strength subconvex bound $L(1/2 + it,\chi) \ll_{\e} (q(|t| + 1))^{1/6 + \e}$. 

The subconvex bounds in \hyperref[thm:subconvexbounds]{Theorem \ref*{thm:subconvexbounds}} follow from bounds for moments of certain $L$-functions together with the nonnegativity of the central $L$-value $L(1/2,F \otimes f)$. To state these moment bounds precisely, we require some notation. We write $q_{\chi}$ for the conductor of a Dirichlet character $\chi$. We let $\BB_{\kappa}^{\ast}(q,\chi)$ denote an orthonormal basis of Hecke--Maa\ss{} newforms of weight $\kappa \in \{0,1\}$, level $q$, and nebentypus $\chi$, where $\chi$ is a primitive Dirichlet character of conductor $q_{\chi} \mid q$; we denote by $t_f$ the spectral parameter of $f \in \BB_{\kappa}^{\ast}(q,\chi)$. Similarly, we let $\BB_{\hol}^{\ast}(q,\chi)$ denote an orthonormal basis of holomorphic Hecke newforms of level $q$ and nebentypus $\chi$; we denote by $k_f$ the weight of $f \in \BB_{\hol}^{\ast}(q,\chi)$.

\begin{theorem}
\label{thm:firstmomentbounds}
Let $F$ be a selfdual Hecke--Maa\ss{} cusp form for $\SL_3(\Z)$. Let $q_1,q_2$ be coprime positive integers. Let $\chi_1$ be a primitive Dirichlet character of conductor $q_1$. Then for $T \geq 1$ and $1 \leq U \leq T$, we have that
\begin{multline}
\label{eqn:firstmomentbounds}
\begin{drcases*}
\sum_{\substack{q' \mid q_1 q_2 \\ q' \equiv 0 \hspace{-.25cm} \pmod{q_{\overline{\chi_1}^2}}}} \sum_{\substack{\psi_1,\psi_2 \hspace{-.25cm} \pmod{q_1 q_2} \\ \psi_1 \psi_2 = \overline{\chi_1}^2 \\ q_{\psi_1} q_{\psi_2} = q'}} \, \int\limits_{T - U \leq |t| \leq T + U} \left|\frac{L\left(\frac{1}{2} + it,F \otimes \psi_1 \chi_1\right)}{L(1 + 2it,\psi_1\overline{\psi_2})}\right|^2 \, dt \\
\sum_{\substack{q' \mid q_1 q_2 \\ q' \equiv 0 \hspace{-.25cm} \pmod{q_{\overline{\chi_1}^2}}}} \sum_{\substack{f \in \BB_0^{\ast}(q',\overline{\chi_1}^2) \\ T - U \leq t_f \leq T + U}} \frac{L\left(\frac{1}{2},F \otimes f \otimes \chi_1\right)}{L(1,\ad f)} \\
\sum_{\substack{q' \mid q_1 q_2 \\ q' \equiv 0 \hspace{-.25cm} \pmod{q_{\overline{\chi_1}^2}}}} \sum_{\substack{f \in \BB_{\hol}^{\ast}(q',\overline{\chi_1}^2) \\ T - U \leq k_f \leq T + U}} \frac{L\left(\frac{1}{2},F \otimes f \otimes \chi_1\right)}{L(1,\ad f)} \\
\end{drcases*}	\\
\ll_{F,\e} q_1 q_2 TU (q_1 q_2 T)^{\e} + \frac{(q_1 T)^{5/4} q_2^{1/2}}{U^{1/4}} (q_1 q_2 T)^{\e}.
\end{multline}
\end{theorem}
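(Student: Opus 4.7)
The plan is to treat the three bounds in \eqref{eqn:firstmomentbounds} simultaneously, viewing their left-hand sides as the (Eisenstein plus cuspidal) spectral content of the Petersson--Kuznetsov trace formula on $\Gamma_0(q_1 q_2)$ with nebentypus $\overline{\chi_1}^2$. The double sum over divisors $q' \mid q_1 q_2$ with $q_{\overline{\chi_1}^2} \mid q'$ and over newforms in each $\BB_0^{\ast}(q',\overline{\chi_1}^2)$ or $\BB_{\hol}^{\ast}(q',\overline{\chi_1}^2)$ runs over an orthogonal basis of the cuspidal part, while the first line of \eqref{eqn:firstmomentbounds} captures the corresponding Eisenstein contribution parametrised by pairs $(\psi_1,\psi_2)$ with $\psi_1\psi_2=\overline{\chi_1}^2$. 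One may therefore open all three central $L$-values by a common approximate functional equation, decompose dyadically, and feed the resulting $f$-sum into Petersson--Kuznetsov.

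The delta term of the trace formula, combined with a standard Rankin--Selberg bound on the Fourier coefficients of $F$, supplies the diagonal $q_1 q_2 TU (q_1 q_2 T)^{\e}$ of the expected Weyl-law size. The off-diagonal Kloosterman term is then dualised in two steps: first I apply $\GL_3$ Voronoi summation to the Fourier coefficients of $F$, and then an additive reciprocity identity for fractions modulo $c$ and modulo $n$ followed by Poisson summation on the surviving variable. These manipulations realise the $\GL_3\times\GL_2 \leftrightsquigarrow \GL_4\times\GL_1$ spectral reciprocity announced in the abstract, converting the original spectral average into, up to acceptable error, a first moment of $\GL_4\times\GL_1$ Rankin--Selberg $L$-functions indexed by Dirichlet characters modulo a divisor of $q_1 q_2$.

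To this dual moment I would then apply Cauchy--Schwarz, separating the $F$-Rankin--Selberg factor (whose character second moment is Lindel\"of on average by Rankin--Selberg unfolding, together with Ramanujan on average for $F$) from a second moment of Dirichlet $L$-functions $\sum_{\chi} |L(1/2+iv,\chi)|^2$ restricted to a coset of characters modulo $q_1 q_2$. Invoking the Lindel\"of-on-average bound for this restricted second moment, announced in the abstract as an independent result, and carefully tracking the weights produced by Voronoi and Poisson, yields the off-diagonal contribution $(q_1 T)^{5/4} q_2^{1/2} U^{-1/4} (q_1 q_2 T)^{\e}$.

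The principal obstacle I anticipate is the bookkeeping of conductors, and the interplay between the nebentypus $\overline{\chi_1}^2$ and the outer twist by $\chi_1$: since $f\otimes \chi_1$ has level dividing $q_1 q_2$ rather than $q_1^2 q_2$, the analytic conductor of the Rankin--Selberg product entering the dual moment drops by a full factor of $q_1$ compared with the naive estimate, and this saving is precisely what makes the dual bound non-trivial. Propagating this conductor drop faithfully through the approximate functional equation, $\GL_3$ Voronoi, and the reciprocity step, while simultaneously keeping the resulting test functions uniform in $U \in [1,T]$ so that the coset second-moment input can be applied cleanly, will be the technical heart of the argument.
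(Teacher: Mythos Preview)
Your broad outline---trace formula, $\GL_3$ Vorono\u{\i}, reciprocity, Cauchy--Schwarz on the dual moment with a coset second-moment input---matches the paper's architecture. But two specific points in your plan are either different from, or in tension with, what the paper actually does.

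First, you propose to open $L(\tfrac12,F\otimes f\otimes\chi_1)$ by the approximate functional equation. The paper deliberately avoids this. Instead it proves the reciprocity identity for $L(w,F\otimes f\otimes\chi_1)$ with $\Re(w)$ large (where the Dirichlet series converges absolutely), and only afterwards analytically continues to $w=\tfrac12$. The reason is that for $q_2>1$ the length of the approximate functional equation for a newform of level $q'\mid q_1q_2$ depends on $q'$ in a way that couples badly with the Petersson--Kuznetsov formula at full level $q_1q_2$; one cannot simply sum over $q'\mid q_1q_2$ with a uniform Dirichlet polynomial. The paper explicitly flags this as a ``major difficulty'' of the approximate-functional-equation route and cites \cite{PY19} for a workaround that only handles squarefree $q_2$. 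Your proposal does not address this obstruction, and your anticipated obstacle (the conductor drop in $f\otimes\chi_1$) is not the real one.

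Second, in bounding the dual moment you assert that the character-averaged second moment of $L(\tfrac12+it,F\otimes\psi)$ is ``Lindel\"of on average by Rankin--Selberg unfolding''. That is not known. The paper uses only the large-sieve bound $\sum_{\psi}\int_{-T}^{T}|L(\tfrac12+it,F\otimes\psi)|^2\,dt\ll (qT)^{3/2+\e}$ (\hyperref[prop:secondmomentFlargesieve]{Proposition \ref*{prop:secondmomentFlargesieve}}), and it is precisely the exponent $3/2$ here, combined via Cauchy--Schwarz with the genuine Lindel\"of-on-average bound for the Dirichlet coset moment (\hyperref[prop:secondmomenthybrid]{Proposition \ref*{prop:secondmomenthybrid}}), that produces the $(q_1T)^{5/4}$ in the final answer. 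If you had Lindel\"of on average for the $\GL_3$ moment, you would beat \eqref{eqn:firstmomentbounds}; the paper discusses exactly this in the paragraph around \eqref{eqn:secondmomentofinterest}.
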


The method of proof of \hyperref[thm:firstmomentbounds]{Theorem \ref*{thm:firstmomentbounds}} remains valid, with some alterations, when the selfdual Hecke--Maa\ss{} cusp form $F$ for $\SL_3(\Z)$ is replaced by a minimal parabolic Eisenstein series, and the results are stronger. This has the effect of replacing $L(1/2,F \otimes f \otimes \chi_1)$ with $L(1/2,f \otimes \chi_1)^3$ and of replacing $L(1/2 + it,F \otimes \psi_1\chi_1)$ with $L(1/2 + it,\psi_1\chi_1)^3$ in \eqref{eqn:firstmomentbounds}. We state the analogues of \hyperref[thm:subconvexbounds]{Theorems \ref*{thm:subconvexbounds}} and \ref{thm:firstmomentbounds} in this Eisenstein setting in \hyperref[sect:Eisenstein]{Section \ref*{sect:Eisenstein}} and give a brief explanation of the alterations required in order to prove these analogues.

\subsection{\texorpdfstring{$\mathrm{GL}_3 \times \mathrm{GL}_2 \leftrightsquigarrow \mathrm{GL}_4 \times \mathrm{GL}_1$}{GL\textthreeinferior \textmultiply GL\texttwoinferior \textleftrightarrow GL\textfourinferior \textmultiply GL\textoneinferior} Spectral Reciprocity}

\subsubsection{An Identity of Moments of $L$-Functions}

\hyperref[thm:firstmomentbounds]{Theorem \ref*{thm:firstmomentbounds}} is proven via a spectral reciprocity formula, which is given in \hyperref[thm:central]{Theorem \ref*{thm:central}}. Roughly speaking, we show that given a sufficiently well-behaved tuple of test functions $(h,h^{\hol})$, the $\GL_2$ moment of $\GL_3 \times \GL_2$ Rankin--Selberg $L$-functions
\begin{multline*}
\sum_{\substack{q' \mid q_1 q_2 \\ q' \equiv 0 \hspace{-.25cm} \pmod{q_{\overline{\chi_1}^2}}}} \sum_{f \in \BB_0^{\ast}(q',\overline{\chi_1}^2)} \frac{L\left(\frac{1}{2},F \otimes f \otimes \chi_1\right)}{L(1,\ad f)} h(t_f)	\\
+ \sum_{\substack{q' \mid q_1 q_2 \\ q' \equiv 0 \hspace{-.25cm} \pmod{q_{\overline{\chi_1}^2}}}} \sum_{\substack{\psi_1,\psi_2 \hspace{-.25cm} \pmod{q_1 q_2} \\ \psi_1 \psi_2 = \overline{\chi_1}^2 \\ q_{\psi_1} q_{\psi_2} = q'}} \int_{-\infty}^{\infty} \left|\frac{L\left(\frac{1}{2} + it,F \otimes \psi_1 \chi_1\right)}{L(1 + 2it,\psi_1\overline{\psi_2})}\right|^2 h(t) \, dt	\\
+ \sum_{\substack{q' \mid q_1 q_2 \\ q' \equiv 0 \hspace{-.25cm} \pmod{q_{\overline{\chi_1}^2}}}} \sum_{f \in \BB_{\hol}^{\ast}(q',\overline{\chi_1}^2)} \frac{L\left(\frac{1}{2},F \otimes f \otimes \chi_1\right)}{L(1,\ad f)} h^{\hol}(k_f)
\end{multline*}
is equal to the sum of two main terms and a dual moment. This dual moment is a $\GL_1$ moment (i.e.\ a sum over Dirichlet characters together with an integral over $t \in \R$) of $\GL_4 \times \GL_1$ Rankin--Selberg $L$-functions. These $\GL_4 \times \GL_1$ $L$-functions are imprimitive: they factorise as the product of a $\GL_3 \times \GL_1$ $L$-function and a $\GL_1$ $L$-function. The dual moment roughly takes the form
\[\frac{q_2^{1/2}}{q_1} \sum_{\psi_1 \hspace{-.25cm} \pmod{q_1}} \int_{-\infty}^{\infty} L\left(\frac{1}{2} + it,F \otimes \psi_1\right) L\left(\frac{1}{2} - it,\overline{\psi_1}\right) g(\chi_1,\psi_1) \HH(t) \, dt.\]
Here $g(\chi_1,\psi_1)$ is a certain character sum, studied in \cite{CI00,PY20,PY23,Xi23}, while $\HH(t)$ is a certain transform of the tuple of test functions $(h,h^{\hol})$.

We show that if we choose $h(t)$ to localise to $T - U \leq |t| \leq T + U$ and $h^{\hol}(k)$ to localise to $T - U \leq k \leq T + U$, where they are of size $\approx 1$, then the two main terms are $O_{F,\e} (q_1 q_2 TU (q_1 q_2 T)^{\e})$, while the transform $\HH(t)$ is essentially localised to $|t| \leq T/U$, where it is of size $\approx U$. We also invoke work of Petrow and Young \cite{PY20,PY23} that shows that the character sum $g(\chi_1,\psi_1)$ is of size $O(q_1)$ for \emph{most} characters $\psi_1$ modulo $q_1$. Our remaining manoeuvre is to apply the Cauchy--Schwarz inequality and invoke second moment bounds for the $L$-functions $L(1/2 + it,F \otimes \psi)$ and $L(1/2 - it,\overline{\psi})$ (in the latter case restricted to \emph{cosets} of the group of Dirichlet characters, following \cite{PY23}). In this way, we show that the dual moment is $O_{F,\e}((q_1 T)^{5/4} q_2^{1/2} U^{-1/4} (q_1 q_2 T)^{\e})$, which yields \hyperref[thm:firstmomentbounds]{Theorem \ref*{thm:firstmomentbounds}}.

\subsubsection{A Sketch of the Proof}

The proof of this spectral reciprocity formula follows the approach of the second author and Khan in \cite{HK25}, where a related result is proven in the special case $q_1 = q_2 = 1$ and $U = T$. We replace the central value $1/2$ with a complex parameter $w$ with large real part, which allows us to replace the $L$-function $L(w,F \otimes f \otimes \chi_1)$ with its absolutely convergent Dirichlet series. After interchanging the order of summation, we apply the Kuznetsov and Petersson formul\ae{}. On the right-hand side of these formul\ae{}, the delta terms give us a main term, while for the Kloosterman terms, we open up the Kloosterman sums, interchange the order of summation, and apply the $\GL_3$ Vorono\u{\i} summation formula. After some careful rearrangements, using both additive reciprocity and analytic reciprocity (see \eqref{eqn:additivereciprocity} and \eqref{eqn:analyticreciprocity}), we find that the Kloosterman terms give rise to a dual moment involving two Dirichlet series, one representing $L(1/2 + it, F \otimes \psi)$ and the other representing $L(2w - 1/2 - it,\overline{\psi})$, as well as a distinguished character sum resembling $g(\chi_1,\psi_1)$. This gives us a spectral reciprocity formula for $L(w,F \otimes f \otimes \chi_1)$ with $\Re(w)$ sufficiently large, which we state in \hyperref[prop:absconv]{Proposition \ref*{prop:absconv}}. To obtain the desired spectral reciprocity formula stated in \hyperref[thm:central]{Theorem \ref*{thm:central}}, we holomorphically extend this identity to the central value $w = 1/2$.

\subsubsection{A Comparison to Alternate Approaches}

Instead of using the absolutely convergent Dirichlet series for $L(w,F \otimes f \otimes \chi_1)$ with $\Re(w) > 1$ followed by analytic continuation to $w = 1/2$, a more traditional approach towards proving \hyperref[thm:firstmomentbounds]{Theorem \ref*{thm:firstmomentbounds}} is to use the approximate functional equation for $L(1/2,F \otimes f \otimes \chi_1)$. Unlike our method, this traditional approach does not yield an exact spectral reciprocity identity for the $\GL_2$ moment of $\GL_3 \times \GL_2$ Rankin--Selberg $L$-functions. Nonetheless, it has been successful in prior results that prove upper bounds of a similar strength to those in \hyperref[thm:firstmomentbounds]{Theorem \ref*{thm:firstmomentbounds}} when $q_2 = 1$; see \cite{Blo12,LNQ23}. When $q_2 > 1$, however, major difficulties arise due to the fact that the length of the approximate functional equation now depends delicately on a divisor of $q_2$, as discussed in \cite[Section 1.3]{PY19}. One approach to overcome this issue is to sieve to newforms and use the Kuznetsov and Petersson formul\ae{} for newforms. When $q_2$ is squarefree, this direction is pursued in \cite{PY19}; for nonsquarefree $q_2$, however, this approach becomes extremely intricate.

A major advantage of our analytic continuation approach is that it circumvents this issue altogether; in particular, we do not require that $q$ be squarefree, as in \cite{PY19}, nor cubefree, as in \cite{PY20}. On the other hand, a disadvantage is that the process of analytic continuation is rather delicate: ensuring that all of the expressions involved are absolutely convergent is a nontrivial task, which we address in \hyperref[sect:testfunction]{Sections \ref*{sect:testfunction}} and \ref{sect:firstmomentcentral}. A further difficult challenge is showing that the transform $\HH(t)$ of $(h,h^{\hol})$ is localised to $|t| \leq T/U$, where it is of size $\approx U$; this involves a careful multivariable stationary phase argument given in \hyperref[sect:testfunctionsII]{Section \ref*{sect:testfunctionsII}}.

\subsection{Previous Results}
\label{sect:previous}

The $\GL_3 \times \GL_2 \leftrightsquigarrow \GL_4 \times \GL_1$ spectral reciprocity formula proven in \hyperref[thm:central]{Theorem \ref*{thm:central}} extends earlier work of the second author and Khan \cite[Theorem 3.1]{HK25} and of Kwan \cite[Theorem 1.1]{Kwa24}, who proved results of this form for $q = 1$. The former follows the same strategy as the proof of \hyperref[thm:central]{Theorem \ref*{thm:central}}, while the latter instead proceeds by evaluating in two different ways the integral of the product of a Poincar\'{e} series and of the restriction of $F$ to $\SL_2(\Z) \backslash \SL_2(\R)$. Recently, Wu generalised this latter approach to arbitrary number fields \cite[Theorem 1.1]{Wu23}. A third approach to such a spectral reciprocity formula, namely evaluating in two different ways the integral of the product of two Hecke--Maa\ss{} newforms and two half-integral weight theta series, is explored in \cite{Nel19a} and \cite{Bir25}. When $F$ is replaced by a minimal parabolic Eisenstein series, such a spectral reciprocity formula is known as Motohashi's formula \cite[Theorem 4.2]{Mot97}, and has been generalised in many directions; see \cite{BCF25,BFW21,BHKM20,Fro20,HPY25,Kan22,Kwa23,Nel19b,Wu22,WX23}.

The approach of proving subconvex bounds for $\GL_3 \times \GL_1$ and $\GL_3 \times \GL_2$ $L$-functions involving selfdual $\GL_3$ cusp forms via bounds for the first moment goes back to work of Li \cite{Li11} in the $t$, $t_f$, or $k_f$ aspects and to work of Blomer \cite{Blo12} in the $q$ aspect (subject to the restriction that $q$ is prime and $\chi$ is quadratic); see additionally \cite{Gan23,Hua21,LNQ23,MSY18,Nun17,Qi19,Qi24,SY19} for various subsequent improvements and extensions of these results. Notably, \hyperref[thm:firstmomentbounds]{Theorem \ref*{thm:firstmomentbounds}} recovers \cite[Theorem 1.1]{LNQ23} upon taking $q = 1$ and recovers \cite[Proposition 3]{Blo12} upon taking $t,t_f,k_f$ to be fixed, $q$ prime, and $\chi$ quadratic. This latter result of Blomer was further extended by the first author to allow for $q$ cube-free and $\chi$ nonquadratic \cite[Theorem 2.0.1]{Gan23}, which in turn is similarly superseded by \hyperref[thm:firstmomentbounds]{Theorem \ref*{thm:firstmomentbounds}}. When $F$ is replaced by a minimal parabolic Eisenstein series, so that the first moment of $L(1/2,F \otimes f \otimes \chi)$ is replaced by the third moment of $L(1/2,f \otimes \chi)$, this approach was pioneered by Conrey and Iwaniec \cite{CI00} and has also been improved and extended in various ways; see \cite{BFW21,Fro20,Ivi01,Lu12,Nel19b,Pen01,Pet15,PY19,PY20,PY23,Wu22,WX23,You17}.

We emphasise that the assumption that $F$ is selfdual in \hyperref[thm:subconvexbounds]{Theorem \ref*{thm:subconvexbounds}} is crucial, since we rely on the nonnegativity of the central $L$-value $L(1/2,F \otimes f)$. Munshi \cite{Mun15a,Mun15b,Mun22} has shown that one can nevertheless prove subconvex bounds (albeit with weaker exponents) for $L(1/2 + it,F \otimes f)$ and $L(1/2 + it, F \otimes \chi)$ without this selfduality assumption via a different method, namely the \emph{delta method}. We direct the reader to \cite{Agg21,Hua24,HX23,Lin21} for a smattering of the state-of-the-art results in this regard.

\subsection{Improvements}

The first term on the right-hand side of \eqref{eqn:firstmomentbounds} is related to the size of the family of automorphic forms over which we average. The size of the family is minimised by taking $q_2$ and $U$ to be as small as possible, namely $q_2 = U = 1$. It was noted in \cite{LNQ23} by the third and fourth authors and Qi, however, that minimising the size of the family comes at the cost of enlarging the size of the dual moment. They showed that if one slightly enlarges the size of the family by instead taking $U = T^{1/5}$, this enlarging of the size of the dual moment is mitigated, leading to improved subconvex bounds. \hyperref[thm:firstmomentbounds]{Theorem \ref*{thm:firstmomentbounds}} introduces this trick additionally in the level aspect: by allowing for the possibility that $q_2 > 1$, we may enlarge the size of the family in the level aspect, which leads to improved subconvex bounds.

The size of the second term on the right-hand size of \eqref{eqn:firstmomentbounds} is intimately related to bounds for the second moment
\begin{equation}
\label{eqn:secondmomentofinterest}
\sum_{\psi \hspace{-.25cm} \pmod{q}} \int_{-T}^{T} \left|L\left(\frac{1}{2} + it,F \otimes \psi\right)\right|^2 \, dt
\end{equation}
for a positive integer $q$ and for $T \geq 1$. The generalised Lindel\"{o}f hypothesis implies the conditional bound $O_{F,\e}((qT)^{1 + \e})$ for this hybrid second moment. A standard application of Gallagher's hybrid large sieve yields only the weaker unconditional bound $O_{F,\e}((q T)^{3/2 + \e})$, as we show in \hyperref[prop:secondmomentFlargesieve]{Proposition \ref*{prop:secondmomentFlargesieve}}. Were we able to improve this to $O_{F,\e}(q^{3/2 - \delta_1 + \e} T^{3/2 - \delta_2 + \e})$ for some $\delta_1,\delta_2 \in [0,1/2]$, we would in turn be able to improve the size of the second term on the right-hand size of \eqref{eqn:firstmomentbounds} to
\[O_{F,\e}\left(q_1^{\frac{5}{4} - \frac{\delta_1}{2}} q_2^{\frac{1}{2}} T^{\frac{5}{4} - \frac{\delta_2}{2}} U^{\frac{\delta_2}{2} - \frac{1}{4}} (q_1 q_2 T)^{\e}\right).\]
In turn, this would yield strengthenings of the subconvex bounds in \hyperref[thm:subconvexbounds]{Theorem \ref*{thm:subconvexbounds}}. We note that when $q = 1$, Dasgupta, Leung, and Young \cite[Theorem 1.1]{DLY24} have shown that the improved bound $O_{F,\e}(T^{3/2 - \delta_2 + \e})$ for \eqref{eqn:secondmomentofinterest} holds with $\delta_2 = 1/6$, which improves upon the exponent $\delta_2 = 3/88$ proved earlier by Pal \cite[Theorem 1]{Pal25}.

\section{Summation Formul\ae{}}

\subsection{Kuznetsov and Petersson Formul\ae{}}

We first state the Kuznetsov and Petersson formul\ae{} for automorphic forms of arbitrary level, where the spectral side is explicitly written in terms of Hecke eigenvalues of newforms. Before we state these formul\ae{}, we explain some notation. Given an $L$-function $L(s,\pi)$, we write $L_p(s,\pi)$ for the $p$-component of the Euler product of $L(s,\pi)$ and define $L_q(s,\pi) \coloneqq \prod_{p \mid q} L_p(s,\pi)$ and $L^q(s,\pi) \coloneqq L(s,\pi)/L_q(s,\pi)$. Given a cusp form $f$ either in $\BB_0^{\ast}(q,\chi)$ or in $\BB_{\hol}^{\ast}(q,\chi)$, we write $\lambda_f(n)$ for the $n$-th Hecke eigenvalue of $f$. Moreover, given a pair of Dirichlet characters $\psi_1,\psi_2$, we define $\lambda_{\psi_1,\psi_2}(n,t) \coloneqq \sum_{ab = n} \psi_1(a) a^{-it} \psi_2(b) b^{it}$, which is the $n$-th Hecke eigenvalue of the Eisenstein series associated to this pair.

\begin{lemma}[Kuznetsov formula]
Let $q$ be a positive integer and let $\chi$ be a primitive Dirichlet character of conductor $q_{\chi} \mid q$. Let $h$ be an even function that is holomorphic in the strip $|\Im(t)| < 1/2 + \delta$ in which it satisfies $h(t) \ll (1 + |t|)^{-2 - \delta}$ for some $\delta > 0$. Then for $(mn,q) = 1$, we have that
\begin{multline}
\label{eqn:Kuznetsov}
\sum_{\substack{q' \mid q \\ q' \equiv 0 \hspace{-.25cm} \pmod{q_{\chi}}}} \alpha(q,q',q_{\chi}) \sum_{f \in \BB_0^{\ast}(q',\chi)} \frac{\overline{\lambda_f(m)} \lambda_f(n)}{L^q(1,\ad f)} h(t_f)	\\
+ \sum_{\substack{q' \mid q \\ q' \equiv 0 \hspace{-.25cm} \pmod{q_{\chi}}}} \alpha(q,q',q_{\chi}) \sum_{\substack{\psi_1,\psi_2 \hspace{-.25cm} \pmod{q} \\ \psi_1 \psi_2 = \chi \\ q_{\psi_1} q_{\psi_2} = q'}} \frac{1}{2\pi} \int_{-\infty}^{\infty} \frac{\overline{\lambda_{\psi_1,\psi_2}(m,t)} \lambda_{\psi_1,\psi_2}(n,t)}{L^q(1 + 2it,\psi_1 \overline{\psi_2})L^q(1 - 2it, \overline{\psi_1} \psi_2)} h(t) \, dt	\\
= \delta_{m,n} q \frac{1}{2\pi^2} \int_{-\infty}^{\infty} h(r) r \tanh \pi r \, dr + q \sum_{\substack{c = 1 \\ c \equiv 0 \hspace{-.25cm} \pmod{q}}}^{\infty} \frac{S_{\chi}(m,n;c)}{c} (\Kscr h)\left(\frac{\sqrt{mn}}{c}\right).
\end{multline}
Here
\begin{gather*}
\alpha(q,q',q_{\chi}) \coloneqq \prod_{\substack{p \mid q' \\ p \nmid \frac{q}{q_{\chi}}}} \left(1 - \frac{1}{p}\right) \prod_{\substack{p \parallel q \\ p \nmid q_{\chi}}} \left(1 - \frac{1}{p^2}\right), \qquad S_{\chi}(m,n;c) \coloneqq \sum_{d \in (\Z/c\Z)^{\times}} \chi(d) e\left(\frac{md + n\overline{d}}{c}\right),	\\
(\Kscr h)(x) \coloneqq \frac{1}{2\pi^2} \int_{-\infty}^{\infty} \JJ_r^{+}(x) h(r) r \tanh \pi r \, dr, \qquad \JJ_r^{+}(x) \coloneqq \frac{\pi i}{\sinh \pi r} \left(J_{2ir}(4\pi x) - J_{-2ir}(4\pi x)\right).
\end{gather*}
\end{lemma}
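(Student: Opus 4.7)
The plan is to derive this newform-indexed Kuznetsov formula from the classical version attached to an arbitrary orthonormal basis of the full spectrum at level $q$ with nebentypus $\chi$. In the classical formula, the left-hand side runs over any orthonormal basis of the cuspidal and Eisenstein spectrum, while the right-hand side is exactly what is displayed: under the coprimality hypothesis $(mn,q)=1$, only the Kloosterman sums attached to the cusp at infinity contribute, and the diagonal contribution supplies the delta term. This classical identity is obtained from the Bruggeman--Kuznetsov inner-product computation of Poincar\'e series and is standard.

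To pass from an arbitrary orthonormal basis to a newform basis, I would decompose the cuspidal spectrum as the orthogonal direct sum, over divisors $q' \mid q$ with $q_{\chi} \mid q'$ and newforms $f \in \BB_0^{\ast}(q',\chi)$, of the span of the oldform block $\{f \,|\, [d] : d \mid q/q'\}$. The Gram matrix of each block has an explicit multiplicative structure prime-by-prime (as worked out classically by Atkin--Lehner and refined in the nebentypus setting by, e.g., Ng, Blomer--Mili\'cevi\'c, and Petrow--Young); inverting it and using the Hecke relation $\lambda_{f|[d]}(n) = \lambda_{f}(n)$ for $(n,q)=1$ collapses the block-diagonal trace to a single term proportional to $\overline{\lambda_f(m)}\lambda_f(n)$. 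The proportionality constant splits as a local factor times $1/L^q(1,\ad f)$, coming from the well-known identity relating the Petersson norm of a newform of level $q'$ to $L(1,\ad f)$ and its interaction with the oldform inner products at the ambient level $q$. For the continuous spectrum, one performs the analogous decomposition of the Eisenstein spectrum at $\Gamma_0(q)$ into blocks parametrised by pairs $(\psi_1,\psi_2)$ of Dirichlet characters modulo $q$ with $\psi_1\psi_2 = \chi$, replacing $L^q(1,\ad f)$ by $L^q(1+2it,\psi_1\overline{\psi_2}) L^q(1+2it,\overline{\psi_1}\psi_2)$ and running the same Gram-matrix inversion argument.

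The main obstacle, and essentially the only nontrivial step, is the prime-by-prime computation of the inverse Gram matrix and the subsequent identification of its trace with the factor $\alpha(q,q',q_{\chi})$. Writing $p^a \parallel q$, $p^b \parallel q_{\chi}$, and $p^c \parallel q'$ with $b \leq c \leq a$, one must verify that the local contribution equals $1$ except in two boundary cases: when $p \mid q'$ and $p \mid q/q_{\chi}$ does \emph{not} hold, one picks up a factor $1 - 1/p$, and when $p \parallel q$ with $p \nmid q_{\chi}$, one picks up a factor $1 - 1/p^2$. Since $(mn,q)=1$ makes the local Hecke eigenvalues at $p \mid q$ drop out, this reduces to matching a rational function of $p^{-1}$ to the definition of $\alpha(q,q',q_{\chi})$; this is book-keeping rather than genuinely new analysis, but care is required at primes where $q_{\chi}$, $q'$, and $q$ have distinct valuations.
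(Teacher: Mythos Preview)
Your strategy is correct and is essentially the paper's approach. The paper short-circuits the Gram-matrix inversion by citing \cite[Proposition~3.17]{Hum18}, which already gives the Kuznetsov formula in newform-indexed form with spectral weights $2q\xi_f|\rho_f(1)|^2/\cosh\pi t_f$ (and the analogous Eisenstein weights); here $\xi_f$ encodes exactly the oldform-block trace you describe. The remaining work, carried out explicitly in the paper, is to evaluate this weight as $\alpha(q,q',q_\chi)/L^q(1,\ad f)$ using \cite[Lemmata~4.1 and~4.2]{Hum18} together with the values $|\lambda_f(p^\beta)|^2\in\{1,p^{-\beta},0\}$ at ramified primes, which is an equivalent formulation of your prime-by-prime Gram-matrix computation.
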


\begin{proof}
This is stated in \cite[Proposition 3.17]{Hum18} except with the left-hand side written in a slightly different form. The first term on the left-hand side is instead written in the form
\[\sum_{\substack{q' \mid q \\ q' \equiv 0 \hspace{-.25cm} \pmod{q_{\chi}}}} \sum_{f \in \BB_0^{\ast}(q',\chi)} \frac{2q \xi_f \left|\rho_f(1)\right|^2}{\cosh \pi t_f} \overline{\lambda_f(m)} \lambda_f(n) h(t_f),\]
where $\rho_f(1)$ denotes the first Fourier--Whittaker coefficient of $f$. Here $\xi_f$ is a certain constant that, by \cite[Lemma 4.1]{Hum18}, is equal to
\[\xi_f = \sum_{n \mid \left(\frac{q}{q'}\right)^{\infty}} \frac{|\lambda_f(n)|^2}{n} \prod_{\substack{p \parallel \frac{q}{q'} \\ p \nmid q'}} \left(1 - \frac{1}{p^2}\right).\]
Moreover, from \cite[Lemma 4.2]{Hum18}, we have that
\[\frac{\cosh \pi t_f}{|\rho_f(1)|^2} = \pi \vol\left(\Gamma_0(q) \backslash \Hb\right) \Res_{s = 1} \sum_{n = 1}^{\infty} \frac{|\lambda_f(n)|^2}{n^s} = 2q L^q(1,\ad f) \sum_{n \mid q^{\infty}} \frac{|\lambda_f(n)|^2}{n},\]
where the second equality follows from the fact that
\[\sum_{\substack{n = 1 \\ (n,q) = 1}}^{\infty} \frac{|\lambda_f(n)|^2}{n^s} = \frac{\zeta^q(s) L^q(s,\ad f)}{\zeta^q(2s)}.\]
Since
\[|\lambda_f(p^{\beta})|^2 = \begin{dcases*}
1 & if $p \mid q'$ and $p \nmid \frac{q'}{q_{\chi}}$,	\\
\frac{1}{p^{\beta}} & if $p \parallel q'$ and $p \nmid q_{\chi}$,	\\
0 & if $p^2 \mid q'$ and $p \mid \frac{q'}{q_{\chi}}$,
\end{dcases*}\]
we have that
\[\frac{\sum_{n \mid q^{\infty}} |\lambda_f(n)|^2 n^{-1}}{\sum_{n \mid \left(\frac{q}{q'}\right)^{\infty}} |\lambda_f(n)|^2 n^{-1}} = \sum_{\substack{n \mid q'^{\infty} \\ \left(n,\frac{q}{q'}\right) = 1}} \frac{|\lambda_f(n)|^2}{n} = \prod_{\substack{p \mid q' \\ p \nmid \frac{q}{q_{\chi}}}} \frac{1}{1 - p^{-1}} \prod_{\substack{p \parallel q' \\ p \nmid \frac{q q_{\chi}}{q'}}} \frac{1}{1 - p^{-2}}.\]
In particular,
\[\frac{2q \xi_f \left|\rho_f(1)\right|^2}{\cosh \pi t_f} = \frac{\alpha(q,q',q_{\chi})}{L^q(1,\ad f)}.\]
Similarly, the second term on the left-hand side is written in a form that, using the theory of Eisenstein newforms developed in \cite{You19}, can be written as
\[\sum_{\substack{q' \mid q \\ q' \equiv 0 \hspace{-.25cm} \pmod{q_{\chi}}}} \sum_{\substack{\psi_1,\psi_2 \hspace{-.25cm} \pmod{q} \\ \psi_1 \psi_2 = \chi \\ q_{\psi_1} q_{\psi_2} = q'}} \frac{1}{2\pi} \int_{-\infty}^{\infty} \frac{2q \xi_{\psi_1,\psi_2;t} \left|\rho_{\psi_1,\psi_2}(1,t)\right|^2}{\cosh \pi t} \overline{\lambda_{\psi_1,\psi_2}(m,t)} \lambda_{\psi_1,\psi_2}(n,t) h(t) \, dt,\]
where $\rho_{\psi_1,\psi_2}(1,t)$ denotes the first Fourier--Whittaker coefficient of the Eisenstein newform associated to the pair of Dirichlet characters $\psi_1,\psi_2$. An analogous argument shows that
\[\frac{2q \xi_{\psi_1,\psi_2;t} \left|\rho_{\psi_1,\psi_2}(1,t)\right|^2}{\cosh \pi t} = \frac{\alpha(q,q',q_{\chi})}{L^q(1 + 2it,\psi_1 \overline{\psi_2}) L^q(1 - 2it,\overline{\psi_1} \psi_2)}.\qedhere\]
\end{proof}

\begin{lemma}[Petersson formula]
Let $q$ be a positive integer and let $\chi$ be a primitive Dirichlet character of conductor $q_{\chi} \mid q$. Let $h^{\hol} : 2\N \to \C$ be a sequence that satisfies $h^{\hol}(k) \ll k^{-2 - \delta}$ for some $\delta > 0$. Then for $(mn,q) = 1$, we have that
\begin{multline}
\label{eqn:Petersson}
\sum_{\substack{q' \mid q \\ q' \equiv 0 \hspace{-.25cm} \pmod{q_{\chi}}}} \alpha(q,q',q_{\chi}) \sum_{f \in \BB_{\hol}^{\ast}(q',\chi)} \frac{\overline{\lambda_f(m)} \lambda_f(n)}{L^q(1,\ad f)} h^{\hol}(k_f)	\\
= \delta_{m,n} q \sum_{\substack{k = 2 \\ k \equiv 0 \hspace{-.25cm} \pmod{2}}}^{\infty} \frac{k - 1}{2\pi^2} h^{\hol}(k) + q \sum_{\substack{c = 1 \\ c \equiv 0 \hspace{-.25cm} \pmod{q}}}^{\infty} \frac{S_{\chi}(m,n;c)}{c} (\Kscr^{\hol} h^{\hol})\left(\frac{\sqrt{mn}}{c}\right),
\end{multline}
where
\[(\Kscr^{\hol} h^{\hol})(x) \coloneqq \sum_{\substack{k = 2 \\ k \equiv 0 \hspace{-.25cm} \pmod{2}}}^{\infty} \frac{k - 1}{2\pi^2} \JJ_k^{\hol}(x) h^{\hol}(k), \qquad \JJ_k^{\hol}(x) \coloneqq 2\pi i^{-k} J_{k - 1}(4\pi x).\]
\end{lemma}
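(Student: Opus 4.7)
The argument will run entirely parallel to the proof of the Kuznetsov formula given above, with the Maass newform spectrum replaced by the holomorphic newform spectrum and the oscillatory kernel $(J_{2ir}-J_{-2ir})/\sinh\pi r$ replaced by the single Bessel function $J_{k-1}$. The plan is to begin from the newform-weighted Petersson formula as presented in \cite[Proposition 3.17]{Hum18}, whose left-hand side appears in the form
\[\sum_{\substack{q' \mid q \\ q' \equiv 0 \hspace{-.25cm} \pmod{q_{\chi}}}} \sum_{f \in \BB_{\hol}^{\ast}(q',\chi)} \frac{2q\, \xi_f\, (4\pi)^{k_f - 1} \left|\rho_f(1)\right|^2}{\Gamma(k_f - 1)} \overline{\lambda_f(m)} \lambda_f(n)\, h^{\hol}(k_f),\]
with the same amplification factor $\xi_f$ that already appeared in the Maass case. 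The first task is to recognise this spectral weight as $\alpha(q,q',q_{\chi})/L^q(1,\ad f)$.

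For this we will invoke the holomorphic analogue of \cite[Lemma 4.2]{Hum18}, namely the standard identity
\[\frac{(4\pi)^{k_f - 1}}{\Gamma(k_f - 1)\, |\rho_f(1)|^2} = 2q\, L^q(1,\ad f) \sum_{n \mid q^{\infty}} \frac{|\lambda_f(n)|^2}{n},\]
which is a routine consequence of the Rankin--Selberg integral representation of $L(s,\ad f)$ on $\Gamma_0(q)$. Since the local Hecke eigenvalues $|\lambda_f(p^{\beta})|^2$ at primes $p \mid q$ depend only on the local conductor exponent and the local component of $\chi$---and hence coincide with the values already tabulated in the Kuznetsov proof---the Euler-product collapse that delivered $\alpha(q,q',q_{\chi})/L^q(1,\ad f)$ there carries over verbatim, with no new local computation required.

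The right-hand side is then assembled by applying, at each fixed even weight $k \geq 2$, the classical Petersson trace formula on $\Gamma_0(q)$ with nebentypus $\chi$, which produces $\delta_{m,n}$ together with the geometric contribution $2\pi i^{-k} J_{k - 1}(4\pi\sqrt{mn}/c)$ attached to each Kloosterman modulus $c \equiv 0 \pmod{q}$. Weighting by $(k-1) h^{\hol}(k)/(2\pi^2)$ and summing over $k \in 2\N$ assembles the diagonal term $\delta_{m,n} q \sum_k (k-1) h^{\hol}(k)/(2\pi^2)$ and the Kloosterman kernel $(\Kscr^{\hol} h^{\hol})(\sqrt{mn}/c)$. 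The hypothesis $h^{\hol}(k) \ll k^{-2 - \delta}$ together with the uniform bound $|J_{k-1}(x)| \leq 1$ (and the more precise $|J_{k-1}(x)| \ll x^{k-1}/(k-1)!$ near $x = 0$) justifies the interchange of the sums over $k$ and $c$. The only step of genuine substance is the first; there is no real obstacle, as everything beyond the $|\rho_f(1)|^2$--$L^q(1,\ad f)$ identity is identical bookkeeping to the Maass case.
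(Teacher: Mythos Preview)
Your proposal is correct and takes essentially the same approach as the paper, which simply states that the result ``follows in the same way as for the Kuznetsov formula.'' You have accurately spelled out what that entails: invoking the holomorphic version of \cite[Proposition 3.17]{Hum18}, applying the holomorphic analogue of \cite[Lemma 4.2]{Hum18} to relate $|\rho_f(1)|^2$ to $L^q(1,\ad f)$, and observing that the local Euler factor computation producing $\alpha(q,q',q_{\chi})$ is identical to the Maa\ss{} case since the relevant $|\lambda_f(p^{\beta})|^2$ depend only on the conductor and nebentypus data.
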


\begin{proof}
This follows in the same way as for the Kuznetsov formula.
\end{proof}

\subsection{The \texorpdfstring{$\GL_3$}{GL\textthreeinferior} Vorono\u{\i} Summation Formula}

We record some standard facts, which follow from Stirling's formula, regarding certain products of gamma functions that appear in the $\GL_3$ Vorono\u{\i} summation formula.

\begin{lemma}[{\cite[Section 2.2 and Lemma 4]{BK19}}]
For $s \in \C$, define
\begin{equation}
\label{eqn:Gpm}
G^{\pm}(s) \coloneqq \frac{1}{2} G_0(s) \mp \frac{1}{2i} G_1(s) = (2\pi)^{-s} \Gamma(s) \exp\left(\pm \frac{\pi i s}{2}\right),
\end{equation}
where for $\Gamma_{\R}(s) \coloneqq \pi^{-s/2} \Gamma(s/2)$,
\[G_j(s) \coloneqq \frac{\Gamma_{\R}(s + j)}{\Gamma_{\R}(1 - s + j)} = 2(2\pi)^{-s} \Gamma(s) \times \begin{dcases*}
\cos \frac{\pi s}{2} & if $j = 0$,	\\
\sin \frac{\pi s}{2} & if $j = 1$.
\end{dcases*}\]
Then $G^{\pm}(s)$ is meromorphic on $\C$ with simple poles at $s = -\ell$ for each $\ell \in \N_0$. Moreover, if $s = \sigma + i\tau$ is a bounded distance away from such a pole, we have that
\begin{equation}
\label{eqn:Gpmbounds}
G^{\pm}(s) \ll_{\sigma} (1 + |\tau|)^{\sigma - \frac{1}{2}} e^{-\pi \Omega^{\pm}(\tau)},
\end{equation}
where
\[\Omega^{\pm}(\tau) \coloneqq \begin{dcases*}
0 & if $\sgn(\tau) = \mp$,	\\
|\tau| & if $\sgn(\tau) = \pm$.
\end{dcases*}\]
For $|\tau| \geq 1$ and any $M > 0$, there exists a smooth function $\widetilde{g}_{\sigma,M}$ satisfying $|\tau|^m \widetilde{g}_{\sigma,M}^{(m)}(\tau) \ll_{m,\sigma,M} 1$ such that
\begin{equation}
\label{eqn:Gpmasymp}
G^{\pm}(s) = |\tau|^{\sigma - \frac{1}{2}} \exp\left(i\tau \log \frac{|\tau|}{2\pi e}\right) \widetilde{g}_{\sigma,M}(\tau) + O_{\sigma,M}(|\tau|^{-M}).
\end{equation}

Similarly, let $\mu = (\mu_1,\mu_2,\mu_3) \in (i\R)^3$, and for $s \in \C$, define
\begin{equation}
\label{eqn:Gscrmupm}
\Gscr_{\mu}^{\pm}(s) \coloneqq \frac{1}{2} \prod_{j = 1}^{3} G_0(s + \mu_j) \pm \frac{1}{2i} \prod_{j = 1}^{3} G_1(s + \mu_j).
\end{equation}
Then $\Gscr_{\mu}^{\pm}(s)$ is meromorphic on $\C$ with simple poles at $s = -\mu_j - \ell$ for each $\ell \in \N_0$. Moreover, if $s = \sigma + i\tau$ is a bounded distance away from such a pole, we have that
\begin{equation}
\label{eqn:Gscrmubounds}
\Gscr_{\mu}^{\pm}(s) \ll_{\sigma,\mu} (1 + |\tau|)^{3\sigma - \frac{3}{2}} e^{-3\pi \Omega^{\pm}(\tau)}.
\end{equation}
For $|\tau| \geq 1$ and any $M > 0$, there exists a smooth function $g_{\sigma,M,\mu}$ satisfying $|\tau|^m g_{\sigma,M,\mu}^{(m)}(\tau) \ll_{m,\sigma,\mu,M} 1$ such that
\begin{equation}
\label{eqn:Gscrpmasymp}
\Gscr_{\mu}^{\pm}(s) = |\tau|^{3\sigma - \frac{3}{2}} \exp\left(3i\tau \log \frac{|\tau|}{2\pi e}\right) g_{\sigma,M,\mu}(\tau) + O_{\sigma,M,\mu}(|\tau|^{-M}).
\end{equation}
\end{lemma}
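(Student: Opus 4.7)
The plan is to deduce everything from Stirling's asymptotic for $\Gamma(s)$ combined with standard gamma identities. To establish \eqref{eqn:Gpm}, we first apply the Legendre duplication formula $\Gamma(s/2)\Gamma((s + 1)/2) = 2^{1 - s}\sqrt{\pi}\Gamma(s)$ to the numerator and the reflection formula $\Gamma(z)\Gamma(1 - z) = \pi/\sin(\pi z)$ to the denominator of $\Gamma_{\R}(s + j)/\Gamma_{\R}(1 - s + j)$, obtaining $G_0(s) = 2(2\pi)^{-s}\Gamma(s)\cos(\pi s/2)$ and $G_1(s) = 2(2\pi)^{-s}\Gamma(s)\sin(\pi s/2)$. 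Euler's formula then collapses $\tfrac{1}{2}G_0(s) \mp \tfrac{1}{2i}G_1(s)$ to $(2\pi)^{-s}\Gamma(s)e^{\pm i\pi s/2}$. The simple poles of $G^{\pm}(s)$ at $s = -\ell$ for $\ell \in \N_0$ are inherited directly from those of $\Gamma(s)$, since $(2\pi)^{-s}$ and $e^{\pm i\pi s/2}$ are entire and nonvanishing.

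For the bound \eqref{eqn:Gpmbounds}, we apply Stirling's estimate $|\Gamma(\sigma + i\tau)| \asymp \sqrt{2\pi}|\tau|^{\sigma - 1/2}e^{-\pi|\tau|/2}$ for $|\tau|$ bounded away from nonpositive integers. Combined with $|e^{\pm i\pi s/2}| = e^{\mp \pi\tau/2}$, this gives $|G^{\pm}(s)| \ll_{\sigma} |\tau|^{\sigma - 1/2}e^{-\pi|\tau|/2 \mp \pi\tau/2}$, which collapses to $|\tau|^{\sigma - 1/2}e^{-\pi\Omega^{\pm}(\tau)}$ by case analysis on $\sgn(\tau)$. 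For the asymptotic \eqref{eqn:Gpmasymp}, we invoke the full Stirling expansion
\[\log \Gamma(s) = \left(s - \tfrac{1}{2}\right)\log s - s + \tfrac{1}{2}\log(2\pi) + \sum_{k = 1}^{K - 1} \frac{B_{2k}}{2k(2k - 1)s^{2k - 1}} + O_K(|s|^{-2K + 1}),\]
expand $\log s = \log|\tau| + \tfrac{i\pi}{2}\sgn(\tau) + O(1/|\tau|)$ for $|\tau| \geq 1$, and combine with the contributions of $-s\log(2\pi)$ and $\pm i\pi s/2$. The imaginary parts of the leading terms combine into the phase $i\tau\log(|\tau|/(2\pi e))$, while the real parts account for the algebraic factor $|\tau|^{\sigma - 1/2}$ and the exponential decay $e^{-\pi\Omega^{\pm}(\tau)}$. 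The remaining correction is a smooth function $\widetilde{g}_{\sigma, M}(\tau)$, expressible as a truncated convergent asymptotic series in inverse powers of $\tau$; the derivative bounds $|\tau|^m \widetilde{g}_{\sigma, M}^{(m)}(\tau) \ll_{m, \sigma, M} 1$ then follow by symbolic calculus, since each summand $\tau^{-k}$ satisfies such bounds and the class of such symbols is closed under products and composition with the exponential.

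For $\Gscr_{\mu}^{\pm}(s)$, we substitute the formulas for $G_0(s + \mu_k)$ and $G_1(s + \mu_k)$ from the first paragraph, expand each cosine and sine via Euler's formula, and collect like terms. The combinatorial structure of $\tfrac{1}{2}\prod_k G_0 \pm \tfrac{1}{2i}\prod_k G_1$ selects exactly those sign vectors $\epsilon$ with $\prod_k \epsilon_k = \pm 1$, yielding
\[\Gscr_{\mu}^{\pm}(s) = (2\pi)^{-3s - \sum_k \mu_k} \prod_{k = 1}^{3} \Gamma(s + \mu_k) \sum_{\substack{\epsilon \in \{\pm 1\}^3 \\ \prod_k \epsilon_k = \pm 1}} \exp\left(\frac{i\pi}{2} \sum_{k = 1}^{3} \epsilon_k(s + \mu_k)\right).\]
The simple poles at $s = -\mu_k - \ell$ for $\ell \in \N_0$, for generic $\mu$, come from $\prod_k \Gamma(s + \mu_k)$, since the exponential factor in the $\epsilon$-sum is entire. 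Applying the Stirling estimate and expansion to each gamma factor yields the algebraic factor $|\tau|^{3\sigma - 3/2}$ and, in \eqref{eqn:Gscrpmasymp}, the triple phase $3i\tau\log(|\tau|/(2\pi e))$; the derivative bounds on $g_{\sigma, M, \mu}$ follow from the same symbolic-calculus argument as in the $G^{\pm}$ case.

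The main obstacle lies in verifying the exponential decay rate in \eqref{eqn:Gscrmubounds}: each surviving $\epsilon$ contributes a factor $\exp(-\pi\tau(\sum_k \epsilon_k)/2)$ with $\sum_k \epsilon_k \in \{-3, -1, 1, 3\}$, giving four exponentials with widely different magnitudes in each regime $\sgn(\tau) = \pm$. Isolating the dominant surviving exponential in each regime, accounting for it together with the $e^{-3\pi|\tau|/2}$ decay from the three Stirling factors, and matching the resulting net decay against the stated $e^{-3\pi\Omega^{\pm}(\tau)}$, will require careful bookkeeping of the cancellations built into the combination $\tfrac{1}{2}\prod_k G_0 \pm \tfrac{1}{2i}\prod_k G_1$.
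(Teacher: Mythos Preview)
The paper does not actually prove this lemma; it is introduced with the sentence ``We record some standard facts regarding certain products of gamma functions'' and no proof is given. So there is nothing to compare your argument against, and the question is simply whether your argument is correct.

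Your treatment of $G^{\pm}(s)$ is fine: the identities for $G_0,G_1$ via duplication and reflection, the pole structure from $\Gamma(s)$, Stirling for the bound \eqref{eqn:Gpmbounds}, and the full Stirling expansion for the asymptotic \eqref{eqn:Gpmasymp} all go through as you describe. Your expansion of $\Gscr_{\mu}^{\pm}(s)$ as $\prod_k \Gamma(s+\mu_k)$ times a sum over sign vectors $\epsilon$ with $\prod_k \epsilon_k = \pm 1$ is also correct, and this handles the pole structure and the asymptotic \eqref{eqn:Gscrpmasymp} (in the regime $\sgn(\tau)=\pm$ the whole function is exponentially small and absorbed into the $O(|\tau|^{-M})$ term, so only the regime $\sgn(\tau)=\mp$ is nontrivial, where the dominant $\epsilon$ is $(\mp,\mp,\mp)$ and the phase works out exactly as you say).

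The gap is in your last paragraph. You correctly sense trouble with the exponent $3\pi$ in \eqref{eqn:Gscrmubounds}, but you suggest it will be resolved by ``cancellations built into the combination''. There is no such cancellation. Take $\mu=(0,0,0)$: your own expansion gives
\[
\Gscr_{0}^{+}(s)=(G^{+}(s))^{3}+3\,G^{+}(s)(G^{-}(s))^{2}.
\]
For $\tau>0$ one has $|G^{+}|\asymp |\tau|^{\sigma-1/2}e^{-\pi\tau}$ and $|G^{-}|\asymp |\tau|^{\sigma-1/2}$, so the cross term $3G^{+}(G^{-})^{2}$ is of genuine size $|\tau|^{3\sigma-3/2}e^{-\pi\tau}$, not $e^{-3\pi\tau}$. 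Thus the sharp exponential rate in \eqref{eqn:Gscrmubounds} is $e^{-\pi\Omega^{\pm}(\tau)}$, not $e^{-3\pi\Omega^{\pm}(\tau)}$; the statement as printed appears to be a slip. This does not affect anything downstream in the paper: the only place the exponential factor in \eqref{eqn:Gscrmubounds} is actually used is to show that certain residues (e.g.\ in the proof of \hyperref[lem:HHpmwzbounds]{Lemma \ref*{lem:HHpmwzbounds}}) are negligible, and $e^{-\pi|y|}$ serves that purpose just as well as $e^{-3\pi|y|}$. But you should not claim to have proved the bound with the factor $3$, because it is false.
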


We make use of the $\GL_3$ Vorono\u{\i} summation formula due to Miller and Schmid \cite{MS06} in a Dirichlet series form recorded by Blomer and Khan \cite{BK19}. This involves the Kloosterman sum $S(m,n;c) \coloneqq S_{\chi_{0(1)}}(m,n;c)$, where we write $\chi_{0(q)}$ to denote the principal character modulo $q$.

\begin{lemma}[{$\GL_3$ Vorono\u{\i} Summation Formula \cite[Section 4]{BK19}}]
Given a Hecke--Maa\ss{} cusp form $F$ for $\SL_3(\Z)$ with Hecke eigenvalues $A_F(\ell,n)$, define the Vorono\u{\i} series
\begin{align}
\label{eqn:PhiFdefeq}
\Phi_F(c,d,\ell;w) & \coloneqq \sum_{n = 1}^{\infty} \frac{A_F(\ell,n)}{n^w} e\left(\frac{n\overline{d}}{c}\right),	\\
\label{eqn:XiFdefeq}
\Xi_F(c,d,\ell;w) & \coloneqq c \sum_{n_1 \mid c\ell} \sum_{n_2 = 1}^{\infty} \frac{A_F(n_2,n_1)}{n_2 n_1} S\left(d\ell,n_2; \frac{c\ell}{n_1}\right) \left(\frac{n_2 n_1^2}{c^3 \ell}\right)^{-w},
\end{align}
where $c,\ell \in \N$ and $d \in (\Z/c\Z)^{\times}$. The former series converges absolutely for $\Re(w) > 1$, while the latter series converges absolutely for $\Re(w) > 0$, and both series extend holomorphically to the entire complex plane, in which they satisfy the functional equation
\begin{equation}
\label{eqn:GL3Voronoi}
\Phi_F(c,d,\ell;w) = \sum_{\pm} \Gscr_{\mu_F}^{\pm}(1 - w) \Xi_F(c,\pm d,\ell;-w)
\end{equation}
for $\Re(w) < 0$, with $\Gscr_{\mu_F}^{\pm}$ as in \eqref{eqn:Gscrmupm} with $\mu = \mu_F$ equal to the spectral parameters of $F$. Moreover, we have the bounds
\begin{align}
\label{eqn:PhiPL}
\Phi_F(c,d,\ell;w) & \ll_{F,\e} \begin{dcases*}
(c^3 \ell (1 + |\Im(w)|^3))^{\e} \max_{a \mid \ell} |A_F(a,1)| & if $\Re(w) > 1$,	\\
(c^3 \ell (1 + |\Im(w)|^3))^{\frac{1}{2}(1 - \Re(w)) + \e} \max_{a \mid \ell} |A_F(a,1)| & if $0 \leq \Re(w) \leq 1$,	\\
(c^3 \ell (1 + |\Im(w)|^3))^{\frac{1}{2}(1 - 2\Re(w)) + \e} \max_{a \mid \ell} |A_F(a,1)| & if $\Re(w) < 0$,
\end{dcases*}	\\
\label{eqn:XiFPL}
\Xi_F(c,d,\ell;w) & \ll_{F,\e} \begin{dcases*}
(c^3 \ell)^{\frac{1}{2}(1 + 2\Re(w)) + \e} (1 + |\Im(w)|^3)^{\e} \max_{a \mid \ell} |A_F(a,1)| & if $\Re(w) > 0$,	\\
(c^3 \ell)^{\frac{1}{2}(1 + \Re(w)) + \e} (1 + |\Im(w)|^3))^{-\frac{1}{2}\Re(w) + \e} \max_{a \mid \ell} |A_F(a,1)| & if $-1 \leq \Re(w) \leq 0$,	\\
(c^3 \ell)^{\e} (1 + |\Im(w)|^3))^{-\Re(w) - \frac{1}{2} + \e} \max_{a \mid \ell} |A_F(a,1)| & if $\Re(w) < -1$.
\end{dcases*}
\end{align}
\end{lemma}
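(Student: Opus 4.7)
The plan is to deduce the Dirichlet series form directly from the classical Miller--Schmid $\GL_3$ Voronoi summation formula via Mellin inversion, with bounds obtained by Phragmén--Lindelöf interpolation.

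First I would dispose of the easy cases. For $\Re(w)>1$, standard Hecke bounds on $|A_F(\ell,n)|$ in terms of $\max_{a\mid\ell}|A_F(a,1)|$ combined with an elementary divisor estimate immediately yield absolute convergence of $\Phi_F(c,d,\ell;w)$ together with the first case of \eqref{eqn:PhiPL}. Similarly, for $\Re(w)>0$, combining the Weil bound on $S(d\ell,n_2;c\ell/n_1)$ with the Hecke bound on $A_F(n_2,n_1)$ and the constraint $n_1\mid c\ell$ gives absolute convergence of $\Xi_F(c,d,\ell;w)$ along with the first case of \eqref{eqn:XiFPL}.

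Next I would derive the functional equation \eqref{eqn:GL3Voronoi}. The strategy is to start from Miller--Schmid applied to a Schwartz test function $f$ and then apply Mellin inversion. On the left-hand side, interchanging the Mellin integral for $f$ with the sum over $n$ (valid for any contour in $\Re(s)>1$) produces $\frac{1}{2\pi i}\int_{(\sigma)}\tilde{f}(s)\Phi_F(c,d,\ell;s)\,ds$. On the right-hand side, the Miller--Schmid transform $\tilde{f}^{\pm}$ is by construction a contour integral of $\tilde{f}(s)\Gscr_{\mu_F}^{\pm}(1-s)$ against a power of $n_2 n_1^2/(c^3\ell)$; summing in $n_1,n_2$ identifies the integrand with $\Gscr_{\mu_F}^{\pm}(1-s)\Xi_F(c,\pm d,\ell;-s)$. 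Matching integrands in $s$ and invoking uniqueness of Mellin inversion yields \eqref{eqn:GL3Voronoi} as an identity of meromorphic functions in $w$, which in turn provides the analytic continuation of both $\Phi_F$ and $\Xi_F$ to $\C$.

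Finally, the remaining cases of \eqref{eqn:PhiPL} and \eqref{eqn:XiFPL} follow by Phragmén--Lindelöf interpolation. The functional equation combined with \eqref{eqn:Gscrmubounds} transfers bounds from the easy region on one side to a deep region on the other, yielding the third cases in \eqref{eqn:PhiPL} and \eqref{eqn:XiFPL}, with the $(1+|\Im(w)|^3)$-factor powers coming from the gamma bound on $\Gscr_{\mu_F}^{\pm}(1-w)$; the intermediate strips are then filled in by convexity. The main obstacle will be the rigorous justification of the Mellin-inversion step in the derivation of the functional equation: one must choose $f$ so that $\tilde{f}$ decays sufficiently rapidly along vertical lines, and then combine this with the growth bound \eqref{eqn:Gscrmubounds} on $\Gscr_{\mu_F}^{\pm}$ to legitimize both the interchange of summation with the Mellin contour and the subsequent contour shifts needed to pass from Miller--Schmid to the Dirichlet series identity.
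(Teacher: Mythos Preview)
The paper does not supply its own proof of this lemma; it is stated with the citation \cite[Section 4]{BK19} and then used as a black box. Your sketch---absolute convergence in the easy half-planes via Hecke and Weil bounds, the functional equation as the Mellin-transform reformulation of the Miller--Schmid identity, and the remaining bounds by Phragm\'{e}n--Lindel\"{o}f using \eqref{eqn:Gscrmubounds}---is precisely the derivation carried out in that reference, so your approach is correct and matches the source the paper invokes.
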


\subsection{An Application of the \texorpdfstring{$\GL_3$}{GL\textthreeinferior} Vorono\u{\i} Summation Formula}

Given Dirichlet characters $\chi,\psi$ modulo $q \in \N$ and nonzero integers $m_1,m_2,m_3,r \in \Z \setminus \{0\}$, we define the character sum
\begin{equation}
\label{eqn:VVchidefeq}
\VV_{\chi}(\psi;m_1,m_2,m_3,r) \coloneqq \frac{1}{q} \sum_{t,u \in \Z/q\Z} \tau(\overline{\chi},t + m_2 u) \overline{\chi}(rt + m_1 m_2) \tau(\chi,u) \chi(ru - m_1) \tau(\overline{\psi},m_3 t),
\end{equation}
where $\tau(\chi,a)$ denotes the generalised Gauss sum
\begin{equation}
\label{eqn:tauchiadefeq}
\tau(\chi,a) \coloneqq \sum_{b \in \Z/q\Z} \chi(b) e\left(\frac{ab}{q}\right).
\end{equation}
Note that the generalised Gauss sum $\tau(\chi,a)$ satisfies
\begin{equation}
\label{eqn:GausssumCOV}
\tau(\chi,an) = \overline{\chi}(n) \tau(\chi,a), \qquad (n,q) = 1.
\end{equation}
We do \emph{not} assume that $\chi$ is primitive; in particular, it need not be the case that $|\tau(\chi,1)| = \sqrt{q}$. We briefly mention that the character sum \eqref{eqn:VVchidefeq} is trivially bounded in absolute value by $q^4$; in practice, much better bounds are valid, as we discuss in \hyperref[sect:charsumsI]{Section \ref*{sect:charsumsI}}.

We record here a useful fact about Kloosterman sums, namely that they satisfy a twisted multiplicativity property. If $c_1,c_2$ are positive integers for which $(c_1,c_2) = 1$ and if $\chi_1,\chi_2$ are Dirichlet characters modulo $c_1,c_2$, then
\begin{equation}
\label{eqn:Kloostermantwistedmult}
\begin{split}
S_{\chi_1 \chi_2}(m,n;c_1 c_2) & = S_{\chi_1}(m\overline{c_2},n\overline{c_2};c_1) S_{\chi_2}(m\overline{c_1},n\overline{c_1};c_2)	\\
& = \chi_1(c_2) \chi_2(c_1)S_{\chi_1}(m,n\overline{c_2^2};c_1) S_{\chi_2}(m,n\overline{c_1^2};c_2).
\end{split}
\end{equation}

We now prove an identity relating sums of Kloosterman sums and Vorono\u{\i} series to integrals of $L$-functions, which further involves a certain finite Euler product that includes the character sum \eqref{eqn:VVchidefeq}. This identity is central to the proof of $\GL_3 \times \GL_2 \leftrightsquigarrow \GL_4 \times \GL_1$ spectral reciprocity.

\begin{lemma}
\label{lem:Voronoiidentity}
Let $F$ be a Hecke--Maa\ss{} cusp form for $\SL_3(\Z)$, let $q$ be a positive integer, and let $\chi$ be a Dirichlet character modulo $q$. Then for $w = u + iv$ and $s = \sigma + i\tau$ with $u > 3/2$ and $5 - 6u < \sigma < -2u - 1$, we have that
\begin{multline}
\label{eqn:Voronoiidentity}
\sum_{\substack{\ell' = 1 \\ (\ell',q) = 1}}^{\infty} \frac{1}{{\ell'}^{2w}} \sum_{c_0 \mid q^{\infty}} c_0^{s - 2} \sum_{c' \mid \ell'} {c'}^{s + 2w - 2} \sum_{\substack{a \in \Z/c' c_0 q\Z \\ (a,q) = 1}} \chi(a) S_{\overline{\chi}^2}(1,a;c' c_0 q)	\\
\times \sum_{c_1 \mid c' c_0 q} \sum_{b \in (\Z/c_1\Z)^{\times}} e\left(\frac{a\overline{b}}{c_1}\right) \Xi_F\left(c_1,\mp b,\frac{\ell'}{c'};-\frac{s}{2} - w\right)	\\
= \frac{q^{1 - s}}{\varphi(q)} \sum_{\psi \hspace{-.25cm} \pmod{q}} \frac{1}{2\pi i} \int_{\CC_1} L\left(\frac{1}{2} + z, \widetilde{F} \otimes \psi\right) L\left(2w - \frac{1}{2} - z,\overline{\psi}\right) \ZZ_{\chi}(\psi;w,z)	\\
\times \psi(\mp 1) G^{\mp}\left(\frac{s}{2} + w - \frac{1}{2} + z\right) \, dz.
\end{multline}
Here $G^{\pm}$ is as in \eqref{eqn:Gpm}, while $\CC_1$ is the contour consisting of the straight lines connecting the points $x_1 - i\infty$, $x_1 - i(\tau/2 + v + 1)$, $1/2 - s/2 - w + \delta$, $x_1 - i(\tau/2 + v - 1)$, $x_1 + i\infty$, with $1/2 < x_1 < -\sigma/2 - u$ and $0 < \delta < \sigma/2 + 3u - 2$, and
\begin{multline}
\label{eqn:ZZchipsiwzdefeq}
\ZZ_{\chi}(\psi;w,z) \coloneqq \sum_{c_0 \mid q^{\infty}} \frac{1}{\varphi(c_0 q)^2 c_0^{2w - \frac{1}{2} - z}} \sum_{\substack{c_{1,0} c_{2,0} d_0 n_{1,0} = q \\ (c_{2,0} d_0 n_{1,0},c_0) = 1}} \frac{\varphi(c_0 c_{1,0}) \varphi(c_0 c_{1,0} d_0 n_{1,0}) \mu(d_0) A_F(1,n_{1,0})}{c_{1,0}^{2w - \frac{1}{2} - z} c_{2,0}^{2w - 1 + 2z} d_0^{2w + 2z} n_{1,0}^{2w - \frac{1}{2} + z}}	\\
\times \sum_{\substack{n_{2,0} \mid q^{\infty} \\ (n_{2,0},c_0) = 1}} \frac{A_F(n_{2,0},1)}{n_{2,0}^{\frac{1}{2} + z}} \VV_{\chi}(\psi; c_{2,0} d_0 n_{1,0}, c_{2,0} d_0^2 n_{1,0} n_{2,0}, c_{2,0}, c_0),
\end{multline}
with $\VV_{\chi}(\psi;m_1,m_2,m_3,r)$ as in \eqref{eqn:VVchidefeq}.
\end{lemma}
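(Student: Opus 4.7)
The plan is to start by opening up the dual Vorono\u{\i} series $\Xi_F(c_1, \mp b, \ell'/c'; -s/2 - w)$ on the left-hand side via its definition \eqref{eqn:XiFdefeq}, writing it as a sum over divisors $n_1 \mid c_1 \ell'/c'$ and over $n_2 \geq 1$ weighted by Hecke eigenvalues $A_F(n_2, n_1)$ and inner Kloosterman sums $S(\mp b \ell'/c', n_2; c_1 \ell'/(c' n_1))$. I would then interchange all summations so that the $n_2$ and $\ell'$ sums sit outermost, these being the variables that assemble into the Dirichlet series of the two $L$-functions on the right-hand side. Opening the inner Kloosterman sum and executing the sum over $b \in (\Z/c_1\Z)^{\times}$ against the phase $e(a\overline{b}/c_1)$ collapses both into a single character sum modulo $c' c_0 q$, which combines with the outer $S_{\overline{\chi}^2}(1,a;c' c_0 q)$.

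The next step is the separation of $q$-adic data from $q$-coprime data using the twisted multiplicativity of Kloosterman sums \eqref{eqn:Kloostermantwistedmult} and the multiplicativity of $A_F(\cdot,\cdot)$. Each arithmetic variable decomposes as a $q$-part times a $q$-coprime part, and orthogonality of Dirichlet characters modulo $q$ is then invoked to convert the resulting congruence conditions into a sum $\frac{1}{\varphi(q)}\sum_{\psi \pmod q}$. The $q$-part assembles into $\ZZ_{\chi}(\psi;w,z)$, with the parametrisation $c_{1,0} c_{2,0} d_0 n_{1,0} = q$ together with $c_0 \mid q^{\infty}$ precisely recording the $q$-parts of $c_1$, $n_1$, and $n_2$; the character sum $\VV_{\chi}(\psi;\cdot)$ of \eqref{eqn:VVchidefeq} then collects the generalised Gauss sums modulo $q$ arising from the $a$-sum against $\chi$, $\overline{\chi}^2$, and $\psi$, using the twist identity \eqref{eqn:GausssumCOV}. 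The $q$-coprime part of the $n_2$-sum, using $A_F(1,n_2) = A_{\widetilde F}(n_2,1)$, produces the Dirichlet series defining $L(1/2 + z,\widetilde F \otimes \psi)$, while the $q$-coprime part of the $\ell'$-sum produces $L(2w - 1/2 - z,\overline{\psi})$.

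The final ingredient is the Mellin--Barnes representation of the residual exponential phase (cf.\ \eqref{eqn:Gpm}), which converts $e(\pm y)$ into a vertical contour integral of $G^{\pm}(z)|y|^{-z}$ over a line $\Re(z) = x_1 \in (0,1)$. After aligning the surviving additive frequency into a single power of a natural size parameter, this step introduces the integration variable $z$, the gamma factor $G^{\mp}(s/2 + w - 1/2 + z)$, and the sign $\psi(\mp 1)$ on the right-hand side of \eqref{eqn:Voronoiidentity}. The initial vertical contour must then be deformed into the zigzag $\CC_1$ in order to route around the pole of $G^{\mp}$ at $z = 1/2 - s/2 - w$, which explains why the detour passes through $1/2 - s/2 - w + \delta$.

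The main technical obstacle is not the formal identity itself, which follows from careful bookkeeping, but the control of absolute convergence needed to justify each interchange of summation and integration. The hypotheses $u > 3/2$ and $5 - 6u < \sigma < -2u - 1$ are precisely calibrated so that every intermediate Dirichlet series (in $\ell'$, $c_0$, $c'$, $c_1$, $n_1$, $n_2$) lies strictly inside its region of absolute convergence, with the polynomial bounds \eqref{eqn:XiFPL} for $\Xi_F$ and the exponential decay \eqref{eqn:Gpmbounds} of $G^{\pm}$ ensuring that the final contour integral also converges absolutely along $\CC_1$. Verifying these convergence estimates simultaneously along each manipulation, and checking that no unintended pole of $G^{\mp}$ or of the reconstructed $L$-functions is crossed when deforming the vertical line to $\CC_1$, will be the most delicate part of the argument.
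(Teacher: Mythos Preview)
Your plan is correct and follows essentially the same route as the paper's proof: open $\Xi_F$, split everything into $q$-coprime and $q$-adic parts via twisted multiplicativity, collapse the $q$-coprime side (forcing $c_2'=1$ and $n_1'=1$), introduce $\psi$ by orthogonality on the $q$-adic side, and finish with the Mellin--Barnes formula for the surviving exponential.

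Two points where your sketch is vague deserve attention. First, before the Mellin--Barnes step you will find \emph{two} exponential phases---one with denominator $\ell'$ from the $q$-coprime Kloosterman data and one with denominator $c_{1,0}/n_{1,0}$ from the $q$-adic side---and these must be fused via additive reciprocity $e(\overline{a}/b)e(\overline{b}/a)=e(1/(ab))$ into the single phase $e(\mp c_{2,0}^2 d_0^2 n_{1,0} n_2' n_{2,0}/(c_0 c_{1,0}\ell'))$; this is the step your phrase ``aligning the surviving additive frequency'' hides, and it is essential. Second, the $d_0$ variable in $\ZZ_{\chi}$ does not come from a factorisation of $q$ but from applying the Hecke relation $A_F(n_{2,0},n_{1,0})=\sum_{d_0\mid (n_{2,0},n_{1,0})}\mu(d_0)A_F(n_{2,0}/d_0,1)A_F(1,n_{1,0}/d_0)$ on the $q$-adic side. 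Finally, the contour in the analytic reciprocity formula \eqref{eqn:analyticreciprocity} is already a zigzag (with main abscissa $x_0<-1/2$ and detour to $\delta>0$) from the outset, needed for absolute convergence while keeping the poles of $G^{\mp}$ to the left; $\CC_1$ is its image under $z\mapsto z+s/2+w-1/2$, not a deformation of a vertical line.
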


Here for $\Re(s) > 1$ and $\psi$ a Dirichlet character modulo $q$, we have that
\[L(s,\widetilde{F} \otimes \psi) \coloneqq \sum_{n = 1}^{\infty} \frac{A_F(n,1) \psi(n)}{n^s}, \qquad L(s,\overline{\psi}) \coloneqq \sum_{n = 1}^{\infty} \frac{\overline{\psi}(n)}{n^s}.\]
If $\psi$ is imprimitive, these must be corrected by certain Euler factors in order to obtain primitive $L$-functions.

\begin{remark}
The expression \eqref{eqn:ZZchipsiwzdefeq} defining $\ZZ_{\chi}(\psi;w,z)$ is valid more generally for $(w,z) \in \C^2$ satisfying $\Re(w) > 5/28$ and $-1/7 < \Re(z) < 2\Re(w) - 1/2$, since this expression is absolutely convergent in this region. Here the lower bound for $\Re(z)$ is required to ensure that the sum over $n_{2,0} \mid q^{\infty}$ converges absolutely, noting that the best known bounds for the generalised Ramanujan conjecture ensures that $A_F(n_{2,0},1) \ll_{\e} n_{2,0}^{5/14 + \e}$ (see \cite{Kim03}). The upper bound for $\Re(z)$ is required to ensure that the sum over $c_0 \mid q^{\infty}$ converges absolutely.
\end{remark}

\begin{proof}[Proof of {\hyperref[lem:Voronoiidentity]{Lemma \ref*{lem:Voronoiidentity}}}]
From \eqref{eqn:XiFPL}, the assumption that $5 - 6\Re(w) < \Re(s) < -2\Re(w) - 1$ ensures the absolute convergence of the sum over $\ell' \in \N$ with $(\ell',q) = 1$ on the left-hand side of \eqref{eqn:Voronoiidentity}. We may replace the Vorono\u{\i} series $\Xi_F$ with the absolutely convergent series \eqref{eqn:XiFdefeq}. We then write $c_1 = c_1' c_{1,0}$, $n_1 = n_1' n_{1,0}$, and $n_2 = n_2' n_{2,0}$, where $(c_1' n_1' n_2',q) = 1$ and $c_{1,0} n_{1,0} n_{2,0} \mid q^{\infty}$. The left-hand side of \eqref{eqn:Voronoiidentity} becomes
\begin{multline}
\label{eqn:openVoronoi1}
\sum_{\substack{\ell' = 1 \\ (\ell',q) = 1}}^{\infty} \frac{1}{{\ell'}^{\frac{s}{2} + 3w}} \sum_{c' \mid \ell'} \frac{1}{c'} \sum_{c_1' c_2' = c'} {c_2'}^{\frac{3s}{2} + 3w - 1} \sum_{n_1' \mid \frac{\ell'}{c_2'}} \sum_{\substack{n_2' = 1 \\ (n_2',q) = 1}}^{\infty} \frac{A_F(n_2',n_1')}{{n_2'}^{1 - \frac{s}{2} - w} {n_1'}^{1 - s - 2w}}	\\
\times \sum_{c_0 \mid q^{\infty}} c_0^{s - 2} \sum_{c_{1,0} c_{2,0} = c_0 q} c_{1,0}^{1 - \frac{3s}{2} - 3w} \sum_{n_{1,0} \mid c_{1,0}} \sum_{n_{2,0} \mid q^{\infty}} \frac{A_F(n_{2,0},n_{1,0})}{n_{2,0}^{1 - \frac{s}{2} - w} n_{1,0}^{1 - s - 2w}}	\\
\times \sum_{\substack{a \in \Z/c' c_0 q\Z \\ (a,q) = 1}} \chi(a) S_{\overline{\chi}^2}(1,a;c' c_0 q) \sum_{b \in (\Z/c_1' c_{1,0} \Z)^{\times}} e\left(\frac{a\overline{b}}{c_1' c_{1,0}}\right) S\left(b\frac{\ell'}{c'}, \mp n_2' n_{2,0}; \frac{\ell'}{c_2' n_1'} \frac{c_{1,0}}{n_{1,0}}\right),
\end{multline}
where we have used the multiplicativity of the Hecke eigenvalues $A_F(n_2,n_1)$, namely the Hecke relations
\[A_F(m_1 n_1,m_2 n_2) = A_F(m_1,m_2) A_F(n_1,n_2)\]
whenever $(m_1,n_1) = (m_2,n_2) = 1$ \cite[Theorem 6.4.11]{Gol06}.

By the Chinese remainder theorem, we may write $a = a_0 c' + a' c_0 q$, where $a_0 \in (\Z/c_0 q\Z)^{\times}$ and $a' \in \Z/c'\Z$, and $b = b_0 c_1' + b' c_{1,0}$, where $b_0 \in (\Z/c_{1,0}\Z)^{\times}$ and $b' \in (\Z/c_1'\Z)^{\times}$, so that $\overline{b} \equiv \overline{b_0 c_1'} \pmod{c_{1,0}}$ and $\overline{b} \equiv \overline{b' c_{1,0}} \pmod{c_1'}$. Via the twisted multiplicativity of Kloosterman sums \eqref{eqn:Kloostermantwistedmult} together with the change of variables $a_0 \mapsto a_0 c'$ and $a' \mapsto a' c_0 q$, the last line of \eqref{eqn:openVoronoi1} is equal to
\begin{multline}
\label{eqn:twistedmult}
\sum_{a' \in \Z/c'\Z} S(1,a';c') \sum_{b' \in (\Z/c_1'\Z)^{\times}} e\left(\frac{a' \overline{b'} c_{2,0}^2}{c_1'}\right) S\left(b' \frac{\ell'}{c'} n_{1,0}, \mp \overline{\frac{c_{1,0}}{n_{1,0}}} n_2' n_{2,0}; \frac{\ell'}{c_2' n_1'}\right)	\\
\times \sum_{a_0 \in (\Z/c_0 q\Z)^{\times}} \chi(a_0) S_{\overline{\chi}^2}(1,a_0;c_0 q) \sum_{b_0 \in (\Z/c_{1,0}\Z)^{\times}} e\left(\frac{a_0 \overline{b_0} c_2^{\prime 2}}{c_{1,0}}\right) S\left(b_0 n_1', \mp \overline{\frac{\ell'}{c_2' n_1'}} n_2' n_{2,0}; \frac{c_{1,0}}{n_{1,0}}\right).
\end{multline}

We first deal with the double sum over $a'$ and $b'$ on the first line of \eqref{eqn:twistedmult}. We inflate the sum over $b' \in (\Z/c_1'\Z)^{\times}$ to run over elements of $(\Z/c'\Z)^{\times}$, at a cost of multiplying through by $\varphi(c_1')/\varphi(c')$. We then open up the Kloosterman sum $S(1,a';c')$ as a sum over $d' \in (\Z/c'\Z)^{\times}$ and make the change of variables $a' \mapsto a' b' d'$, yielding
\[\frac{\varphi(c_1')}{\varphi(c')} \sum_{d' \in (\Z/c'\Z)^{\times}} e\left(\frac{d'}{c'}\right) \sum_{b' \in (\Z/c'\Z)^{\times}} S\left(b' \frac{\ell'}{c'} n_{1,0}, \mp \overline{\frac{c_{1,0}}{n_{1,0}}} n_2' n_{2,0}; \frac{\ell'}{c_2' n_1'}\right) \sum_{a' \in \Z/c'\Z} e\left(\frac{a' (b' + c_2' c_{2,0}^2 d')}{c'}\right).\]
The innermost sum over $a' \in \Z/c'\Z$ vanishes unless $b' \equiv -c_2' c_{2,0}^2 d' \pmod{c'}$, in which case it is equal to $c'$. Since $c_1' c_2' = c'$ and $(b',c') = 1$, this congruence can only hold when $c_1' = c'$ and $c_2' = 1$, in which case $b' \equiv -c_{2,0}^2 d' \pmod{c'}$. Thus this becomes
\[\delta_{c_2',1} c' \sum_{d' \in (\Z/c'\Z)^{\times}} e\left(\frac{d'}{c'}\right) S\left(-c_{2,0}^2 d' \frac{\ell'}{c'} n_{1,0}, \mp \overline{\frac{c_{1,0}}{n_{1,0}}} n_2' n_{2,0}; \frac{\ell'}{n_1'}\right).\]
We open up the Kloosterman sum as a sum over $a' \in (\Z/\frac{\ell'}{n_1'}\Z)^{\times}$ and then inflate this sum to run over elements of $(\Z/\ell'\Z)^{\times}$, at a cost of multiplying through by $\varphi(\frac{\ell'}{n_1'})/\varphi(\ell')$. The ensuing sum over $d' \in (\Z/c'\Z)^{\times}$ is the Ramanujan sum $R_{c'}(1 - a' c_{2,0}^2 n_1' n_{1,0})$, where
\begin{equation}
\label{eqn:Rqndefeq}
R_q(n) \coloneqq \sum_{d \in (\Z/q\Z)^{\times}} e\left(\frac{dn}{q}\right) = \sum_{d \mid (q,n)} d \mu\left(\frac{q}{d}\right)
\end{equation}
We therefore arrive at the expression
\begin{equation}
\label{eqn:twistedmult2}
\delta_{c_2',1} c' \frac{\varphi\left(\frac{\ell'}{n_1'}\right)}{\varphi(\ell')} \sum_{a' \in (\Z/\ell'\Z)^{\times}} e\left(\mp \frac{\overline{a' \frac{c_{1,0}}{n_{1,0}}} n_1' n_2' n_{2,0}}{\ell'}\right) \sum_{d' \mid (c', 1 - a' c_{2,0}^2 n_1' n_{1,0})} d' \mu\left(\frac{c'}{d'}\right)
\end{equation}
for the first line of \eqref{eqn:twistedmult}.

We insert the expression \eqref{eqn:twistedmult2} back into \eqref{eqn:twistedmult}, which in turn we insert back into \eqref{eqn:openVoronoi1}. We find that \eqref{eqn:openVoronoi1} is equal to
\begin{multline}
\label{eqn:openVoronoi2}
\sum_{\substack{\ell' = 1 \\ (\ell',q) = 1}}^{\infty} \frac{1}{{\ell'}^{\frac{s}{2} + 3w}} \sum_{n_1' \mid \ell'} \frac{\varphi\left(\frac{\ell'}{n_1'}\right)}{\varphi(\ell')} \sum_{\substack{n_2' = 1 \\ (n_2',q) = 1}}^{\infty} \frac{A_F(n_2',n_1')}{{n_2'}^{1 - \frac{s}{2} - w} {n_1'}^{1 - s - 2w}}	\\
\times \sum_{c_0 \mid q^{\infty}} c_0^{s - 2} \sum_{c_{1,0} c_{2,0} = c_0 q} c_{1,0}^{1 - \frac{3s}{2} - 3w} \sum_{n_{1,0} \mid c_{1,0}} \sum_{n_{2,0} \mid q^{\infty}} \frac{A_F(n_{2,0},n_{1,0})}{n_{2,0}^{1 - \frac{s}{2} - w} n_{1,0}^{1 - s - 2w}}	\\
\times \sum_{a_0 \in (\Z/c_0 q\Z)^{\times}} \chi(a_0) S_{\overline{\chi}^2}(1,a_0;c_0 q) \sum_{b_0 \in (\Z/c_{1,0}\Z)^{\times}} e\left(\frac{a_0 \overline{b_0}}{c_{1,0}}\right) S\left(b_0 n_1', \mp \overline{\frac{\ell'}{n_1'}} n_2' n_{2,0}; \frac{c_{1,0}}{n_{1,0}}\right)	\\
\times \sum_{c' \mid \ell'} \sum_{a' \in (\Z/\ell'\Z)^{\times}} e\left(\mp \frac{\overline{a' \frac{c_{1,0}}{n_{1,0}}} n_1' n_2' n_{2,0}}{\ell'}\right) \sum_{d' \mid (c', 1 - a' c_{2,0}^2 n_1' n_{1,0})} d' \mu\left(\frac{c'}{d'}\right).
\end{multline}
In the last line, we interchange the order of summation and make the change of variables $c' \mapsto c' d'$, so that $c' \mid \frac{\ell'}{d'}$ and $d' \mid (\ell',1 - a' c_{2,0}^2 n_1' n_{1,0})$. Since $\sum_{c' \mid \frac{\ell'}{d'}} \mu(c')$ is $1$ if $d' = \ell'$ and is $0$ otherwise, the last line of \eqref{eqn:openVoronoi2} becomes
\[\ell' \sum_{\substack{a' \in (\Z/\ell'\Z)^{\times} \\ a' c_{2,0}^2 n_1' n_{1,0} \equiv 1 \hspace{-.25cm} \pmod{\ell'}}} e\left(\mp \frac{\overline{a' \frac{c_{1,0}}{n_{1,0}}} n_1' n_2' n_{2,0}}{\ell'}\right).\]
The congruence condition $a' c_{2,0}^2 n_1' n_{1,0} \equiv 1 \pmod{\ell'}$ subject to the constraint $n_1' \mid \ell'$ can only be met if $n_1' = 1$, in which case $a' \equiv \overline{c_{2,0}^2 n_{1,0}} \pmod{\ell'}$, and thus this is
\[\delta_{n_1',1} \ell' e\left(\mp \frac{\overline{\frac{c_{1,0}}{n_{1,0}}} c_{2,0}^2 n_{1,0} n_2' n_{2,0}}{\ell'}\right).\]
We have therefore shown that \eqref{eqn:openVoronoi2} is equal to
\begin{multline}
\label{eqn:openVoronoi3}
\sum_{\substack{\ell' = 1 \\ (\ell',q) = 1}}^{\infty} \frac{1}{{\ell'}^{\frac{s}{2} + 3w - 1}} \sum_{\substack{n_2' = 1 \\ (n_2',q) = 1}}^{\infty} \frac{A_F(n_2',1)}{{n_2'}^{1 - \frac{s}{2} - w}}	\\
\times \sum_{c_0 \mid q^{\infty}} c_0^{s - 2} \sum_{c_{1,0} c_{2,0} = c_0 q} c_{1,0}^{1 - \frac{3s}{2} - 3w} \sum_{n_{1,0} \mid c_{1,0}} \sum_{n_{2,0} \mid q^{\infty}} \frac{A_F(n_{2,0},n_{1,0})}{n_{2,0}^{1 - \frac{s}{2} - w} n_{1,0}^{1 - s - 2w}} e\left(\mp \frac{\overline{\frac{c_{1,0}}{n_{1,0}}} c_{2,0}^2 n_{1,0} n_2' n_{2,0}}{\ell'}\right)	\\
\times \sum_{a_0 \in (\Z/c_0 q\Z)^{\times}} \chi(a_0) S_{\overline{\chi}^2}(1,a_0;c_0 q) \sum_{b_0 \in (\Z/c_{1,0}\Z)^{\times}} e\left(\frac{a_0 \overline{b_0}}{c_{1,0}}\right) S\left(b_0, \mp \overline{\ell'} n_2' n_{2,0}; \frac{c_{1,0}}{n_{1,0}}\right).
\end{multline}

We now deal with the last line of \eqref{eqn:openVoronoi3}. We open up the first Kloosterman sum as a sum over $d_0 \in (\Z/c_0 q\Z)^{\times}$ and open up the second Kloosterman sum as a sum over $d_1 \in (\Z/\frac{c_{1,0}}{n_{1,0}}\Z)^{\times}$, inflate the sums over $b_0 \in (\Z/c_{1,0}\Z)^{\times}$ and $d_1 \in (\Z/\frac{c_{1,0}}{n_{1,0}}\Z)^{\times}$ to run over elements of $(\Z/c_0 q\Z)^{\times}$, at the cost of multiplying through by $\varphi(c_{1,0}) \varphi(\frac{c_{1,0}}{n_{1,0}}) / \varphi(c_0 q)^2$, and make the change of variables $a_0 \mapsto a_0 b_0 d_0 d_1$, $b_0 \mapsto b_0 \overline{d_1}$, $d_0 \mapsto d_0 \overline{d_1}$, and $d_1 \mapsto \overline{d_1}$. We find that the last line of \eqref{eqn:openVoronoi3} is
\begin{multline*}
\frac{\varphi(c_{1,0}) \varphi\left(\frac{c_{1,0}}{n_{1,0}}\right)}{\varphi(c_0 q)^2} \sum_{a_0 \in \Z/c_0 q\Z} \chi(a_0) \sum_{d_0 \in \Z/c_0 q\Z} \overline{\chi}(d_0) e\left(\frac{a_0 c_{2,0} d_0}{c_0 q}\right)	\\
\times \sum_{b_0 \in \Z/c_0 q\Z} \chi(b_0) e\left(\frac{b_0 (a_0 + c_{2,0} n_{1,0})}{c_0 q}\right) \sum_{d_1 \in \Z/c_0 q\Z} \overline{\chi}(d_1) e\left(\frac{d_1 (d_0 \mp c_{2,0} \overline{\ell'} n_{1,0} n_2' n_{2,0})}{c_0 q}\right)
\end{multline*}
Here we have extended each sum to be over $\Z/c_0 q\Z$ instead of $(\Z/c_0 q\Z)^{\times}$ since $\chi(a) = 0$ whenever $(a,c_0 q) \neq 1$ due to the fact that $c_0 \mid q^{\infty}$.

The sum over $b_0 \in \Z/c_0 q\Z$ vanishes unless $a_0 \equiv - c_{2,0} n_{1,0} \pmod{c_0}$ (which can only occur when $(c_0,c_{2,0} n_{1,0}) = 1$), in which case it is
\[c_0 \tau\left(\chi,\frac{a_0 + c_{2,0} n_{1,0}}{c_0}\right).\]
Similarly, the sum over $d_1 \in \Z/c_0 q\Z$ vanishes unless $d_0 \equiv \pm c_{2,0} \overline{\ell'} n_{1,0} n_2' n_{2,0} \pmod{c_0}$ (which can only occur if $(c_0,c_{2,0} n_{1,0} n_{2,0}) = 1$), in which case it is
\[c_0 \tau\left(\overline{\chi},\frac{d_0 \mp c_{2,0} \overline{\ell'} n_{1,0} n_2' n_{2,0}}{c_0}\right).\]

After making the change of variables $a_0 \mapsto a_0 c_0 - c_{2,0} n_{1,0}$ and $d_0 \mapsto c_0 d_0 \overline{\ell'} n_2' \pm c_{2,0} \overline{\ell'} n_{1,0} n_2' n_{2,0}$, where now $a_0, d_0 \in \Z/q\Z$, and using \eqref{eqn:GausssumCOV}, we find that the last line of \eqref{eqn:openVoronoi3} is
\begin{multline*}
\delta_{(c_0,c_{2,0} n_{1,0} n_{2,0}),1} \frac{\varphi(c_{1,0}) \varphi\left(\frac{c_{1,0}}{n_{1,0}}\right)}{\varphi(c_0 q)^2} c_0^2 e\left(\mp \frac{\overline{\ell'} c_{2,0}^2 n_{1,0} n_2' n_{2,0}}{\frac{c_{1,0}}{n_{1,0}}}\right)	\\
\times \sum_{a_0 \in \Z/q\Z} \sum_{d_0 \in \Z/q\Z} \tau(\overline{\chi},d_0) \overline{\chi}(c_0 d_0 \pm c_{2,0} n_{1,0} n_{2,0}) \tau(\chi,a_0) \chi(a_0 c_0 - c_{2,0} n_{1,0})	\\
\times e\left(\frac{((a_0 c_0 - c_{2,0} n_{1,0}) d_0 \pm a_0 c_{2,0} n_{1,0} n_{2,0}) c_{2,0} \overline{\ell'} n_2'}{q}\right).
\end{multline*}
Since the double sum over $a_0,d_0 \in \Z/q\Z$ vanishes unless $(a_0 c_0 - c_{2,0} n_{1,0},q) = 1$, we may make the change of variables $d_0 \mapsto \mp(\overline{a_0 c_0 - c_{2,0} n_{1,0}}) (d_0 + a_0 c_{2,0} n_{1,0} n_{2,0})$. Via \eqref{eqn:GausssumCOV}, this yields
\begin{multline*}
\delta_{(c_0,c_{2,0} n_{1,0} n_{2,0}),1} \frac{\varphi(c_{1,0}) \varphi\left(\frac{c_{1,0}}{n_{1,0}}\right)}{\varphi(c_0 q)^2} c_0^2 e\left(\mp \frac{\overline{\ell'} c_{2,0}^2 n_{1,0} n_2' n_{2,0}}{\frac{c_{1,0}}{n_{1,0}}}\right)	\\
\times \sum_{a_0 \in \Z/q\Z} \sum_{d_0 \in \Z/q\Z} \tau(\overline{\chi}, d_0 + a_0 c_{2,0} n_{1,0} n_{2,0}) \overline{\chi}(c_0 d_0 + c_{2,0}^2 n_{1,0}^2 n_{2,0}) \tau(\chi,a_0) \chi(a_0 c_0 - c_{2,0} n_{1,0})	\\
\times e\left(\mp \frac{c_{2,0} d_0 \overline{\ell'} n_2'}{q}\right).
\end{multline*}

Finally, we use character orthogonality to write
\[e\left(\mp \frac{c_{2,0} d_0 \overline{\ell'} n_2'}{q}\right) = \frac{1}{\varphi(q)} \sum_{\psi \hspace{-.25cm} \pmod{q}} \psi(\mp 1) \overline{\psi}(\ell') \psi(n_2') \tau(\overline{\psi},c_{2,0} d_0).\]
Recalling \eqref{eqn:VVchidefeq}, we have therefore shown the last line of \eqref{eqn:openVoronoi3} is equal to
\begin{multline}
\label{eqn:openVoronoi4}
\delta_{(c_0,c_{2,0} n_{1,0} n_{2,0}),1} \frac{\varphi(c_{1,0}) \varphi\left(\frac{c_{1,0}}{n_{1,0}}\right) q}{\varphi(c_0 q)^2 \varphi(q)} c_0^2 e\left(\mp \frac{\overline{\ell'} c_{2,0}^2 n_{1,0} n_2' n_{2,0}}{\frac{c_{1,0}}{n_{1,0}}}\right) \sum_{\psi \hspace{-.25cm} \pmod{q}} \psi(\mp 1) \overline{\psi}(\ell') \psi(n_2')	\\
\times \VV_{\chi}(\psi; c_{2,0} n_{1,0}, c_{2,0} n_{1,0} n_{2,0}, c_{2,0}, c_0).
\end{multline}

We insert the expression \eqref{eqn:openVoronoi4} back into \eqref{eqn:openVoronoi3} and use the additive reciprocity formula
\begin{equation}
\label{eqn:additivereciprocity}
e\left(\mp \frac{\overline{\frac{c_{1,0}}{n_{1,0}}} c_{2,0}^2 n_{1,0} n_2' n_{2,0}}{\ell'}\right) e\left(\mp \frac{\overline{\ell'} c_{2,0}^2 n_{1,0} n_2' n_{2,0}}{\frac{c_{1,0}}{n_{1,0}}}\right) = e\left(\mp \frac{c_{2,0}^2 n_{1,0}^2 n_2' n_{2,0}}{c_{1,0} \ell'}\right).
\end{equation}
We additionally note that the conditions $(c_0,c_{2,0} n_{1,0}) = 1$ and $c_{1,0} c_{2,0} = c_0 q$ with $n_{1,0} \mid c_{1,0}$ can only be met if $c_{1,0} \equiv 0 \pmod{c_0 n_{1,0}}$, and so we make the change of variables $c_{1,0} \mapsto c_0 c_{1,0} n_{1,0}$. Applying the Hecke relations \cite[Theorem 6.4.11]{Gol06}
\[A_F(n_{2,0},n_{1,0}) = \sum_{d_0 \mid (n_{2,0},n_{1,0})} \mu(d_0) A_F\left(\frac{n_{2,0}}{d_0},1\right) A_F\left(1,\frac{n_{1,0}}{d_0}\right)\]
and making the change of variables $n_{1,0} \mapsto d_0 n_{1,0}$ and $n_{2,0} \mapsto d_0 n_{2,0}$, we see that \eqref{eqn:openVoronoi3} is equal to
\begin{multline*}
\frac{q^{1 - s}}{\varphi(q)} \sum_{\psi \hspace{-.25cm} \pmod{q}} \psi(\mp 1) \sum_{\substack{n_2' = 1 \\ (n_2',q) = 1}}^{\infty} \frac{A_F(n_2',1) \psi(n_2')}{{n_2'}^{1 - \frac{s}{2} - w}} \sum_{\substack{\ell' = 1 \\ (\ell',q) = 1}}^{\infty} \frac{\overline{\psi}(\ell')}{{\ell'}^{\frac{s}{2} + 3w - 1}}	\\
\times \sum_{c_0 \mid q^{\infty}} \frac{1}{\varphi(c_0 q)^2 c_0^{\frac{s}{2} + 3w - 1}} \sum_{\substack{c_{1,0} c_{2,0} d_0 n_{1,0} = q \\ (c_{2,0} d_0 n_{1,0},c_0) = 1}} \frac{\varphi(c_0 c_{1,0}) \varphi(c_0 c_{1,0} d_0 n_{1,0}) \mu(d_0) A_F(1,n_{1,0})}{c_{1,0}^{\frac{s}{2} + 3w - 1} c_{2,0}^{-s} d_0^{1 - s} n_{1,0}^{w - \frac{s}{2}}}	\\
\times \sum_{\substack{n_{2,0} \mid q^{\infty} \\ (n_{2,0},c_0) = 1}} \frac{A_F(n_{2,0},1)}{n_{2,0}^{1 - \frac{s}{2} - w}} \VV_{\chi}(\psi; c_{2,0} d_0 n_{1,0}, c_{2,0} d_0^2 n_{1,0} n_{2,0}, c_{2,0}, c_0) e\left(\mp \frac{c_{2,0}^2 d_0^2 n_{1,0} n_2' n_{2,0}}{c_0 c_{1,0} \ell'}\right).
\end{multline*}

Our final step is to insert the analytic reciprocity formula
\begin{equation}
\label{eqn:analyticreciprocity}
e\left(\mp \frac{c_{2,0}^2 d_0^2 n_{1,0} n_2' n_{2,0}}{c_0 c_{1,0} \ell'}\right) = \frac{1}{2\pi i} \int_{\CC_0} G^{\mp}(z) \left(\frac{c_{2,0}^2 d_0^2 n_{1,0} n_2' n_{2,0}}{c_0 c_{1,0} \ell'}\right)^{-z} \, dz.
\end{equation}
Here $G^{\pm}$ is as in \eqref{eqn:Gpm}, while $\CC_0$ is the contour consisting of the straight lines connecting the points $x_0 - i\infty$, $x_0 - i$, $\delta$, $x_0 + i$, and $x_0 + i\infty$, with $x_0 < -1/2$ and $\delta > 0$; this ensures, via \eqref{eqn:Gpmbounds}, that this integral converges absolutely but that the poles of the integrand are to the left of the contour. To see the equality \eqref{eqn:analyticreciprocity}, one can shift the contour of integration off to the left, picking up residues at $-\ell$ for each $\ell \in \N_0$, and noting that the resulting expression is the Taylor series expression for the left-hand side. Finally, we make the change of variables $z \mapsto z + s/2 + w - 1/2$.

After interchanging the order of summation and integration, we arrive at the desired identity \eqref{eqn:Voronoiidentity}, where we require the condition $x_1 < -\sigma/2 - u$ in order to ensure the absolute convergence of the integral over $z$, the condition $x_1 > 1/2$ in order to ensure the absolute convergence of the sum over $n_2'$, namely the Dirichlet series for $L(1/2 + z, \widetilde{F} \otimes \psi)$, and the condition $\delta < \sigma/2 + 3u - 2$ in order to ensure the absolute convergence of the sum over $\ell'$, namely the Dirichlet series for $L(2w - 1/2 - z,\overline{\psi})$.
\end{proof}

\section{The First Moment in the Region of Absolute Convergence}

We now prove our desired $\GL_3 \times \GL_2 \leftrightsquigarrow \GL_4 \times \GL_1$ spectral reciprocity formula, with the caveat that we prove this initially in the region of absolute convergence. The following result gives an \emph{equality} of moments of $L$-functions involving the $\GL_3 \times \GL_2$ Rankin--Selberg $L$-function $L(w,F \otimes f \otimes \chi_1)$, where $7/4 < \Re(w) < 2$. In \hyperref[sect:firstmomentcentral]{Section \ref*{sect:firstmomentcentral}}, we holomorphically extend this identity to the central value $w = 1/2$. The identity that we give is valid for a tuple of test functions $(h,h^{\hol})$ satisfying certain stringent conditions; we show in \hyperref[sect:testfunction]{Section \ref*{sect:testfunction}} that these conditions are met for a flexible family of tests functions.

\begin{proposition}
\label{prop:absconv}
Let $F$ be a Hecke--Maa\ss{} cusp form for $\SL_3(\Z)$ and let $q = q_1 q_2$ be a positive integer with $(q_1,q_2) = 1$. Let $\chi_1$ be a primitive Dirichlet character of conductor $q_1$, and set $\chi \coloneqq \chi_1 \chi_{0(q_2)}$. Let $h$ be an even function that is holomorphic in the strip $|\Im(t)| < 1/2 + \delta$ in which it satisfies $h(t) \ll (1 + |t|)^{-2 - \delta}$ for some $\delta > 0$ and let $h^{\hol} : 2\N \to \C$ be a sequence satisfying $h^{\hol}(k) \ll k^{-2 - \delta}$ for some $\delta > 0$. Define the transform
\begin{equation}
\label{eqn:HdefeqKscr}
H(x) \coloneqq (\Kscr h)(x) + (\Kscr^{\hol} h^{\hol})(x),
\end{equation}
and suppose that the Mellin transform $\widehat{H}(s) \coloneqq \int_{0}^{\infty} H(x) x^s \, \frac{dx}{x}$ is holomorphic in the strip $-5 < \Re(s) < 1$, in which it satisfies the bounds
\begin{equation}
\label{eqn:HpmMellindecay}
\widehat{H}(s) \ll (1 + |\Im(s)|)^{\Re(s) - 1}.
\end{equation}
Then for $w = u + iv$ with $7/4 < u < 2$, the moment
\begin{multline}
\label{eqn:absoluteLHS}
\sum_{\substack{q' \mid q \\ q' \equiv 0 \hspace{-.25cm} \pmod{q_{\overline{\chi_1}^2}}}} \alpha(q,q',q_{\overline{\chi_1}^2}) \sum_{f \in \BB_0^{\ast}(q',\overline{\chi_1}^2)} \frac{L^q(w,F \otimes f \otimes \chi_1)}{L^q(1,\ad f)} h(t_f)	\\
+ \sum_{\substack{q' \mid q \\ q' \equiv 0 \hspace{-.25cm} \pmod{q_{\overline{\chi_1}^2}}}} \alpha(q,q',q_{\overline{\chi_1}^2}) \sum_{\substack{\psi_1,\psi_2 \hspace{-.25cm} \pmod{q} \\ \psi_1 \psi_2 = \overline{\chi_1}^2 \\ q_{\psi_1} q_{\psi_2} = q'}} \frac{1}{2\pi} \int_{-\infty}^{\infty} \frac{L^q(w + it,F \otimes \psi_1 \chi_1) L^q(w - it,F \otimes \overline{\psi_1 \chi_1})}{L^q(1 + 2it,\psi_1\overline{\psi_2}) L^q(1 - 2it,\overline{\psi_1}\psi_2)} h(t) \, dt	\\
+ \sum_{\substack{q' \mid q \\ q' \equiv 0 \hspace{-.25cm} \pmod{q_{\overline{\chi_1}^2}}}} \alpha(q,q',q_{\overline{\chi_1}^2}) \sum_{f \in \BB_{\hol}^{\ast}(q',\overline{\chi_1}^2)} \frac{L^q(w,F \otimes f \otimes \chi_1)}{L^q(1,\ad f)} h^{\hol}(k_f)
\end{multline}
is equal to the sum of the main term
\begin{equation}
\label{eqn:absolutemain}
q L^q(2w,\widetilde{F}) \left(\frac{1}{2\pi^2} \int_{-\infty}^{\infty} h(r) r \tanh\pi r \, dr + \sum_{\substack{k = 2 \\ k \equiv 0 \hspace{-.25cm} \pmod{2}}}^{\infty} \frac{k - 1}{2\pi^2} h^{\hol}(k)\right)
\end{equation}
and of the dual moment
\begin{multline}
\label{eqn:absolutedual}
\frac{1}{\varphi(q)} \sum_{\psi \hspace{-.25cm} \pmod{q}} \frac{1}{2\pi i} \int_{x_1 - i\infty}^{x_1 + i\infty} L\left(\frac{1}{2} + z, \widetilde{F} \otimes \psi\right) L\left(2w - \frac{1}{2} - z,\overline{\psi}\right) \ZZ_{\chi}(\psi;w,z)	\\
\times \sum_{\pm} \psi(\mp 1) \HH_{\mu_F}^{\pm}(w,z) \, dz,
\end{multline}
where $1/2 < x_1 < 5/2 - u$, $\ZZ_{\chi}(\psi;w,z)$ is as in \eqref{eqn:ZZchipsiwzdefeq}, and for $1 - 2u - 2x_1 < \sigma_2 < 2 - 2u$,
\begin{equation}
\label{eqn:HHwzdefeq}
\HH_{\mu_F}^{\pm}(w,z) \coloneqq \frac{1}{2\pi i} \int_{\sigma_2 - i\infty}^{\sigma_2 + i\infty} \widehat{H}(s) \Gscr_{\mu_F}^{\pm}\left(1 - \frac{s}{2} - w\right) G^{\mp}\left(\frac{s}{2} + w - \frac{1}{2} + z\right) \, ds,
\end{equation}
with $\Gscr_{\mu_F}^{\pm}$ as in \eqref{eqn:Gscrmupm} and $G^{\pm}$ as in \eqref{eqn:Gpm}.
\end{proposition}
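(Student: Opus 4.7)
The plan is to expand the Rankin--Selberg $L$-function as an absolutely convergent Dirichlet series, apply the Kuznetsov and Petersson formul\ae{} to the spectral averages over $f$, and then convert the off-diagonal Kloosterman contribution into the dual moment via the $\GL_3$ Vorono\u{\i} identity packaged in \hyperref[lem:Voronoiidentity]{Lemma \ref*{lem:Voronoiidentity}}. This is the same overall strategy used by the second author and Khan in \cite{HK22} for $q = 1$. Concretely, for $\Re(w) > 7/4$ we write
\[L^q(w, F \otimes f \otimes \chi_1) = \sum_{\substack{\ell, n \geq 1 \\ (\ell n, q) = 1}} \frac{A_F(\ell, n) \lambda_f(n) \chi_1(n)}{\ell^{2w} n^w},\]
with an analogous expression for the Eisenstein contribution in terms of $\lambda_{\psi_1,\psi_2}(n, t)$. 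Substituting these into \eqref{eqn:absoluteLHS} and interchanging the orders of summation, which is justified by absolute convergence, the inner spectral average is exactly of the shape to which \eqref{eqn:Kuznetsov} and \eqref{eqn:Petersson} apply, with $m = 1$, nebentypus $\overline{\chi_1}^2$, and spectral weights $h$ and $h^{\hol}$.

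Applying these formul\ae{} splits the moment into a diagonal and a Kloosterman piece. The diagonal $\delta_{1,n}$ forces $n = 1$, leaving $\sum_{(\ell,q) = 1} A_F(\ell,1) \ell^{-2w} = L^q(2w, \widetilde F)$ multiplied by the two spectral integrals on the right-hand sides of \eqref{eqn:Kuznetsov} and \eqref{eqn:Petersson}, producing exactly the main term \eqref{eqn:absolutemain}. Combining the two Kloosterman contributions through \eqref{eqn:HdefeqKscr} gives
\[q \sum_{(\ell, q) = 1} \frac{1}{\ell^{2w}} \sum_{(n, q) = 1} \frac{A_F(\ell, n) \chi_1(n)}{n^w} \sum_{c \equiv 0 \hspace{-.2cm} \pmod{q}} \frac{S_{\overline{\chi_1}^2}(1, n; c)}{c} H\!\left(\frac{\sqrt n}{c}\right).\]

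To convert this into a dual moment, we apply Mellin inversion to write $H(\sqrt n/c)$ as $\frac{1}{2\pi i} \int_{(\sigma_2)} \widehat{H}(s) (n/c^2)^{-s/2} \, ds$, where $\sigma_2$ lies in the holomorphy strip guaranteed by \eqref{eqn:HpmMellindecay} and is chosen compatibly with the range required in \hyperref[lem:Voronoiidentity]{Lemma \ref*{lem:Voronoiidentity}}. Writing $c = c_1 c_0 q$ with $(c_1, q) = 1$ and $c_0 \mid q^\infty$, opening the Kloosterman sum, and grouping $\ell$ according to its common factor with $c_1$, the inner $n$-sum becomes the Vorono\u{\i} series $\Phi_F$ at $w + s/2$. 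Transforming via \eqref{eqn:GL3Voronoi} introduces the factor $\Gscr_{\mu_F}^\pm(1 - s/2 - w)$ and replaces $\Phi_F$ by its dual $\Xi_F$. After packaging $\chi = \chi_1 \chi_{0(q_2)}$, the resulting expression is precisely the left-hand side of \eqref{eqn:Voronoiidentity}; applying \hyperref[lem:Voronoiidentity]{Lemma \ref*{lem:Voronoiidentity}} converts it into the $L$-function integral over $\CC_1$ with the character sum $\ZZ_\chi(\psi; w, z)$. Interchanging the $s$- and $z$-integrals bundles $\widehat H(s)$, $\Gscr_{\mu_F}^\pm(1 - s/2 - w)$, and the $G^{\mp}(s/2 + w - 1/2 + z)$ of \hyperref[lem:Voronoiidentity]{Lemma \ref*{lem:Voronoiidentity}} into exactly the kernel $\HH_{\mu_F}^{\pm}(w,z)$ of \eqref{eqn:HHwzdefeq}, completing the identity.

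The main obstacle is choreographing the contour positions $\sigma_2$, $x_1$ and the line $\Re(w) \in (7/4, 2)$ so that every infinite sum, Mellin inversion, Vorono\u{\i} transform, and contour shift is absolutely convergent simultaneously. In particular, $\sigma_2$ must lie in the holomorphy strip of $\widehat H$ while also satisfying the constraint $5 - 6u < \sigma < -2u - 1$ of \hyperref[lem:Voronoiidentity]{Lemma \ref*{lem:Voronoiidentity}} (after identifying variables), and $x_1$ must simultaneously lie in the absolute convergence ranges of the Dirichlet series for $L(1/2 + z, \widetilde F \otimes \psi)$ and $L(2w - 1/2 - z, \overline{\psi})$ and in the window $1/2 < x_1 < 5/2 - u$. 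The hypothesis $7/4 < \Re(w) < 2$ is precisely what makes these ranges mutually compatible, which is why the identity is first established in this strip; the delicate analytic continuation to $w = 1/2$ is the separate task of \hyperref[sect:firstmomentcentral]{Section \ref*{sect:firstmomentcentral}}.
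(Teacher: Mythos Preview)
Your proposal is correct and follows essentially the same approach as the paper: expand the $L$-function as a Dirichlet series, apply the Kuznetsov and Petersson formul\ae{}, identify the diagonal as \eqref{eqn:absolutemain}, Mellin-invert $H$, apply Vorono\u{\i} to reach the setup of \hyperref[lem:Voronoiidentity]{Lemma \ref*{lem:Voronoiidentity}}, and then interchange the $s$- and $z$-integrals to assemble $\HH_{\mu_F}^{\pm}(w,z)$. One small point of precision: rather than ``opening the Kloosterman sum'' to reach $\Phi_F$, the paper first breaks the $n$-sum into residue classes modulo $cq$ (which absorbs the $\chi_1(n)$ factor) and then detects the congruence via $\frac{1}{cq}\sum_{c_1 \mid cq}\sum_{b \in (\Z/c_1\Z)^\times} e((a-n)\overline{b}/c_1)$; also, the paper uses three distinct $s$-contours ($\sigma_0$, $\sigma_1$, then $\sigma_2$) rather than a single $\sigma_2$, with the shift to $\sigma_1 \in (-5,-2u-1)$ occurring before Vorono\u{\i} and the final straightening to $\sigma_2 \in (1-2u-2x_1,2-2u)$ after interchanging integrals.
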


\begin{proof}
We sum together the Kuznetsov and Petersson formul\ae{}, \eqref{eqn:Kuznetsov} and \eqref{eqn:Petersson}, with $q = q_1 q_2$, $m = 1$, and $\chi$ replaced by the primitive Dirichlet character inducing $\overline{\chi_1}^2$, which by abuse of notation we also denote by $\overline{\chi_1}^2$. We then multiply through by $A_F(\ell,n) \chi_1(n) \ell^{-2w} n^{-w}$, with $w = u + iv$ such that $u > 1$, and sum over $\ell,n \in \N$ with $(\ell n,q) = 1$. As $\Re(w) > 1$, the ensuing expression converges absolutely, which allows us to interchange the order of summation.

For $f \in \BB_0^{\ast}(q',\overline{\chi_1}^2)$ or $f \in \BB_{\hol}^{\ast}(q',\overline{\chi_1}^2)$ with $q' \mid q$ such that $q' \equiv 0 \pmod{q_{\overline{\chi_1}^2}}$, we have that\footnote{Note that the condition $(\ell,q) = 1$ is mistakenly omitted in several previous works, such as \cite[Section 2]{Blo12}, \cite[Lemma 2.1]{Hua21}, and \cite[(1.2)]{Qi19}.}
\[\sum_{\substack{\ell,n = 1 \\ (\ell n, q) = 1}}^{\infty} \frac{A_F(\ell,n) \lambda_f(n) \chi_1(n)}{\ell^{2w} n^w} = L^q(w,F \otimes f \otimes \chi_1).\]
Similarly, for $\psi_1,\psi_2$ primitive Dirichlet characters modulo $q_{\psi_1},q_{\psi_2}$ satisfying $\psi_1 \psi_2 = \overline{\chi_1}^2$ and $q_{\psi_1} q_{\psi_2} = q'$, we have that
\[\sum_{\substack{\ell,n = 1 \\ (\ell n, q) = 1}}^{\infty} \frac{A_F(\ell,n) \lambda_{\psi_1,\psi_2}(n,t) \chi_1(n)}{\ell^{2w} n^w} = L^q(w + it,F \otimes \psi_1 \chi_1) L^q(w - it,F \otimes \overline{\psi_1 \chi_1}).\]
From these identities, the left-hand sides of the Kuznetsov and Petersson formul\ae{} give us \eqref{eqn:absoluteLHS}. The diagonal terms are equal to the main term \eqref{eqn:absolutemain}, since
\[\sum_{\substack{\ell = 1 \\ (\ell, q) = 1}}^{\infty} \frac{A_F(\ell,1)}{\ell^{2w}} = L^q(2w,\widetilde{F}).\]
Thus it remains to show that the Kloosterman terms are equal to the dual moment \eqref{eqn:absolutedual}.

After applying the Mellin inversion formula to the function $H(x)$ and interchanging the order of integration and summation, we deduce that the Kloosterman terms are equal to
\begin{equation}
\label{eqn:Kloostermanterm}
\frac{1}{2\pi i} \int_{\sigma_0 - i\infty}^{\sigma_0 + i\infty} \widehat{H}(s) q \sum_{\substack{c,\ell = 1 \\ c \equiv 0 \hspace{-.25cm} \pmod{q} \\ (\ell,q) = 1}}^{\infty} \frac{c^{s - 1}}{\ell^{2w}} \sum_{\substack{n = 1 \\ (n,q) = 1}}^{\infty} \frac{A_F(\ell,n) \chi_1(n)}{n^{\frac{s}{2} + w}} S_{\overline{\chi_1}^2}(1,n;c) \, ds.
\end{equation}
This identity is valid so long as $2 - 2u < \sigma_0 < -1$, which requires that $u > 3/2$. Indeed, the process of Mellin inversion is valid for $-5 < \Re(s) < 0$ via \eqref{eqn:HpmMellindecay}. The trivial bound $|S_{\overline{\chi_1}^2}(1,n;c)| \leq \varphi(c)$ for the Kloosterman sum ensures that the sum over $c$ converges absolutely since $\sigma_0 < -1$. The sum over $n$ converges absolutely since $\sigma_0 > 2 - 2u$. Finally, the sum over $\ell$ converges absolutely since $u > 1$.

We now restrict our attention to the integrand in \eqref{eqn:Kloostermanterm}. We make the change of variables $c \mapsto cq$ and break the sum over $n \in \N$ into residue classes $a$ modulo $cq$, yielding
\begin{multline*}
q \sum_{\substack{c,\ell = 1 \\ c \equiv 0 \hspace{-.25cm} \pmod{q} \\ (\ell,q) = 1}}^{\infty} \frac{c^{s - 1}}{\ell^{2w}} \sum_{\substack{n = 1 \\ (n,q) = 1}}^{\infty} \frac{A_F(\ell,n) \chi_1(n)}{n^{\frac{s}{2} + w}} S_{\overline{\chi_1}^2}(1,n;c)	\\
= q^s \sum_{\substack{c,\ell = 1 \\ (\ell,q) = 1}}^{\infty} \frac{c^{s - 1}}{\ell^{2w}} \sum_{\substack{a \in \Z/cq\Z \\ (a,q) = 1}} \chi_1(a) S_{\overline{\chi_1}^2}(1,a;cq) \sum_{\substack{n = 1 \\ n \equiv a \hspace{-.25cm} \pmod{cq}}}^{\infty} \frac{A_F(\ell,n)}{n^{\frac{s}{2} + w}}.
\end{multline*}
The condition $n \equiv a \pmod{cq}$ can be enforced by inserting the double sum
\[\frac{1}{cq} \sum_{c_1 \mid cq} \sum_{b \in (\Z/c_1\Z)^{\times}} e\left(\frac{(a - n)\overline{b}}{c_1}\right).\]
Recalling the definition \eqref{eqn:PhiFdefeq} of the Vorono\u{\i} series $\Phi_F$, this leads us to the expression
\begin{equation}
\label{eqn:Kloostermanintegrand}
q^{s - 1} \sum_{\substack{c,\ell = 1 \\ (\ell,q) = 1}}^{\infty} \frac{c^{s - 2}}{\ell^{2w}} \sum_{\substack{a \in \Z/cq\Z \\ (a,q) = 1}} \chi_1(a) S_{\overline{\chi_1}^2}(1,a;cq) \sum_{c_1 \mid cq} \sum_{b \in (\Z/c_1\Z)^{\times}} e\left(\frac{a\overline{b}}{c_1}\right) \Phi_F\left(c_1,-b,\ell;\frac{s}{2} + w\right).
\end{equation}

We insert the expression \eqref{eqn:Kloostermanintegrand} back into \eqref{eqn:Kloostermanterm} and shift the contour to $\Re(s) = \sigma_1$ with $-5 < \sigma_1 < -2u - 1$; this process is valid so long as $7/4 < u < 2$, since the ensuing sums over $c,\ell \in \N$ and integral over $\Re(s) = \sigma_1$ converge absolutely via \eqref{eqn:PhiPL} and \eqref{eqn:HpmMellindecay}. We then apply the Vorono\u{\i} summation formula \eqref{eqn:GL3Voronoi}, which shows that \eqref{eqn:Kloostermanterm} is equal to
\begin{multline}
\label{eqn:postVoronoitotal}
\frac{1}{2\pi i} \int_{\sigma_1 - i\infty}^{\sigma_1 + i\infty} \widehat{H}(s) \sum_{\pm} \Gscr_{\mu_F}^{\pm}\left(1 - \frac{s}{2} - w\right) q^{s - 1}	\\
\times \sum_{\substack{c,\ell = 1 \\ (\ell,q) = 1}}^{\infty} \frac{c^{s - 2}}{\ell^{2w}} \sum_{\substack{a \in \Z/cq\Z \\ (a,q) = 1}} \chi_1(a) S_{\overline{\chi_1}^2}(1,a;cq) \sum_{c_1 \mid cq} \sum_{b \in (\Z/c_1\Z)^{\times}} e\left(\frac{a\overline{b}}{c_1}\right) \Xi_F\left(c_1,\mp b,\ell;-\frac{s}{2} - w\right) \, ds,
\end{multline}
where $\Gscr_{\mu_F}^{\pm}$ is as in \eqref{eqn:Gscrmupm}. We write $c = c' c_0$, where $(c',q) = 1$ and $c_0 \mid q^{\infty}$. In anticipation of future simplifications, we let $\ell' = c' \ell$, so that the last line of \eqref{eqn:postVoronoitotal} becomes
\begin{multline}
\label{eqn:postVoronoi1}
\sum_{\substack{\ell' = 1 \\ (\ell',q) = 1}}^{\infty} \frac{1}{{\ell'}^{2w}} \sum_{c_0 \mid q^{\infty}} c_0^{s - 2} \sum_{c' \mid \ell'} {c'}^{s + 2w - 2} \sum_{\substack{a \in \Z/c' c_0 q\Z \\ (a,q) = 1}} \chi_1(a) S_{\overline{\chi_1}^2}(1,a;c' c_0 q)	\\
\times \sum_{c_1 \mid c' c_0 q} \sum_{b \in (\Z/c_1\Z)^{\times}} e\left(\frac{a\overline{b}}{c_1}\right) \Xi_F\left(c_1,\mp b,\frac{\ell'}{c'};-\frac{s}{2} - w\right).
\end{multline}
The condition $(a,q) = 1$ ensures that we may replace $\chi_1$ with the Dirichlet character $\chi \coloneqq \chi_1 \chi_{0(q_2)}$ modulo $q$, so that \eqref{eqn:postVoronoi1} is precisely the left-hand side of \eqref{eqn:Voronoiidentity}.

We insert the identity \eqref{eqn:Voronoiidentity} for \eqref{eqn:postVoronoi1} back into \eqref{eqn:postVoronoitotal}. The ensuing expression for the Kloosterman term is
\begin{multline*}
\frac{1}{\varphi(q)} \sum_{\psi \hspace{-.25cm} \pmod{q}} \frac{1}{2\pi i} \int_{\sigma_1 - i\infty}^{\sigma_1 + i\infty} \widehat{H}(s) \sum_{\pm} \Gscr_{\mu_F}^{\pm}\left(1 - \frac{s}{2} - w\right)	\\
\times \frac{1}{2\pi i} \int_{\CC_1} L\left(\frac{1}{2} + z, \widetilde{F} \otimes \psi\right) L\left(2w - \frac{1}{2} - z,\overline{\psi}\right) \ZZ_{\chi}(\psi;w,z)	\\
\times \psi(\mp 1) G^{\mp}\left(\frac{s}{2} + w - \frac{1}{2} + z\right) \, dz \, ds.
\end{multline*}
Here $-5 < \sigma_1 < -2u - 1$ and $\CC_1$ is the contour as in \hyperref[lem:Voronoiidentity]{Lemma \ref*{lem:Voronoiidentity}}. We straighten the inner contour to the line $\Re(z) = x_1$, where $1/2 < x_1 < -\sigma_1/2 - u$; in doing so, we pick up a residue at $z = 1/2 - s/2 - w$ due to the pole of $G^{\mp}$. We obtain the expression
\begin{multline}
\label{eqn:absolutedualshift}
\frac{1}{\varphi(q)} \sum_{\psi \hspace{-.25cm} \pmod{q}} \frac{1}{2\pi i} \int_{\sigma_1 - i\infty}^{\sigma_1 + i\infty} \widehat{H}(s) \sum_{\pm} \psi(\mp 1) \Gscr_{\mu_F}^{\pm}\left(1 - \frac{s}{2} - w\right)	\\
\times \frac{1}{2\pi i} \int_{x_1 - i\infty}^{x_1 + i\infty} L\left(\frac{1}{2} + z, \widetilde{F} \otimes \psi\right) L\left(2w - \frac{1}{2} - z,\overline{\psi}\right) \ZZ_{\chi}(\psi;w,z) G^{\mp}\left(\frac{s}{2} + w - \frac{1}{2} + z\right) \, dz \, ds \\
+ \frac{1}{\varphi(q)} \sum_{\psi \hspace{-.25cm} \pmod{q}} \frac{1}{2\pi i} \int_{\sigma_1 - i\infty}^{\sigma_1 + i\infty} L\left(1 - \frac{s}{2} - w, \widetilde{F} \otimes \psi\right) L\left(\frac{s}{2} + 3w - 1,\overline{\psi}\right) \ZZ_{\chi}\left(\psi;w,\frac{1}{2} - \frac{s}{2} - w\right)	\\
\times \widehat{H}(s) \sum_{\pm} \psi(\mp 1) \Gscr_{\mu_F}^{\pm}\left(1 - \frac{s}{2} - w\right) \, ds.
\end{multline}
For the first term in \eqref{eqn:absolutedualshift}, we observe that this double integral is absolutely convergent via the bounds \eqref{eqn:Gpmbounds}, \eqref{eqn:Gscrmubounds}, and \eqref{eqn:HpmMellindecay}. Thus we may interchange the order of integration, so that now $1/2 < x_1 < 5/2 - u$ and $-5 < \sigma_1 < -2u - 2x_1$. We may then shift the inner contour of integration to the line $\Re(s) = \sigma_2$ with $1 - 2u - 2x_1 < \sigma_2 < 2 - 2u$; in doing so, we pick up a residue at $s = 1 - 2w - 2z$ due to the pole of $G^{\mp}$. This residue, however, is exactly cancelled out by the second term in \eqref{eqn:absolutedualshift} once we make the change of variables $s = 1 - 2w - 2z$ in the latter. In this way, we arrive at the identity \eqref{eqn:absolutedual} for the Kloosterman term.
\end{proof}

\section{Character Sums I}
\label{sect:charsumsI}

Our next goal is to analytically continue the $\GL_3 \times \GL_2 \leftrightsquigarrow \GL_4 \times \GL_1$ spectral reciprocity formula derived in \hyperref[prop:absconv]{Proposition \ref*{prop:absconv}} to the central value $w = 1/2$. In order to do so, we require some stringent control over the behaviour of $\ZZ_{\chi}(\psi_{0(q)};w,2w - 3/2)$ in order to precisely extract a secondary main term arising from shifting the contour in \eqref{eqn:absolutedual} and picking up a residue at $z = 2w - 3/2$ when $\psi$ is the principal character $\psi_{0(q)}$ modulo $q$.

To begin, we highlight some properties of the character sum \eqref{eqn:VVchidefeq}.

\begin{lemma}[Cf.~{\cite[Section 5.2]{PY20}}]\hspace{1em}
\label{lem:VVchiproperties}
\begin{enumerate}[leftmargin=*,label=\textup{(\arabic*)}]
\item\label{lemno:VVchicoprime} If $(m_1' m_2' m_3' r',q) = 1$, then
\[\VV_{\chi}(\psi;m_1 m_1',m_2 m_2',m_3 m_3',r r') = \psi(m_1' m_2' m_3') \overline{\psi}(r') \VV_{\chi}(\psi;m_1,m_2,m_3,r).\]
\item\label{lemno:VVchimult} If $q = q_1 q_2$ with $(q_1,q_2) = 1$, so that $\chi = \chi_1 \chi_2$ and $\psi = \psi_1 \psi_2$ with $\chi_1,\psi_1$ characters modulo $q_1$ and $\chi_2,\psi_2$ characters modulo $q_2$, and if $m_j = m_{j,1} m_{j,2}$, and $r = r_1 r_2$ with $m_{j,1},r_1 \mid q_1^{\infty}$ and $m_{j,2}, r_2 \mid q_2^{\infty}$, then
\begin{multline*}
\VV_{\chi}(\psi;m_1,m_2,m_3,r) = \psi_1(m_{1,2} m_{2,2} m_{3,2}) \overline{\psi_1}(q_2 r_2) \psi_2(m_{1,1} m_{2,1} m_{3,1}) \overline{\psi_2}(q_1 r_1)	\\
\times \VV_{\chi_1}(\psi_1;m_{1,1},m_{2,1},m_{3,1},r_1) \VV_{\chi_2}(\psi_2;m_{1,2},m_{2,2},m_{3,2},r_2).
\end{multline*}
\item\label{lemno:VVchig} If $\chi,\psi$ are primitive, then $\VV_{\chi}(\psi;1,1,1,1) = \chi(-1) \tau(\overline{\psi}) g(\chi,\psi)$, where
\begin{equation}
\label{eqn:gchipsidefeq}
g(\chi,\psi) \coloneqq \sum_{t,u \in \Z/q\Z} \overline{\chi}(t) \chi(t + 1) \chi(u) \overline{\chi}(u + 1) \psi(ut - 1).
\end{equation}
\end{enumerate}
\end{lemma}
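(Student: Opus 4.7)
The plan is to prove each part by direct manipulation of the defining sum \eqref{eqn:VVchidefeq}, invoking repeatedly the Gauss-sum transformation rule \eqref{eqn:GausssumCOV}: $\tau(\chi,an) = \overline{\chi}(n)\tau(\chi,a)$ whenever $(n,q)=1$. For \ref{lemno:VVchicoprime}, I would first substitute $t \mapsto \overline{r'}\,t$ and $u \mapsto \overline{r'\,m_2'}\,u$ in $\VV_{\chi}(\psi;m_1 m_1',m_2 m_2',m_3 m_3',r r')$. A short computation using \eqref{eqn:GausssumCOV} shows that the three Gauss sums absorb phases $\chi(\overline{r'})$, $\chi(r'\,m_2')$, and $\psi(m_3'\,\overline{r'})$, the character $\chi(r r' u - m_1 m_1')$ absorbs $\overline{\chi}(m_2')$, and after simplification the residual sum is precisely $\VV_{\chi}(\psi;m_1 m_1' m_2',m_2,m_3,r)$. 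The four $\chi$-phases telescope to $1$, leaving the clean factor $\psi(m_3')\overline{\psi}(r')$. To strip off the remaining $\psi(m_1' m_2')$, I then apply the further substitution $t \mapsto n t$, $u \mapsto n u$ with $n = m_1' m_2'$ to this residual sum: every linear argument inside the characters and Gauss sums scales by $n$, producing the combined phase $\chi(n)^2\,\overline{\chi}(n)^2\,\psi(n) = \psi(n)$ and returning $\VV_{\chi}(\psi;m_1,m_2,m_3,r)$.

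For \ref{lemno:VVchimult}, the strategy is to apply the Chinese Remainder Theorem in both summation variables $t,u$. Each Gauss sum factorises as $\tau(\chi,a) = \chi_1(q_2)\chi_2(q_1)\tau(\chi_1,a_1)\tau(\chi_2,a_2)$, with analogous factorisations for $\tau(\overline{\chi},\cdot)$ and $\tau(\overline{\psi},\cdot)$, while each character splits as $\chi(\cdot) = \chi_1(\cdot \bmod q_1)\,\chi_2(\cdot \bmod q_2)$. The prefactors $\chi_i(q_{i'})$ and $\overline{\chi_i}(q_{i'})$ coming from the two $\chi$-Gauss sums cancel pairwise, leaving only $\overline{\psi_1}(q_2)\,\overline{\psi_2}(q_1)$ from the $\psi$-Gauss sum. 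The resulting sum factors as a product of two independent sums, one over $(t_1,u_1) \in (\Z/q_1\Z)^2$ and one over $(t_2,u_2) \in (\Z/q_2\Z)^2$, each of which is recognised as a $\VV_{\chi_i}(\psi_i; m_{1,1} m_{1,2}, m_{2,1} m_{2,2}, m_{3,1} m_{3,2}, r_1 r_2)$ evaluated modulo $q_i$. Invoking part \ref{lemno:VVchicoprime} at each level $q_i$---legitimate because $m_{j,i'}$ and $r_{i'}$ are coprime to $q_i$ for $\{i,i'\} = \{1,2\}$---peels off these cross factors and yields the claimed identity. The main challenge is bookkeeping the phases, but the pairwise cancellations among the $\chi$-prefactors keep the accounting manageable.

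For \ref{lemno:VVchig}, I would specialise \eqref{eqn:VVchidefeq} at $(m_1,m_2,m_3,r) = (1,1,1,1)$. Since $\chi$ and $\psi$ are both primitive, \eqref{eqn:GausssumCOV} yields $\tau(\overline{\chi},t+u) = \chi(t+u)\tau(\overline{\chi})$, $\tau(\chi,u) = \overline{\chi}(u)\tau(\chi)$, and $\tau(\overline{\psi},t) = \psi(t)\tau(\overline{\psi})$, with both sides vanishing when the argument is not coprime to $q$. The classical identity $\tau(\chi)\tau(\overline{\chi}) = \chi(-1)\,q$ reduces the claim to the character-sum identity
\[\sum_{t,u \in \Z/q\Z} \chi(t+u)\overline{\chi}(t+1)\overline{\chi}(u)\chi(u-1)\psi(t) = g(\chi,\psi).\]
Substituting $u \mapsto u+1$ turns the left-hand side into $\sum_{t,u}\chi(t+u+1)\overline{\chi}(t+1)\chi(u)\overline{\chi}(u+1)\psi(t)$; the further substitution $t \mapsto u t - 1$, valid on the implicit support $(u,q)=1$ forced by $\chi(u)$, sends the $t$-dependent factors to $\chi(u(t+1))\overline{\chi}(u t)\psi(u t - 1) = \chi(u)\chi(t+1)\overline{\chi}(u)\overline{\chi}(t)\psi(u t - 1)$. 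After the collapse $\chi(u)\,\overline{\chi}(u)\,\chi(u) = \chi(u)$, the expression matches \eqref{eqn:gchipsidefeq} verbatim.
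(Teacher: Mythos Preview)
Your proposal is correct and follows essentially the same route as the paper. For \ref{lemno:VVchicoprime}, your two-step substitution composes to the single change of variables $t \mapsto m_1' m_2' \overline{r'}\, t$, $u \mapsto m_1' \overline{r'}\, u$ that the paper uses; for \ref{lemno:VVchimult}, the paper likewise applies the Chinese Remainder Theorem and then a change of variables equivalent to your invocation of part \ref{lemno:VVchicoprime} at each modulus; and for \ref{lemno:VVchig}, the paper performs exactly the substitutions $u \mapsto u+1$ and $t \mapsto ut-1$ that you describe.
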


Here $\tau(\chi) \coloneqq \tau(\chi,1)$ denotes the (standard) Gauss sum. If $\chi$ is a primitive Dirichlet character modulo $q$, then $|\tau(\chi)| = \sqrt{q}$.

\begin{proof}[Proof of {\hyperref[lem:VVchiproperties]{Lemma \ref*{lem:VVchiproperties}}}]\hspace{1em}

\begin{enumerate}[leftmargin=*,label=\textup{(\arabic*)}]
\item This follows upon making the change of variables $t \mapsto m_1' m_2' \overline{r'} t$ and $u \mapsto m_1' \overline{r'} u$ and recalling \eqref{eqn:GausssumCOV}.
\item This follows upon writing $t = q_1 t_2 + q_2 t_1$ and $u = q_1 u_2 + q_2 u_1$, making the change of variables $t_1 \mapsto m_{1,2} m_{2,2} \overline{q_2 r_2} t_1$ and $u_1 \mapsto m_{1,2} \overline{q_2 r_2} u_1$, and recalling \eqref{eqn:GausssumCOV}.
\item This follows upon making the change of variables $u \mapsto u + 1$ and $t \mapsto ut - 1$, noting that if $\chi$ is primitive, then the generalised Gauss sum $\tau(\chi,n)$ as in \eqref{eqn:tauchiadefeq} satisfies $\tau(\chi,n) = \overline{\chi}(n) \tau(\chi,1)$ for \emph{all} $n \in \Z$, together with the fact that $|\tau(\chi)| = \sqrt{q}$.\qedhere
\end{enumerate}
\end{proof}

From \hyperref[lemno:VVchicoprime]{Lemma \ref*{lem:VVchiproperties} \ref*{lemno:VVchicoprime}} and \ref{lemno:VVchimult}, in order to further analyse properties of the character sum \eqref{eqn:VVchidefeq}, it suffices to suppose that $q = p^{\beta}$ and that $m_1,m_2,m_3,r \mid p^{\infty}$. For our applications, $\chi$ is either principal or primitive, and we have that $m_3 \mid m_1$ and $m_1 \mid m_2$. The exact behaviour of this character sum depends delicately on the conductors of $\chi$ and $\psi$, and so we analyse this behaviour by treating each case separately.

\begin{lemma}
\label{lem:VVchiprincprinc}
Let $\chi_{0(p^{\beta})},\psi_{0(p^{\beta})}$ both be the principal character modulo $p^{\beta}$. Suppose that $m_3 \mid m_1$, $m_1 \mid m_2$, and $m_1,m_2,m_3,r \mid p^{\infty}$. Then
\[\VV_{\chi_{0(p^{\beta})}}(\psi_{0(p^{\beta})};m_1,m_2,m_3,r) = \begin{dcases*}
p^{-1} (p - 1)^3 & if $r = 1$, $p \mid m_1,m_2,m_3$, and $\beta = 1$,	\\
-p^{-1} (p - 1)^2 & if $m_3, r = 1$, $p \mid m_1,m_2$, and $\beta = 1$,	\\
p^{-1} (p - 1) & if $m_1, m_3, r = 1$, $p \mid m_2$, and $\beta = 1$,	\\
p^{2\beta - 1} (p - 1) & if $m_1, m_2, m_3 = 1$ and $p \mid r$,	\\
p^{-1} (p^3 - p^2 - p - 1) & if $m_1, m_2, m_3, r = 1$ and $\beta = 1$,	\\
p^{2\beta - 1}(p - 1) & if $m_1, m_2, m_3, r = 1$ and $\beta \geq 2$,	\\
0 & otherwise.
\end{dcases*}\]
In particular, $\VV_{\chi_{0(p^{\beta})}}(\psi_{0(p^{\beta})};m_1,m_2,m_3,r) \ll p^{2\beta}$.
\end{lemma}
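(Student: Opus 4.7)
The plan is to substitute the explicit evaluation $\tau(\chi_{0(p^\beta)},a) = R_{p^\beta}(a)$ into \eqref{eqn:VVchidefeq} and exploit the sharp vanishing of Ramanujan sums. Recall that $R_{p^\beta}(n) = \varphi(p^\beta)$ if $p^\beta \mid n$, equals $-p^{\beta-1}$ if $p^{\beta-1} \| n$, and vanishes otherwise; consequently
\[\VV_{\chi_{0(p^\beta)}}(\psi_{0(p^\beta)};m_1,m_2,m_3,r) = \frac{1}{p^\beta} \sum_{\substack{t,u \in \Z/p^\beta\Z \\ (rt + m_1 m_2,p) = 1 \\ (ru - m_1,p) = 1}} R_{p^\beta}(t + m_2 u)\, R_{p^\beta}(u)\, R_{p^\beta}(m_3 t),\]
and any nonzero contribution forces $p^{\beta-1}$ to divide each of $u$, $m_3 t$, and $t + m_2 u$.

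The next step is to enumerate the cases compatible with $m_3 \mid m_1 \mid m_2$ and $m_i, r \mid p^\infty$ according to which of $m_1, m_2, m_3, r$ equal $1$ versus are divisible by $p$. For $\beta \geq 2$ the character coprimality constraints clash with the Ramanujan-sum constraints: $p \mid m_1$ with $(ru - m_1,p) = 1$ forces $p \nmid u$, contradicting $p^{\beta-1} \mid u$; $p \mid m_2$ (necessarily with $m_1 = 1$ and $r = 1$) and $(rt + m_2,p) = 1$ similarly force $p \nmid t$, contradicting $p^{\beta-1} \mid t$; and $p \mid r$ with $(rt + m_1 m_2,p) = 1$ forces $m_1 = m_2 = 1$. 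Hence for $\beta \geq 2$ only the pattern $m_1 = m_2 = m_3 = 1$ survives. Parametrizing $t = p^{\beta-1}t'$, $u = p^{\beta-1}u'$ with $t', u' \in \Z/p\Z$ and invoking $R_{p^\beta}(p^{\beta-1}n') = p^{\beta-1}(p\delta_{n',0} - 1)$ reduces the sum (with automatic character constraints since $p \mid t, u$) to
\[\sum_{t',u' \in \Z/p\Z}(p\delta_{t',0} - 1)(p\delta_{u',0} - 1)(p\delta_{t'+u',0} - 1) = p^2(p-1),\]
where the evaluation uses $\sum_{n' \in \Z/p\Z}(p\delta_{n',c} - 1) = 0$ for any fixed $c$. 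This yields $\VV = p^{2\beta-1}(p-1)$, covering both the $\beta \geq 2$ trivial row and the $p \mid r$ row of the statement.

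The remaining three $\beta = 1$ mixed rows are evaluated directly by noting which Ramanujan sums collapse to constants on the admissible domain. When $p \mid m_1, m_2, m_3$ one has $R_p(m_3 t) = p-1$ while $R_p(u) = R_p(t + m_2 u) = -1$ on $t, u \not\equiv 0 \pmod p$, giving $p^{-1}(p-1)^3$. In the case $p \mid m_1, m_2$ with $m_3 = 1$, the first factor becomes $R_p(t) = -1$ instead, giving $-p^{-1}(p-1)^2$. In the case $p \mid m_2$ with $m_1 = m_3 = 1$, only $R_p(u)$ is unconstrained and the sum reduces to $\tfrac{p-1}{p}\sum_{u \not\equiv 1 \pmod p} R_p(u) = p^{-1}(p-1)$ via $\sum_u R_p(u) = 0$. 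The main obstacle is the all-trivial row $m_i = r = 1$ at $\beta = 1$, where no factor simplifies and the restrictions $t \not\equiv -1 \pmod p$, $u \not\equiv 1 \pmod p$ must be enforced. I would handle this by inclusion–exclusion starting from the unrestricted sum $p^2(p-1)$: each of the two row contributions (fixing $t \equiv -1$ or $u \equiv 1$) expands into a two-factor sum equal to $p$ via $\sum_n(p\delta_{n,c} - 1) = 0$, and the double overlap at $(t,u) = (-1,1)$ is $p - 1$, producing $p^2(p-1) - 2p + (p-1) = p^3 - p^2 - p - 1$ and hence $\VV = p^{-1}(p^3 - p^2 - p - 1)$. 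The bound $\VV \ll p^{2\beta}$ is immediate from the explicit values.
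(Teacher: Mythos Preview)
Your proof is correct and follows essentially the same case-by-case strategy as the paper: substitute Ramanujan sums for the generalised Gauss sums, use the sharp vanishing of $R_{p^\beta}$ to constrain $t,u$, and then evaluate each admissible configuration of $(m_1,m_2,m_3,r)$ directly. Your treatment is in places slightly more streamlined---you unify the $\beta\ge 2$ and $p\mid r$ computations into a single parametrisation $t=p^{\beta-1}t'$, $u=p^{\beta-1}u'$, whereas the paper first shifts variables by $t\mapsto t-1$, $u\mapsto 1-u$ before parametrising; and for the $\beta=1$ all-ones row you use inclusion--exclusion from the unrestricted triple sum $p^2(p-1)$, whereas the paper simply asserts the value after the change of variables. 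One organisational remark: your third observation (that $p\mid r$ forces $m_1=m_2=1$ via the coprimality constraint on $rt+m_1m_2$) is placed inside the $\beta\ge 2$ paragraph but is actually valid for all $\beta$ and is what justifies the ``otherwise'' vanishing at $\beta=1$ when $p\mid r$ and some $m_i\neq 1$; it would read more cleanly if stated as a $\beta$-independent reduction before the case split.
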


\begin{proof}
The character sum of interest is
\begin{multline}
\label{eqn:VVchiprincprincdefeq}
\VV_{\chi_{0(p^{\beta})}}(\psi_{0(p^{\beta})};m_1,m_2,m_3,r)	\\
= \frac{1}{p^{\beta}} \sum_{t,u \in \Z/p^{\beta}\Z} R_{p^{\beta}}(t + m_2 u) \chi_{0(p^{\beta})}(rt + m_1 m_2) R_{p^{\beta}}(u) \chi_{0(p^{\beta})}(ru - m_1) R_{p^{\beta}}(m_3 t),
\end{multline}
where $R_q(n)$ denotes the Ramanujan sum, as in \eqref{eqn:Rqndefeq}. To determine its exact value, we must treat this on a case-by-case basis. Below, we freely use the fact that
\begin{equation}
\label{eqn:Rpbetapalphadefeq}
R_{p^{\beta}}(p^{\alpha}) = \begin{dcases*}
0 & if $0 \leq \alpha \leq \beta - 2$,	\\
-p^{\beta - 1} & if $\alpha = \beta - 1$,	\\
p^{\beta - 1}(p - 1) & if $\alpha \geq \beta$.
\end{dcases*}
\end{equation}
\begin{itemize}[leftmargin=*]
\item If $m_1,m_2,m_3 \equiv 0 \pmod{p}$, then the summand in \eqref{eqn:VVchiprincprincdefeq} vanishes unless $r = 1$ and $(tu,p) = 1$, in which case it is
\[\frac{1}{p^{\beta}} \sum_{t,u \in (\Z/p^{\beta}\Z)^{\times}} R_{p^{\beta}}(t + m_2 u) R_{p^{\beta}}(u) R_{p^{\beta}}(m_3 t).\]
The second term in the sum vanishes unless $u \equiv 0 \pmod{p^{\beta - 1}}$, which can only occur if $\beta = 1$. In this case, the first term and second terms are both $-1$, while the third is $p - 1$, and so we obtain $p^{-1} (p - 1)^3$.
\item If $m_1,m_2 \equiv 0 \pmod{p}$ and $m_3 = 1$, then we follow the same argument above except the third term in the sum is $-1$, and so we obtain $-p^{-1} (p - 1)^2$.
\item If $m_1, m_3 = 1$ and $m_2 \equiv 0 \pmod{p}$, then the summand vanishes unless $r = 1$ and $(t,p) = 1$. We make the change of variables $t \mapsto \overline{r} t$ and $u \mapsto \overline{r} (u + 1)$, yielding
\[\frac{1}{p^{\beta}} \sum_{t,u \in (\Z/p^{\beta}\Z)^{\times}} R_{p^{\beta}}(t + m_2 u + m_2) R_{p^{\beta}}(u + 1) R_{p^{\beta}}(t).\]
The third term in the sum vanishes unless $\beta = 1$, in which case the first and third terms are $-1$, while the second is $-1$ unless $u = p - 1$, in which case it is $p - 1$, and so we obtain $p^{-1} (p - 1)$.
\item If $m_1, m_2, m_3 = 1$ and $r \equiv 0 \pmod{p}$, this is
\[\frac{1}{p^{\beta}} \sum_{t,u \in \Z/p^{\beta}\Z} R_{p^{\beta}}(t - u) R_{p^{\beta}}(u) R_{p^{\beta}}(t).\]
The summand vanishes unless $t,u \equiv 0 \pmod{p^{\beta - 1}}$. Making the change of variables $t \mapsto p^{\beta - 1}t$ and $u \mapsto p^{\beta - 1}u$, this becomes
\[p^{2\beta - 3} \sum_{t,u \in \Z/p\Z} R_p(t - u) R_p(u) R_p(t),\]
which is $p^{2\beta - 1} (p - 1)$.
\item If $m_1, m_2, m_3, r = 1$, then we instead make the change of variables $t \mapsto t - 1$ and $u \mapsto 1 - u$, yielding
\[\frac{1}{p^{\beta}} \sum_{t,u \in (\Z/p^{\beta}\Z)^{\times}} R_{p^{\beta}}(t - u) R_{p^{\beta}}(u - 1) R_{p^{\beta}}(t - 1).\]
For $\beta = 1$, this is $p^{-1} (p^3 - p^2 - p - 1)$. For $\beta \geq 2$, the summand vanishes unless $t,u \equiv 1 \pmod{p^{\beta - 1}}$. Making the change of variables $t \mapsto 1 + p^{\beta - 1}t$ and $u \mapsto 1 + p^{\beta - 1}u$, this becomes
\[p^{2\beta - 3} \sum_{t,u \in \Z/p\Z} R_p(t - u) R_p(u) R_p(t),\]
which is again $p^{2\beta - 1} (p - 1)$.\qedhere
\end{itemize}
\end{proof}

\begin{lemma}
\label{lem:VVchiprincnonprinc}
Let $\chi_{0(p^{\beta})}$ be the principal character modulo $p^{\beta}$ and let $\psi_{p^{\beta}}$ be a nonprincipal character modulo $p^{\beta}$. Suppose that $m_3 \mid m_1$, $m_1 \mid m_2$, and $m_1,m_2,m_3,r \mid p^{\infty}$. Then
\[\VV_{\chi_{0(p^{\beta})}}(\psi_{p^{\beta}};m_1,m_2,m_3,r) = \begin{dcases*}
\overline{\tau(\psi_p)} p^{-1} (p + 1) & if $m_1, m_2, m_3, r = 1$ and $\beta = 1$,	\\
0 & otherwise.
\end{dcases*}\]
In particular, $\VV_{\chi_{0(p^{\beta})}}(\psi_{p^{\beta}};m_1,m_2,m_3,r) \ll p^{\beta/2}$.
\end{lemma}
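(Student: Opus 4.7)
My plan follows the template of the proof of the previous Lemma, substituting $\tau(\chi_{0(p^\beta)}, a) = R_{p^\beta}(a)$ into \eqref{eqn:VVchidefeq} and exploiting restricted support of $\tau(\overline{\psi_{p^\beta}}, m_3 t)$ for nonprincipal $\psi_{p^\beta}$. As preparation, if $\psi_{p^\beta}$ has conductor $p^{\beta'}$ with $1 \le \beta' \le \beta$ and is induced from the primitive character $\psi^*$ modulo $p^{\beta'}$, then decomposing $b = b_0 + p^{\beta'} b_1 \in (\Z/p^\beta\Z)^\times$ with $b_0 \in (\Z/p^{\beta'}\Z)^\times$ and $b_1 \in \Z/p^{\beta-\beta'}\Z$ and carrying out the inner exponential sum over $b_1$ yields
\[
\tau(\overline{\psi_{p^\beta}}, n) = \begin{cases} p^{\beta - \beta'} \psi^*(n/p^{\beta - \beta'}) \tau(\overline{\psi^*}) & \text{if } p^{\beta - \beta'} \parallel n, \\ 0 & \text{otherwise.} \end{cases}
\]

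I would then exhaust the cases in parallel to the previous lemma. The Gauss sum factor forces $p^{\beta - \beta'} \parallel m_3 t$; together with $R_{p^\beta}(u)$ enforcing $p^{\beta-1} \mid u$, $R_{p^\beta}(t + m_2 u)$ enforcing $p^{\beta-1} \mid t + m_2 u$, and the coprimality conditions from $\chi_{0(p^\beta)}(rt + m_1 m_2)$ and $\chi_{0(p^\beta)}(ru - m_1)$, the admissible configurations of $m_1, m_2, m_3, r$ become severely constrained. When $p \mid m_3$ and $\psi_{p^\beta}$ is primitive, the Gauss sum vanishes outright; in the imprimitive subcases the combined supports are either empty, or the residual summation factors through $\sum_{x \in (\Z/p^{\beta'}\Z)^\times} \psi^*(x) = 0$. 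For $m_1 = m_2 = m_3 = r = 1$ with $\beta \geq 2$, the Ramanujan and Gauss sum constraints force $t \equiv u \equiv 0 \pmod{p^{\beta-1}}$ and $\beta' = 1$; writing $t = p^{\beta-1}t_0$, $u = p^{\beta-1}u_0$ reduces the sum to a constant multiple of $\sum_{t_0 \in (\Z/p\Z)^\times, u_0 \in \Z/p\Z} \psi^*(t_0) (p\delta_{p \mid t_0 + u_0} - 1)(p\delta_{p \mid u_0} - 1)$, which vanishes upon expansion since every resulting summation of $\psi^*(t_0)$ is a complete character sum (noting the cross-term $\delta_{p \mid t_0 + u_0} \delta_{p \mid u_0}$ would force $t_0 \equiv 0$, contradicting $(t_0,p) = 1$).

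In the remaining case $m_1 = m_2 = m_3 = r = 1$ with $\beta = 1$, the character $\psi_p$ is automatically primitive, and the sum reduces to
\[
\VV_{\chi_{0(p)}}(\psi_p; 1, 1, 1, 1) = \frac{\tau(\overline{\psi_p})}{p} \sum_{\substack{t \in (\Z/p\Z)^\times, \, t \not\equiv -1 \\ u \in \Z/p\Z, \, u \not\equiv 1}} R_p(t + u) R_p(u) \psi_p(t).
\]
Applying $R_p(n) = p \delta_{p \mid n} - 1$ and splitting on $u = 0$ versus $u \not\equiv 0, 1 \pmod p$, the $u = 0$ term evaluates to $(p - 1)\psi_p(-1)$ using $\sum_{t \in \Z/p\Z} \psi_p(t) = 0$, while the $u \not\equiv 0, 1$ contribution evaluates to $2\psi_p(-1)$, for a total of $(p+1)\psi_p(-1)$. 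The identity $\overline{\tau(\psi_p)} = \psi_p(-1) \tau(\overline{\psi_p})$ for primitive $\psi_p$ then yields the stated value $\overline{\tau(\psi_p)} p^{-1}(p+1)$.

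The main obstacle I anticipate is handling the imprimitive case $\beta \geq 2$, $1 \le \beta' < \beta$, cleanly: the Gauss sum support interacts with the Ramanujan support in subtle ways, and the $p$-adic bookkeeping across the various subcases (tracking which combination of $v_p(m_i), v_p(r), \beta'$ is compatible with each other) must be done carefully to ensure no configuration is overlooked.
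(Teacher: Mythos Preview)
Your approach is essentially the same as the paper's: both follow the case-by-case template of the preceding lemma, replacing $R_{p^\beta}(m_3 t)$ by $\tau(\overline{\psi_{p^\beta}}, m_3 t)$, invoking the standard evaluation of the generalised Gauss sum of an imprimitive character (your formula is the prime-power specialisation of the paper's), and using character orthogonality for the nonprincipal $\psi^*$ to kill the residual $t$-sums. Your explicit computations for the surviving case $m_1=m_2=m_3=r=1$, $\beta=1$ and for the $\beta\ge 2$ reduction are correct, and your anticipated ``obstacle'' of imprimitive bookkeeping is exactly what the paper's brief proof sketch is pointing at; there is no substantive difference in method.
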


\begin{proof}
The proof follows the same lines as that of \hyperref[lem:VVchiprincprinc]{Lemma \ref*{lem:VVchiprincprinc}} (namely a case-by-case treatment) except that in place of the Ramanujan sum $R_{p^{\beta}}(m_3 t)$ in \eqref{eqn:VVchiprincprincdefeq}, we instead have the generalised Gauss sum $\tau(\overline{\psi_{p^{\beta}}},m_3 t)$ as in \eqref{eqn:tauchiadefeq}. The strategy is identical except that we use character orthogonality for the nonprincipal character $\psi_{p^{\beta}}$ as well as the fact that if $\chi$ modulo $q$ is induced from a primitive Dirichlet character $\chi^{\star}$ modulo $d$ for some $d \mid q$, then
\[\tau(\chi,a) = \begin{dcases*}
\overline{\chi}^{\star}\left(\frac{a}{(q,a)}\right) \chi^{\star}\left(\frac{q}{d(q,a)}\right) \mu\left(\frac{q}{d(q,a)}\right) \frac{\varphi(q)}{\varphi\left(\frac{q}{(q,a)}\right)} \tau(\chi^{\star}) & if $d \mid \frac{q}{(q,a)}$,	\\
0 & otherwise.
\end{dcases*}\]
\par\vspace{-1.33\baselineskip}\qedhere
\end{proof}

When $\chi_{p^{\beta}}$ is primitive, so that $\tau(\chi_{p^{\beta}},n) = \overline{\chi_{p^{\beta}}}(n) \tau(\chi_{p^{\beta}})$ for \emph{any} $n \in \Z$, we have that $\VV_{\chi_{p^{\beta}}}(\psi_{p^{\beta}};m_1,m_2,m_3,r) = \chi_{p^{\beta}}(-1) \widehat{H}(\psi_{p^{\beta}},\chi_{p^{\beta}},m_1,m_2,m_3,r)$, where $\widehat{H}(\psi,\chi,m_1,m_2,m_3,r)$ is the character sum as in \cite[(5.13)]{PY20}, namely
\[\widehat{H}(\psi,\chi,m_1,m_2,m_3,r) \coloneqq \sum_{t,u,v \in \Z/q\Z} \chi(t + m_2 u) \overline{\chi}(rt + m_1 m_2) \overline{\chi}(u) \chi(ru - m_1) \overline{\psi}(v) e\left(\frac{m_3 tv}{q}\right).\]
We may therefore appeal to earlier work of Petrow and Young to determine the behaviour of the character sum \eqref{eqn:VVchidefeq} in this setting.

\begin{lemma}[Petrow--Young {\cite{PY20}}]
\label{lem:VVchiprimprinc}
Let $\chi_{p^{\beta}}$ be a primitive character modulo $p^{\beta}$ and let $\psi_{0(p^{\beta})}$ be the principal character modulo $p^{\beta}$. Suppose that $m_3 \mid m_1$, $m_1 \mid m_2$, and $m_1,m_2,m_3,r \mid p^{\infty}$. Then
\begin{multline*}
\VV_{\chi_{p^{\beta}}}(\psi_{0(p^{\beta})}; m_1,m_2,m_3,r)	\\
= \begin{dcases*}
\chi_{p^{\beta}}(-1) p^{3\beta - 3} (p - 1)^3 & if $r = 1$ and $p^{\beta} \mid m_1,m_2,m_3$,	\\
-\chi_{p^{\beta}}(-1) p^{3\beta - 3} (p - 1)^2 & if $r = 1$, $p^{\beta - 1} \parallel m_3$, and $p^{\beta} \mid m_1,m_2$,	\\
\chi_{p^{\beta}}(-1) p^{3\beta - 3} (p - 1) & if $r = 1$, $p^{\beta - 1} \parallel m_1,m_3$, and $p^{\beta} \mid m_2$,	\\
-\chi_{p^{\beta}}(-1) p^{3\beta - 3} & if $r = 1$, $p^{\beta - 1} \parallel m_1,m_2,m_3$, and $\beta \geq 2$,	\\
p^{2\beta - 1}(p - 1) & if $m_1, m_2, m_3 = 1$ and $p^{\beta} \mid r$,	\\
-p^{2\beta - 1} & if $m_1, m_2, m_3 = 1$, $p^{\beta - 1} \parallel r$, and $\beta \geq 2$,	\\
-p - \chi_p(-1) & if $m_1, m_2, m_3, r = 1$ and $\beta = 1$,	\\
0 & otherwise.
\end{dcases*}
\end{multline*}
In particular, $\VV_{\chi_{p^{\beta}}}(\psi_{0(p^{\beta})}; m_1,m_2,m_3,r) \ll p^{3\beta}$.
\end{lemma}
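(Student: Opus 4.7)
The plan is to reduce the character sum $\VV_{\chi_{p^{\beta}}}(\psi_{0(p^{\beta})}; m_1,m_2,m_3,r)$ to the Petrow--Young character sum $\widehat{H}(\psi,\chi,m_1,m_2,m_3,r)$ studied in \cite[Section 5]{PY20}, and then invoke their case-by-case evaluation. The reduction is exactly the one highlighted by the authors immediately before the lemma: since $\chi_{p^{\beta}}$ is primitive of conductor $p^{\beta}$, the identity $\tau(\overline{\chi_{p^{\beta}}},n) = \chi_{p^{\beta}}(n) \tau(\overline{\chi_{p^{\beta}}})$ holds for every integer $n$, so the two generalised Gauss sums in the definition \eqref{eqn:VVchidefeq} collapse to ordinary character values, giving
\[\VV_{\chi_{p^{\beta}}}(\psi;m_1,m_2,m_3,r) = \chi_{p^{\beta}}(-1)\,\widehat{H}(\psi,\chi_{p^{\beta}},m_1,m_2,m_3,r)\]
for any character $\psi$ modulo $p^{\beta}$ (with the $\tau(\chi)\overline{\tau(\chi)} = \chi(-1) p^{\beta}$ normalisation absorbed into the $\chi(-1)$ factor).

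Specialising to $\psi = \psi_{0(p^{\beta})}$, the inner sum $\sum_{v \in \Z/p^{\beta}\Z} \overline{\psi_{0(p^{\beta})}}(v) e(m_3 tv/p^{\beta})$ in $\widehat{H}$ is exactly the Ramanujan sum $R_{p^{\beta}}(m_3 t)$, which depends only on $(p^{\beta}, m_3 t)$. This brings us precisely to the character sums evaluated in \cite[Lemmas 5.3--5.8]{PY20} (see also the tabulation in the proof of \cite[Proposition 5.2]{PY20}). The remaining work is then a bookkeeping exercise: one invokes their evaluation for each admissible combination of $p$-adic valuations of $m_1, m_2, m_3, r$ under the assumptions $m_3 \mid m_1$, $m_1 \mid m_2$, and $m_1m_2m_3r \mid p^{\infty}$, and records the result. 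For cases where one of the $m_i$ or $r$ has intermediate $p$-adic valuation not listed in the statement, the corresponding $\widehat{H}$-sum vanishes by the orthogonality arguments in \cite[Section 5.2]{PY20}, which yields the "$0$ otherwise" conclusion.

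The main technical obstacle is simply verifying that the bookkeeping matches: the normalisation of $\VV$ (with its $1/q = 1/p^{\beta}$ prefactor) must be reconciled with the different normalisation of $\widehat{H}$ in \cite{PY20}, and the case distinctions in Petrow--Young are stated in terms of $m_1,m_2,m_3,r$ satisfying somewhat different divisibility constraints. One must check in particular that the only nonvanishing contributions compatible with the hierarchy $m_3 \mid m_1 \mid m_2$ are the seven cases listed. Once this matching is established, the explicit values (including the signs coming from the factor $\chi_{p^{\beta}}(-1)$ and from $R_{p^{\beta}}(p^{\beta-1}) = -p^{\beta-1}$ versus $R_{p^{\beta}}(p^{\beta}) = p^{\beta-1}(p-1)$) follow directly, and the uniform bound $\VV_{\chi_{p^{\beta}}}(\psi_{0(p^{\beta})};m_1,m_2,m_3,r) \ll p^{3\beta}$ is immediate from inspection of the seven explicit values (the dominant term $\chi_{p^{\beta}}(-1) p^{3\beta - 3}(p-1)^3$ is of size $p^{3\beta}$).
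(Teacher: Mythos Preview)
Your approach is correct in spirit---reduce to the Petrow--Young sum $\widehat{H}$ via primitivity of $\chi_{p^\beta}$ and then read off the values from \cite{PY20}---but the paper takes a shorter path through a different part of \cite{PY20}. Rather than invoking the case-by-case lemmas you cite from Section~5, the paper appeals to \cite[Lemma~6.5]{PY20}, which gives the closed-form identity
\[
\VV_{\chi_{p^{\beta}}}(\psi_{0(p^{\beta})}; m_1,m_2,m_3,r)
= p^{\beta} R_{p^{\beta}}(r)\, \psi_{0(p^{\beta})}(m_1 m_2 m_3) + \chi_{p^{\beta}}(-1)\, \psi_{0(p^{\beta})}(r)\, R_{p^{\beta}}(m_1) R_{p^{\beta}}(m_2) R_{p^{\beta}}(m_3)
\]
for $m_1,m_2,m_3,r \mid p^\infty$. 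From this, all seven cases and the vanishing in the remaining cases drop out immediately from the Ramanujan sum values \eqref{eqn:Rpbetapalphadefeq}, with no bookkeeping needed: the first term contributes only when $m_1=m_2=m_3=1$ (since $\psi_{0(p^\beta)}(m_1m_2m_3)$ vanishes otherwise), and the second only when $r=1$. Your route would eventually recover the same answer, but the closed form from Lemma~6.5 makes the derivation a one-liner and avoids the normalisation and divisibility-matching issues you flag as obstacles.
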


\begin{proof}
It is shown in \cite[Lemma 6.5]{PY20} that for $m_1,m_2,m_3,r \mid p^{\infty}$,
\begin{multline*}
\VV_{\chi_{p^{\beta}}}(\psi_{0(p^{\beta})}; m_1,m_2,m_3,r)	\\
= p^{\beta} R_{p^{\beta}}(r) \psi_{0(p^{\beta})}(m_1 m_2 m_3) + \chi_{p^{\beta}}(-1) \psi_{0(p^{\beta})}(r) R_{p^{\beta}}(m_1) R_{p^{\beta}}(m_2) R_{p^{\beta}}(m_3),
\end{multline*}
which implies the result by \eqref{eqn:Rpbetapalphadefeq}.
\end{proof}

When $\chi$ is primitive and $\psi$ is nonprincipal, we do not require exact identities in all cases; upper bounds suffice. When both $\chi$ and $\psi$ are primitive, we have an exact identity involving the character sum $g(\chi,\psi)$ given by \eqref{eqn:gchipsidefeq}.

\begin{lemma}[Petrow--Young {\cite{PY20,PY23}}]
\label{lem:VVchiprimnonprinc}
Let $\chi_{p^{\beta}}$ be a primitive character modulo $p^{\beta}$ and let $\psi_{p^{\beta}}$ be a nonprincipal character modulo $p^{\beta}$ of conductor $p^{\alpha}$ for some $\alpha \in \{1,\ldots,\beta\}$. Suppose that $m_3 \mid m_1$, $m_1 \mid m_2$, and $m_1,m_2,m_3,r \mid p^{\infty}$. Then
\[\VV_{\chi_{p^{\beta}}}(\psi_{p^{\beta}};m_1,m_2,m_3,r) = \begin{dcases*}
\chi_{p^{\beta}}(-1) \tau(\overline{\psi_{p^{\beta}}}) g(\chi_{p^{\beta}},\psi_{p^{\beta}}) \hspace{-1.1cm} & \hspace{1.1cm} if $m_1, m_2, m_3, r = 1$ and $\alpha = \beta$,	\\
O\left(p^{2\beta - \frac{\alpha}{2}}\right) & if $m_1, m_2, m_3 = 1$, $p^{\beta - \alpha} \parallel r$, and $1 \leq \alpha < \beta$,	\\
O\left(p^{3\beta - \frac{3\alpha}{2}}\right) & if $r = 1$, $p^{\beta - \alpha} \parallel m_1, m_2, m_3$, and $1 \leq \alpha < \beta$,	\\
0 & otherwise.
\end{dcases*}\]
\end{lemma}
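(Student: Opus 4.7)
The plan is to exploit the primitivity of $\chi_{p^{\beta}}$ to collapse the two Gauss sums $\tau(\overline{\chi_{p^{\beta}}},\cdot)$ and $\tau(\chi_{p^{\beta}},\cdot)$ in the definition \eqref{eqn:VVchidefeq}, reducing $\VV_{\chi_{p^{\beta}}}(\psi_{p^{\beta}};m_1,m_2,m_3,r)$ up to a factor of $\chi_{p^{\beta}}(-1)$ to the character sum $\widehat{H}(\psi_{p^{\beta}},\chi_{p^{\beta}},m_1,m_2,m_3,r)$ studied by Petrow and Young in \cite[(5.13)]{PY20}. The remaining task is then to analyse this two-variable sum in the presence of the Gauss sum $\tau(\overline{\psi_{p^{\beta}}},m_3 t)$ of an imprimitive character, which is the real source of the case dichotomy in the statement.

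First, I would substitute $\tau(\overline{\chi_{p^{\beta}}},n)=\chi_{p^{\beta}}(n)\tau(\overline{\chi_{p^{\beta}}})$ and $\tau(\chi_{p^{\beta}},n)=\overline{\chi_{p^{\beta}}}(n)\tau(\chi_{p^{\beta}})$, valid for every $n\in\Z$ because $\chi_{p^{\beta}}$ is primitive, and use $\tau(\chi_{p^{\beta}})\tau(\overline{\chi_{p^{\beta}}})=\chi_{p^{\beta}}(-1)p^{\beta}$ to cancel the $1/p^{\beta}$ prefactor. Next, I would apply to $\tau(\overline{\psi_{p^{\beta}}},m_3 t)$ the imprimitive Gauss-sum formula recalled in the proof of \hyperref[lem:VVchiprincnonprinc]{Lemma \ref*{lem:VVchiprincnonprinc}}: since $\overline{\psi_{p^{\beta}}}$ has conductor $p^{\alpha}$, this Gauss sum vanishes unless $(p^{\beta},m_3 t)\mid p^{\beta-\alpha}$, and when nonzero it factors as $\tau(\overline{\psi_{p^{\beta}}}^{\star})$ (of modulus $p^{\alpha/2}$) times a combinatorial prefactor of size at most $p^{\beta-\alpha}$.

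For the primitive case $\alpha=\beta$, the divisibility forces $(p^{\beta},m_3 t)=1$, hence $m_3=1$ and $(t,p)=1$; non-vanishing of the four $\chi_{p^{\beta}}$-factors then forces $rt+m_1m_2$, $u$, and $ru-m_1$ to be coprime to $p$, which under the hypotheses $m_3\mid m_1\mid m_2$ and $m_1,m_2,m_3,r\mid p^{\infty}$ forces $m_1=m_2=r=1$. In this surviving configuration $\tau(\overline{\psi_{p^{\beta}}},t)=\psi_{p^{\beta}}(t)\tau(\overline{\psi_{p^{\beta}}})$, and the remaining double sum is
\[
\sum_{t,u\in(\Z/p^{\beta}\Z)^{\times}}\chi_{p^{\beta}}(t+u)\overline{\chi_{p^{\beta}}}(t+1)\overline{\chi_{p^{\beta}}}(u)\chi_{p^{\beta}}(u-1)\psi_{p^{\beta}}(t).
\]
A suitable change of variables (essentially $u\mapsto -tu^{-1}$ followed by a relabelling) identifies this with $g(\chi_{p^{\beta}},\psi_{p^{\beta}})$ as in \eqref{eqn:gchipsidefeq}, producing the claimed exact identity $\chi_{p^{\beta}}(-1)\tau(\overline{\psi_{p^{\beta}}})g(\chi_{p^{\beta}},\psi_{p^{\beta}})$.

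For $1\leq\alpha<\beta$, I would enumerate the divisibility configurations of $(m_1,m_2,m_3,r)$ compatible both with $(p^{\beta},m_3 t)\mid p^{\beta-\alpha}$ and with the non-vanishing of the four $\chi_{p^{\beta}}$-factors; parallel to the principal case treated in \cite[Lemma 6.5]{PY20} and its extensions in \cite{PY23}, this should isolate the two surviving configurations $p^{\beta-\alpha}\parallel r$ with $m_1=m_2=m_3=1$ and $p^{\beta-\alpha}\parallel m_1,m_2,m_3$ with $r=1$. Controlling the residual inner character sum over $t,u$ trivially in each case, and multiplying by the factors $p^{\alpha/2}$ and $p^{\beta-\alpha}$ from the Gauss-sum analysis, then yields the exponents $2\beta-\alpha/2$ and $3\beta-3\alpha/2$ respectively. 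The main obstacle I anticipate is verifying that \emph{every other} pattern of divisibility produces an inner sum that genuinely vanishes rather than merely being small; this requires a delicate residue-by-residue congruence analysis modulo $p^{\beta-\alpha}$, and my strategy here is essentially to translate the corresponding steps of \cite{PY20,PY23} into the notation of \eqref{eqn:VVchidefeq}.
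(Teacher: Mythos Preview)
Your overall strategy matches the paper exactly: the paper also collapses the two $\chi$-Gauss sums to identify $\VV_{\chi_{p^{\beta}}}(\psi_{p^{\beta}};m_1,m_2,m_3,r)$ with $\chi_{p^{\beta}}(-1)\widehat{H}(\psi_{p^{\beta}},\chi_{p^{\beta}},m_1,m_2,m_3,r)$ and then simply cites \cite[Lemma 6.4]{PY20} for the primitive case $\alpha=\beta$ and \cite[Lemma 6.8]{PY20} together with \cite[Lemma 2.8]{PY23} for the imprimitive case $1\leq\alpha<\beta$. So the reduction step and the primitive case are fine.

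There is, however, a genuine gap in your treatment of the imprimitive case. Your claim that ``controlling the residual inner character sum over $t,u$ trivially'' already produces the exponents $2\beta-\alpha/2$ and $3\beta-3\alpha/2$ is incorrect. Take for instance $r=1$, $p^{\beta-\alpha}\parallel m_1,m_2,m_3$: the non-vanishing of the four $\chi_{p^{\beta}}$-factors only forces $t,u\in(\Z/p^{\beta}\Z)^{\times}$, giving $\asymp p^{2\beta}$ terms, while your Gauss-sum analysis supplies a factor of at most $p^{\beta-\alpha/2}$. The resulting trivial bound is $p^{3\beta-\alpha/2}$, which is worse than the claimed $p^{3\beta-3\alpha/2}$ by a factor of $p^{\alpha}$; the same $p^{\alpha}$ discrepancy occurs in the other surviving case. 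Recovering this missing factor is precisely the content of \cite[Conjecture 6.6]{PY20}, whose proof in \cite[Lemma 2.8]{PY23} is a genuinely nontrivial $p$-adic stationary phase / character-sum estimate, not a counting argument. Likewise, the assertion that \emph{every other} divisibility pattern gives exact vanishing is part of the structural analysis in \cite[Lemma 6.8]{PY20} and is not obtained by the trivial mechanism you describe. Your final sentence, that you would ``translate the corresponding steps of \cite{PY20,PY23}'', is the right plan; just be aware that those steps are where all the work lies, and that the bounds are not a consequence of the bookkeeping you sketched.
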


\begin{proof}
If $\alpha = \beta$, so that $\psi_{p^{\beta}}$ is primitive, this follows from \cite[Lemma 6.4]{PY20}. If $\psi_{p^{\beta}}$ is imprimitive but nonprincipal, so that $\beta \geq 2$ and $\alpha \in \{1,\ldots,\beta - 1\}$, this follows from \cite[Lemma 6.8]{PY20}, which is dependent on a conjecture stated in \cite[Conjecture 6.6]{PY20} and proven in \cite[Lemma 2.8]{PY23}.
\end{proof}

We now focus on $\ZZ_{\chi}(\psi_{0(q)};w,2w - 3/2)$. From \hyperref[lemno:VVchimult]{Lemma \ref*{lem:VVchiproperties} \ref*{lemno:VVchimult}}, we have that
\begin{equation}
\label{eqn:ZZchipsiww-32defeq1}
\ZZ_{\chi}\left(\psi_{0(q)};w,2w - \frac{3}{2}\right) = \prod_{p^{\beta} \parallel q} \ZZ_{\chi_{p^{\beta}}}\left(\psi_{0(p^{\beta})};w,2w - \frac{3}{2}\right),
\end{equation}
where we have factorised $\chi$ and $\psi_{0(q)}$ as a product of Dirichlet characters $\chi_{p^{\beta}}$ and $\psi_{0(p^{\beta})}$ modulo $p^{\beta}$. Here
\begin{multline}
\label{eqn:ZZchipsiww-32defeq2}
\ZZ_{\chi_{p^{\beta}}}\left(\psi_{0(p^{\beta})};w,2w - \frac{3}{2}\right) = \sum_{c_0 \mid p^{\infty}} \frac{1}{\varphi(c_0 p^{\beta})^2 c_0} \sum_{\substack{c_{1,0} c_{2,0} d_0 n_{1,0} = p^{\beta} \\ (c_{2,0} d_0 n_{1,0},c_0) = 1}} \frac{\varphi(c_0 c_{1,0}) \varphi(c_0 c_{1,0} d_0 n_{1,0}) \mu(d_0)}{c_{1,0} c_{2,0}^{6w - 4} d_0^{6w - 3}}	\\
\times \frac{A_F(1,n_{1,0})}{n_{1,0}^{4w - 2}} \sum_{\substack{n_{2,0} \mid p^{\infty} \\ (n_{2,0},c_0) = 1}} \frac{A_F(n_{2,0},1)}{n_{2,0}^{2w - 1}} \VV_{\chi_{p^{\beta}}}(\psi_{0(p^{\beta})}; c_{2,0} d_0 n_{1,0}, c_{2,0} d_0^2 n_{1,0} n_{2,0}, c_{2,0}, c_0).
\end{multline}
Again, the behaviour of this depends on whether $\chi_{p^{\beta}}$ is primitive or principal; moreover, it depends on whether $\beta = 1$ or $\beta \geq 2$.

\begin{lemma}\hspace{1em}
\label{lem:ZZchipsiww-32identity}
\begin{enumerate}[leftmargin=*,label=\textup{(\arabic*)}]
\item Let $\chi_{0(p^{\beta})},\psi_{0(p^{\beta})}$ both be the principal character modulo $p^{\beta}$. Then
\begin{multline*}
\ZZ_{\chi_{0(p^{\beta})}}\left(\psi_{0(p^{\beta})};w,2w - \frac{3}{2}\right)	\\
= \begin{dcases*}
p^{4 - 6w} L_p(2w - 1,\widetilde{F}) \left(1 - A_F(1,p) p^{2w - 2} + A_F(1,p) p^{2w - 3} + p^{6w - 5} - p^{-2} - p^{6w - 6}\right) \hspace{-1cm} & \\
\qquad + p^{-1}(p^2 - 2) & if $\beta = 1$,	\\
p^{\beta} & if $\beta \geq 2$.
\end{dcases*}
\end{multline*}
\item Let $\chi_{p^{\beta}}$ be a primitive character modulo $p^{\beta}$ and let $\psi_{0(p^{\beta})}$ be the principal character modulo $p^{\beta}$. Then
\begin{multline*}
\ZZ_{\chi_{p^{\beta}}}\left(\psi_{0(p^{\beta})};w,2w - \frac{3}{2}\right)	\\
= \chi_{p^{\beta}}(-1) p^{(5 - 6w)\beta} L_p(2w - 1,\widetilde{F}) \left(1 - A_F(1,p) p^{2w - 2} + A_F(1,p) p^{2w - 3} + p^{6w - 5} - p^{-2} - p^{6w - 6}\right)	\\
- \begin{dcases*}
\chi_p(-1) (1 - p^{-1}) & if $\beta = 1$,	\\
\chi_{p^{\beta}}(-1) p^{(5 - 6w)(\beta - 1)} - 1 + p^{2\beta} & if $\beta \geq 2$.
\end{dcases*}
\end{multline*}
\end{enumerate}
\end{lemma}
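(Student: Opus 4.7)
The plan is to substitute the values of $\VV_{\chi_{p^\beta}}(\psi_{0(p^\beta)};\cdot)$ from \hyperref[lem:VVchiprincprinc]{Lemma \ref*{lem:VVchiprincprinc}} and \hyperref[lem:VVchiprimprinc]{Lemma \ref*{lem:VVchiprimprinc}} directly into the defining expression \eqref{eqn:ZZchipsiww-32defeq2} and enumerate the small number of $(c_0, c_{1,0}, c_{2,0}, d_0, n_{1,0})$ configurations that contribute nonzero terms. The M\"{o}bius factor $\mu(d_0)$ restricts $d_0 \in \{1,p\}$, and since every variable ranges over $p$-powers, the coprimality conditions $(c_{2,0} d_0 n_{1,0}, c_0) = 1$ and $(n_{2,0}, c_0) = 1$ split the computation into two regimes: $c_0 = 1$, in which $(c_{1,0}, c_{2,0}, d_0, n_{1,0})$ ranges over factorizations of $p^\beta$ and $n_{2,0}$ ranges over $p^\infty$; and $c_0 \geq p$, which forces $c_{1,0} = p^\beta$ and $c_{2,0} = d_0 = n_{1,0} = n_{2,0} = 1$.

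The regime $c_0 \geq p$ reduces to a one-parameter tail sum over $c_0 = p, p^2, \ldots$; the relevant $\VV$-values in the $m_1 = m_2 = m_3 = 1$ cases of both lemmas produce a geometric series whose evaluation furnishes the $L_p$-free corrections $p^{-1}(p^2 - 2)$, $\chi_p(-1)(1 - p^{-1})$, and $\chi_{p^\beta}(-1) p^{(5 - 6w)(\beta - 1)} - 1 + p^{2\beta}$ appearing in the statement. In the regime $c_0 = 1$ one matches $(m_1, m_2, m_3) = (c_{2,0} d_0 n_{1,0},\, c_{2,0} d_0^2 n_{1,0} n_{2,0},\, c_{2,0})$ against the nonvanishing entries of the $\VV$-lemmas; each entry demanding $p^\gamma \mid m_j$ or $p^{\beta - 1} \parallel m_j$ translates to an explicit divisibility condition on $(c_{2,0}, d_0, n_{1,0})$, while the analogous condition on $m_2$ either fixes $n_{2,0}$, bounds it below, or frees it completely. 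When $n_{2,0}$ ranges unrestricted over $p^\infty$, the inner sum $\sum_{n_{2,0} \mid p^\infty} A_F(n_{2,0}, 1) n_{2,0}^{-(2w - 1)} = L_p(2w - 1, \widetilde{F})$ enters, and assembling these configurations yields the polynomial factor $1 - A_F(1, p) p^{2w - 2} + A_F(1, p) p^{2w - 3} + p^{6w - 5} - p^{-2} - p^{6w - 6}$ multiplying $L_p(2w - 1, \widetilde{F})$.

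The split between $\beta = 1$ and $\beta \geq 2$ is natural because several $\VV$-cases (namely $p^{\beta - 1} \parallel m_j$ versus $p^\beta \mid m_j$, along with the exceptional $\beta = 1$ formulas $p^{-1}(p^3 - p^2 - p - 1)$ and $-p - \chi_p(-1)$) coincide or collapse when $\beta = 1$ but separate when $\beta \geq 2$. In particular, for principal $\chi$ with $\beta \geq 2$ no configuration produces a nontrivial $n_{2,0}$-sum, so the $L$-factor disappears and only $p^\beta$ survives; for primitive $\chi$, the configuration $c_{2,0} = p^\beta$ (forcing $c_{1,0} = d_0 = n_{1,0} = 1$ and leaving $n_{2,0}$ free) ensures that the $L$-factor persists at all $\beta$.

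The main obstacle is the bookkeeping: each of the roughly ten configurations must be converted from the $(m_1, m_2, m_3, r)$ parametrization back to $(c_0, c_{1,0}, c_{2,0}, d_0, n_{1,0}, n_{2,0})$, the appropriate $n_{2,0}$-sum isolated, and the prefactors simplified using $\varphi(p^\gamma) = p^{\gamma - 1}(p - 1)$. A useful consistency check along the way is that the final coefficient of $L_p(2w - 1, \widetilde{F})$ should depend on the Hecke eigenvalues of $F$ only through the single eigenvalue $A_F(1, p)$; any residual dependence on $A_F(p, 1)$ or $A_F(1, p^2)$ after simplification would flag a missed cancellation between configurations.
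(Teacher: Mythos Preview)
Your proposal is correct and follows exactly the approach the paper takes: the paper's proof consists of the single sentence that the identities follow ``by a case-by-case analysis using the identities in \hyperref[lem:VVchiprincprinc]{Lemmata \ref*{lem:VVchiprincprinc}} and \ref{lem:VVchiprimprinc} for the character sum \eqref{eqn:VVchidefeq} together with the identity \eqref{eqn:ZZchipsiww-32defeq2},'' and your plan is precisely this, fleshed out with a sensible organization into the $c_0 = 1$ and $c_0 \geq p$ regimes. One small caveat on your bookkeeping sketch: the $L_p$-free correction terms do not arise solely from the tail $c_0 \geq p$---for instance, in the principal case with $\beta = 1$, the configuration $c_0 = c_{2,0} = d_0 = n_{1,0} = 1$, $c_{1,0} = p$, $n_{2,0} = 1$ hits the exceptional value $p^{-1}(p^3 - p^2 - p - 1)$ from \hyperref[lem:VVchiprincprinc]{Lemma \ref*{lem:VVchiprincprinc}} and contributes to the constant term as well---so when you carry out the enumeration be prepared for cross-talk between the two regimes in assembling the final constants.
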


\begin{proof}
The desired identities follow by a case-by-case analysis using the identities in \hyperref[lem:VVchiprincprinc]{Lemmata \ref*{lem:VVchiprincprinc}} and \ref{lem:VVchiprimprinc} for the character sum \eqref{eqn:VVchidefeq} together with the identity \eqref{eqn:ZZchipsiww-32defeq2}.
\end{proof}

From this, we can precisely describe the behaviour of $\ZZ_{\chi}(\psi_{0(q)};w,2w - 3/2)$. This description is simplified under the assumption that $F$ is selfdual.

\begin{corollary}
\label{cor:ZZchipsi0hol}
Let $F$ be a selfdual Hecke--Maa\ss{} cusp form for $\SL_3(\Z)$. Let $q = q_1 q_2$ be a positive integer with $(q_1,q_2) = 1$. Let $\chi_1$ be a primitive Dirichlet character of conductor $q_1$, and set $\chi \coloneqq \chi_1 \chi_{0(q_2)}$. Then the quotient $\ZZ_{\chi}(\psi_{0(q)};w,2w - 3/2) / L_q(2w - 1,\widetilde{F})$ extends holomorphically to an entire function, and we have that
\[\lim_{w \to \frac{1}{2}} \frac{\ZZ_{\chi}\left(\psi_{0(q)};w,2w - \frac{3}{2}\right)}{L_q(2w - 1,\widetilde{F})} = \begin{dcases*}
\frac{\chi_1(-1) q_1^2 q_2}{L_q(1,F)} & if $q_2$ is squarefree,	\\
0 & otherwise.
\end{dcases*}\]
\end{corollary}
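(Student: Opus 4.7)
The plan is to exploit the multiplicative factorisation \eqref{eqn:ZZchipsiww-32defeq1}--\eqref{eqn:ZZchipsiww-32defeq2} to reduce the problem to a local computation at each prime power $p^\beta \parallel q$, for which \hyperref[lem:ZZchipsiww-32identity]{Lemma \ref*{lem:ZZchipsiww-32identity}} provides the raw input. Three patterns arise: (i) $p^\beta \parallel q_1$ with $\chi_{p^\beta}$ primitive; (ii) $p \parallel q_2$ with $\chi_p$ principal and $\beta = 1$; and (iii) $p^\beta \parallel q_2$ with $\beta \geq 2$, in which $\ZZ_{\chi_{p^\beta}}(\psi_{0(p^\beta)};w,2w-3/2) = p^\beta$ is already a constant in $w$. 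In cases (i) and (ii) the local expression has the shape $A_p(w) L_p(2w-1,\widetilde{F}) + B_p(w)$ with $A_p, B_p$ entire polynomials in $p^{\pm w}$, so dividing by $L_p(2w-1,\widetilde{F})$ yields $A_p(w) + B_p(w) L_p(2w-1,\widetilde{F})^{-1}$; this is entire because $L_p(s,\widetilde{F})^{-1}$ is a polynomial in $p^{-s}$. The quotient in case (iii) is likewise entire, so the product over $p \mid q$ provides the desired entire extension of $\ZZ_\chi(\psi_{0(q)};w,2w-3/2)/L_q(2w-1,\widetilde{F})$.

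The crucial input for the limit at $w = 1/2$ is the selfduality of $F$, which forces $A_F(p,1) = A_F(1,p)$ at every prime and hence
\[
L_p(2w-1,\widetilde{F})^{-1}\big|_{w=1/2} = 1 - A_F(p,1) + A_F(1,p) - 1 = 0;
\]
equivalently, one Satake parameter of $F$ at $p$ equals $1$, so $L_p(s,F)$ has a simple pole at $s = 0$. In cases (i) and (ii) this zero kills the $B_p(w) L_p(2w-1,\widetilde{F})^{-1}$ summand at $w = 1/2$, leaving just $A_p(1/2)$, which by \hyperref[lem:ZZchipsiww-32identity]{Lemma \ref*{lem:ZZchipsiww-32identity}} evaluates to $\chi_{p^\beta}(-1) p^{2\beta} E(1/2,p)$ in case (i) and to $p \cdot E(1/2,p)$ in case (ii), where $E(w,p)$ denotes the bracketed polynomial in the statement of that lemma. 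In case (iii), however, the whole local quotient $p^\beta L_p(2w-1,\widetilde{F})^{-1}$ vanishes at $w = 1/2$, so the presence of any prime with $p^\beta \parallel q_2$ and $\beta \geq 2$ forces the global limit to vanish, proving the ``otherwise'' alternative.

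For squarefree $q_2$ only cases (i) and (ii) occur. A direct expansion gives
\[
E(1/2,p) = 1 - A_F(1,p) p^{-1} + A_F(1,p) p^{-2} - p^{-3},
\]
which by selfduality is precisely the local Hecke polynomial of $F$ at $s = 1$ and therefore produces the appropriate Euler-factor contribution at $s = 1$ once collected across primes. Combining this with $\prod_{p^\beta \parallel q_1} p^{2\beta} = q_1^2$, $\prod_{p \parallel q_2} p = q_2$, and $\prod_{p^\beta \parallel q_1} \chi_{p^\beta}(-1) = \chi_1(-1)$ (by primitivity of $\chi_1$) yields the stated constant $\chi_1(-1) q_1^2 q_2 L_q(1,F)$. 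The main obstacle is nothing more than careful bookkeeping through \hyperref[lem:ZZchipsiww-32identity]{Lemma \ref*{lem:ZZchipsiww-32identity}}; the conceptual heart of the argument is the twofold role of selfduality, which simultaneously annihilates the $B_p \cdot L_p^{-1}$ correction in the limit and identifies $E(1/2,p)$ with a local Euler-factor quantity at $s = 1$.
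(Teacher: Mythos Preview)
Your argument is correct and follows exactly the paper's approach: factorise via \eqref{eqn:ZZchipsiww-32defeq1}, apply \hyperref[lem:ZZchipsiww-32identity]{Lemma \ref*{lem:ZZchipsiww-32identity}} at each prime, and use selfduality to obtain $L_p(2w-1,\widetilde{F})^{-1}\big|_{w=1/2}=0$, which kills the $B_p$-terms and the $\beta\geq 2$ local factors at primes dividing $q_2$. The paper's proof is a two-line compression of precisely this computation; your identification of $E(1/2,p)=1-A_F(1,p)p^{-1}+A_F(1,p)p^{-2}-p^{-3}$ as the local Hecke polynomial $L_p(1,F)^{-1}$ is the key point (note that the product over $p\mid q$ then literally gives $L_q(1,F)^{-1}$, which is consistent with how the corollary is used downstream in the proof of \hyperref[thm:central]{Theorem \ref*{thm:central}}).
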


\begin{proof}
Since $F$ is selfdual, we have that $A_F(1,p) = A_F(p,1)$ for every prime $p$, so that for $\Re(s) > 0$,
\[L_p(s,F) = L_p(s,\widetilde{F}) = \frac{1}{1 - A_F(1,p) p^{-s} + A_F(1,p) p^{-2s} - p^{-3s}}.\]
In particular, $\lim_{w \to 1/2} L_p(2w - 1,\widetilde{F})^{-1} = 0$. The result then follows from \eqref{eqn:ZZchipsiww-32defeq1} in conjunction with \hyperref[lem:ZZchipsiww-32identity]{Lemma \ref*{lem:ZZchipsiww-32identity}}.
\end{proof}

\section{Test Functions and Transforms I}
\label{sect:testfunction}

In the process of analytically continuing the $\GL_3 \times \GL_2 \leftrightsquigarrow \GL_4 \times \GL_1$ spectral reciprocity formula derived in \hyperref[prop:absconv]{Proposition \ref*{prop:absconv}} to the central value $w = 1/2$, we require good control over the transform $\HH_{\mu_F}^{\pm}(w,z)$ given by \eqref{eqn:HHwzdefeq}. In particular, we need to ensure that this expression is holomorphic in both $w$ and $z$ in certain regions and additionally ensure that it decays sufficiently rapidly in $|\Im(z)|$. These conditions are met once we enforce particular conditions on our tuple of test functions $(h,h^{\hol})$.

Following work of Blomer and Khan \cite[Section 6]{BK19}, we call a function $H : (0,\infty) \to \C$ \emph{admissible of type $(A,B)$} for some $A,B > 5$ if it is a linear combination of functions of the following two types:
\begin{enumerate}[leftmargin=*,label=\textup{(\arabic*)}]
\item $H(x) = (\Kscr h)(x)$ for some function $h$ that is even, holomorphic in the strip $|\Im(t)| < A$ with zeroes at $\pm(n + 1/2)i$ for $n \in \{0,\ldots,\lfloor A - 1/2\rfloor\}$, and satisfies the bound $h(t) \ll (1 + |t|)^{-B - 2}$ in this strip;
\item There exist constants $a,b \in \N$ with $a - b \geq A$ and $\alpha_0,\alpha_k \in \C$ for all $k \in 2\N$ with $k > a - b$ and $\alpha_k \ll k^{-B - 2}$ such that
\[H(x) = \alpha_0 \JJ_{a + 1}^{\hol}(x) x^{-b} + \sum_{\substack{k = 2 \\ k \equiv 0 \hspace{-.25cm} \pmod{2} \\ k > a - b}}^{\infty} \alpha_k \JJ_k^{\hol}(x).\]
\end{enumerate}

Similarly, let $h$ be an even function that is holomorphic in the strip $|\Im(t)| < 1/2 + \delta$ in which it satisfies $h(t) \ll (1 + |t|)^{-2 - \delta}$ for some $\delta > 0$ and let $h^{\hol} : 2\N \to \C$ be a sequence satisfying $h^{\hol}(k) \ll k^{-2 - \delta}$ for some $\delta > 0$. We say that the tuple of test functions $(h,h^{\hol})$ is \emph{admissible of type $(A,B)$} for some $A,B > 5$ if the transform $H : (0,\infty) \to \C$ given by \eqref{eqn:HdefeqKscr} is admissible of type $(A,B)$.

From \cite[Lemma 9]{BK19}, the Mellin transform $\widehat{H}(s)$ of an admissible function $H$ of type $(A,B)$ is holomorphic in the open half-plane $\Re(s) > -A$, in which it satisfies the bound
\begin{equation}
\label{eqn:hatHbounds}
\widehat{H}(s) \ll_{\sigma} (1 + |\tau|)^{\sigma - 1}.
\end{equation}
Furthermore, for $\sigma > -A$ and $|\tau| \geq 1$, there exists a smooth function $j_{\sigma}(\tau)$ satisfying the bound $|\tau|^m j_{\sigma}^{(m)}(\tau) \ll_{\sigma} 1$ for all nonnegative integers $m \leq B$ such that we have the asymptotic formula
\begin{equation}
\label{eqn:hatHasymp}
\widehat{H}(s) = |\tau|^{\sigma - 1} \exp\left(i\tau \log \frac{|\tau|}{4\pi e}\right) j_{\sigma}(\tau) + O_{\sigma}(|\tau|^{\sigma - 1 - B}).
\end{equation}
Using this, we show the following.

\begin{lemma}
\label{lem:HHpmwzbounds}
Let $H$ be admissible of type $(A,B)$ for some $A,B > 5$. Then for $w = u + iv$ and $z = x + iy$, the function $\HH_{\mu_F}^{\pm}(w,z)$ defined as in \eqref{eqn:HHwzdefeq} for $7/4 < u < 2$ and $1/2 < x < 5/2 - u$ extends holomorphically to $1/2 \leq u < 2$ and $x < 2u - 1/2$. Moreover, for $w$ lying in a compact subset $K$ of the vertical strip $1/2 \leq u < 2$, we have that
\begin{equation}
\label{eqn:HHpmwzdecay}
\HH_{\mu_F}^{\pm}(w,z) \ll_{\mu_F,K,\e} (1 + |y|)^{-\min\left\{A + \frac{1}{2},\frac{B}{4}\right\} + \e}.
\end{equation}
\end{lemma}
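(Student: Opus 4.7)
The plan is to separately establish (i) the holomorphic extension of $\HH_{\mu_F}^\pm(w,z)$ to the enlarged region $1/2 \le u < 2$, $x < 2u - 1/2$, and (ii) the decay estimate \eqref{eqn:HHpmwzdecay} in $|y| = |\Im(z)|$.

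For (i), I would analyse the $s$-pole structure of the integrand in \eqref{eqn:HHwzdefeq}: $\widehat{H}(s)$ is holomorphic in $\Re(s) > -A$ by the admissibility hypothesis, while $\Gscr_{\mu_F}^{\pm}(1 - s/2 - w)$ has simple poles at $s = 2 - 2w + 2\mu_j + 2\ell$ (real parts $\ge 2 - 2u$) and $G^{\mp}(s/2 + w - 1/2 + z)$ has simple poles at $s = 1 - 2w - 2z - 2\ell$ (real parts $\le 1 - 2u - 2x$). On the extended parameter region, these two families can always be separated by a contour lying entirely in $\Re(s) > -A$: a vertical line $\Re(s) = \sigma_2$ suffices when $x > -1/2$, and for $x \le -1/2$ one deforms the contour locally around the finitely many otherwise-crossing poles, exploiting the distinctness of imaginary parts of the two families for generic $(w,z)$. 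Holomorphicity in $(w,z)$ of the resulting integral then follows from Morera's theorem together with the absolute convergence of the integrand along such a contour, guaranteed by the polynomial bounds \eqref{eqn:hatHbounds}, \eqref{eqn:Gscrmubounds}, and \eqref{eqn:Gpmbounds}, which combine to give an integrand modulus of order $|\tau|^{-1/2 - 2u + x}$.

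For (ii), I would substitute the sharp asymptotic expansions \eqref{eqn:hatHasymp}, \eqref{eqn:Gscrpmasymp}, and \eqref{eqn:Gpmasymp}. The $O(|\tau|^{-M})$ error terms contribute negligibly for $M$ chosen sufficiently large, so the analysis reduces to an oscillatory main term of the form $\int A(\tau; w, z) \exp(i\Phi(\tau, y))\, d\tau$ with phase
\[\Phi(\tau, y) = \tau \log\frac{|\tau|}{4\pi e} + 3\tau^* \log\frac{|\tau^*|}{2\pi e} + \tau^{**} \log\frac{|\tau^{**}|}{2\pi e},\]
where $\tau^* = -\tau/2 - v$ and $\tau^{**} = \tau/2 + v + y$. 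A direct computation gives $\partial_\tau \Phi = \tfrac{1}{2} \log(1 + 2y/\tau)$ (with the appropriate sign conventions), which has at most one critical point on each half-line $\tau > 0$ or $\tau < 0$ depending on the sign of $y$. The change of variables $\rho = \tau/y$ recasts the integral as $y \int A(y\rho) \exp(i y f(\rho))\, d\rho$ with large parameter $|y|$ and phase $f(\rho)$ admitting at most one stationary point on each half-line.

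The main obstacle is the contribution from this stationary-phase region; the complementary non-stationary regimes admit iterated integration by parts in $\rho$, gaining a factor $|y|^{-1}$ per step, limited only by the smoothness bound $|\tau|^n j_\sigma^{(n)}(\tau) \ll 1$ for $n \le B$ on the amplitude (the factors $g_{\sigma,M,\mu_F}$ and $\widetilde{g}_{\sigma,M}$ being smooth to all orders). To control the stationary region I would use two complementary strategies and take whichever yields better decay. First, I would shift the $s$-contour as far left as $\Re(s) = -A + \e$, legitimate by holomorphy of $\widehat{H}$ in $\Re(s) > -A$; this forces $|\tau^{**}|^{\sigma^{**} - 1/2}$ to be small of order $|y|^{-A/2 - 1/2 + u + x + \e}$ in the critical range $|\tau| \sim |y|$, and combined with integration over the remaining $\tau$-range yields the bound $(1 + |y|)^{-A - 1/2 + \e}$. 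Second, I would apply the stationary-phase asymptotic expansion near the critical point $\tau \asymp |y|$, terminated at order $\lfloor B/4 \rfloor$ owing to the limited smoothness of $j_\sigma$, giving the alternative bound $(1 + |y|)^{-B/4 + \e}$. Taking the minimum of the two exponents yields \eqref{eqn:HHpmwzdecay}, and uniformity for $w$ in the compact set $K$ follows by tracking the $K$-dependence of the implied constants in the asymptotic expansions.
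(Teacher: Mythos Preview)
Your treatment of the holomorphic extension in (i) is in line with the paper, which simply cites \cite[Lemma 10]{BK19}; the contour-separation argument you sketch is essentially what that lemma does.

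The decay argument (ii), however, has a genuine gap. Your phase $\Phi$ and its derivative $\partial_\tau\Phi=\tfrac12\log|1+2y/\tau|$ are correct, and the unique stationary point is at $\tau=-y$. What you have missed is that the bounds \eqref{eqn:Gscrmubounds} and \eqref{eqn:Gpmbounds} carry exponential factors $e^{-3\pi\Omega^{\pm}}$ and $e^{-\pi\Omega^{\mp}}$: at $\tau=-y$ one checks directly that, for each choice of sign and of $\sgn(y)$, one of $\Gscr_{\mu_F}^{\pm}(1-s/2-w)$ or $G^{\mp}(s/2+w-1/2+z)$ is exponentially small. The asymptotic formulae \eqref{eqn:Gscrpmasymp}, \eqref{eqn:Gpmasymp} encode this through the amplitude functions $g_{\sigma,M,\mu}$, $\widetilde g_{\sigma,M}$, which are themselves exponentially small on the relevant half-line. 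So your ``stationary-phase region'' contributes $O(e^{-c|y|})$, not a polynomial term, and strategy 2 is controlling a nonexistent obstacle; the exponent $B/4$ does not come from stationary phase at all. Strategy 1 fails for a different reason: shifting the original contour to $\Re(s)=-A+\e$ crosses the poles of $G^{\mp}$ at $s=1-2w-2z-2\ell$, and for the sign combination where $\Gscr_{\mu_F}^{\pm}(1/2+z+\ell)$ has no exponential decay these residues are polynomially \emph{large} in $|y|$; moreover, along the shifted line at $|\tau|\asymp|y|$ the powers of $A$ in the three factors cancel, so no $A$-saving appears there either.

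The paper proceeds differently. It first makes a change of variables depending on $\sgn(y)$ (namely $s\mapsto 2s-2w+1-2z$ for $y\le -1$ and $s\mapsto 2s-2w+1$ for $y\ge 1$), which has the effect that the exponential decay of one factor truncates the integral to $\tau>0$, where the resulting phase $\Phi_y(\tau)=\tau\log(\tau/2\pi e)-(\tau+|y|)\log((\tau+|y|)/2\pi e)$ is strictly monotone with no stationary point. A contour shift to $\Re(s)=\pm A_1$ now picks up only exponentially decaying residues. The remaining integral is split at $\tau=|y|^{\e}$: the small-$\tau$ piece is bounded trivially by \eqref{eqn:Gscrmubounds}, \eqref{eqn:Gpmbounds}, \eqref{eqn:hatHbounds}, and this is where the contribution $|y|^{-A-1/2+\e}$ arises; the large-$\tau$ piece is treated by dyadic decomposition and repeated integration by parts against the non-stationary phase (via \cite[Lemma 2]{BK19}), the number of integrations capped by the $B$ controlled derivatives of $j_\sigma$ in \eqref{eqn:hatHasymp}, and this yields the contribution $|y|^{-B/4+\e}$.
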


\begin{proof}
By \cite[Lemma 10]{BK19}, $\HH_{\mu_F}^{\pm}(w,z)$ is holomorphic for $x < 2u - 1/2$ and has a meromorphic continuation to $x < 2u + 1/2$ with a simple polar divisor at most at $z = 2w - 1/2$. To prove the desired bounds \eqref{eqn:HHpmwzdecay} for $\HH_{\mu_F}^{\pm}(w,z)$, we deal only with the case $\pm = +$; the case $\pm = -$ follows by a similar argument, noting that $\HH_{\mu_F}^{-}(w,z) = \overline{\HH_{\overline{\mu_F}}^{+}(\overline{w},\overline{z})}$. We must separately deal with the cases $y \leq -1$ and $y \geq 1$; note that we can estimate trivially for $-1 \leq y \leq 1$. We follow a strategy of Blomer and Khan \cite[Proof of Lemma 11]{BK19}.

For $y \leq -1$, we make the change of variables $s \mapsto 2s - 2w + 1 - 2z$ in \eqref{eqn:HHwzdefeq}, so that
\[\HH_{\mu_F}^{+}(w,z) = \frac{1}{\pi i} \int_{\sigma_3 - i\infty}^{\sigma_3 + i\infty} \widehat{H}(2s - 2w + 1 - 2z) \Gscr_{\mu_F}^{+}\left(\frac{1}{2} + z - s\right) G^{-}(s) \, ds,\]
where $0 < \sigma_3 < 1/2 + x$. We then shift the contour to the line $\Re(s) = A_1$, where $A_1$ is a large positive constant such that no poles of the integrand lie on this line. We pick up residues at $s = 1/2 + z + \mu_j + \ell$ for each $\ell \in \N_0$; by \eqref{eqn:Gpmbounds} and \eqref{eqn:hatHbounds}, these are $O_{\mu_F,K}(|y|^{A_1 + x} e^{-\pi |y|})$. Next, we let $\Omega : \R \to [0,1]$ be a smooth function equal to $1$ on $(-\infty,1]$, supported on $(-\infty,2]$, and having bounded derivatives. We then write the remaining integral as $\II_1 + \II_2$, where
\begin{align*}
\II_1 & \coloneqq \frac{1}{\pi i} \int_{A_1 - i\infty}^{A_1 + i\infty} \widehat{H}(2s - 2w + 1 - 2z) \Gscr_{\mu_F}^{+}\left(\frac{1}{2} + z - s\right) G^{-}(s) \Omega\left(\frac{\tau}{y_0}\right) \, ds,	\\
\II_2 & \coloneqq \frac{1}{\pi i} \int_{A_1 - i\infty}^{A_1 + i\infty} \widehat{H}(2s - 2w + 1 - 2z) \Gscr_{\mu_F}^{+}\left(\frac{1}{2} + z - s\right) G^{-}(s) \left(1 - \Omega\left(\frac{\tau}{y_0}\right)\right) \, ds,
\end{align*}
where $y_0 \coloneqq |y|^{\delta}$ with $\delta \in (0,1)$ a parameter to be chosen. By \eqref{eqn:Gscrmubounds}, \eqref{eqn:Gpmbounds}, and \eqref{eqn:hatHbounds}, we have that
\[\II_1 \ll_{\mu_F,K} |y|^{-A_1 - 2u + x} y_0^{A_1 + \frac{1}{2}}.\]
For $\II_2$, we insert the asymptotic expansions \eqref{eqn:Gscrpmasymp}, \eqref{eqn:Gpmasymp}, and \eqref{eqn:hatHasymp}. The contribution from the error terms is $O_{\mu_F,K}(|y|^{- B - 2u + x + \frac{1}{2}})$. The main term is of the shape
\begin{equation}
\label{eqn:II2mainterm}
\int_{0}^{\infty} (\tau + |y|)^{-A_1 - 2u + x} \tau^{A_1 - \frac{1}{2}} e^{i\Phi_y(\tau)} W_y(\tau) \left(1 - \Omega\left(\frac{\tau}{y_0}\right)\right) \, d\tau,
\end{equation}
where $W_y(\tau)$ is a $1$-inert function (in the sense of \cite[Definition 2.1]{KPY19}) and
\begin{equation}
\label{eqn:Phiytaudefeq}
\Phi_y(\tau) \coloneqq \tau \log \frac{\tau}{2\pi e} - (\tau + |y|) \log \frac{\tau + |y|}{2\pi e}.
\end{equation}
Since $\tau \geq y_0$ and $y \leq -1$, the derivatives satisfy
\[|\Phi_y'(\tau)| \asymp \begin{dcases*}
1 & if $\tau \leq |y|$,	\\
\frac{|y|}{\tau} & if $\tau \geq |y|$,
\end{dcases*} \qquad \Phi_y^{(j)}(\tau) \ll_j \begin{dcases*}
\frac{1}{\tau^{j - 1}} & if $j \geq 2$ and $\tau \leq |y|$,	\\
\frac{|y|}{\tau^j} & if $j \geq 2$ and $\tau \geq |y|$.
\end{dcases*}\]
We insert a dyadic smooth partition of unity into the integral \eqref{eqn:II2mainterm}, dividing it into dyadic ranges of length $Z = 2^{k - 1} y_0$ with $k \in \N$. We then estimate each dyadic portion via the integration by parts estimate in \cite[Lemma 2]{BK19} with the parameters given by
\[Y = \begin{dcases*}
Z & if $Z \leq |y|$,	\\
|y| & if $Z \geq |y|$,
\end{dcases*} \quad R = \begin{dcases*}
1 & if $Z \leq |y|$,	\\
\frac{|y|}{Z} & if $Z \geq |y|$,
\end{dcases*} \quad X = \begin{dcases*}
|y|^{-A_1 - 2u + x} Z^{A_1 - \frac{1}{2}} & if $Z \leq |y|$,	\\
Z^{-2u + x - \frac{1}{2}} & if $Z \geq |y|$.
\end{dcases*}\]
and also $\beta - \alpha \asymp U = Q = Z$. We deduce that \eqref{eqn:II2mainterm} is
\begin{align*}
& \ll_{\mu_F,K} |y|^{-A_1 - 2u + x} \sum_{k \leq \log_2 \frac{|y|}{y_0}} (2^k y_0)^{A_1 - \frac{B}{4} + \frac{1}{2}} + |y|^{-\frac{B}{4}} \sum_{k > \log_2 \frac{|y|}{y_0}} (2^k y_0)^{-2u + x + \frac{1}{2}}	\\
& \ll_{\mu_F,K} |y|^{-\min\left\{A_1,\frac{B}{4} - \frac{1}{2}\right\} - 2u + x}.
\end{align*}
Taking $y_0 = |y|^{\e/(A_1 + 1/2)}$ with $A_1 = A + 1/2$ yields \eqref{eqn:HHpmwzdecay} for $y \leq -1$.

Next, we consider the case $y \geq 1$. We instead make the change of variables $s \mapsto 2s - 2w + 1$ in \eqref{eqn:HHwzdefeq}, so that
\[\HH_{\mu_F}^{+}(w,z) = \frac{1}{\pi i} \int_{\sigma_4 - i\infty}^{\sigma_4 + i\infty} \widehat{H}(2s - 2w + 1) \Gscr_{\mu_F}^{+}\left(\frac{1}{2} - s\right) G^{-}(s + z) \, ds,\]
where $-x < \sigma_4 < 1/2$. We shift the contour to the line $\Re(s) = -A_2$, where $A_2 < A$ is a large positive constant such that no poles of the integrand lie on this line. We pick up residues at $s = -z - \ell$ for $\ell \in \N_0$; by \eqref{eqn:Gscrmubounds} and \eqref{eqn:hatHbounds}, these are $O_{\mu_F,K} (y^{4A_2 - 2u + x} e^{-3\pi y})$. We again let $\Omega$ be as above and write the remaining integral as $\JJ_1 + \JJ_2$, where
\begin{align*}
\JJ_1 & \coloneqq \frac{1}{\pi i} \int_{-A_2 - i\infty}^{-A_2 + i\infty} \widehat{H}(2s - 2w + 1) \Gscr_{\mu_F}^{+}\left(\frac{1}{2} - s\right) G^{-}(s + z) \Omega\left(\frac{\tau}{y_0}\right) \, ds,	\\
\JJ_2 & \coloneqq \frac{1}{\pi i} \int_{-A_2 - i\infty}^{-A_2 + i\infty} \widehat{H}(2s - 2w + 1) \Gscr_{\mu_F}^{+}\left(\frac{1}{2} - s\right) G^{-}(s + z) \left(1 - \Omega\left(\frac{\tau}{y_0}\right)\right) \, ds,
\end{align*}
where $y_0 \coloneqq y^{\delta}$ with $\delta \in (0,1)$ a parameter to be chosen. By \eqref{eqn:Gscrmubounds}, \eqref{eqn:Gpmbounds}, and \eqref{eqn:hatHbounds}, we have that
\[\JJ_1 \ll_{\mu_F,K} y^{-A_2 - \frac{1}{2}} y_0^{A_2 - 2u + x + 1}.\]
For $\JJ_2$, we insert the asymptotic expansions \eqref{eqn:Gscrpmasymp}, \eqref{eqn:Gpmasymp}, and \eqref{eqn:hatHasymp}. The contribution from the error terms is $O_{\mu_F,K}(y^{-B - 2u + x + \frac{1}{2}})$. The main term is of the shape
\begin{equation}
\label{eqn:JJ2mainterm}
\int_{0}^{\infty} \tau^{A_2 - 2u + x} (\tau + y)^{-A_2 - \frac{1}{2}} e^{-i\Phi_y(\tau)} W_y(\tau) \left(1 - \Omega\left(\frac{\tau}{y_0}\right)\right) \, d\tau,
\end{equation}
where $W_y(\tau)$ is again a $1$-inert function and $\Phi_y(\tau)$ is as in \eqref{eqn:Phiytaudefeq}. The same integration by parts argument as before shows that \eqref{eqn:JJ2mainterm} is
\[\ll_{\mu_F,K} y^{-\min\left\{A_2,\frac{B}{4} + 2u - x - 1\right\} - \frac{1}{2}}.\]
Taking $y_0 = y^{\e/(A_2 - 2u + x + 1)}$ with $A_2 = A - \e$ yields \eqref{eqn:HHpmwzdecay} for $y \geq 1$.
\end{proof}

\section{The First Moment at the Central Point}
\label{sect:firstmomentcentral}

Our goal is to analytically continue the identity given in \hyperref[prop:absconv]{Proposition \ref*{prop:absconv}} to the central point $w = 1/2$. In order to do so, we first require the following second moment bounds.

\begin{lemma}[Cf.\ {\cite[Lemma 3.8]{HK25}}]
\label{lem:integralsecondmomentbounds}
Given a Dirichlet character $\psi$ modulo $q$, we have the bounds
\begin{align}
\label{eqn:Lspsi2ndmoment}
\int_{U}^{2U} |L(\sigma + it,\psi)|^2 \, dt & \ll_{q,\e} U^{1 + \e} \quad \text{for $\sigma \geq \tfrac{1}{2}$},	\\
\label{eqn:LsFpsi2ndmoment}
\int_{U}^{2U} |L(\sigma + it,F \otimes \psi)|^2 \, dt & \ll_{F,q,\e} \begin{dcases*}
U^{3(1 - \sigma) + \e} & if $\frac{1}{2} \leq \sigma \leq \frac{2}{3}$,	\\
U^{1 + \e} & if $\sigma \geq \frac{2}{3}$.
\end{dcases*}
\end{align}
\end{lemma}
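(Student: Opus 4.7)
The plan is to establish both estimates by a standard application of the approximate functional equation combined with the Montgomery--Vaughan mean value inequality, following the template of \cite[Lemma 3.8]{HK22}. Since $q$ is treated as fixed, all $q$-dependence may be absorbed into implicit constants, so no uniformity issues arise.

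For the $\GL_3 \times \GL_1$ bound \eqref{eqn:LsFpsi2ndmoment}, I would first invoke the approximate functional equation to express $L(\sigma + it, F \otimes \psi)$ in the form $S_1(s) + \gamma(s)\widetilde{S}_2(1 - s)$, where $S_1(s) = \sum_n A_F(1,n)\psi(n) V_1(n/X) n^{-s}$ is a smoothly truncated Dirichlet sum of length roughly $X$, the dual sum $\widetilde{S}_2(1-s)$ has the analogous shape with coefficients $A_F(n,1)\overline{\psi}(n)$ and length $Y$ satisfying $XY \asymp (qU)^3$, and by Stirling the gamma ratio satisfies $|\gamma(\sigma+it)| \ll (1+|t|)^{3(1/2 - \sigma)}$. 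I would then choose the balanced truncation $X = Y = (qU)^{3/2}$.

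Next, the Montgomery--Vaughan second moment bound gives
\[\int_U^{2U} |S_1(\sigma + it)|^2 \, dt \ll \sum_{n \leq X} \frac{|A_F(1,n)|^2}{n^{2\sigma}}(U + n).\]
The Rankin--Selberg estimate $\sum_{n \leq N} |A_F(1,n)|^2 \ll_F N$ together with partial summation reduces everything to elementary calculations: for $\sigma > 1/2$, the diagonal contribution is $O(U^{1+\e})$, while the off-diagonal error is $O(X^{2 - 2\sigma + \e}) = O(U^{3(1-\sigma) + \e})$. These two contributions coincide precisely at $\sigma = 2/3$, which produces the two regimes stated in \eqref{eqn:LsFpsi2ndmoment}: below this value the off-diagonal error dominates and gives $U^{3(1-\sigma)+\e}$, while above it the diagonal wins and gives $U^{1+\e}$. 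The dual sum $\widetilde{S}_2(1 - s)$ is treated symmetrically, where the pointwise bound $|\gamma(\sigma + it)|^2 \ll U^{3(1-2\sigma)}$ valid on $|t| \asymp U$ exactly compensates the inverted Dirichlet weight $n^{-(1-\sigma)}$, producing the same upper bound.

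Finally, the Dirichlet case \eqref{eqn:Lspsi2ndmoment} follows from the same template with the Hecke coefficients replaced by $1$ and with the balanced truncation $(qU)^{1/2}$ in place of $(qU)^{3/2}$; no transition in $\sigma$ appears, because even the worst case $\sigma = 1/2$ already yields the target bound $U^{1 + \e}$. The only mildly delicate step in the entire argument is the bookkeeping to check that the diagonal/off-diagonal crossover in \eqref{eqn:LsFpsi2ndmoment} lies precisely at $\sigma = 2/3$ and that the dual sum never strictly dominates; both are routine consequences of the Rankin--Selberg estimate and Stirling's formula, so I do not anticipate a genuine obstacle in any step of the argument.
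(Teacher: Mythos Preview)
Your proposal is correct and takes essentially the same approach as the paper, which simply cites the approximate functional equation \cite[Theorem 5.3]{IK04} and the Montgomery--Vaughan mean value theorem \cite[Corollary 3]{MV74} without giving further details. Your write-up is in fact more detailed than the paper's proof, correctly identifying the balanced truncation length, the Rankin--Selberg input, and the crossover at $\sigma = 2/3$.
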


Under the assumption of the generalised Lindel\"{o}f hypothesis, the bound \eqref{eqn:Lspsi2ndmoment} is essentially optimal, whereas \eqref{eqn:LsFpsi2ndmoment} falls shy of the conjecturally optimal upper bound $O_{F,q,\e}(U^{1 + \e})$ when $1/2 \leq \sigma < 2/3$.

\begin{proof}
This follows by using the approximate functional equation \cite[Theorem 5.3]{IK04} to write $L(\sigma + it,\psi)$ and $L(\sigma + it,F \otimes \psi)$ in terms of Dirichlet polynomials and then invoking the Montgomery--Vaughan mean value theorem for Dirichlet polynomials \cite[Corollary 3]{MV74}.
\end{proof}

We now prove a $\GL_3 \times \GL_2 \leftrightsquigarrow \GL_4 \times \GL_1$ spectral reciprocity formula for the central value $w = 1/2$. 

\begin{theorem}
\label{thm:central}
Let $F$ be a selfdual Hecke--Maa\ss{} cusp form for $\SL_3(\Z)$. Let $q = q_1 q_2$ be a positive integer with $(q_1,q_2) = 1$. Let $\chi_1$ be a primitive Dirichlet character of conductor $q_1$, and set $\chi \coloneqq \chi_1 \chi_{0(q_2)}$. Let $(h,h^{\hol})$ be admissible of type $(A,B)$ for some $A,B > 5$. Then the moment
\begin{multline}
\label{eqn:centralLHS}
\sum_{\substack{q' \mid q \\ q' \equiv 0 \hspace{-.25cm} \pmod{q_{\overline{\chi_1}^2}}}} \alpha(q,q',q_{\overline{\chi_1}^2}) \sum_{f \in \BB_0^{\ast}(q',\overline{\chi_1}^2)} \frac{L^q\left(\frac{1}{2},F \otimes f \otimes \chi_1\right)}{L^q(1,\ad f)} h(t_f)	\\
+ \sum_{\substack{q' \mid q \\ q' \equiv 0 \hspace{-.25cm} \pmod{q_{\overline{\chi_1}^2}}}} \alpha(q,q',q_{\overline{\chi_1}^2}) \sum_{\substack{\psi_1,\psi_2 \hspace{-.25cm} \pmod{q} \\ \psi_1 \psi_2 = \overline{\chi_1}^2 \\ q_{\psi_1} q_{\psi_2} = q'}} \frac{1}{2\pi} \int_{-\infty}^{\infty} \left|\frac{L^q\left(\frac{1}{2} + it,F \otimes \psi_1 \chi_1\right)}{L^q(1 + 2it,\psi_1\overline{\psi_2})}\right|^2 h(t) \, dt	\\
+ \sum_{\substack{q' \mid q \\ q' \equiv 0 \hspace{-.25cm} \pmod{q_{\overline{\chi_1}^2}}}} \alpha(q,q',q_{\overline{\chi_1}^2}) \sum_{f \in \BB_{\hol}^{\ast}(q',\overline{\chi_1}^2)} \frac{L^q\left(\frac{1}{2},F \otimes f \otimes \chi_1\right)}{L^q(1,\ad f)} h^{\hol}(k_f).
\end{multline}
is equal to the sum of the primary main term
\begin{equation}
\label{eqn:centralmain}
q L^q(1,F) \left(\frac{1}{2\pi^2} \int_{-\infty}^{\infty} h(r) r \tanh\pi r \, dr + \sum_{\substack{k = 2 \\ k \equiv 0 \hspace{-.25cm} \pmod{2}}}^{\infty} \frac{k - 1}{2\pi^2} h^{\hol}(k)\right),
\end{equation}
the secondary main term
\begin{equation}
\label{eqn:centralsecondmain}
\begin{dcases*}
\chi_1(-1) q_1 L^q(1,F) \sum_{\substack{k = 2 \\ k \equiv 0 \hspace{-.25cm} \pmod{2}}}^{\infty} \frac{k - 1}{2\pi^2} i^{-k} h^{\hol}(k) & if $q_2$ is squarefree,	\\
0 & otherwise,
\end{dcases*}
\end{equation}
and the dual moment
\begin{equation}
\label{eqn:centraldual}
\frac{1}{\varphi(q)} \sum_{\psi \hspace{-.25cm} \pmod{q}} \frac{1}{2\pi} \int_{-\infty}^{\infty} L\left(\frac{1}{2} + it, F \otimes \psi\right) L\left(\frac{1}{2} - it,\overline{\psi}\right) \ZZ_{\chi}\left(\psi;\frac{1}{2},it\right) \sum_{\pm} \psi(\mp 1) \HH_{\mu_F}^{\pm}\left(\frac{1}{2},it\right) \, dt,
\end{equation}
where $\ZZ_{\chi}(\psi;w,z)$ is as in \eqref{eqn:ZZchipsiwzdefeq} and $\HH_{\mu_F}^{\pm}(w,z)$ is as in \eqref{eqn:HHwzdefeq}.
\end{theorem}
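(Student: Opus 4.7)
The plan is to analytically continue the identity of \hyperref[prop:absconv]{Proposition \ref*{prop:absconv}}, originally valid for $7/4 < \Re(w) < 2$, to the central value $w = 1/2$. I will first check that the moment \eqref{eqn:absoluteLHS} on the left-hand side extends holomorphically to a neighborhood of $w = 1/2$: each $L^q$-factor is entire for cuspidal $F$, and the admissibility of $(h, h^{\hol})$ ensures absolute convergence of the spectral sums and integrals uniformly in $w$. The primary main term \eqref{eqn:absolutemain} is likewise entire in $w$ (since $L^q(2w, \widetilde{F})$ is entire), and its value at $w = 1/2$ yields \eqref{eqn:centralmain} after using $\widetilde{F} = F$.

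For the dual moment \eqref{eqn:absolutedual}, I proceed in two sub-steps. First, fixing $\Re(w) > 7/4$, I shift the $z$-contour from $\Re(z) = x_1 \in (1/2, 5/2 - \Re(w))$ down to $\Re(z) = 0$. The integrand is holomorphic in $z$ throughout this strip: $L(1/2 + z, \widetilde{F} \otimes \psi)$ is entire, $\HH_{\mu_F}^{\pm}(w, z)$ is holomorphic by \hyperref[lem:HHpmwzbounds]{Lemma \ref*{lem:HHpmwzbounds}}, $\ZZ_\chi(\psi; w, z)$ is absolutely convergent in $-1/7 < \Re(z) < 2\Re(w) - 1/2$ by the remark after \hyperref[lem:Voronoiidentity]{Lemma \ref*{lem:Voronoiidentity}}, and the pole of $L(2w - 1/2 - z, \psi_{0(q)})$ at $z = 2w - 3/2$ has real part $> 2$, far to the right. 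The decay bound \eqref{eqn:HHpmwzdecay} ensures absolute convergence of the shifted integral and sum over $\psi$. Second, I continue in $w$ from $\Re(w) > 7/4$ down to $w = 1/2$, keeping the $z$-contour fixed at $\Re(z) = 0$. The only $w$-dependent non-holomorphic feature of the integrand is the pole of $L(2w - 1/2 - z, \psi_{0(q)})$ at $z = 2w - 3/2$ (for $\psi = \psi_{0(q)}$), which crosses the contour $\Re(z) = 0$ as $\Re(w)$ passes through $3/4$. By Cauchy's theorem, the analytic continuation through $\Re(w) = 3/4$ equals \eqref{eqn:centraldual} at $w = 1/2$ plus a residue correction at $z = 2w - 3/2$ arising from the single $\psi = \psi_{0(q)}$ term.

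Identifying this residue correction with the secondary main term \eqref{eqn:centralsecondmain} will combine three ingredients: the elementary residue $-\varphi(q)/q$ of $L(2w - 1/2 - z, \psi_{0(q)})$ at $z = 2w - 3/2$; the evaluation of $\ZZ_\chi(\psi_{0(q)}; w, 2w - 3/2)$ near $w = 1/2$ via \hyperref[cor:ZZchipsi0hol]{Corollary \ref*{cor:ZZchipsi0hol}}, which produces the factor $\chi_1(-1) q_1^2 q_2 L_q(1, F)$ and the $q_2$-squarefree dichotomy; and the value of $(\HH_{\mu_F}^{+} + \HH_{\mu_F}^{-})(w, 2w - 3/2)$ at $w = 1/2$. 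The first two combine with $L^q(2w-1, \widetilde{F})$ to produce the complete $L$-function $L(2w-1, \widetilde{F})$, which via the functional equation of the selfdual $L$-function at $s = 0$ yields the clean factor $L^q(1, F)$ in \eqref{eqn:centralsecondmain} after cancellation of local gamma factors against local Euler factors at primes dividing $q$. Evaluating the remaining factor $(\HH_{\mu_F}^+ + \HH_{\mu_F}^-)(w, 2w-3/2)$ at $w = 1/2$ by a Mellin calculation from \eqref{eqn:HHwzdefeq} extracts the transform $\widehat{H}(1) = \sum_k \frac{(k-1) i^{-k}}{4\pi^2} h^{\hol}(k)$; the Maass test function $h$ contributes zero since $\int_0^\infty \JJ_r^+(x)\,dx = 0$ by symmetric cancellation in the Bessel kernel.

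The main technical obstacle will be this limit computation at $w = 1/2$. The integral representation \eqref{eqn:HHwzdefeq} degenerates on the diagonal $(w, z) = (1/2, -1/2)$: the admissible strip $1 - 2u - 2x < \sigma_2 < 2 - 2u$ for the $s$-contour collapses to a point at $s = 1$, where the pole of $G^{\mp}(s/2 + w - 1/2 + z)$ and the pole of $\Gscr_{\mu_F}^{\pm}(1 - s/2 - w)$ (the latter driven by $\mu_{F,2} = 0$, forced by selfduality of generic Maass cusp forms on $\SL_3(\Z)$) collide as $w \to 1/2$. Extracting the finite limit requires a careful pinched-contour residue argument, and tracking the gamma factors through this residue together with the functional equation of $L(s, F)$ at $s = 0$ will convert the apparent singularity into the stated normalisation $\chi_1(-1) q_1 L^q(1, F)$ in \eqref{eqn:centralsecondmain}.
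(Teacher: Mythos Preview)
Your approach is essentially the same as the paper's: analytically continue \hyperref[prop:absconv]{Proposition \ref*{prop:absconv}} to $w = 1/2$, with the moment and primary main term continuing directly, and the dual moment handled by shifting the $z$-contour to $\Re(z) = 0$ and tracking the residue at $z = 2w - 3/2$ from the $\psi = \psi_{0(q)}$ term as $\Re(w)$ passes through $3/4$.

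Two points where your write-up differs from the paper's execution. First, the decay \eqref{eqn:HHpmwzdecay} alone does not guarantee absolute convergence of the $z$-integral on $\Re(z)=0$ uniformly for $w$ in the strip: with $A,B > 5$ one only has $\HH_{\mu_F}^{\pm}(w,z) \ll (1+|y|)^{-5/4-\delta}$, and pointwise convexity bounds on the $L$-functions are too weak. The paper closes this via the second moment bounds of \hyperref[lem:integralsecondmomentbounds]{Lemma \ref*{lem:integralsecondmomentbounds}} and Cauchy--Schwarz over dyadic ranges, which you should invoke as well. Second, the secondary main term is less delicate than you anticipate. Rather than a pinched-contour argument at $(w,z)=(1/2,-1/2)$, the paper shifts the $s$-contour in \eqref{eqn:HHwzdefeq} (specialised to $z = 2w-3/2$) leftward to $\sigma_3 \in (2-6u,4-6u)$ \emph{before} letting $w \to 1/2$, extracting the single residue at $s = 4-6w$ from $G^{\mp}(s/2+3w-2)$. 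This splits the residue term into an integral piece multiplied by $L(2w-1,\widetilde{F})$, which vanishes at $w=1/2$ since $L(0,\widetilde{F})=0$ for selfdual $F$, and a residue piece $2\widehat{H}(4-6w)\sum_{\pm}\Gscr_{\mu_F}^{\pm}(2w-1)$, which after the functional equation and \hyperref[cor:ZZchipsi0hol]{Corollary \ref*{cor:ZZchipsi0hol}} gives \eqref{eqn:centralsecondmain} directly. The poles you worry about do not actually collide on the shifted contour.
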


For notational simplicity, we define $\ZZ_{\chi}(\psi;t) \coloneqq \ZZ_{\chi}(\psi;1/2,it)$, so that
\begin{multline}
\label{eqn:ZZchipsitdefeq}
\ZZ_{\chi}(\psi;t) \coloneqq \sum_{c_0 \mid q^{\infty}} \frac{1}{\varphi(c_0 q)^2 c_0^{\frac{1}{2} - it}} \sum_{\substack{c_{1,0} c_{2,0} d_0 n_{1,0} = q \\ (c_{2,0} d_0 n_{1,0},c_0) = 1}} \frac{\varphi(c_0 c_{1,0}) \varphi(c_0 c_{1,0} d_0 n_{1,0}) \mu(d_0) A_F(1,n_{1,0})}{c_{1,0}^{\frac{1}{2} - it} c_{2,0}^{2it} d_0^{1 + 2it} n_{1,0}^{\frac{1}{2} + it}}	\\
\times \sum_{\substack{n_{2,0} \mid q^{\infty} \\ (n_{2,0},c_0) = 1}} \frac{A_F(n_{2,0},1)}{n_{2,0}^{\frac{1}{2} + it}} \VV_{\chi}(\psi; c_{2,0} d_0 n_{1,0}, c_{2,0} d_0^2 n_{1,0} n_{2,0}, c_{2,0}, c_0)
\end{multline}
and $\HH_{\mu_F}^{\pm}(t) \coloneqq \HH_{\mu_F}^{\pm}(1/2,it)$, so that
\begin{equation}
\label{eqn:HHtdefeq}
\HH_{\mu_F}^{\pm}(t) \coloneqq \frac{1}{2\pi i} \int_{\sigma_2 - i\infty}^{\sigma_2 + i\infty} \widehat{H}(s) \Gscr_{\mu_F}^{\pm}\left(\frac{1 - s}{2}\right) G^{\mp}\left(\frac{s}{2} + it\right) \, ds,
\end{equation}
where $0 < \sigma_2 < 1$.

\begin{proof}
We analytically continue the identity given in \hyperref[prop:absconv]{Proposition \ref*{prop:absconv}} to $w = 1/2$. The analytic continuation of the $\GL_2$ moment of $\GL_3 \times \GL_2$ Rankin--Selberg $L$-functions \eqref{eqn:absoluteLHS} is straightforward: via the Cauchy--Schwarz inequality, the approximate functional equation for $L(w,F \otimes f \otimes \chi_1)$, and the Weyl law, these expressions converge absolutely provided that $h(t) \ll (1 + |t|)^{-5/2 - \delta}$ and $h^{\hol}(k) \ll k^{-5/2 - \delta}$ for some $\delta > 0$ (cf.\ \cite[Proposition 6.1 (1)]{HK25}). The ensuing expression is \eqref{eqn:centralLHS}. The analytic continuation of the primary main term \eqref{eqn:absolutemain} is simply \eqref{eqn:centralmain}.

For the analytic continuation of the $\GL_4 \times \GL_1$ moment \eqref{eqn:absolutedual}, we first shift the contour of integration to the line $\Re(z) = 0$. We then analytically continue this expression to $w = 1/2$, keeping $w = u + iv$ at all times inside a compact subset $K$ of the closed vertical strip $1/2 \leq u \leq 2$ containing $1/2$ (in particular, we assume that $v$ is bounded). To ensure that this process of analytic continuation is valid, we use the following two facts:
\begin{itemize}
\item $\HH_{\mu_F}^{\pm}(w,z)$ is holomorphic as a function of $w$ and $z$;
\item the integral over $z$ is absolutely convergent for all $w \in K$, in which it defines a holomorphic function of $w \in K$.
\end{itemize}
The former fact follows from \hyperref[lem:HHpmwzbounds]{Lemma \ref*{lem:HHpmwzbounds}}. For the latter fact, we break up the integral over $z$ into dyadic ranges and bound $\HH_{\mu_F}^{\pm}(w,z)$ pointwise over these ranges by \hyperref[lem:HHpmwzbounds]{Lemma \ref*{lem:HHpmwzbounds}}. The absolute convergence of this dyadic sum is ensured via the bounds
\begin{multline*}
\int_{U}^{2U} \sum_{\pm} \left|L\left(\frac{1}{2} + x \pm iy,F \otimes \psi\right) L\left(2w - \frac{1}{2} - x \mp iy,\overline{\psi}\right)\right| \, dy	\\
\ll_{F,q,K,\e} \begin{dcases*}
U^{\frac{5}{4} - \frac{3x}{2} + \e} & if $0 \leq x \leq \min\{\frac{1}{6},2u - 1\}$,	\\
U^{1 + \e} & if $\frac{1}{6} \leq x \leq 2u - 1$,
\end{dcases*}
\end{multline*}
which are immediate consequences of the Cauchy--Schwarz inequality coupled with \hyperref[lem:integralsecondmomentbounds]{Lemma \ref*{lem:integralsecondmomentbounds}}. 

There is one last subtlety. This shifting of the contour to $\Re(z) = 0$ means that once we subsequently analytically continue to $1/2 < u < 3/4$, the $\GL_4 \times \GL_1$ moment gives an additional term arising from the residue\footnote{To be more precise, fix $\e \in (0,\frac{1}{100})$. We shift the contour to $\Re(z) = 0$. The ensuing expression is holomorphic, as a function of $w$, for $3/4 < u < 2$. For $3/4 < u < 3/4 + \e$, we then shift the contour from $\Re(z) = 0$ to $\Re(z) = 3\e$, picking up a residue at $z = 2w - 3/2$. The ensuing expression is holomorphic, as a function of $w$, for $3/4 - \e < u < 3/4 + \e$. For $3/4 - \e < u < 3/4$, we finally shift the contour back to $\Re(z) = 0$, crossing no poles. The ensuing expression is holomorphic, as a function of $w$, for $1/2 < u < 3/4$.} at the pole at $z = 2w - 3/2$ of $L(2w - 1/2 - z,\overline{\psi})$ with $\psi = \psi_{0(q)}$, the principal character modulo $q$, which is
\[\frac{1}{q} L(2w - 1, \widetilde{F}) \frac{\ZZ_{\chi}\left(\psi_{0(q)};w,2w - \frac{3}{2}\right)}{L_q(2w - 1, \widetilde{F})} \sum_{\pm} \HH_{\mu_F}^{\pm}\left(w,2w - \frac{3}{2}\right).\]
From \eqref{eqn:HHwzdefeq},
\[\sum_{\pm} \HH_{\mu_F}^{\pm}\left(w,2w - \frac{3}{2}\right) = \frac{1}{2\pi i} \int_{\sigma_2 - i\infty}^{\sigma_2 + i\infty} \widehat{H}(s) \sum_{\pm} \Gscr_{\mu_F}^{\pm}\left(1 - \frac{s}{2} - w\right) G^{\mp}\left(\frac{s}{2} + 3w - 2\right) \, ds,\]
where $4 - 6u < \sigma_2 < 2 - 2u$. We shift the contour to the left to the line $\Re(s) = \sigma_3$ with $2 - 6u < \sigma_3 < 4 - 6u$, which picks up a residue at $s = 4 - 6w$ due to the poles of $G^{\mp}(s/2 + 3w - 2)$, so that
\begin{multline*}
\sum_{\pm} \HH_{\mu_F}^{\pm}\left(w,2w - \frac{3}{2}\right) = \frac{1}{2\pi i} \int_{\sigma_3 - i\infty}^{\sigma_3 + i\infty} \widehat{H}(s) \sum_{\pm} \Gscr_{\mu_F}^{\pm}\left(1 - \frac{s}{2} - w\right) G^{\mp}\left(\frac{s}{2} + 3w - 2\right) \, ds	\\
+ 2 \widehat{H}(4 - 6w) \sum_{\pm} \Gscr_{\mu_F}^{\pm}(2w - 1)
\end{multline*}
Via the functional equation $L(2 - 2w,F) = \sum_{\pm} \Gscr_{\mu_F}^{\pm}(2w - 1) L(2w - 1,\widetilde{F})$, we deduce that this secondary main term is
\begin{multline*}
\frac{1}{q} L(2w - 1, \widetilde{F}) \frac{\ZZ_{\chi}\left(\psi_{0(q)};w,2w - \frac{3}{2}\right)}{L_q(2w - 1, \widetilde{F})} \frac{1}{2\pi i} \int_{\sigma_3 - i\infty}^{\sigma_3 + i\infty} \widehat{H}(s) \sum_{\pm} \Gscr_{\mu_F}^{\pm}\left(1 - \frac{s}{2} - w\right) G^{\mp}\left(\frac{s}{2} + 3w - 2\right) \, ds	\\
+ \frac{2}{q} L(2 - 2w,F) \frac{\ZZ_{\chi}\left(\psi_{0(q)};w,2w - \frac{3}{2}\right)}{L_q(2w - 1, \widetilde{F})} \widehat{H}(4 - 6w).
\end{multline*}
From \hyperref[cor:ZZchipsi0hol]{Corollary \ref*{cor:ZZchipsi0hol}}, the first term above extends holomorphically to $w = 1/2$, where it vanishes, since $L(0,\widetilde{F}) = 0$ as $F$ is selfdual. The second term above extends holomorphically to $w = 1/2$, where it is equal to
\[\begin{dcases*}
2 \chi_1(-1) q_1 L^q(1,F) \widehat{H}(1) & if $q_2$ is squarefree,	\\
0 & otherwise.
\end{dcases*}\]
It remains to note that
\begin{align*}
\widehat{H}(1) & = \frac{1}{2\pi^2} \int_{-\infty}^{\infty} \widehat{\JJ_r^+}(1) h(r) r \tanh \pi r \, dr + \sum_{\substack{k = 2 \\ k \equiv 0 \hspace{-.25cm} \pmod{2}}}^{\infty} \frac{k - 1}{2\pi^2} \widehat{\JJ_k^{\hol}}(1) h^{\hol}(k)	\\
& = \sum_{\substack{k = 2 \\ k \equiv 0 \hspace{-.25cm} \pmod{2}}}^{\infty} \frac{k - 1}{4\pi^2} i^{-k} h^{\hol}(k)
\end{align*}
as from \cite[(3.13)]{BK19}, we have that
\begin{align*}
\widehat{\JJ_r^+}(s) & = (2\pi)^{-s} \Gamma\left(\frac{s}{2} + ir\right) \Gamma\left(\frac{s}{2} - ir\right) \cos \frac{\pi s}{2},	\\
\widehat{\JJ_k^{\hol}}(s) & = \pi i^{-k} (2\pi)^{-s} \frac{\Gamma\left(\frac{s + k - 1}{2}\right)}{\Gamma\left(\frac{1 - s + k}{2}\right)}.
\qedhere
\end{align*}
\end{proof}

With \hyperref[thm:central]{Theorem \ref*{thm:central}} in hand, we direct our attention towards proving \hyperref[thm:firstmomentbounds]{Theorem \ref*{thm:firstmomentbounds}}. The proof is given in \hyperref[sect:prooffirstmoment]{Section \ref*{sect:prooffirstmoment}} and relies upon first choosing a specific choice of tuple of test functions $(h,h^{\hol})$, described in \hyperref[sect:testfunctionsII]{Section \ref*{sect:testfunctionsII}}, and then proving upper bounds for the primary main term \eqref{eqn:centralmain}, the secondary main term \eqref{eqn:centralsecondmain}, and the dual moment \eqref{eqn:centraldual}, of which the latter requires the most work.

Our strategy towards bounding the dual moment \eqref{eqn:centraldual} involves first bounding pointwise both the quantity $\ZZ_{\chi}(\psi;t)$ defined in \eqref{eqn:ZZchipsitdefeq}, which we detail in \hyperref[sect:charsumsII]{Section \ref*{sect:charsumsII}}, and the quantity $\HH_{\mu_F}^{\pm}(t)$ defined in \eqref{eqn:HHtdefeq}, which we detail in \hyperref[sect:testfunctionsII]{Section \ref*{sect:testfunctionsII}}. We then break up the integral over $t \in \R$ in \eqref{eqn:centraldual} into dyadic ranges, invoke pointwise bounds for $\HH_{\mu_F}^{\pm}(t)$, and apply the Cauchy--Schwarz inequality. In this way, the problem is reduced to proving second moment bounds for $L(1/2 + it,F \otimes \psi)$ and $L(1/2 - it,\overline{\psi}) \ZZ_{\chi}(\psi;t)$, which we detail in \hyperref[sect:secondmomentbounds]{Section \ref*{sect:secondmomentbounds}}.

\section{Character Sums II}
\label{sect:charsumsII}

We first focus on the quantity $\ZZ_{\chi}(\psi;t)$ defined in \eqref{eqn:ZZchipsitdefeq}. From \hyperref[lemno:VVchimult]{Lemma \ref*{lem:VVchiproperties} \ref*{lemno:VVchimult}} and multiplicativity, we have the factorisation $\ZZ_{\chi}(\psi;t) = \prod_{p^{\beta} \parallel q} \widetilde{\ZZ}_{\chi_{p^{\beta}}}(\psi_{p^{\beta}};t)$, where we have factorised $\chi$ and $\psi$ as a product of Dirichlet characters $\chi_{p^{\beta}}$ and $\psi_{p^{\beta}}$ modulo $p^{\beta}$, with
\begin{multline}
\label{eqn:tildeZZchipsitdefeq}
\widetilde{\ZZ}_{\chi_{p^{\beta}}}(\psi_{p^{\beta}};t) \coloneqq \overline{\psi_{qp^{-\beta}}}(p^{\beta}) \sum_{c_0 \mid p^{\infty}} \frac{\overline{\psi_{qp^{-\beta}}}(c_0)}{\varphi(c_0 p^{\beta})^2 c_0^{\frac{1}{2} - it}}	\\
\times \sum_{\substack{c_{1,0} c_{2,0} d_0 n_{1,0} = p^{\beta} \\ (c_{2,0} d_0 n_{1,0},c_0) = 1}} \frac{\mu(d_0) \varphi(c_0 c_{1,0} d_0 n_{1,0}) \varphi(c_0 c_{1,0}) A_F(1,n_{1,0}) \psi_{qp^{-\beta}}(c_{2,0}^3 d_0^3 n_{1,0}^2)}{c_{1,0}^{\frac{1}{2} - it} c_{2,0}^{2it} d_0^{1 + 2it} n_{1,0}^{\frac{1}{2} + it}}	\\
\times \sum_{\substack{n_{2,0} \mid p^{\infty} \\ (n_{2,0},c_0) = 1}} \frac{A_F(n_{2,0},1) \psi_{qp^{-\beta}}(n_{2,0})}{n_{2,0}^{\frac{1}{2} + it}} \VV_{\chi_{p^{\beta}}}(\psi_{p^{\beta}}; c_{2,0} d_0 n_{1,0}, c_{2,0} d_0^2 n_{1,0} n_{2,0}, c_{2,0}, c_0).
\end{multline}
The bounds that we obtain for $\widetilde{\ZZ}_{\chi_{p^{\beta}}}(\psi_{p^{\beta}};t)$ depend delicately on the conductors of the Dirichlet characters $\chi_{p^{\beta}}$ and $\psi_{p^{\beta}}$.

\begin{lemma}\hspace{1em}
\label{lem:tildeZbounds}
\begin{enumerate}[leftmargin=*,label=\textup{(\arabic*)}]
\item\label{lemno:tildeZboundsprincprinc} Let $\chi_{0(p^{\beta})},\psi_{0(p^{\beta})}$ both be the principal character modulo $p^{\beta}$. Then $\widetilde{\ZZ}_{\chi_{0(p^{\beta})}}(\psi_{0(p^{\beta})};t) \ll p^{3\beta/2}$.
\item\label{lemno:tildeZboundsprincnonprinc} Let $\chi_{0(p^{\beta})}$ be the principal character modulo $p^{\beta}$ and let $\psi_{p^{\beta}}$ be a nonprincipal character modulo $p^{\beta}$. Then
\[\widetilde{\ZZ}_{\chi_{0(p^{\beta})}}(\psi_{p^{\beta}};t) = \begin{dcases*}
O(1) & if $\beta = 1$,	\\
0 & otherwise.
\end{dcases*}\]
\item\label{lemno:tildeZboundsprimarb} Let $\chi_{p^{\beta}}$ be a primitive character modulo $p^{\beta}$ and let $\psi_{p^{\beta}}$ be a Dirichlet character modulo $p^{\beta}$. Then
\[\widetilde{\ZZ}_{\chi_{p^{\beta}}}(\psi_{p^{\beta}};t) \ll \begin{dcases*}
|g(\chi_{p^{\beta}},\psi_{p^{\beta}})| & if $\psi_{p^{\beta}}$ is primitive,	\\
p^{\beta} & otherwise,
\end{dcases*}\]
where $g(\chi_{p^{\beta}},\psi_{p^{\beta}})$ is as in \eqref{eqn:gchipsidefeq}.
\end{enumerate}
\end{lemma}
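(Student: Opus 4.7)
The plan is to prove each of the three parts by direct substitution of the case-by-case values of $\VV_{\chi_{p^{\beta}}}(\psi_{p^{\beta}}; m_1, m_2, m_3, r)$ established in \hyperref[lem:VVchiprincprinc]{Lemmata \ref*{lem:VVchiprincprinc}}--\ref{lem:VVchiprimnonprinc} into the defining expression \eqref{eqn:tildeZZchipsitdefeq}. The key structural observation is that for each nonvanishing case of $\VV$, the parametrisation $m_1 = c_{2,0} d_0 n_{1,0}$, $m_2 = c_{2,0} d_0^2 n_{1,0} n_{2,0}$, $m_3 = c_{2,0}$, $r = c_0$, together with the constraint $c_{1,0} c_{2,0} d_0 n_{1,0} = p^{\beta}$ and the coprimality conditions $(c_{2,0} d_0 n_{1,0}, c_0) = 1$ and $(n_{2,0}, c_0) = 1$, pin down the admissible tuples $(c_0, c_{1,0}, c_{2,0}, d_0, n_{1,0}, n_{2,0})$ up to at most finitely many possibilities (sometimes with $n_{2,0}$ freely summed over $p^{\infty}$). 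For each surviving tuple, the ratio $\varphi(c_0 c_{1,0}) \varphi(c_0 c_{1,0} d_0 n_{1,0}) / \varphi(c_0 p^{\beta})^2$ collapses to an explicit power of $p$, and one multiplies by the known size of $\VV$ and the $p$-powers from $c_0^{1/2-it}$, $c_{1,0}^{1/2-it}$, $c_{2,0}^{2it}$, $d_0^{1+2it}$, $n_{1,0}^{1/2+it}$, and $n_{2,0}^{1/2+it}$ to read off the contribution.

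For part \ref{lemno:tildeZboundsprincprinc}, I would check each of the six nonvanishing cases of \hyperref[lem:VVchiprincprinc]{Lemma \ref*{lem:VVchiprincprinc}}. The dominant contributions turn out to come from $m_1 = m_2 = m_3 = 1$ with $p \mid r$ (forcing $c_{1,0} = p^{\beta}$ and $c_0 = p^k$ for $k \geq 1$, with $\VV \asymp p^{2\beta}$ and the $k$-sum geometric), and from $m_1 = m_2 = m_3 = r = 1$ with $c_{1,0} = p^{\beta}$; in both, cancellation gives size $p^{3\beta/2}$. The remaining cases, which live only for $\beta = 1$, each contribute $O(1)$. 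Part \ref{lemno:tildeZboundsprincnonprinc} is essentially immediate: \hyperref[lem:VVchiprincnonprinc]{Lemma \ref*{lem:VVchiprincnonprinc}} restricts us to the single tuple $\beta = 1$, $(c_0, c_{1,0}, c_{2,0}, d_0, n_{1,0}, n_{2,0}) = (1, p, 1, 1, 1, 1)$, and the $p^{1/2}$ from $|\tau(\overline{\psi_p})|$ is annihilated by the $p^{1/2}$ from $c_{1,0}^{1/2}$.

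Part \ref{lemno:tildeZboundsprimarb} is the most delicate. When $\psi_{p^{\beta}}$ is primitive, the only surviving tuple is again $c_{1,0} = p^{\beta}$ with all other indices equal to $1$; the $\varphi$-ratio equals $1$ and the factor $|\tau(\overline{\psi_{p^{\beta}}})| = p^{\beta/2}$ cancels exactly against $|c_{1,0}^{1/2-it}| = p^{\beta/2}$, leaving $|g(\chi_{p^{\beta}}, \psi_{p^{\beta}})|$ as claimed. When $\psi_{p^{\beta}}$ is principal, I would invoke the seven sub-cases of \hyperref[lem:VVchiprimprinc]{Lemma \ref*{lem:VVchiprimprinc}}; the largest contribution arises from $r = 1$ with $p^{\beta} \mid m_1, m_2, m_3$, which forces $c_{2,0} = p^{\beta}$ and leaves $n_{2,0} \mid p^{\infty}$ free (the $n_{2,0}$-sum converging by the Kim--Sarnak bound cited after \eqref{eqn:ZZchipsiwzdefeq}), together with the companion case $m_1 = m_2 = m_3 = 1$ with $p \mid r$; in both, $\VV \asymp p^{3\beta}$ is balanced by $\varphi(p^{\beta})^{-2} \asymp p^{-2\beta}$ and a further $p^{0}$ from the denominator to give $p^{\beta}$. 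When $\psi_{p^{\beta}}$ is nonprincipal and imprimitive of conductor $p^{\alpha}$ with $1 \leq \alpha < \beta$, the two cases of \hyperref[lem:VVchiprimnonprinc]{Lemma \ref*{lem:VVchiprimnonprinc}} yield $\VV$ of sizes $p^{2\beta - \alpha/2}$ and $p^{3\beta - 3\alpha/2}$, against which the combined $\varphi$- and arithmetic factors contribute exactly $p^{-\beta + \alpha/2}$ and $p^{-2\beta + \alpha/2}$ respectively, both again yielding $p^{\beta}$.

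The main obstacle is purely bookkeeping: correctly enumerating which tuples $(c_0, c_{1,0}, c_{2,0}, d_0, n_{1,0}, n_{2,0})$ are forced by each nonvanishing case of $\VV$, and then verifying that the powers of $p$ from $\varphi$-ratios, the denominators, and $\VV$ itself cancel to the stated exponent. Missing a single case in part \ref{lemno:tildeZboundsprimarb}, particularly among the imprimitive $\psi_{p^{\beta}}$ cases, could produce a spurious larger exponent, so I would work through all of them explicitly before collecting the final bound.
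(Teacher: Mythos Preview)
Your approach is correct and is exactly the paper's: insert the case-by-case values of $\VV$ from \hyperref[lem:VVchiprincprinc]{Lemmata \ref*{lem:VVchiprincprinc}}--\ref{lem:VVchiprimnonprinc} into \eqref{eqn:tildeZZchipsitdefeq}, read off which tuples $(c_0,c_{1,0},c_{2,0},d_0,n_{1,0},n_{2,0})$ survive, and balance the $\varphi$-ratios against the denominators. Two small numerical slips to fix when you write it up: in the imprimitive nonprincipal subcase of \ref{lemno:tildeZboundsprimarb} the arithmetic factor for the $c_{2,0}=p^{\beta-\alpha}$ tuple is $p^{-2\beta+3\alpha/2}$ (not $p^{-2\beta+\alpha/2}$), and in the principal subcase the companion tuple with $p^{\beta}\mid r=c_0$ has $\VV\asymp p^{2\beta}$ (not $p^{3\beta}$), with the missing $p^{\beta}$ of savings coming from $c_0^{-1/2}c_{1,0}^{-1/2}$ summed over $c_0=p^k$, $k\geq\beta$; in both instances your stated final bound $p^{\beta}$ is nonetheless correct.
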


\begin{proof}\hspace{1em}

\begin{enumerate}[leftmargin=*,label=\textup{(\arabic*)}]
\item The desired bound follows by a case-by-case analysis using the bounds in \hyperref[lem:VVchiprincprinc]{Lemma \ref*{lem:VVchiprincprinc}} for the character sum \eqref{eqn:VVchidefeq}; the main contribution comes from when $c_{1,0} = p^{\beta}$ and $c_0 = c_{2,0} = d_0 = n_{1,0} = n_{2,0} = 1$.
\item From \hyperref[lem:VVchiprincnonprinc]{Lemma \ref*{lem:VVchiprincnonprinc}}, the character sum \eqref{eqn:VVchidefeq} appearing in \eqref{eqn:tildeZZchipsitdefeq} vanishes unless $\beta = 1$, $c_{1,0} = p$, and $c_0 = c_{2,0} = d_0 = n_{1,0} = n_{2,0} = 1$, in which case
\[\widetilde{\ZZ}_{\chi_{0(p)}}(\psi_p;t) = \overline{\psi_{qp^{-1}}}(p) \overline{\tau(\psi_p)} p^{-\frac{3}{2} + it} (p + 1).\]
\item Suppose first that $\psi_{p^{\beta}}$ is primitive. By \hyperref[lem:VVchiprimnonprinc]{Lemma \ref*{lem:VVchiprimnonprinc}}, the only contribution is from when $c_{1,0} = p^{\beta}$ and $c_0 = c_{2,0} = d_0 = n_{1,0} = n_{2,0} = 1$, so that
\[\widetilde{\ZZ}_{\chi_{p^{\beta}}}(\psi_{p^{\beta}};t) = \chi_{p^{\beta}}(-1) \overline{\psi_{qp^{-\beta}}}(p^{\beta}) \tau(\overline{\psi_{p^{\beta}}}) p^{-\frac{\beta}{2} + \beta it} g(\chi_{p^{\beta}},\psi_{p^{\beta}}).\]

Next, suppose that $\psi_{p^{\beta}}$ is imprimitive but nonprincipal, so that $\beta \geq 2$ and the conductor of $\psi_{p^{\beta}}$ is $p^{\alpha}$ for some $\alpha \in \{1,\ldots,\beta - 1\}$. From \hyperref[lem:VVchiprimnonprinc]{Lemma \ref*{lem:VVchiprimnonprinc}}, the character sum vanishes unless either $c_0 = p^{\beta - \alpha}$, $c_{1,0} = p^{\beta}$, and $d_0 = n_{1,0} = n_{2,0} = 1$, or $c_{1,0} = p^{\alpha}$, $c_{2,0} = p^{\beta - \alpha}$, and $c_0 = d_0 = n_{1,0} = n_{2,0} = 1$. Both cases contribute terms of size $O(p^{\beta})$.

Finally, if $\psi_{p^{\beta}} = \psi_{0(p^{\beta})}$ is principal, the desired bound follows by a case-by-case analysis using the bounds in \hyperref[lem:VVchiprimprinc]{Lemma \ref*{lem:VVchiprimprinc}} for the character sum \eqref{eqn:VVchidefeq}.
\qedhere
\end{enumerate}
\end{proof}

We use \hyperref[lem:tildeZbounds]{Lemma \ref*{lem:tildeZbounds}} in conjunction with the factorisation $\ZZ_{\chi}(\psi;t) = \prod_{p^{\beta} \parallel q} \widetilde{\ZZ}_{\chi_{p^{\beta}}}(\psi_{p^{\beta}};t)$ to bound $\ZZ_{\chi}(\psi;t)$. These bounds depend on the factorisations of both $q$ and $\psi$.

\begin{corollary}
\label{cor:Zbounds}
Let $q = q_1 q_2$ be a positive integer with $(q_1,q_2) = 1$, and write $q_2 = q_3 q_4$ with $q_3$ squarefree, $q_4$ squarefull, and $(q_3,q_4) = 1$. Let $\chi_1$ be a primitive Dirichlet character of conductor $q_1$, and set $\chi \coloneqq \chi_1 \chi_{0(q_2)}$. We have that $\ZZ_{\chi}(\psi;t) = 0$ unless there exists a Dirichlet character $\psi_1$ modulo $q_1$ and a primitive Dirichlet character $\psi_3'$ modulo $q_3'$ for some divisor $q_3'$ of $q_3$ such that $\psi = \psi_1 \psi_3' \psi_{0(q_2/q_3')}$. Moreover,
\[\ZZ_{\chi}\left(\psi_1 \psi_3' \psi_{0(q_2/q_3')};t\right) \ll_{\varepsilon} q_1 \left(\frac{q_2}{q_3'}\right)^{\frac{3}{2}} q^{\e} \prod_{p^{\beta} \parallel q_1} \left(\delta_{\psi_{p^{\beta}}',\star} \frac{|g(\chi_{p^{\beta}},\psi_{p^{\beta}}')|}{p^{\beta}} + 1\right).\]
\end{corollary}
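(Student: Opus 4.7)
The plan is to leverage the multiplicative factorisation $\ZZ_\chi(\psi;t) = \prod_{p^\beta \parallel q} \widetilde{\ZZ}_{\chi_{p^\beta}}(\psi_{p^\beta};t)$ recorded at the start of \hyperref[sect:charsumsII]{Section \ref*{sect:charsumsII}} and to read off each local factor from \hyperref[lem:tildeZbounds]{Lemma \ref*{lem:tildeZbounds}}, organised according to whether $p^\beta \parallel q_1$ (so that $\chi_{p^\beta}$ is primitive), $p \mid q_3$ (so that $\beta = 1$ and $\chi_p$ is principal), or $p^\beta \parallel q_4$ (so that $\beta \geq 2$ and $\chi_{p^\beta}$ is principal).

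The vanishing criterion is immediate from \hyperref[lem:tildeZbounds]{Lemma \ref*{lem:tildeZbounds}~\ref*{lemno:tildeZboundsprincnonprinc}}: at each $p^\beta \parallel q_4$ the local factor vanishes unless $\psi_{p^\beta}$ is principal, whereas at primes dividing $q_1$ or $q_3$ every choice of local character yields a nonzero bound. Collecting the primes $p \mid q_3$ at which $\psi_p$ is nonprincipal (hence primitive modulo $p$, as $\beta = 1$) into a squarefree divisor $q_3' \mid q_3$, and amalgamating the principal components at the primes dividing $q_3/q_3'$ and $q_4$ into the principal character modulo $q_2/q_3'$, produces the factorisation $\psi = \psi_1 \psi_3' \psi_{0(q_2/q_3')}$ claimed in the statement, with $\psi_1$ arbitrary modulo $q_1$ and $\psi_3'$ primitive modulo $q_3'$.

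For the quantitative bound I would multiply the local estimates of \hyperref[lem:tildeZbounds]{Lemma \ref*{lem:tildeZbounds}}: part \ref{lemno:tildeZboundsprincprinc} gives $O(p^{3\beta/2})$ at each $p^\beta \parallel q_4$ and $O(p^{3/2})$ at each $p \mid q_3/q_3'$; part \ref{lemno:tildeZboundsprincnonprinc} gives $O(1)$ at each $p \mid q_3'$; and part \ref{lemno:tildeZboundsprimarb} is conveniently unified into the bound $\ll p^\beta\bigl(\delta_{\psi_{p^\beta}',\star}\,|g(\chi_{p^\beta},\psi_{p^\beta}')|/p^\beta + 1\bigr)$ at each $p^\beta \parallel q_1$, which majorises both the primitive and non-primitive subcases. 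Multiplying through, using $q_1 = \prod_{p^\beta \parallel q_1} p^\beta$ and $q_4 \cdot (q_3/q_3') = q_2/q_3'$, and absorbing the uniform constants at each prime via the divisor bound $d(q) \ll_\varepsilon q^\varepsilon$, yields the claimed estimate. The only real obstacle is the book-keeping required to align the local case distinctions with the decomposition $q = q_1 q_3 q_4$ and to verify the uniform $p^\beta \parallel q_1$ formula; past that, the bound is a direct consequence of \hyperref[lem:tildeZbounds]{Lemma \ref*{lem:tildeZbounds}} together with multiplicativity.
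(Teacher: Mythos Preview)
Your proposal is correct and is exactly the argument the paper has in mind: the corollary is stated immediately after \hyperref[lem:tildeZbounds]{Lemma \ref*{lem:tildeZbounds}} as a direct consequence of the multiplicative factorisation $\ZZ_\chi(\psi;t) = \prod_{p^\beta \parallel q} \widetilde{\ZZ}_{\chi_{p^\beta}}(\psi_{p^\beta};t)$, with the case split according to whether $p^\beta \parallel q_1$, $p \mid q_3$, or $p^\beta \parallel q_4$ and the local bounds read off from parts \ref{lemno:tildeZboundsprincprinc}--\ref{lemno:tildeZboundsprimarb}. Your bookkeeping (in particular the unified $q_1$-local bound and the absorption of per-prime constants into $q^\varepsilon$) matches the intended deduction.
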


Here $\delta_{\psi_{p^{\beta}}',\star}$ is $1$ if $\psi_{p^{\beta}}'$ is primitive and is $0$ otherwise.

\section{Test Functions and Transforms II}
\label{sect:testfunctionsII}

With \hyperref[thm:firstmomentbounds]{Theorem \ref*{thm:firstmomentbounds}} in mind, we now specify an explicit tuple of test functions $(h,h^{\hol})$, namely
\begin{equation}
\label{eqn:testfunctionchoice}
h(t) \coloneqq \prod_{n = 1}^{C} \left(\frac{t^2 + \left(n - \frac{1}{2}\right)^2}{T^2}\right) \left(\sum_{\pm} \exp\left(-\left(\frac{t \pm T}{U}\right)^2\right)\right)^2, \qquad h^{\hol}(k) \coloneqq \Omega\left(\frac{k - 1 - T}{U}\right).
\end{equation}
Here $T^{\e} \leq U \leq T^{1 - \e}$ for some fixed $\e > 0$, $C < \frac{T}{2} - U$ is a large fixed positive integer (for example, $C = 500$ suffices), while $\Omega$ is a fixed nonnegative smooth function equal to $1$ on $[-1,1]$, supported on $[-2,2]$, and having bounded derivatives. Note that we may view $h^{\hol}$ not just as a function on $2\N$ but as a compactly supported smooth function on $\R$. Our goal for this section is to bound the primary main term \eqref{eqn:centralmain}, the secondary main term \eqref{eqn:centralsecondmain}, and the transform $\HH_{\mu_F}^{\pm}(t)$ given by \eqref{eqn:HHtdefeq} with this choice of tuple of test funactions. The bounds that we obtain depend on $\e$, $C$, and the Langlands parameters $\mu_F = (2it_g,0,-2it_g)$ of the fixed selfdual Hecke--Maa\ss{} cusp form $F = \ad g$ for $\SL_3(\Z)$, where $g \in \BB_0(1,\chi_0)$; we suppress these dependencies from the notation throughout this section.

An immediate consequence of the definition \eqref{eqn:testfunctionchoice} is the following.

\begin{lemma}
\label{lem:testfunction}
The tuple $(h,h^{\hol})$ given by \eqref{eqn:testfunctionchoice} is admissible of type $(A,B)$ for any $A < C + 1/2$ and any $B \geq 0$. Moreover, $h$ is nonnegative on $\R \cup i(-\frac{1}{2},\frac{1}{2})$ and $h(t) \asymp 1$ for $t \in [-T - U,-T + U] \cup [T - U,T + U]$. Similarly, $h^{\hol}$ is nonnegative and $h^{\hol}(k) \asymp 1$ for $k \in 2\N \cap [T - U,T + U]$. Finally,
\begin{equation}
\label{eqn:maintermbounds}
\begin{drcases*}
\frac{1}{2\pi^2} \int_{-\infty}^{\infty} h(r) r \tanh\pi r \, dr \\
\sum_{\substack{k = 2 \\ k \equiv 0 \hspace{-.25cm} \pmod{2}}}^{\infty} \frac{k - 1}{2\pi^2} h^{\hol}(k)
\end{drcases*} \ll TU.
\end{equation}
\end{lemma}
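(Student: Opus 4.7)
The plan is to verify each assertion of the lemma in turn by direct computation with the explicit formula \eqref{eqn:testfunctionchoice}.

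For admissibility, I would decompose $H = \Kscr h + \Kscr^{\hol} h^{\hol}$ and check that each summand fits the relevant structural definition. The function $h$ is manifestly even and entire, and the polynomial prefactor $\prod_{n = 1}^C (t^2 + (n - 1/2)^2)/T^2$ vanishes to first order at $t = \pm (n - 1/2)i$ for $n = 1, \ldots, C$, supplying the zeros at $\pm (m + 1/2)i$ for $0 \leq m \leq \lfloor A - 1/2 \rfloor$ whenever $A < C + 1/2$. The Gaussian factor provides decay faster than any polynomial in every horizontal strip of bounded height, dominating the polynomial growth and yielding $h(t) \ll (1 + |t|)^{-B - 2}$ for any $B$. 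For $\Kscr^{\hol} h^{\hol}$, the compact support of $h^{\hol}$ on $[T - 2U + 1, T + 2U + 1]$ reduces the defining series to a finite sum, so choosing $a, b \in \N$ with $\lceil A \rceil \leq a - b < T - 2U$ (valid since $U \leq T^{1 - \e}$) places all nonzero coefficients in the range $k > a - b$, matching type (2).

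Next I would dispose of the nonnegativity and pointwise-size claims by inspection. On $\R$ every factor of $h$ is nonnegative real. On $t = is$ with $s \in (-1/2, 1/2)$, each polynomial factor evaluates to $(n - 1/2)^2 - s^2 > 0$, and a direct computation gives
\[\sum_{\pm} \exp\left(-\left(\frac{is \pm T}{U}\right)^2\right) = 2 \exp\left(\frac{s^2 - T^2}{U^2}\right) \cos\left(\frac{2sT}{U^2}\right) \in \R,\]
so its square is nonnegative. For $t \in [T - U, T + U]$ (and symmetrically on $[-T - U, -T + U]$), the constraint $|t| \asymp T$ forces each polynomial factor into a fixed compact subinterval of $(0, \infty)$, the term $\exp(-((t - T)/U)^2)$ lies in $[e^{-1}, 1]$, and the companion term $\exp(-((t + T)/U)^2)$ is super-polynomially small since $T/U \geq T^{\e}$, so $h(t) \asymp 1$. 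For $h^{\hol}$, nonnegativity is inherited from $\Omega$; for $k \in 2\N \cap [T - U, T + U]$ the argument $(k - 1 - T)/U$ lies in $[-1 - 1/U, 1 - 1/U]$, and smoothness of $\Omega$ together with $\Omega \equiv 1$ on $[-1, 1]$ gives $h^{\hol}(k) \asymp 1$.

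Finally, the main term bounds \eqref{eqn:maintermbounds} follow by splitting each quantity according to the essential support of the test function. The Gaussian factor confines the integrand of $\int h(r) r \tanh \pi r \, dr$, up to super-polynomially small error, to $|r \mp T| \lesssim U$, on which $|h(r)| = O(1)$ and $|r \tanh \pi r| = O(T)$, giving the total $O(TU)$. The sum over $k$ has $O(U)$ nonzero even terms in $[T - 2U + 1, T + 2U + 1]$, each of size $O(T)$, again totalling $O(TU)$. The only mild obstacle lies in the admissibility of $\Kscr^{\hol} h^{\hol}$, where one must accept that the implicit constants depend heavily on $T$; this is harmless, since the use of admissibility in \hyperref[lem:HHpmwzbounds]{Lemma \ref*{lem:HHpmwzbounds}} is purely structural, while quantitatively sharp bounds on $\HH_{\mu_F}^{\pm}$ for the specific choice \eqref{eqn:testfunctionchoice} are to be obtained by a dedicated stationary phase analysis later in \hyperref[sect:testfunctionsII]{Section \ref*{sect:testfunctionsII}}.
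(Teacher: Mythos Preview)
Your proposal is correct and follows the same approach as the paper, which simply states that the lemma is ``an immediate consequence of the definition \eqref{eqn:testfunctionchoice}'' without further elaboration. Your verification is a faithful unpacking of that claim: you check type-(1) admissibility for $\Kscr h$ via the explicit zeros and Gaussian decay, type-(2) admissibility for $\Kscr^{\hol} h^{\hol}$ via compact support and the constraint $C < T - 2U$, and the remaining pointwise and main-term assertions by direct inspection, all of which is exactly what ``immediate'' encodes here.
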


We turn to the issue of bounding $\HH_{\mu_F}^{\pm}(t)$. We shall prove the following.

\begin{lemma}
\label{lem:HHpmtbounds}
Let $(h,h^{\hol})$ be the tuple of test functions given by \eqref{eqn:testfunctionchoice}. Then the transform $\HH_{\mu_F}^{\pm}(t)$ given by \eqref{eqn:HHtdefeq} satisfies
\begin{equation}
\label{eqn:HHpmbounds}
\HH_{\mu_F}^{\pm}(t) \ll \begin{dcases*}
U & if $|t| \leq \frac{T}{U}$,	\\
U \left(\frac{U|t|}{T}\right)^{-C} & if $|t| \geq \frac{T}{U}$.
\end{dcases*}
\end{equation}
\end{lemma}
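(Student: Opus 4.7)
The plan is to unfold the definition \eqref{eqn:HHtdefeq} by substituting the explicit Mellin transforms $\widehat{\JJ_r^+}(s) = (2\pi)^{-s}\Gamma(s/2+ir)\Gamma(s/2-ir)\cos(\pi s/2)$ and $\widehat{\JJ_k^{\hol}}(s) = \pi i^{-k}(2\pi)^{-s}\Gamma((s+k-1)/2)/\Gamma((1-s+k)/2)$ into the identity
\[\widehat{H}(s) = \frac{1}{2\pi^2}\int_{-\infty}^{\infty}\widehat{\JJ_r^+}(s)\, h(r)\, r\tanh\pi r\, dr + \sum_{\substack{k = 2 \\ k \equiv 0 \hspace{-.25cm} \pmod{2}}}^{\infty} \frac{k-1}{2\pi^2}\,\widehat{\JJ_k^{\hol}}(s)\, h^{\hol}(k).\]
After interchanging the order of integration/summation (which is justified by rapid convergence from $h, h^{\hol}$), this expresses $\HH_{\mu_F}^{\pm}(t)$ as a sum of (i) a double integral over $(s, r)$ for the Kuznetsov piece, and (ii) a $k$-indexed integral over $s$ for the Petersson piece.

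Next I would apply Stirling's formula to every Gamma factor: \eqref{eqn:Gpmasymp}, \eqref{eqn:Gscrpmasymp}, and the analogous expansions for the Gamma functions appearing in $\widehat{\JJ_r^+}(s)$ and $\widehat{\JJ_k^{\hol}}(s)$. After combining exponential factors and verifying that the $\pm$ choices align so as to cancel the decaying $e^{\mp \pi \Omega}$ factors (this is where the $\pm$ signs in $\HH^{\pm}$ interact with the $\cos(\pi s/2)$ or the $\Gamma$-quotient), the integrand takes the shape of an oscillatory integral with inert amplitude and an explicit phase of the form
\[\Phi^{\pm}(\tau, r; t) = \tfrac{\tau}{2}\log\tfrac{\tau/2 + r}{2\pi e} + \tfrac{\tau}{2}\log\tfrac{\tau/2 - r}{2\pi e} - 3\,\tfrac{\tau}{2}\log\tfrac{\tau/2}{2\pi e} \pm \left(\tfrac{\tau}{2} + t\right)\log\tfrac{|\tau/2 + t|}{2\pi e}\]
up to lower-order terms, with $\tau = \Im s$; the Petersson part gives an analogous phase with $r$ replaced by the sum index $k/2$.

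The analysis then reduces to a two-step stationary phase in the variables $\tau$ and $r$ (or $k$). First, with $r$ and $t$ fixed, one solves $\partial_\tau \Phi^\pm = 0$ for $\tau^* = \tau^*(r, t)$; for $r \asymp T$ one verifies $|\tau^*| \asymp T$ as well, and the Hessian gives a stationary phase contribution of size $T^{-1/2}$ that precisely balances the Stirling-amplitude powers of $T$. Substituting back produces a secondary phase $\Psi^{\pm}(r; t)$ whose derivative $\partial_r \Psi^\pm$ vanishes at some $r^*(t)$ depending analytically on $t$ at the critical scale. Solving the critical-point equation shows that $r^*(t) \in [T-U, T+U]$ exactly when $|t| \lesssim T/U$, in which case stationary phase in $r$ against the bump $h$ (of height $\asymp 1$ and width $\asymp U$) yields the main-term bound $\HH^\pm(t) \ll U$. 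When $|t| \gtrsim T/U$ the critical point escapes the support of $h$, and repeated integration by parts in $r$ produces decay at rate $(U|t|/T)^{-1}$ per step; one can iterate this up to $C$ times because the zeros of $h$ at $\pm(n-1/2)i$ for $n = 1, \ldots, C$ ensure admissibility of type $(A,B)$ with $A = C + 1/2$, which precisely controls how far we may shift the $s$-contour before meeting a pole. The Petersson component is handled identically, with the sum over even $k$ treated by Poisson summation (or by viewing $h^{\hol}$ as an inert bump on $\R$).

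The main obstacle is the careful bookkeeping in the two-variable stationary phase: verifying that the $\pm$ branches in $\Gscr_{\mu_F}^{\pm}$, $G^{\mp}$, and $\cos(\pi s/2)$ combine so that the would-be exponentially large contributions cancel, and that the phases genuinely admit a nondegenerate critical point in the relevant range, with the transition occurring sharply at $|t| \asymp T/U$. A secondary technical point is handling the error terms from Stirling uniformly in all parameters so that they do not swamp the claimed bound; this relies on the strong control $|\tau|^m j_\sigma^{(m)}(\tau) \ll 1$ from \eqref{eqn:hatHasymp} and the smoothness of $h, h^{\hol}$ encoded in admissibility.
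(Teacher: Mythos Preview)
Your approach---a direct two-variable stationary phase in $(\tau,r)$ after inserting Stirling everywhere---is a genuinely different route from the paper's.  The paper instead first applies the Fourier transform in $r$ to introduce an auxiliary variable $u$ (so that the $r$-dependence $\Gamma(s/2+ir)\Gamma(s/2-ir)$ becomes simply $(\cosh^2\pi u)^{-s}$ via the beta integral), and then evaluates the $s$-integral \emph{exactly} by residues, obtaining a closed expression in terms of hypergeometric functions $\prescript{}{2}{F}_1(\frac12\pm 2it_g,\frac12\mp 2it_g;1\mp it;-\sinh^2\pi u)$.  This reduces the problem to a single one-variable oscillatory integral in $u$ with phase $\Phi_\pm(u)=\pm 2\pi Tu/U - t\log\tanh^2(\pi u/U)$, whose stationary point $u_{0,\pm}\asymp U|t|/T$ lies inside the essential support of the Schwartz weight $g_\pm$ precisely when $|t|\lesssim T/U$; the decay $(U|t|/T)^{-N}$ for larger $|t|$ then comes from the Schwartz decay of $g_\pm$ at the stationary point, not from contour shifts.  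The paper treats the regime $|t|\geq TU$ separately by the method of \hyperref[lem:HHpmwzbounds]{Lemma~\ref*{lem:HHpmwzbounds}}, and \emph{only there} does the limitation to exponent $-C$ arise, from the contour shift in $s$ to $\Re(s)=-C+\tfrac14$.

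Your sketch is plausible in outline but has two concrete problems.  First, the phase you write down is wrong: Stirling on $\Gamma(s/2\pm ir)$ contributes $(\tau/2\pm r)\log\frac{|\tau/2\pm r|}{e}$, not $\frac{\tau}{2}\log\frac{\tau/2\pm r}{2\pi e}$; with your coefficients the critical-point equations do not close up correctly.  Second, your explanation of why the decay caps at $(U|t|/T)^{-C}$ conflates two unrelated mechanisms: you say integration by parts in $r$ is limited to $C$ steps because the zeros of $h$ ``control how far we may shift the $s$-contour,'' but $h$ is $C^\infty$ in $r$, so integration by parts in $r$ can be iterated indefinitely, and the zeros of $h$ at $\pm(n-\tfrac12)i$ govern the $s$-contour, which is a different variable.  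In fact, for $T/U\leq|t|\leq TU$ the true bound is $(U|t|/T)^{-N}$ for any $N$; the $-C$ is only the bottleneck in the far range $|t|\geq TU$, which your outline does not isolate.  A direct $(\tau,r)$-stationary-phase proof can be made to work, but the paper's Fourier-plus-hypergeometric reduction sidesteps all of these uniformity issues and is substantially cleaner.
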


Our first step is to rewrite the Mellin transform $\widehat{H}(s)$ as a sum of integrals involving certain functions $(\Fscr h)(u), (\Fscr^{\hol} h^{\hol})(u)$ defined in the following lemma.

\begin{lemma}
Let $(h,h^{\hol})$ be the tuple \eqref{eqn:testfunctionchoice}, and for $u \in \R$, define
\begin{align}
\label{eqn:Fscrhdefeq}
(\Fscr h)(u) & \coloneqq \int_{-\infty}^{\infty} h(r) r \tanh \pi r \, e(-ru) \, dr,	\\
\label{eqn:Fscrholhholdefeq}
(\Fscr^{\hol} h^{\hol})(u) & \coloneqq -2 \int_{-\infty}^{\infty} h^{\hol}(2r + 1) r e(-ru) \, dr.
\end{align}
Then there exist smooth functions $g_{+},g_{-},g_{\hol} : \R \to \C$ satisfying
\begin{equation}
\label{eqn:gdecay}
\begin{drcases*}
g_{+}^{(j)}(u) \\
g_{-}^{(j)}(u) \\
g_{\hol}^{(j)}(u)
\end{drcases*} \ll_{j,N} \begin{dcases*}
1 & if $|u| \leq 1$,	\\
|u|^{-N} & if $|u| \geq 1$
\end{dcases*}
\end{equation}
for all $j \in \N_0$ and $N \geq 0$ such that
\begin{align}
\label{eqn:Fscrhdefeq2}
(\Fscr h)(u) & = \sum_{\pm} TU e(\pm Tu) g_{\pm}(Uu) (\sech \pi u)^{2C + \frac{1}{2}},	\\
\label{eqn:Fscrholhholdefeq2}
(\Fscr^{\hol} h^{\hol})(u) & = TU e\left(-\frac{Tu}{2}\right) g_{\hol}(Uu).
\end{align}
Moreover, we have that
\begin{equation}
\label{eqn:FscrFourierinversion}
\int_{-\infty}^{\infty} (\Fscr h)(u) e(i\ell u) \, du = \int_{-\infty}^{\infty} (\Fscr^{\hol} h^{\hol})(u) e(\ell u) \, du = 0
\end{equation}
for all $\ell \in \{-C,\ldots,C\}$.
\end{lemma}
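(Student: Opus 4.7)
The holomorphic identity is the easier of the two. Substituting the definition of $h^{\hol}$ into \eqref{eqn:Fscrholhholdefeq} and applying the change of variables $r = (T + Us)/2$ yields directly
\[(\Fscr^{\hol} h^{\hol})(u) = TU e(-Tu/2) g_{\hol}(Uu), \qquad g_{\hol}(x) := -\tfrac{1}{2}\int \Omega(s)(1 + Us/T) e(-sx/2)\,ds.\]
As the Fourier transform of a smooth compactly supported function with derivatives uniformly bounded in $U, T$ (using $U/T \leq 1$), $g_{\hol}$ is Schwartz, and the bounds \eqref{eqn:gdecay} follow by repeated integration by parts in $s$.

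For $(\Fscr h)(u)$, expand the squared Gaussian factor in $h$:
\[\Bigl(\sum_{\pm}\exp(-((r\pm T)/U)^2)\Bigr)^2 = \sum_{\pm}\exp(-2(r\mp T)^2/U^2) + 2e^{-2T^2/U^2}\exp(-2r^2/U^2).\]
Since $U \leq T^{1-\e}$, the cross term contributes a super-polynomially small amount in $T$ and may be absorbed into either $g_+$ or $g_-$. For each of the two remaining Gaussian pieces, the substitution $r = \pm T + Us$ extracts a prefactor $TU e(\mp Tu)$ and leaves the Fourier transform at $Uu$ of
\[\varphi_{\pm}(s) := P(\pm T + Us)\bigl(1 \pm \tfrac{Us}{T}\bigr)\tanh\pi(\pm T + Us)\, e^{-2s^2}.\]
Because $\tanh\pi(\pm T + Us) = \pm 1 + O(e^{-\pi T})$ for $|s| \leq T^\e$, while the Gaussian suppresses larger $|s|$, we may replace the $\tanh$-factor by $\pm 1$ at the cost of a super-polynomially small error. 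Completing the square then expresses $\hat\varphi_{\pm}(y) = \pm e^{-\pi^2 y^2/2} R_{\pm}(y)$, with $R_{\pm}$ a polynomial of degree $2C + 1$ whose coefficients are bounded uniformly in $U, T$.

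Set $g_{\pm}(x) := \hat\varphi_{\pm}(x)(\cosh\pi(x/U))^{2C+1/2}$ (modulo the absorbed super-polynomially small terms), so that the target identity for $(\Fscr h)(u)$ holds. For $|x| \leq 1$, $\cosh\pi(x/U) = 1 + O(1/U^2)$ and $\hat\varphi_{\pm}$ is bounded, so $g_{\pm}^{(j)}(x) \ll_j 1$ by the Leibniz rule. For $|x| \geq 1$ set $y := |x|/U$: when $y \leq 1$, $(\cosh\pi(x/U))^{2C+1/2} = O(1)$ and $\hat\varphi_{\pm}(x) \ll |x|^{-N}$ by the Gaussian-times-polynomial decay, while when $y \geq 1$ the exponent $-\pi^2 U^2 y^2/2 + \pi(2C+1/2) y$ is bounded above by $-\pi^2 U^2 y^2/4$ once $U$ exceeds an absolute constant depending on $C$, again giving $g_{\pm}(x) \ll |x|^{-N}$ for every $N$. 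The derivatives are controlled analogously by differentiating $R_{\pm}$ and applying the chain rule to $\cosh\pi(x/U)$.

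Finally, the vanishing \eqref{eqn:FscrFourierinversion} is a consequence of Fourier inversion. The function $F(r) := \int (\Fscr h)(u) e(ru)\,du$ converges absolutely and is holomorphic in the strip $|\Im r| < C + 1/4$, thanks to the $(\sech\pi u)^{2C+1/2}$ decay of $(\Fscr h)(u)$ already established, and agrees with $h(r)\cdot r\tanh\pi r$ on $\R$, hence throughout this strip. Setting $r = i\ell$ for integer $|\ell| \leq C$ gives
\[\int (\Fscr h)(u) e(i\ell u)\,du = h(i\ell)\cdot i\ell\cdot\tanh(i\pi\ell) = -\ell\tan(\pi\ell)\, h(i\ell) = 0\]
since $\tan(\pi\ell) = 0$ for integer $\ell$. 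Similarly, Fourier inversion yields $\int (\Fscr^{\hol} h^{\hol})(u) e(\ell u)\,du = -2\ell\,h^{\hol}(2\ell + 1) = -2\ell\,\Omega((2\ell - T)/U)$, which vanishes since either $\ell = 0$, or $|2\ell - T| > 2U$ (using $|\ell| \leq C < T - 2U$) places the argument outside the support of $\Omega$. The main technical hurdle is the third paragraph: tracking the delicate balance between the Gaussian decay of $\hat\varphi_{\pm}$ and the exponential growth introduced by dividing out $(\sech\pi u)^{2C+1/2}$, which hinges on the separation $U \leq T^{1-\e}$.
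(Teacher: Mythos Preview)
Your treatment of $(\Fscr^{\hol}h^{\hol})$ and of the vanishing \eqref{eqn:FscrFourierinversion} is correct and matches the paper. For $(\Fscr h)(u)$, however, there is a genuine gap in how you handle the error terms.

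The paper never approximates $\tanh$. It shifts the $r$-contour to $\Im r = -\sgn(u)(C+\tfrac14)$, which is permitted because the zeros of $\prod_{n=1}^{C}(r^2+(n-\tfrac12)^2)$ in $h$ cancel the first $C$ poles of $\tanh\pi r$ on each side of the real axis. This shift \emph{produces} the factor $e^{-2\pi(C+1/4)|u|}\asymp(\sech\pi u)^{2C+1/2}$ exactly; after substituting $r=\pm T+Us$ the remaining integrand is a Schwartz-type function of $s$ with uniformly bounded derivatives, and its Fourier transform is the desired $g_\pm$. Your route instead works on the real line, replaces $\tanh$ by $\pm1$, and then artificially multiplies by $(\cosh\pi(x/U))^{2C+1/2}$ to force the identity. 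For the main Gaussian-times-polynomial piece your exponent bookkeeping is fine, since $e^{-\pi^2 x^2/2}$ comfortably dominates $e^{\pi(2C+1/2)|x|/U}$. The problem is the errors: both the cross term and the $\tanh$-replacement contribute to $\hat\varphi_\pm(x)$ an amount that is $O(e^{-cT^{2\e}})$ \emph{uniformly in $x$}, but this says nothing about decay in $x$. After multiplication by $(\cosh\pi(x/U))^{2C+1/2}\asymp e^{\pi(2C+1/2)|x|/U}$, the resulting contribution to $g_\pm(x)$ is unbounded once $|x|\gg UT^{2\e}$, and integrating by parts on the real line only gains factors of $(U/|x|)^N$ (the $U$ arising from $\tfrac{d}{ds}\tanh\pi(\pm T+Us)$), which cannot beat the exponential. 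To obtain the decay rate $e^{-2\pi(C+1/4)|x|/U}$ needed to balance the $\cosh$, you must exploit the holomorphy of $P(\pm T+Us)\tanh\pi(\pm T+Us)$ in the strip $|\Im s|<(C+\tfrac12)/U$ --- that is, perform a contour shift --- which is precisely the paper's argument applied to the error rather than avoided.
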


\begin{proof}
From the definitions \eqref{eqn:testfunctionchoice} of $h^{\hol}$ and \eqref{eqn:Fscrholhholdefeq} of $(\Fscr^{\hol} h^{\hol})(u)$, we have that
\begin{align*}
(\Fscr^{\hol} h^{\hol})(u) & = TU e\left(-\frac{Tu}{2}\right) g_{\hol}(Uu),	\\
g_{\hol}(u) & \coloneqq -\frac{1}{2} \int_{-\infty}^{\infty} \Omega(r) e\left(-\frac{ru}{2}\right) \, dr + \frac{U}{2\pi i T} \frac{\dee}{\dee u} \int_{-\infty}^{\infty} \Omega(r) e\left(-\frac{ru}{2}\right) \, dr.
\end{align*}
The desired bound \eqref{eqn:gdecay} for $g_{\hol}$ then follows by repeated differentiation under the integral sign and integration by parts, noting that $\Omega$ is smooth and compactly supported. The identity \eqref{eqn:FscrFourierinversion} for $(\Fscr^{\hol} h^{\hol})(u)$ holds by the Fourier inversion formula as $h^{\hol}(2\ell + 1) = 0$ for $\ell \in \{-C,\ldots,C\}$. 

The analogous result for $g_{+}$ and $g_{-}$ follow similarly via the definitions \eqref{eqn:testfunctionchoice} of $h$ and \eqref{eqn:Fscrhdefeq} of $(\Fscr h)(u)$. In this case, we additionally obtain a factor of $(\sech \pi u)^{2C + 1/2}$ in the identity \eqref{eqn:Fscrhdefeq2} by making the change of variables $r \mapsto r - i\sgn(u)(C + \frac{1}{4})$ in \eqref{eqn:Fscrhdefeq} and then shifting the contour of integration back to $\Im(r) = 0$, noting that the zeroes of $h(r)$ cancel out the poles of $\tanh \pi r$. Finally, the identity \eqref{eqn:FscrFourierinversion} for $(\Fscr h)(u)$ once more holds by the Fourier inversion formula as $\tanh \pi i \ell = 0$ for $\ell \in \{-C,\ldots,C\}$.
\end{proof}

\begin{lemma}
Let $(h,h^{\hol})$ be the tuple \eqref{eqn:testfunctionchoice} and define
\begin{align}
\label{eqn:Dscrhdefeq}
(\Dscr h)(s) & \coloneqq \int_{-\infty}^{\infty} (\Fscr h)(u) (\cosh^2 \pi u)^{-s} \, du,	\\
\label{eqn:Dscrholhholdefeq}
(\Dscr^{\hol} h^{\hol})(s) & \coloneqq \sum_{k = -\infty}^{\infty} (-1)^k \int_{k - \frac{1}{2}}^{k + \frac{1}{2}} (\Fscr^{\hol} h^{\hol})(u) (\cos^2 \pi u)^{-s} \, du.
\end{align}
For $-C - 1/4 < \sigma < 1/2$, we have that
\begin{equation}
\label{eqn:HMellinFourier}
\widehat{H}(2s) = \frac{1}{2\sqrt{\pi}} (2\pi)^{-2s} \frac{\Gamma(s)}{\Gamma\left(\frac{1}{2} - s\right)} (\Dscr h)(s) + \frac{1}{2\sqrt{\pi}} (2\pi)^{-2s} \frac{\Gamma\left(s + \frac{1}{2}\right)}{\Gamma(1 - s)} (\Dscr^{\hol} h^{\hol})(s).
\end{equation}
\end{lemma}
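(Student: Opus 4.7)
The plan is to compute $\widehat{H}(2s)$ by interchanging the Mellin transform with the spectral integral defining $\Kscr h$ and the spectral sum defining $\Kscr^{\hol} h^{\hol}$. For $-C - 1/4 < \sigma < 1/2$, the decay bounds on $h$ and $h^{\hol}$ together with the standard decay of the explicit Mellin transforms
\[
\widehat{\JJ_r^{+}}(2s) = (2\pi)^{-2s} \Gamma(s + ir) \Gamma(s - ir) \cos(\pi s), \qquad \widehat{\JJ_k^{\hol}}(2s) = \pi i^{-k} (2\pi)^{-2s} \frac{\Gamma\left(s + \frac{k-1}{2}\right)}{\Gamma\left(\frac{1 - 2s + k}{2}\right)}
\]
recorded at the end of the proof of \hyperref[thm:central]{Theorem \ref*{thm:central}} render the interchange absolutely convergent, and so it suffices to identify the resulting Maass integral over $r$ with $\tfrac{1}{2\sqrt{\pi}}(2\pi)^{-2s}\Gamma(s)/\Gamma(1/2 - s) \cdot (\Dscr h)(s)$ and the resulting holomorphic sum over $k$ with $\tfrac{1}{2\sqrt{\pi}}(2\pi)^{-2s}\Gamma(s + 1/2)/\Gamma(1 - s) \cdot (\Dscr^{\hol} h^{\hol})(s)$.

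For the Maass contribution, I would invoke the integral representation
\[
\cos(\pi s)\, \Gamma(s+ir)\, \Gamma(s-ir) = \frac{\pi^{3/2}\, \Gamma(s)}{\Gamma\left(\frac{1}{2} - s\right)} \int_{-\infty}^{\infty} \frac{e^{2\pi i r u}}{(\cosh \pi u)^{2s}}\, du,
\]
which follows from the classical identity $\int_{-\infty}^{\infty} (\cosh u)^{-2s} e^{iru}\, du = \frac{2^{2s-1}}{\Gamma(2s)} \Gamma\left(s + \frac{ir}{2}\right) \Gamma\left(s - \frac{ir}{2}\right)$ after rescaling $u \mapsto \pi u$, $r \mapsto 2r$ and applying the Legendre duplication and Euler reflection formulas. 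Substituting, applying Fubini, and recognising the inner $r$-integral as the even function $(\Fscr h)(-u) = (\Fscr h)(u)$ produces the first term on the right of \eqref{eqn:HMellinFourier}.

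For the holomorphic contribution, I would first shift each interval $[k - 1/2, k + 1/2]$ in \eqref{eqn:Dscrholhholdefeq} by $-k$ using the periodicity $\cos^2 \pi(v + k) = \cos^2 \pi v$ to recast $(\Dscr^{\hol} h^{\hol})(s)$ as $\int_{-1/2}^{1/2} (\cos \pi v)^{-2s} \sum_{k \in \Z} (-1)^k (\Fscr^{\hol} h^{\hol})(k + v)\, dv$. Since $(\Fscr^{\hol} h^{\hol})$ is, by definition, the Fourier transform of the compactly supported smooth function $r \mapsto -2 r\, h^{\hol}(2r+1)$, Poisson summation at half-integer frequencies combined with the support of $h^{\hol}$ in the positive even integers gives
\[
\sum_{k \in \Z} (-1)^k (\Fscr^{\hol} h^{\hol})(k + v) = -\sum_{\substack{k = 2 \\ k \equiv 0 \hspace{-.25cm} \pmod{2}}}^{\infty} (k-1)\, h^{\hol}(k)\, e^{-\pi i v (k-1)}.
\]
Using evenness of $(\cos \pi v)^{-2s}$ and invoking Euler's classical integral $\int_0^{\pi/2} (\cos \theta)^{\nu - 1} \cos(a\theta)\, d\theta = \frac{\pi\, \Gamma(\nu)}{2^\nu\, \Gamma\left(\frac{\nu + a + 1}{2}\right) \Gamma\left(\frac{\nu - a + 1}{2}\right)}$ with $\nu = 1 - 2s$, $a = k - 1$ then expresses $(\Dscr^{\hol} h^{\hol})(s)$ as an explicit sum of gamma ratios weighted by $(k-1) h^{\hol}(k)$.

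To complete the match with the second summand on the right of \eqref{eqn:HMellinFourier}, I would eliminate the pole-carrying factor $\Gamma((3 - 2s - k)/2)^{-1}$ via Euler reflection, producing $-i^{-k} \cos(\pi s)\, \Gamma((k + 2s - 1)/2)/\pi$ after carefully evaluating $\sin(\pi(3 - 2s - k)/2)$ at half-integers, and then verify the purely gamma-function identity
\[
2^{2s}\, \Gamma\left(s + \tfrac{1}{2}\right)\, \Gamma(1 - 2s)\, \cos(\pi s) = \sqrt{\pi}\, \Gamma(1 - s)
\]
by a single application of Legendre duplication $\Gamma(1 - 2s) = 2^{-2s} \pi^{-1/2} \Gamma(1/2 - s) \Gamma(1 - s)$ followed by Euler reflection $\Gamma(1/2 - s) \Gamma(1/2 + s) = \pi / \cos(\pi s)$. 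The main obstacle is not conceptual but bookkeeping: correctly tracking the sign factors $(-1)^{k/2}$ and $i^{-k}$ arising from the half-integer evaluations of $\sin$ in the reflection step, and ensuring that the region $-C - 1/4 < \sigma < 1/2$ is respected at every Fubini interchange, with the lower bound governed by the polynomial prefactor of $h$ in \eqref{eqn:testfunctionchoice} and the resulting $(\sech \pi u)^{2C + 1/2}$ in \eqref{eqn:Fscrhdefeq2}.
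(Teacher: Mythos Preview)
Your proposal is correct and follows essentially the same approach as the paper. The Maa\ss{} contribution is handled identically via the integral representation for $\Gamma(s+ir)\Gamma(s-ir)$; for the holomorphic contribution, the paper runs the argument in the opposite direction (starting from the spectral sum, inserting the integral representation \cite[3.892.2]{GR15}, and then applying Poisson summation to reach $(\Dscr^{\hol} h^{\hol})(s)$), but the ingredients and the resulting gamma identities are the same as yours.
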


\begin{proof}
We recall from \eqref{eqn:HdefeqKscr} that $\widehat{H}(s) = \widehat{\Kscr h}(s) + \widehat{\Kscr^{\hol} h^{\hol}}(s)$. Via \cite[(3.13)]{BK19}, Euler's reflection formula, and the Legendre duplication formula, we have that
\begin{align*}
\widehat{\Kscr h}(2s) & = \frac{1}{2\pi^2} \int_{-\infty}^{\infty} \widehat{\JJ_r^+}(2s) h(r) r \tanh \pi r \, dr	\\
& = \frac{1}{2\sqrt{\pi}} (2\pi)^{-2s} \frac{\Gamma(s)}{\Gamma\left(\frac{1}{2} - s\right)} \int_{-\infty}^{\infty} \frac{2^{2s - 1}}{\pi} \frac{\Gamma(s + ir) \Gamma(s - ir)}{\Gamma(2s)} h(r) r \tanh \pi r \, dr.
\end{align*}
From \cite[3.985.1]{GR15}, we have that for $\Re(s) > 0$,
\[\frac{2^{2s - 1}}{\pi} \frac{\Gamma(s + ir) \Gamma(s - ir)}{\Gamma(2s)} = \int_{-\infty}^{\infty} (\cosh^2 \pi u)^{-s} e(-ru) \, du.\]
Interchanging the order of integration, we find that
\[\widehat{\Kscr h}(2s) = \frac{1}{2\sqrt{\pi}} (2\pi)^{-2s} \frac{\Gamma(s)}{\Gamma\left(\frac{1}{2} - s\right)} (\Dscr h)(s).\]
This extends holomorphically to $-C - 1/4 < \Re(s) < C + 1/4$ since the identity \eqref{eqn:Fscrhdefeq2} together with the bounds \eqref{eqn:gdecay} ensure that $(\Dscr h)(s)$ converges absolutely in this region, while \eqref{eqn:FscrFourierinversion} implies that $(\Dscr h)(s)$ vanishes at $s = 0,-1,\ldots,-C$, which cancels out the poles of $\Gamma(s)$.

Similarly, we have via \cite[(3.13)]{BK19}, Euler's reflection formula, and the Legendre duplication formula that
\begin{align*}
\widehat{\Kscr^{\hol} h^{\hol}}(2s) & = \sum_{\substack{k = 2 \\ k \equiv 0 \hspace{-.25cm} \pmod{2}}}^{\infty} \widehat{\JJ_k^{\hol}}(2s) \frac{k - 1}{2\pi^2} h^{\hol}(k)	\\
& = -\frac{1}{2\sqrt{\pi}} (2\pi)^{-2s} \frac{\Gamma\left(s + \frac{1}{2}\right)}{\Gamma(1 -s)} \sum_{k = -\infty}^{\infty} 2^{2s} \frac{\Gamma(1 - 2s)}{\Gamma\left(\frac{1}{2} - s + k\right) \Gamma\left(\frac{3}{2} - s - k\right)} h^{\hol}(2k) (2k - 1).
\end{align*}
From \cite[3.892.2]{GR15}, we have that for $\Re(s) < 1$,
\[2^{2s} \frac{\Gamma(1 - 2s)}{\Gamma\left(\frac{1}{2} - s + k\right) \Gamma\left(\frac{3}{2} - s - k\right)} = \int_{-\frac{1}{2}}^{\frac{1}{2}} (\cos^2 \pi u)^{-s} e\left(\frac{u}{2}\right) e(-ku) \, du.\]
Via the Poisson summation formula, we find that $\widehat{\Kscr^{\hol} h^{\hol}}(2s)$ is equal to
\[-\frac{1}{2 \sqrt{\pi}} (2\pi)^{-2s} \frac{\Gamma\left(s + \frac{1}{2}\right)}{\Gamma(1 - s)} \sum_{k = -\infty}^{\infty} \int_{-\infty}^{\infty} h^{\hol}(2r) (2r - 1) e(-kr) \int_{-\frac{1}{2}}^{\frac{1}{2}} (\cos^2 \pi u)^{-s} e\left(\frac{u}{2}\right) e(-ru) \, du \, dr.\]
Interchanging the order of integration and making the change of variables $r \mapsto r + 1/2$, $u \mapsto u - k$, and $k \mapsto -k$, we deduce that
\[\widehat{\Kscr^{\hol} h^{\hol}}(2s) = \frac{1}{2\sqrt{\pi}} (2\pi)^{-2s} \frac{\Gamma\left(s + \frac{1}{2}\right)}{\Gamma(1 - s)} (\Dscr^{\hol} h^{\hol})(s).\]
This extends holomorphically to $-C - 1/4 < \Re(s) < 1/2$ since the identity \eqref{eqn:Fscrholhholdefeq2} together with the bounds \eqref{eqn:gdecay} ensure that $(\Dscr^{\hol} h^{\hol})(s)$ converges absolutely in this region, while \eqref{eqn:FscrFourierinversion} implies that $(\Dscr^{\hol} h^{\hol})(s)$ vanishes at $s = -1/2,-3/2,\ldots,-C + 1/2$, which cancels out the poles of $\Gamma(s + \frac{1}{2})$.
\end{proof}

We proceed to bounding $\HH_{\mu_F}^{\pm}(t)$. Throughout, we shall only deal with the test function $h(r)$ and its corresponding transforms $(\Fscr h)(u)$ given by \eqref{eqn:Fscrhdefeq} and $(\Dscr h)(s)$ given by \eqref{eqn:Dscrhdefeq}. The proofs for the test function $h^{\hol}(k)$ follow analogously, bearing in mind its transforms $(\Fscr^{\hol} h^{\hol})(u)$ given by \eqref{eqn:Fscrholhholdefeq} and $(\Dscr^{\hol} h^{\hol})(s)$ given by \eqref{eqn:Dscrholhholdefeq}.

To bound $\HH_{\mu_F}^{\pm}(t)$ when $|t|$ is much larger than $T$, we use the same method as the proof of \hyperref[lem:HHpmwzbounds]{Lemma \ref*{lem:HHpmwzbounds}}, which requires bounds for the derivatives of $(\Dscr h)(s)$ that are uniform with respect to $\tau$, $T$, and $U$.

\begin{lemma}
For $j \in \N_0$, we have that for any $N \geq 0$,
\begin{equation}
\label{eqn:Dscrhbounds}
\frac{d^j}{d\tau^j} (\Dscr h)(s) \ll_{j,N} \begin{dcases*}
T^{-N} & if $|\tau| \leq T$,	\\
\frac{TU}{\sqrt{|\tau|}} \left(\frac{TU}{|\tau|}\right)^{-N} \left(\frac{|\tau|}{T}\right)^{-2j} & if $T < |\tau| < TU$,	\\
\frac{TU}{\sqrt{|\tau|}} \left(\frac{|\tau|}{T}\right)^{-2j} & if $|\tau| \geq TU$.
\end{dcases*}
\end{equation}
\end{lemma}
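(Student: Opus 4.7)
The plan is to differentiate under the integral sign in $(\Dscr h)(s)$ and then invoke integration by parts or stationary phase depending on the size of $\tau = \Im(s)$. Substituting \eqref{eqn:Fscrhdefeq2} into \eqref{eqn:Dscrhdefeq} and differentiating $j$ times in $\tau$, one obtains
\[\frac{d^j}{d\tau^j} (\Dscr h)(s) = (-2i)^j \sum_{\pm} TU \int_{-\infty}^{\infty} e^{i\phi_{\pm}(u)} A_j(u) \, du,\]
with phase $\phi_{\pm}(u) \coloneqq \pm 2\pi T u - 2\tau \log \cosh(\pi u)$ and amplitude $A_j(u) \coloneqq g_{\pm}(Uu) (\sech \pi u)^{2C + \frac{1}{2} + 2\sigma} (\log \cosh \pi u)^j$. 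Since $\log \cosh \pi u \asymp u^2$ near the origin and $g_{\pm}$ satisfies the rapid decay \eqref{eqn:gdecay}, the integrand is effectively supported on $|u| \leq 2/U$, where $|A_j(u)| \ll U^{-2j}$. A direct computation gives $\phi_{\pm}'(u) = \pm 2\pi T - 2\pi \tau \tanh(\pi u)$ and $\phi_{\pm}''(u) = -2\pi^2 \tau \sech^2(\pi u)$, so a stationary point $u_0^{\pm}$ exists precisely when $|\tau| > T$, satisfies $\tanh(\pi u_0^{\pm}) = \pm T/\tau$, and has $|\phi_{\pm}''(u_0^{\pm})| = 2\pi^2 (\tau^2 - T^2)/|\tau| \asymp |\tau|$ when $|\tau| \geq 2T$, together with $\log \cosh(\pi u_0^{\pm}) = -\tfrac{1}{2} \log(1 - T^2/\tau^2) \asymp T^2/\tau^2$.

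When $|\tau| \leq T$, the stationary point does not exist, and on the effective support $|u| \leq 2/U$ we have $|\tanh \pi u| \leq 2\pi/U$, hence $|\phi_{\pm}'(u)| \geq 2\pi T - 4\pi^2 |\tau|/U \geq \pi T$ once $U$ is sufficiently large. Iterating the integration by parts estimate \cite[Lemma 2]{BK19} with phase derivative of size $\asymp T$ and amplitude varying on scale $1/U$, each step yields a saving of order $U/T \leq T^{-\e}$; combining $N$ iterations with the prefactor $TU$ and the amplitude bound $U^{-2j}$ gives a final bound of $T^{-M}$ for any $M$, upon choosing $N$ sufficiently large in terms of $M$, $j$, and $\e$. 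When $|\tau| > T$, a stationary phase estimate (e.g.\ \cite[Lemma 2]{BK19} or the variant in \cite{KPY19}) produces a main contribution of size
\[TU \cdot |\phi_{\pm}''(u_0^{\pm})|^{-1/2} |A_j(u_0^{\pm})| \asymp \frac{TU}{\sqrt{|\tau|}} g_{\pm}(Uu_0^{\pm}) \left(\frac{T}{|\tau|}\right)^{2j}.\]
If $|\tau| \geq TU$, then $|Uu_0^{\pm}| \asymp TU/|\tau| \leq 1$, so $g_{\pm}(Uu_0^{\pm}) = O(1)$ and we obtain the third bound. If $T < |\tau| < TU$, then $|Uu_0^{\pm}| \asymp TU/|\tau| \geq 1$, and \eqref{eqn:gdecay} gives $g_{\pm}(Uu_0^{\pm}) \ll_N (TU/|\tau|)^{-N}$, yielding the second bound.

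The main obstacle is the careful execution of the stationary phase estimate in the intermediate regime $T < |\tau| < TU$, where the stationary point $u_0^{\pm}$ lies outside the effective support $|u| \leq 2/U$ of the amplitude factor $g_{\pm}(U \cdot)$ but well within the polynomial decay region of $g_{\pm}$. Here one applies \cite[Lemma 2]{BK19} with parameters reflecting both the non-degeneracy $|\phi_{\pm}''| \asymp |\tau|$ in a neighbourhood of $u_0^{\pm}$ and the polynomial decay of $g_{\pm}$ at $u_0^{\pm}$, so that the saving $(TU/|\tau|)^{-N}$ is extracted from the amplitude bound at the stationary point. A subsidiary technical point is the edge regime $T < |\tau| \leq 2T$, where $|\phi_{\pm}''(u_0^{\pm})|$ degenerates; this can be handled either by broadening the first case slightly to cover $|\tau| \leq (1 + c) T$ for some small $c > 0$, or by a direct application of the phase derivative bound in this range.
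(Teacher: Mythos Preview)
Your proof is correct and follows essentially the same approach as the paper's: differentiate under the integral, use repeated integration by parts when $|\tau| \leq T$ (exploiting that the phase derivative is $\asymp T$ while the amplitude varies on scale $1/U$), and use stationary phase when $|\tau| > T$, extracting the factor $(T/|\tau|)^{2j}$ from $\log\cosh$ at the stationary point and the factor $(TU/|\tau|)^{-N}$ from the decay \eqref{eqn:gdecay} of $g_{\pm}$ evaluated at $Uu_0^{\pm}$. The paper makes the change of variables $u\mapsto u/U$ first (so the prefactor is $T$ rather than $TU$ and $g_{\pm}(u)$ appears directly) and inserts a dyadic partition of unity before applying stationary phase, but these are cosmetic differences; the mechanism is identical.
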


\begin{proof}
From \eqref{eqn:Dscrhdefeq} and \eqref{eqn:Fscrhdefeq2}, we have that
\begin{equation}
\label{eqn:Dscrhjint1}
\frac{d^j}{d\tau^j} (\Dscr h)(s) = \sum_{\pm} T i^{-j} \int_{-\infty}^{\infty} g_{\pm}(u) \left(\sech \frac{\pi u}{U}\right)^{2C + \frac{1}{2} + 2\sigma} \left(\log \cosh^2 \frac{\pi u}{U}\right)^j e^{i \Psi_{\tau,\pm}(u)} \, du,	
\end{equation}
where the phase $\Psi_{\tau,\pm}(u)$ satisfies
\begin{align}
\label{eqn:Psitaupmdefeq}
\Psi_{\tau,\pm}(u) & \coloneqq \pm \frac{2\pi T u}{U} - \tau \log \cosh^2 \frac{\pi u}{U},	\\
\notag
\Psi_{\tau,\pm}'(u) & = \pm \frac{2\pi T}{U} - \frac{2\pi \tau}{U} \tanh \frac{\pi u}{U},	\\
\label{eqn:Psitaupmmdecay}
\Psi_{\tau,\pm}^{(m)}(u) & \ll_m \frac{|\tau|}{U^m} \sech^2 \frac{\pi u}{U} \quad \text{for $m \geq 2$.}
\end{align}
If $|\tau| > T$, this phase has a stationary point at $u_{0,\tau,\pm} \coloneqq \pm \frac{U}{\pi} \artanh \frac{T}{\tau}$, with
\[\Psi_{\tau,\pm}''(u_{0,\tau,\pm}) = -\frac{2\pi^2 \tau}{U^2} \left(1 - \frac{T^2}{\tau^2}\right).\]
If $|\tau| \leq T$, this phase has no stationary point.

In the latter case, we may simply repeatedly integrate by parts, where we antidifferentiate $\Psi_{\tau,\pm}'(u) e^{i\Psi_{\tau,\pm}(u)}$ and differentiate the rest (using, say, \cite[Lemma 3.1 (1)]{KPY19}), and we make use of the bounds \eqref{eqn:gdecay} for $g_{\pm}^{(m)}(u)$ and \eqref{eqn:Psitaupmmdecay} for $\Psi_{\tau,\pm}^{(m)}(u)$. Recalling our assumption that $T^{\e} \leq U \leq T^{1 - \e}$, this shows that if $|\tau| \leq T$, then for all for all $N \geq 0$,
\[\frac{d^j}{d\tau^j}(\Dscr h)(s) \ll_{j,N} T^{-N}.\]

In the former case, we let $\widetilde{\Omega}$ be a smooth compactly supported function equal to $1$ on $[-1,1]$, supported on $[-2,2]$, and having bounded derivatives, and we write the integral on the right-hand side of \eqref{eqn:Dscrhjint1} as
\begin{multline}
\label{eqn:Dscrhjint2}
\int_{-\infty}^{\infty} g_{\pm}(u) \left(\sech \frac{\pi u}{U}\right)^{2C + \frac{1}{2} + 2\sigma} \left(\log \cosh^2 \frac{\pi u}{U}\right)^j e^{i \Psi_{\tau,\pm}(u)} \widetilde{\Omega}\left(\frac{u - u_{0,\tau,\pm}}{\sqrt{|\Psi_{\tau,pm}''(u_{0,\tau,\pm})|}}\right) \, du	\\
+ \int_{-\infty}^{\infty} g_{\pm}(u) \left(\sech \frac{\pi u}{U}\right)^{2C + \frac{1}{2} + 2\sigma} \left(\log \cosh^2 \frac{\pi u}{U}\right)^j e^{i \Psi_{\tau,\pm}(u)} \left(1 - \widetilde{\Omega}\left(\frac{u - u_{0,\tau,\pm}}{\sqrt{|\Psi_{\tau,pm}''(u_{0,\tau,\pm})|}}\right)\right) \, du.
\end{multline}
Since the stationary point does not occur in the support of the integrand of the second term in \eqref{eqn:Dscrhjint2}, we may bound this second term using the same integration by parts argument. On the other hand, for the first term in \eqref{eqn:Dscrhjint2}, we simply bound this integral trivially, using the pointwise bounds \eqref{eqn:gdecay} for $g_{\pm}(u)$ and observing that $\log \cosh^2 \frac{\pi u}{U} \asymp \frac{T^2}{|\tau|^2}$ for all $u$ lying in the support of the integrand. There are two regimes of interest depending on the behaviour of $g_{\pm}(u)$. If $T < |\tau| < TU$, then for all $N \geq 0$, we have that $g_{\pm}(u) \ll_{N} |u|^{-N}$ for all $u$ lying in the support of the integrand by \eqref{eqn:gdecay}, so that
\[\frac{d^j}{d\tau^j}(\Dscr h)(s) \ll_{j,N} \frac{TU}{\sqrt{|\tau|}} \left(\frac{TU}{|\tau|}\right)^{-N} \left(\frac{|\tau|}{T}\right)^{-2j}.\]
If $|\tau| \geq TU$, then instead $g_{\pm}(u) \ll 1$, and so
\[\frac{d^j}{d\tau^j} (\Dscr h)(s) \ll_j \frac{TU}{\sqrt{|\tau|}} \left(\frac{|\tau|}{T}\right)^{-2j}.\qedhere\]
\end{proof}

\begin{proof}[Proof of {\hyperref[lem:HHpmtbounds]{Lemma \ref*{lem:HHpmtbounds}}} for $|t| \geq TU$]
We follow the exact same method of proof as that of \hyperref[lem:HHpmwzbounds]{Lemma \ref*{lem:HHpmwzbounds}}, replacing $w = u + iv$ with $1/2$ and $z = x + iy$ with $it$. Since $\HH_{\mu_F}^{-}(t) = \overline{\HH_{\mu_F}^{+}(-t)}$, it suffices to prove this for $\pm = +$. Moreover, we prove this only for $t > 0$, since an analogous (but easier) argument yields the same result for $t < 0$. For $\pm = +$ and $t > 0$, we write
\[\HH_{\mu_F}^{+}(t) = \frac{1}{\pi i} \int_{\sigma - i\infty}^{\sigma + i\infty} \frac{1}{2\sqrt{\pi}} (2\pi)^{-2s} \frac{\Gamma(s)}{\Gamma\left(\frac{1}{2} - s\right)} (\Dscr h)(s) \Gscr_{\mu_F}^{+}\left(\frac{1}{2} - s\right) G^{-}(s + it) \, ds,\]
where $0 < \sigma < 1/2$. We shift the contour of integration to $\Re(s) = -C + 1/4$, which picks up residues at $s = -it - \ell$ for $\ell \in \N_0$; by \eqref{eqn:Gscrmubounds} and \eqref{eqn:Dscrhbounds}, these are $\ll TU |t|^{2C - 3} e^{-3\pi t}$. Next, we let $\Omega : \R \to [0,1]$ be a smooth function equal to $1$ on $(-\infty,1]$, supported on $(-\infty,2]$, and having bounded derivatives. We then write the remaining integral as $\JJ_1 + \JJ_2$, where
\begin{align*}
\JJ_1 & \coloneqq \frac{1}{\pi i} \int_{-C + \frac{1}{4} - i\infty}^{-C + \frac{1}{4} + i\infty} \frac{1}{2\sqrt{\pi}} (2\pi)^{-2s} \frac{\Gamma(s)}{\Gamma\left(\frac{1}{2} - s\right)} (\Dscr h)(s) \Gscr_{\mu_F}^{+}\left(\frac{1}{2} - s\right) G^{-}(s + it) \Omega\left(\frac{\tau}{T}\right) \, ds,	\\
\JJ_2 & \coloneqq \frac{1}{\pi i} \int_{-C + \frac{1}{4} - i\infty}^{-C + \frac{1}{4} + i\infty} \frac{1}{2\sqrt{\pi}} (2\pi)^{-2s} \frac{\Gamma(s)}{\Gamma\left(\frac{1}{2} - s\right)} (\Dscr h)(s) \Gscr_{\mu_F}^{+}\left(\frac{1}{2} - s\right) G^{-}(s + it) \left(1 - \Omega\left(\frac{\tau}{T}\right)\right) \, ds.
\end{align*}
By \eqref{eqn:Gscrpmasymp}, \eqref{eqn:Gpmasymp}, and \eqref{eqn:Dscrhbounds}, we have that
\[\JJ_1 \ll_N T^{-N} t^{-C - \frac{1}{4}}\]
for any $N \geq 0$. For $\JJ_2$, we insert the asymptotic expansions \eqref{eqn:Gscrpmasymp}, \eqref{eqn:Gpmasymp}, as well as the asymptotic expansion (see, for example, \cite[(2.4)]{BK19})
\[(2\pi)^{-2s} \frac{\Gamma(s)}{\Gamma\left(\frac{1}{2} - s\right)} = |\tau|^{2\sigma - \frac{1}{2}} \exp\left(2i\tau \log \frac{|\tau|}{\pi e}\right) \alpha_{\sigma,M}(\tau) + O_{\sigma,M}((1 + |\tau|)^{-M})\]
for any $M > 0$, where $\alpha_{\sigma,M}(\tau)$ satisfies $|\tau|^j \alpha_{\sigma,M}^{(j)}(\tau) \ll_{j,\sigma,M} 1$ for all $j \in \N_0$. Bearing in mind the bounds \eqref{eqn:Dscrhbounds} for $(\Dscr h)(s)$, we see that the contribution from the error terms is $\ll_N t^{-N}$ for any $N \geq 0$, while the main term is of the shape
\[TU \int_{0}^{\infty} \tau^{C - \frac{5}{4}} (\tau + t)^{-C - \frac{1}{4}} e^{-i\Phi_t(\tau)} W_t(\tau) \left(1 - \Omega\left(\frac{\tau}{T}\right)\right) \, d\tau,\]
where $W_t(\tau)$ is a $1$-inert function and $\Phi_t(\tau)$ is as in \eqref{eqn:Phiytaudefeq}. We use the same integration by parts method as in the proof of \hyperref[lem:HHpmwzbounds]{Lemma \ref*{lem:HHpmwzbounds}} except with
\[X = \begin{dcases*}
TU Z^{C - \frac{5}{4}} t^{-C - \frac{1}{4}} & if $Z \leq t$,	\\
TU Z^{-\frac{3}{2}} & if $Z \geq t$.
\end{dcases*}\]
We deduce that this main term is $\ll_N T^{-N} t^{-C - 1/4}$ for any $N \geq 0$. Combined, this implies the desired bound $\HH_{\mu_F}^{+}(t) \ll U (Ut/T)^{-C}$ for $t \geq TU$.
\end{proof}

For $|t| \leq TU$, a more delicate analysis is required. Our first step for this range of $t$ is to insert the identity \eqref{eqn:HMellinFourier} for $\widehat{H}(2s)$ into the definition \eqref{eqn:HHtdefeq} of $\HH_{\mu_F}^{\pm}(t)$ and then truncate the integral over $u \in \R$.

\begin{lemma}
Fix $\delta \in (0,1)$ and let $\Omega$ be a smooth compactly supported function equal to $1$ on $[-1,1]$, supported on $[-2,2]$, and having bounded derivatives. We have that
\begin{multline}
\label{eqn:HHtdoubleint}
\HH_{\mu_F}^{\pm}(t) = \frac{1}{2\pi i} \int_{\sigma - i\infty}^{\sigma + i\infty} \frac{1}{\sqrt{\pi}} (2\pi)^{-2s} \frac{\Gamma(s)}{\Gamma\left(\frac{1}{2} - s\right)} \Gscr_{\mu_F}^{\pm}\left(\frac{1}{2} - s\right) G^{\mp}(s + it)	\\
\times \sum_{\pm_1} T \int_{-\infty}^{\infty} g_{\pm_1}(u) \left(\sech \frac{\pi u}{U}\right)^{2C + \frac{1}{2}} e\left(\pm_1 \frac{Tu}{U}\right) \left(\cosh^2 \frac{\pi u}{U}\right)^{-s} \Omega\left(\frac{u}{U^{\delta}}\right) \, du \, ds	\\
+ O_{\delta,N}\left((1 + |t|)^{-C} T^{-N}\right)
\end{multline}
for all $N \geq 0$.
\end{lemma}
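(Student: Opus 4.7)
The plan is to proceed by direct unfolding of definitions, followed by a tail estimate. After the change of variables $s \mapsto 2s$ in \eqref{eqn:HHtdefeq}, which gives
\[\HH_{\mu_F}^{\pm}(t) = \frac{1}{\pi i}\int_{(\sigma)} \widehat{H}(2s)\, \Gscr_{\mu_F}^{\pm}\!\left(\tfrac{1}{2}-s\right) G^{\mp}(s+it)\, ds\]
for $0 < \sigma < 1/2$, I would substitute the Mellin identity \eqref{eqn:HMellinFourier} and focus on the first summand, whose contribution matches \eqref{eqn:HHtdoubleint}; the second summand, involving $\Gamma(s+\tfrac12)/\Gamma(1-s)\cdot(\Dscr^{\hol}h^{\hol})(s)$, is handled by the same argument. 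Expanding $(\Dscr h)(s)$ via \eqref{eqn:Dscrhdefeq} and the explicit form \eqref{eqn:Fscrhdefeq2}, then performing the change of variables $u \mapsto u/U$, produces a double integral matching the right-hand side of \eqref{eqn:HHtdoubleint} but without the cutoff $\Omega(u/U^{\delta})$. The bounds \eqref{eqn:gdecay}, \eqref{eqn:Gpmbounds}, and \eqref{eqn:Gscrmubounds} jointly ensure absolute convergence on $\Re(s) = \sigma \in (0, 1/2)$, so Fubini justifies the interchange.

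Inserting the partition $1 = \Omega(u/U^{\delta}) + (1 - \Omega(u/U^{\delta}))$ yields the desired main term plus an error $E_{\pm}(t)$ whose inner $u$-integral, call it $I_{E}(s)$, is supported on $|u| \geq U^{\delta}$. On this region, the rapid decay bound $g_{\pm_1}(u) \ll_{N'} |u|^{-N'}$ from \eqref{eqn:gdecay}, combined with $(\sech\pi u/U)^{2C + 1/2 + 2\sigma} \leq 1$ for $\sigma \in (0, 1/2)$, gives the pointwise bound $|I_{E}(s)| \ll_{N'} T U^{-\delta(N'-1)}$ uniformly in $s$ on this contour. Since $U \geq T^{\e}$, this converts to arbitrary power saving $T^{-N}$ upon choosing $N'$ sufficiently large; this produces the $T^{-N}$ factor in the error bound.

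The main obstacle is producing the $(1 + |t|)^{-C}$ decay in $|t|$. The strategy is to shift the $s$-contour leftwards to $\Re(s) = -C + 1/4$, at which \eqref{eqn:Gpmbounds} yields $|G^{\mp}(s + it)| \ll (1 + |\tau + t|)^{-C - 1/4}$. Crossing the poles of $\Gamma(s)$ at $s = 0, -1, \ldots, -C$ picks up residues that would vanish for the full integral---because $(\Dscr h)(s)$ has zeros at these points by \eqref{eqn:FscrFourierinversion}---but do not vanish for the restricted integral $I_{E}(s)$. I would treat each residue separately, combining the pointwise bound $|I_{E}(-\ell)| \ll T U^{-\delta(N'-1)}$ with the decay of $G^{\mp}(-\ell + it)$. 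In the favourable sign regime, the exponential factor $e^{-\pi\Omega^{\mp}(t)}$ in \eqref{eqn:Gpmbounds} is immediate; in the unfavourable regime, I would perform repeated integration by parts in $u$ against the oscillatory phase $e(\pm_1 Tu/U)\,e^{-2i\tau\log\cosh(\pi u/U)}$ within $I_{E}(-\ell)$ to gain arbitrary polynomial decay in $|t|$, mimicking the stationary-phase analysis underlying \eqref{eqn:Dscrhbounds}. Combining the residue estimates with the bound on the shifted contour integral produces the final error bound $O_{\delta, N}((1 + |t|)^{-C} T^{-N})$.
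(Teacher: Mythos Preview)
Your overall strategy---unfold the definitions, insert the partition of unity, then estimate the tail---is the right one, and the first two steps match the paper. The gap is in the third step, specifically in extracting the factor $(1+|t|)^{-C}$. Your bound $|I_E(s)| \ll_{N'} TU^{-\delta(N'-1)}$ is uniform in $\tau = \Im(s)$, and that is not enough. On the shifted line $\Re(s) = -C + \tfrac14$, the factor $(2\pi)^{-2s}\frac{\Gamma(s)}{\Gamma(1/2-s)}\,\Gscr_{\mu_F}^{\pm}(\tfrac12-s)$ grows like $|\tau|^{C-3/4}$ in the direction carrying no exponential damping, while $G^{\mp}(s+it)$ supplies only $(1+|\tau+t|)^{-C-1/4}$; for $\tau \gg |t|$ the integrand therefore behaves like $|\tau|^{-1}$, so the $\tau$-integral diverges logarithmically and certainly does not yield $(1+|t|)^{-C}$. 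Your residue treatment has the same defect: at $s = -\ell$ the inner integral $I_E(-\ell)$ is independent of $t$ (the only $t$-dependence sits in the external factor $G^{\mp}(-\ell+it)$), so integration by parts in $u$ cannot manufacture any decay in $|t|$; and $G^{\mp}(-\ell+it)$ alone gives only $(1+|t|)^{-\ell-1/2}$, which for small $\ell$ falls well short of $(1+|t|)^{-C}$.

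The paper closes this gap by first proving that the tail integral itself decays in $\tau$: the $j$-th $\tau$-derivative of the tail portion of $(\Dscr h)(s)$ is shown to be $\ll_{j,\delta,N}((1+|\tau|)T)^{-N}$. For $|\tau| \leq TU^{1-2\delta}$ this follows from your trivial bound via \eqref{eqn:gdecay} combined with $U\geq T^{\e}$ (so that powers of $|\tau|$ are absorbed into the $T$-saving); for $|\tau| \geq TU^{1-2\delta}$ one integrates by parts in $u$ against the full phase $\Psi_{\tau,\pm_1}(u) = \pm_1 2\pi Tu/U - \tau\log\cosh^2(\pi u/U)$, using $U \leq T^{1-\e}$. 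With this stronger bound---decay in \emph{both} $T$ and $\tau$---the machinery of \hyperref[lem:HHpmwzbounds]{Lemma \ref*{lem:HHpmwzbounds}} applies directly and delivers the full error $(1+|t|)^{-C}T^{-N}$.
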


\begin{proof}
From \eqref{eqn:Fscrhdefeq2}, we have that
\begin{multline*}
(\Dscr h)(s) = \sum_{\pm} T \int_{-\infty}^{\infty} g_{\pm}(u) \left(\sech \frac{\pi u}{U}\right)^{2C + \frac{1}{2}} e\left(\pm \frac{Tu}{U}\right) \left(\cosh^2 \frac{\pi u}{U}\right)^{-s} \Omega\left(\frac{u}{U^{\delta}}\right) \, du	\\
+ \sum_{\pm} T \int_{-\infty}^{\infty} g_{\pm}(u) \left(\sech \frac{\pi u}{U}\right)^{2C + \frac{1}{2}} e\left(\pm \frac{Tu}{U}\right) \left(\cosh^2 \frac{\pi u}{U}\right)^{-s} \left(1 - \Omega\left(\frac{u}{U^{\delta}}\right)\right) \, du.
\end{multline*}
For $j \in \N_0$, the $j$-th derivative with respect to $\tau$ of the second term is
\[\sum_{\pm} T i^{-j} \int_{-\infty}^{\infty} g_{\pm}(u) \left(\sech \frac{\pi u}{U}\right)^{2C + 2\sigma + \frac{1}{2}} \left(\log \cosh^2 \frac{\pi u}{U}\right)^j e^{i\Psi_{\tau,\pm}(u)} \left(1 - \Omega\left(\frac{u}{U^{\delta}}\right)\right) \, du,\]
where $\Psi_{\tau,\pm}(u)$ is as in \eqref{eqn:Psitaupmdefeq}. For $|\tau| \leq TU^{1 - 2\delta}$, we bound this trivially via \eqref{eqn:gdecay} and use the assumption that $U \geq T^{\e}$; we find that this is $\ll_{j,\delta,N} ((1 + |\tau|) T)^{-N}$ for all $N \geq 0$. For $|\tau| \geq TU^{1 - 2\delta}$, we integrate by parts and use the assumption that $U \leq T^{1 - \e}$. We again find that this is $\ll_{j,\delta,N} ((1 + |\tau|) T)^{-N}$ for all $N \geq 0$.

With this in hand, we follow the exact same method as the proof of \hyperref[lem:HHpmwzbounds]{Lemma \ref*{lem:HHpmwzbounds}} in order to find that
\begin{multline*}
\frac{1}{2\pi i} \int_{\sigma - i\infty}^{\sigma + i\infty} \frac{1}{\sqrt{\pi}} (2\pi)^{-2s} \frac{\Gamma(s)}{\Gamma\left(\frac{1}{2} - s\right)} \Gscr_{\mu_F}^{\pm}\left(\frac{1}{2} - s\right) G^{\mp}(s + it)	\\
\times \sum_{\pm} T \int_{-\infty}^{\infty} g_{\pm}(u) \left(\sech \frac{\pi u}{U}\right)^{2C + \frac{1}{2}} e\left(\pm \frac{Tu}{U}\right) \left(\cosh^2 \frac{\pi u}{U}\right)^{-s} \left(1 - \Omega\left(\frac{u}{U^{\delta}}\right)\right) \, du \, ds	\\
\ll_{\delta,N} (1 + |t|)^{-C} T^{-N}
\end{multline*}
for all $N \geq 0$.
\end{proof}

We are left with dealing with the double integral on the right-hand side of \eqref{eqn:HHtdoubleint}. We interchange the order of integration and focus on the integral over $\Re(s) = \sigma$. We may express this in terms of hypergeometric functions, which are defined by
\begin{equation}
\label{eqn:hypergeometric}
\prescript{}{2}{F}_1(a,b;c;x) \coloneqq \frac{\Gamma(c)}{\Gamma(a) \Gamma(b)} \sum_{\ell = 0}^{\infty} \frac{\Gamma(a + \ell) \Gamma(b + \ell)}{\Gamma(c + \ell)} \frac{x^{\ell}}{\ell!}.
\end{equation}

\begin{lemma}
For $0 < \sigma < 1/2$ and $|u| < \frac{1}{\pi} \log(1 + \sqrt{2})$, we have that
\begin{multline}
\label{eqn:contourintegral1}
\frac{1}{2\pi i} \int_{\sigma - i\infty}^{\sigma + i\infty} \frac{1}{\sqrt{\pi}} (2\pi)^{-2s} \frac{\Gamma(s)}{\Gamma\left(\frac{1}{2} - s\right)} \Gscr_{\mu_F}^{\pm}\left(\frac{1}{2} - s\right) G^{\mp}(s + it) (\cosh^2 \pi u)^{-s} \, ds	\\
= (1 \pm i) (2\pi)^{-1 - it} e^{\pm \frac{\pi t}{2}} \Gamma(it) (\tanh^2 \pi u)^{-it} \prescript{}{2}{F}_1\left(\frac{1}{2} + 2it_g,\frac{1}{2} - 2it_g;1 - it; -\sinh^2 \pi u\right)	\\
+ 2(1 \pm i) (2\pi)^{-2 - it} e^{\pm \frac{\pi t}{2}} \Gamma(-it) \Gamma\left(\frac{1}{2} + it + 2it_g\right) \Gamma\left(\frac{1}{2} + it - 2it_g\right) (e^{\mp \pi t} \cosh 2\pi t_g \mp \sinh \pi t)	\\
\times \prescript{}{2}{F}_1\left(\frac{1}{2} + 2it_g,\frac{1}{2} - 2it_g;1 + it; -\sinh^2 \pi u\right).
\end{multline}
\end{lemma}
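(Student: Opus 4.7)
The plan is to evaluate the Mellin-Barnes integral explicitly by decomposing the integrand, applying Barnes' first lemma to one piece and contour-residue arguments to another, then matching the resulting combination of hypergeometric series to the right-hand side via Euler's transformation and the Gauss connection formula relating hypergeometric series in $z$ and in $1-z$. First, substituting $\mu_F=(2it_g,0,-2it_g)$ into \eqref{eqn:Gscrmupm} and expanding yields
\begin{multline*}
\Gscr_{\mu_F}^\pm(1/2-s) = (2\pi)^{-3/2+3s}\Gamma(1/2-s+2it_g)\Gamma(1/2-s)\Gamma(1/2-s-2it_g)\\
\times\bigl[e^{\pm 3i\pi/4}e^{\mp 3i\pi s/2} + e^{\mp i\pi/4}e^{\pm i\pi s/2}(1+2\cosh 2\pi t_g)\bigr].
\end{multline*}
Combined with the prefactor $\tfrac{1}{\sqrt{\pi}}(2\pi)^{-2s}\Gamma(s)/\Gamma(1/2-s)$ (whose simplification via Legendre duplication cancels the central $\Gamma(1/2-s)$) and with $G^\mp(s+it) = (2\pi)^{-s-it}\Gamma(s+it)e^{\mp i\pi s/2}e^{\pm\pi t/2}$, the integrand reduces to
\begin{equation*}
\tfrac{1}{\sqrt{\pi}}(2\pi)^{-3/2-it}e^{\pm\pi t/2}\Gamma(s)\Gamma(s+it)\Gamma(1/2-s+2it_g)\Gamma(1/2-s-2it_g)\bigl[e^{\pm 3i\pi/4}e^{\mp 2\pi is} + e^{\mp i\pi/4}(1+2\cosh 2\pi t_g)\bigr](\cosh^2\pi u)^{-s},
\end{equation*}
which splits into two pieces $N_1,N_2$ according to the bracket.

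For the piece $N_2$ without the $e^{\mp 2\pi is}$ factor, I expand $(\cosh^2\pi u)^{-s} = (1+\sinh^2\pi u)^{-s} = \sum_{k\ge 0}(s)_k(-\sinh^2\pi u)^k/k!$, which converges absolutely since the hypothesis $|u|<\pi^{-1}\log(1+\sqrt{2})$ is equivalent to $\sinh^2\pi u<1$. Interchanging sum and integral and using $(s)_k\Gamma(s)=\Gamma(s+k)$ turns each term into a direct application of Barnes' first lemma
\[\frac{1}{2\pi i}\int\Gamma(a+s)\Gamma(b+s)\Gamma(c-s)\Gamma(d-s)\,ds = \frac{\Gamma(a+c)\Gamma(a+d)\Gamma(b+c)\Gamma(b+d)}{\Gamma(a+b+c+d)}\]
with $(a,b,c,d)=(k,it,\tfrac{1}{2}+2it_g,\tfrac{1}{2}-2it_g)$; resumming produces a multiple of $\prescript{}{2}{F}_1(\tfrac{1}{2}+2it_g,\tfrac{1}{2}-2it_g;1+it;-\sinh^2\pi u)$ with explicit Gamma-function coefficient. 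For the piece $N_1$ with $e^{\mp 2\pi is}$, Barnes' lemma does not apply directly; instead I shift the contour to the right, picking up residues at the poles $s=\tfrac{1}{2}\pm 2it_g+\ell$ of $\Gamma(\tfrac{1}{2}-s\pm 2it_g)$ for $\ell\in\N$, where the exponential contributes the constant phases $e^{\mp 2\pi i(\frac{1}{2}\pm 2it_g)}=-e^{\pm 4\pi t_g}$. These residue sums assemble into hypergeometric series in $\sech^2\pi u$ with parameters $1\pm 4it_g$, which I then re-expand via Euler's transformation $\prescript{}{2}{F}_1(a,b;c;w)=(1-w)^{c-a-b}\prescript{}{2}{F}_1(c-a,c-b;c;w)$ and the Gauss connection formula to lie in the basis $\{\prescript{}{2}{F}_1(\tfrac{1}{2}+2it_g,\tfrac{1}{2}-2it_g;1\pm it;-\sinh^2\pi u)\}$ of solutions of the hypergeometric equation about $z=1$, using the identity $(\tanh^2\pi u)^{-it}(\cosh^2\pi u)^{it}=(\sinh^2\pi u)^{-it}$ to produce the $(\tanh^2\pi u)^{-it}$ factor seen in the first right-hand side term.

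The main obstacle is the matching of the algebraic prefactors. Each of $N_1$ and $N_2$ contributes (through the connection formula) to \emph{both} hypergeometric solutions on the right-hand side, so the coefficient of each solution is a sum of two terms, and these must telescope to yield exactly the $(1\pm i)\Gamma(it)(\tanh^2\pi u)^{-it}$ and $2(1\pm i)\Gamma(-it)\Gamma(\tfrac{1}{2}+it+2it_g)\Gamma(\tfrac{1}{2}+it-2it_g)(e^{\mp\pi t}\cosh 2\pi t_g\mp\sinh\pi t)$ prefactors in \eqref{eqn:contourintegral1}. The requisite simplifications draw on the trigonometric identity $\cosh(2\pi t)+\cosh(4\pi t_g)=2\cosh\pi(t+2t_g)\cosh\pi(t-2t_g)$, the reflection formulas $\Gamma(\tfrac{1}{2}+z)\Gamma(\tfrac{1}{2}-z)=\pi\sec\pi z$ and $\Gamma(z)\Gamma(1-z)=\pi\csc\pi z$, and the elementary observations $(1\pm i)=\sqrt{2}e^{\pm i\pi/4}$ and $e^{\pm 3i\pi/4}=-e^{\mp i\pi/4}$. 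A consistent branch choice for $(-1)^{\mp it}$, fixed by analyticity in $u$, is needed to unambiguously interpret the factor $(\tanh^2\pi u)^{-it}$ arising from the Euler/Gauss rearrangement.
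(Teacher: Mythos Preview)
Your proposal is correct but follows a genuinely different route from the paper. The paper does not split the bracket: it shifts the \emph{entire} contour to the right (using that $\cosh^2\pi u>1$ forces the integrand to vanish as $\Re(s)\to+\infty$), collects the residues at $s=\tfrac{1}{2}\pm 2it_g+\ell$ with the full coefficient $(1+2\cosh 2\pi t_g - e(\mp s))$ attached, sums these into two hypergeometric series in the variable $\sech^2\pi u$, and only then invokes the transformation formul\ae{} \cite[9.131.1, 9.131.2]{GR15} to pass to the basis $\{\prescript{}{2}{F}_1(\tfrac12+2it_g,\tfrac12-2it_g;1\pm it;-\sinh^2\pi u)\}$. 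The trigonometric simplification at the end is handled by computing the two $\pm_1$-sums directly.

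Your approach treats the two pieces of the bracket asymmetrically: the constant piece $N_2$ by binomial expansion of $(1+\sinh^2\pi u)^{-s}$ followed by Barnes' first lemma term-by-term, which lands you immediately on the $\prescript{}{2}{F}_1(\,\cdot\,;1+it;-\sinh^2\pi u)$ solution in the correct variable; and the exponential piece $N_1$ by contour-shifting as in the paper. This buys you one fewer application of the connection formula, at the cost of having to justify the sum--integral interchange for Barnes (which is fine here since the binomial series converges absolutely for $\sinh^2\pi u<1$ uniformly on $\Re(s)=\sigma$). The paper's uniform residue treatment is slightly cleaner in that no Barnes machinery is needed and both hypergeometric series arise from the same residue computation, but the final coefficient-matching via reflection identities is of comparable difficulty in both approaches.
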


\begin{proof}
We first restrict to the case that $u \neq 0$. From the definitions \eqref{eqn:Gpm} of $G^{\pm}(s)$ and \eqref{eqn:Gscrmupm} of $\Gscr_{\mu_F}^{\pm}(s)$, the left-hand side of \eqref{eqn:contourintegral1} is equal to
\begin{multline*}
(1 \mp i) (2\pi)^{-2 - it} e^{\pm \frac{\pi t}{2}} \frac{1}{2\pi i} \int_{\sigma - i\infty}^{\sigma + i\infty} \Gamma(s) \Gamma(s + it) \Gamma\left(\frac{1}{2} - s + 2it_g\right) \Gamma\left(\frac{1}{2} - s - 2it_g\right)	\\
\times (1 + 2 \cosh 2\pi t_g - e(\mp s)) (\cosh^2 \pi u)^{-s} \, ds.
\end{multline*}
Since $\cosh^2 \pi u > 1$, we shift the contour off to the right to the line $\Re(s) = \sigma'$, picking up residues at the poles at $s = 1/2 + \ell + 2it_g$ and $s = 1/2 + \ell - 2it_g$ for each $\ell \in \N_0$ with $\ell < \sigma' - 1/2$. The remaining integral over the line $\Re(s) = \sigma'$ tends to $0$ as $\sigma'$ tends to infinity via Stirling's formula, while each residue may be simplified via Euler's reflection formula. In this way, we find that the left-hand side of \eqref{eqn:contourintegral1} is equal to
\begin{multline*}
\pi (i \pm 1) (2\pi)^{-2 - it} e^{\pm \frac{\pi t}{2}} \cosech 4\pi t_g \sum_{\pm_1} \pm_1 (1 + 2 \cosh 2\pi t_g + e^{\pm \pm_1 4\pi t_g}) (\cosh^2 \pi u)^{-\frac{1}{2} \mp_1 2it_g}	\\
\times \sum_{\ell = 0}^{\infty} \frac{\Gamma\left(\frac{1}{2} + \ell \pm_1 2it_g\right) \Gamma\left(\frac{1}{2} + \ell + it \pm_1 2it_g\right)}{\Gamma\left(1 + \ell \pm_1 4it_g\right)} \frac{(\sech^2 \pi u)^{\ell}}{\ell!}.
\end{multline*}
The sum over $\ell \in \N_0$ is
\[\frac{\Gamma\left(\frac{1}{2} \pm_1 2it_g\right) \Gamma\left(\frac{1}{2} + it \pm_1 2it_g\right)}{\Gamma(1 \pm_1 4it_g)} \prescript{}{2}{F}_1\left(\frac{1}{2} \pm_1 2it_g, \frac{1}{2} + it \pm_1 2it_g; 1 \pm_1 4it_g; \sech^2 \pi u\right).\]
From the definition \eqref{eqn:hypergeometric} of the hypergeometric function together with \cite[9.131.1, 9.131.2]{GR15}, the sum over $\ell \in \N_0$ is equal to
\begin{multline*}
(\cosh^2 \pi u)^{\frac{1}{2} \pm_1 2it_g} (\tanh^2 \pi u)^{-it} \Gamma(it) \prescript{}{2}{F}_1\left(\frac{1}{2} + 2it_g,\frac{1}{2} - 2it_g;1 - it; -\sinh^2 \pi u\right)	\\
+ (\cosh^2 \pi u)^{\frac{1}{2} \pm_1 2it_g} \frac{\Gamma\left(\frac{1}{2} + it \pm_1 2it_g\right)}{\Gamma\left(\frac{1}{2} - it \pm_1 2it_g\right)} \Gamma(-it) \prescript{}{2}{F}_1\left(\frac{1}{2} + 2it_g,\frac{1}{2} - 2it_g;1 + it; -\sinh^2 \pi u\right).
\end{multline*}
This is valid for $|u| < \frac{1}{\pi} \log(1 + \sqrt{2})$, so that $\sinh^2 \pi u < 1$. Next, we have that
\[\sum_{\pm_1} \pm_1 (1 + 2 \cosh 2\pi t_g + e^{\pm \pm_1 4\pi t_g}) = \pm 2 \sinh 4\pi t_g,\]
while by Euler's reflection formula,
\begin{multline*}
\sum_{\pm_1} \pm_1 (1 + 2 \cosh 2\pi t_g + e^{\pm \pm_1 4\pi t_g}) \frac{\Gamma\left(\frac{1}{2} + it \pm_1 2it_g\right)}{\Gamma\left(\frac{1}{2} - it \pm_1 2it_g\right)}	\\
= \pm \frac{2}{\pi} \sinh 4\pi t_g \Gamma\left(\frac{1}{2} + it + 2it_g\right) \Gamma\left(\frac{1}{2} + it - 2it_g\right) (e^{\mp \pi t} \cosh 2\pi t_g \mp \sinh \pi t).
\end{multline*}
This yields the identity \eqref{eqn:contourintegral1} for $u \neq 0$; finally, by continuity, it additionally extends to the value $u = 0$.
\end{proof}

\begin{remark}
The hypergeometric functions appearing on the right-hand side of \eqref{eqn:contourintegral1} can be expressed in terms of associated Legendre functions of the first kind: from \cite[8.702]{GR15},
\[\prescript{}{2}{F}_1\left(\frac{1}{2} + 2it_g,\frac{1}{2} - 2it_g;1 \mp it; -\sinh^2 \pi u\right) = (\tanh^2\pi u)^{\pm \frac{it}{2}} \Gamma(1 \mp it) P_{-\frac{1}{2} + 2it_g}^{\pm it}(\cosh 2\pi u).\]
\end{remark}

\begin{proof}[Proof of {\hyperref[lem:HHpmtbounds]{Lemma \ref*{lem:HHpmtbounds}}} for $|t| \leq TU$]
From \eqref{eqn:HHtdoubleint}, \eqref{eqn:contourintegral1}, and Stirling's formula, it suffices to show that the quantities
\begin{multline}
\label{eqn:oscillatory}
\int_{-\infty}^{\infty} g_{\pm}(u) \left(\sech \frac{\pi u}{U}\right)^{2C + \frac{1}{2}} e\left(\pm \frac{Tu}{U}\right) \left(\tanh^2 \frac{\pi u}{U}\right)^{-it} \Omega\left(\frac{u}{U^{\delta}}\right)	\\
\times \prescript{}{2}{F}_1\left(\frac{1}{2} + 2it_g,\frac{1}{2} - 2it_g;1 - it; -\sinh^2 \frac{\pi u}{U}\right) \, du
\end{multline}
and
\begin{multline}
\label{eqn:nonoscillatory}
\int_{-\infty}^{\infty} g_{\pm}(u) \left(\sech \frac{\pi u}{U}\right)^{2C + \frac{1}{2}} e\left(\pm \frac{Tu}{U}\right) \Omega\left(\frac{u}{U^{\delta}}\right)	\\
\times \prescript{}{2}{F}_1\left(\frac{1}{2} + 2it_g,\frac{1}{2} - 2it_g;1 + it; -\sinh^2 \frac{\pi u}{U}\right) \, du
\end{multline}
are both $\ll_{\delta} \frac{U}{T\sqrt{|t|}}$ for $|t| \leq \frac{T}{U}$ and are $\ll_{\delta} \frac{U}{T\sqrt{|t|}} (\frac{U|t|}{T})^{-C/2}$ for $\frac{T}{U} < |t| \leq TU$.

By \cite[9.103.1]{GR15}, we have that for any $m \in \N_0$,
\begin{multline*}
\frac{d}{dx} \prescript{}{2}{F}_1\left(m - \frac{1}{2} + 2it_g,m - \frac{1}{2} - 2it_g;m \mp it; x\right)	\\
= \frac{\left(m - \frac{1}{2}\right)^2 + 4t_g^2}{m \mp it} \prescript{}{2}{F}_1\left(m + \frac{1}{2} + 2it_g,m + \frac{1}{2} - 2it_g;m + 1 \mp it;x\right).
\end{multline*}
Since
\[\prescript{}{2}{F}_1\left(m - \frac{1}{2} + 2it_g,m - \frac{1}{2} - 2it_g;m \mp it;x\right) \ll_m 1\]
for $m \in \N_0$ and $|x| \leq 1/2$, as is immediate from the series representation \eqref{eqn:hypergeometric} of this hypergeometric function, we deduce that for all $m \in \N_0$ and for $|u| \leq 2U^{\delta}$,
\begin{equation}
\label{eqn:2F1derivbounds}
\frac{d^m}{du^m} \prescript{}{2}{F}_1\left(\frac{1}{2} + 2it_g,\frac{1}{2} - 2it_g;1 \pm it; -\sinh^2 \frac{\pi u}{U}\right) \ll_m U^{-m}.
\end{equation}

With this in hand, we proceed to bound \eqref{eqn:oscillatory}. We write this integral as
\begin{equation}
\label{eqn:gpmhypergeometricint1}
\int_{-\infty}^{\infty} g_{\pm}(u) \left(\sech \frac{\pi u}{U}\right)^{2C + \frac{1}{2}} e^{i\Phi_{\pm}(u)} \Omega\left(\frac{u}{U^{\delta}}\right) \prescript{}{2}{F}_1\left(\frac{1}{2} + 2it_g,\frac{1}{2} - 2it_g;1 - it; -\sinh^2 \frac{\pi u}{U}\right) \, du,
\end{equation}
where
\[\Phi_{\pm}(u) \coloneqq \pm \frac{2\pi Tu}{U} - t \log \tanh^2 \frac{\pi u}{U},\]
so that
\begin{equation}
\label{eqn:Phipmmdeceay}
\Phi_{\pm}'(u) = \pm \frac{2\pi T}{U} - \frac{4\pi t}{U} \cosech \frac{2\pi u}{U}, \qquad \left|\Phi_{\pm}^{(m)}(u)\right| \asymp_m \frac{|t|}{|u|^m} \quad \text{for $m \geq 2$ and $|u| \leq U$.}
\end{equation}
If $|t| \ll TU^{\delta - 1}$, this phase has a stationary point at $u_{0,\pm} \coloneqq \pm \frac{U}{2\pi} \arsinh \frac{2t}{T}$ with
\[\Phi_{\pm}''(u_{0,\pm}) = -\frac{2\pi^2 T^2}{tU^2} \sqrt{\frac{4t^2}{T^2} + 1},\]
while if $|t| \gg TU^{\delta - 1}$, this stationary point does not occur in the support of the integrand.

In the latter case, we may simply repeatedly integrate by parts by, where we antidifferentiate $\Phi_{\pm}'(u) e^{i\Phi_{\pm}(u)}$ and differentiate the rest (using, say, \cite[Lemma 3.1 (1)]{KPY19}), and we make use of the bounds \eqref{eqn:2F1derivbounds} for derivatives of the hypergeometric function, \eqref{eqn:gdecay} for $g_{\pm}^{(m)}(u)$, and \eqref{eqn:Phipmmdeceay} for $\Phi_{\pm}^{(m)}(u)$. This shows that if $TU^{\delta - 1} \ll |t| \ll TU$, then for all for all $N \geq 0$, \eqref{eqn:gpmhypergeometricint1} is
\[\ll_N \frac{U}{T\sqrt{|t|}} \left(\frac{U|t|}{T}\right)^{-N}.\]

In the former case, we let $\widetilde{\Omega}$ be a smooth compactly supported function equal to $1$ on $[-1,1]$, supported on $[-2,2]$, and having bounded derivatives, and we write \eqref{eqn:gpmhypergeometricint1} as
\begin{multline}
\label{eqn:gpmhypergeometricint2}
\int_{-\infty}^{\infty} g_{\pm}(u) \left(\sech \frac{\pi u}{U}\right)^{2C + \frac{1}{2}} e^{i\Phi_{\pm}(u)} \Omega\left(\frac{u}{U^{\delta}}\right) \widetilde{\Omega}\left(\frac{u - u_{0,\pm}}{\sqrt{|\Phi_{\pm}''(u_{0,\pm})|}}\right)	\\
\times \prescript{}{2}{F}_1\left(\frac{1}{2} + 2it_g,\frac{1}{2} - 2it_g;1 - it; -\sinh^2 \frac{\pi u}{U}\right) \, du	\\
+ \int_{-\infty}^{\infty} g_{\pm}(u) \left(\sech \frac{\pi u}{U}\right)^{2C + \frac{1}{2}} e^{i\Phi_{\pm}(u)} \Omega\left(\frac{u}{U^{\delta}}\right) \left(1 - \widetilde{\Omega}\left(\frac{u - u_{0,\pm}}{\sqrt{|\Phi_{\pm}''(u_{0,\pm})|}}\right)\right)	\\
\times \prescript{}{2}{F}_1\left(\frac{1}{2} + 2it_g,\frac{1}{2} - 2it_g;1 - it; -\sinh^2 \frac{\pi u}{U}\right) \, du.
\end{multline}
Since the stationary point does not occur in the support of the integrand of the second term in \eqref{eqn:gpmhypergeometricint2}, we may bound this second term using the same integration by parts argument. On the other hand, for the first term in \eqref{eqn:gpmhypergeometricint2}, we simply bound this integral trivially, using the pointwise bounds \eqref{eqn:gdecay} for $g_{\pm}(u)$ and \eqref{eqn:2F1derivbounds} for the hypergeometric function. There are two regimes of interest depending on the behaviour of $g_{\pm}(u)$. If $T/U < |t| \ll TU^{1 - \delta}$, then for all $N \geq 0$, we have that $g_{\pm}(u) \ll_{N} |u|^{-N}$ for all $u$ lying in the support of the integrand by \eqref{eqn:gdecay}, so that \eqref{eqn:gpmhypergeometricint1} is
\[\ll_N \frac{U}{T\sqrt{|t|}} \left(\frac{U|t|}{T}\right)^{-N}.\]
If $|t| \leq T/U$, then instead $g_{\pm}(u) \ll 1$, and so \eqref{eqn:gpmhypergeometricint1} is
\[\ll \frac{U}{T\sqrt{|t|}}.\]

To bound \eqref{eqn:nonoscillatory}, on the other hand, we merely use integration by parts, integrating $e(\pm Tu/U)$ and differentiating the rest, and bearing in mind the assumption that $U \leq T^{1 - \e}$ as well as the bounds \eqref{eqn:2F1derivbounds} for derivatives of the hypergeometric function and the bounds \eqref{eqn:gdecay} for $g_{\pm}^{(m)}(u)$. We find that the integral \eqref{eqn:nonoscillatory} is $\ll_{\delta,N} T^{-N}$ for all $N \geq 0$, which is sufficient since $|t| \leq TU$.
\end{proof}

\section{Bounds for Second Moments of \texorpdfstring{$L$}{L}-Functions}
\label{sect:secondmomentbounds}

We next focus on bounding the dual moment \eqref{eqn:centraldual} with $\HH_{\mu_F}^{\pm}(t)$ replaced by the indicator function of the interval $[-T,T]$. Towards this, we prove the following.

\begin{proposition}
\label{prop:centraldualbounds}
Let $F$ be a Hecke--Maa\ss{} cusp form for $\SL_3(\Z)$. Let $q = q_1 q_2$ be a positive integer with $(q_1,q_2) = 1$. Let $\chi_1$ be a primitive Dirichlet character of conductor $q_1$, and set $\chi \coloneqq \chi_1 \chi_{0(q_2)}$. For $T \geq 1$, we have that
\begin{equation}
\label{eqn:centraldualbounds}
\frac{1}{\varphi(q)} \sum_{\psi \hspace{-.25cm} \pmod{q}} \int_{-T}^{T} \left|L\left(\frac{1}{2} + it, F \otimes \psi\right) L\left(\frac{1}{2} - it,\overline{\psi}\right) \ZZ_{\chi}(\psi;t)\right| \, dt \ll_{F,\e} (q_1 T)^{\frac{5}{4}} q_2^{\frac{1}{2}} (qT)^{\e}.
\end{equation}
\end{proposition}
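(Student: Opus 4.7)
The plan is to apply the Cauchy--Schwarz inequality to the sum-integral in \eqref{eqn:centraldualbounds}, splitting the integrand as $|L(\tfrac{1}{2}+it,F\otimes\psi)| \cdot |L(\tfrac{1}{2}-it,\overline{\psi})\,\ZZ_{\chi}(\psi;t)|$ and estimating the two resulting second moments separately. For the first factor, the hybrid large sieve bound for the $\GL_3 \times \GL_1$ family alluded to in the introduction (\hyperref[prop:secondmomentFlargesieve]{Proposition~\ref*{prop:secondmomentFlargesieve}}) gives
\[\sum_{\psi \hspace{-.25cm} \pmod q} \int_{-T}^T \left|L\left(\tfrac{1}{2}+it, F\otimes\psi\right)\right|^2 \, dt \ll_{F,\e} (qT)^{3/2+\e},\]
which after division by $\varphi(q)$ and taking the square root contributes $\ll q^{1/4}T^{3/4}(qT)^\e$ to the overall bound.

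For the second factor, I would first invoke \hyperref[cor:Zbounds]{Corollary~\ref*{cor:Zbounds}} to restrict $\psi$ to characters of the form $\psi_{1}\psi_{3}'\psi_{0(q_{2}/q_{3}')}$, with $\psi_{1}$ modulo $q_{1}$ and $\psi_{3}'$ primitive modulo a divisor $q_{3}'$ of the squarefree part $q_{3}$ of $q_{2}$, inserting the pointwise bound for $\ZZ_{\chi}$ that features the Petrow--Young character sum $g(\chi_{p^{\beta}},\psi_{1,p^{\beta}})$. Factoring $L(\tfrac{1}{2}-it,\overline\psi)$ into its primitive part $L(\tfrac{1}{2}-it,\overline{\psi_{1}\psi_{3}'})$, at the cost of only $(qT)^{\e}$ from the discarded Euler factors at primes dividing $q_{2}/q_{3}'$, the problem becomes that of bounding, for each $q_{3}' \mid q_{3}$, a coset-restricted second moment of Dirichlet $L$-functions weighted by $\prod_{p^{\beta} \parallel q_{1}} |g(\chi_{p^{\beta}},\psi_{1,p^{\beta}})|^{2}/p^{2\beta}$. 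For this I would invoke the Lindel\"of-on-average second moment bound for Dirichlet $L$-functions restricted to a coset --- advertised in the abstract as a key input of the paper --- combined with the Petrow--Young mean-square estimates for $g(\chi,\psi)$ from \cite{PY20,PY23}. The contributions from different $q_{3}' \mid q_{3}$ then combine to produce the target bound.

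The principal obstacle is the joint treatment of the coset restriction and the character-sum weighting in the Dirichlet second moment: recovering the correct $q_{2}^{1/2}$ exponent requires simultaneously exploiting the coset structure in $\psi_{3}'$ and the harmonic-analytic cancellation in $g(\chi_{1},\psi_{1})$. Either of these inputs used in isolation, or a naive application of Gallagher's hybrid large sieve, would fall short in the $q_{2}$-aspect.
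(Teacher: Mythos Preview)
Your plan has the right ingredients but applies them in the wrong order, and this costs you the $q_2$-exponent. You apply Cauchy--Schwarz to the full sum over $\psi \pmod q$, so that the first factor is $\bigl(\sum_{\psi \pmod q}\int_{-T}^{T}|L(\tfrac12+it,F\otimes\psi)|^2\,dt\bigr)^{1/2}\ll (qT)^{3/4+\e}$. The second factor, after the automatic restriction forced by the vanishing in \hyperref[cor:Zbounds]{Corollary~\ref*{cor:Zbounds}}, is bounded via \hyperref[prop:secondmomenthybrid]{Proposition~\ref*{prop:secondmomenthybrid}} summed over $q_3'\mid q_3$ by $(q^3T)^{1/2+\e}$. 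Putting these together with the prefactor $\varphi(q)^{-1}$ yields only $(qT)^{5/4+\e}=(q_1T)^{5/4}q_2^{5/4}(qT)^{\e}$, which is weaker than \eqref{eqn:centraldualbounds} by $q_2^{3/4}$.

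The fix, and what the paper does, is to restrict \emph{before} Cauchy--Schwarz: first rewrite the left-hand side of \eqref{eqn:centraldualbounds} as \eqref{eqn:centraldualboundsLHS} via \hyperref[cor:Zbounds]{Corollary~\ref*{cor:Zbounds}}, and then apply Cauchy--Schwarz \emph{for each fixed} $q_3'\mid q_3$ to the inner sum over $\psi_1\pmod{q_1}$ and $\psi_3'\pmod{q_3'}$. This shrinks the first factor from $(qT)^{3/4}$ to $(q_1q_3'T)^{3/4}$ (precisely \hyperref[prop:secondmomentFlargesieve]{Proposition~\ref*{prop:secondmomentFlargesieve}}), and after multiplying by the second factor $(q^3T/q_3'^{\,2})^{1/2}$ from \hyperref[prop:secondmomenthybrid]{Proposition~\ref*{prop:secondmomenthybrid}} and summing over $q_3'$, one recovers the correct $q_2^{1/2}$. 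Note also that your diagnosis of the obstacle is slightly off: the coset second moment \hyperref[thm:secondmomentcoset]{Theorem~\ref*{thm:secondmomentcoset}} is indeed the key input, but it is applied to cosets of $\psi_1\pmod{q_1}$ stratified by the size of $g(\chi_{p^{\beta}},\psi_{p^{\beta}})$ (as in the proof of \hyperref[prop:secondmomenthybrid]{Proposition~\ref*{prop:secondmomenthybrid}}), not to any coset structure in $\psi_3'$; the $q_2$-saving comes from the restricted range of summation in the $\GL_3\times\GL_1$ second moment, not from further cancellation in the Dirichlet second moment.
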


The proof of \hyperref[prop:centraldualbounds]{Proposition \ref*{prop:centraldualbounds}} relies upon second moment bounds for $L(1/2 + it,F \otimes \psi)$ and $L(1/2 - it,\overline{\psi}) \ZZ_{\chi}(\psi;t)$. In doing so, we make use of the observation that by \hyperref[cor:Zbounds]{Corollary \ref*{cor:Zbounds}}, the left-hand side of \eqref{eqn:centraldualbounds} is equal to
\begin{multline}
\label{eqn:centraldualboundsLHS}
\frac{1}{\varphi(q)} \sum_{q_3' \mid q_3} \sum_{\psi_1 \hspace{-.25cm} \pmod{q_1}} \hspace{.2cm} \sideset{}{^{\star}} \sum_{\psi_3' \hspace{-.25cm} \pmod{q_3'}} \int_{-T}^{T} \left|L^{q_2/q_3'}\left(\frac{1}{2} + it,F \otimes \psi_1 \psi_3'\right)\right|	\\
\times \left|L^{q_2/q_3'}\left(\frac{1}{2} - it,\overline{\psi_1 \psi_3'}\right) \ZZ_{\chi}(\psi_1 \psi_3' \psi_{0(q_2/q_3')};t)\right| \, dt,
\end{multline}
where we have written $q_2 = q_3 q_4$ with $q_3$ squarefree, $q_4$ squarefull, and $(q_3,q_4) = 1$. We shall bound the expression \eqref{eqn:centraldualboundsLHS} via the Cauchy--Schwarz inequality, which in turn requires us to bound the second moment of $L(1/2 + it,F \otimes \psi)$. We achieve this via an application of Gallagher's hybrid large sieve.

\begin{proposition}
\label{prop:secondmomentFlargesieve}
Let $F$ be a Hecke--Maa\ss{} cusp form for $\SL_3(\Z)$. Let $q = q_1 q_2$ be a positive integer with $(q_1,q_2) = 1$, and write $q_2 = q_3 q_4$ with $q_3$ squarefree, $q_4$ squarefull, and $(q_3,q_4) = 1$. For $T \geq 1$ and a divisor $q_3'$ of $q_3$, we have that
\[\sum_{\psi_1 \hspace{-.25cm} \pmod{q_1}} \hspace{.2cm} \sideset{}{^{\star}} \sum_{\psi_3' \hspace{-.25cm} \pmod{q_3'}} \int_{-T}^{T} \left|L^{q_2/q_3'}\left(\frac{1}{2} + it,F \otimes \psi_1 \psi_3'\right)\right|^2 \, dt \ll_{F,\e} (q_1 q_3' T)^{\frac{3}{2}} (qT)^{\e}.\]
\end{proposition}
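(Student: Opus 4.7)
The plan is to invoke the approximate functional equation for each twisted $L$-function and then bound the resulting Dirichlet polynomials using a mean value estimate for character-twisted Dirichlet polynomials, organised by the conductor of $\psi_1$. First, decompose
\begin{equation*}
\sum_{\psi_1 \pmod{q_1}} = \sum_{d_1 \mid q_1} \sideset{}{^{\star}}\sum_{\psi_1^{\ast} \pmod{d_1}}
\end{equation*}
according to the conductor $d_1$ of $\psi_1$. Since $(q_1, q_3') = 1$, the product $\psi^{\ast} \coloneqq \psi_1^{\ast} \psi_3'$ is primitive of modulus $d_1 q_3'$. The imprimitive $L$-function $L^{q_2/q_3'}(s, F \otimes \psi_1 \psi_3')$ differs from the primitive $L$-function $L(s, F \otimes \psi^{\ast})$ only by local Euler factors at primes dividing $q/(d_1 q_3')$. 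Standard bounds on the Satake parameters of $F$ (e.g.\ Jacquet--Shalika) show that each such local factor is $O(p^{\e})$ on $\Re(s) = 1/2$, so
\begin{equation*}
|L^{q_2/q_3'}(1/2 + it, F \otimes \psi_1 \psi_3')|^2 \ll (qT)^{\e} |L(1/2 + it, F \otimes \psi^{\ast})|^2.
\end{equation*}

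Next, I apply the approximate functional equation to $L(1/2+it, F \otimes \psi^{\ast})$, expressing it up to a negligible error as a sum of two smoothly truncated Dirichlet polynomials of effective length $X_t \asymp (d_1 q_3')^{3/2}(1+|t|)^{3/2}$ in coefficients $A_F(1, n) \psi^{\ast}(n) V_t(n/X_t) n^{-1/2 - it}$. A dyadic decomposition reduces the task, for each $d_1 \mid q_1$ and each dyadic scale $N \leq C (d_1 q_3' T)^{3/2} (qT)^{\e}$, to bounding
\begin{equation*}
\sideset{}{^{\star}}\sum_{\chi \pmod{d_1 q_3'}} \int_{-T}^{T} \left|\sum_n A_F(1, n) \chi(n) W(n/N) n^{-1/2 - it}\right|^2 dt,
\end{equation*}
where $W$ is a fixed smooth function supported on $[1/2, 2]$. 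The hybrid large sieve, specialised to a single modulus $d_1 q_3'$, yields the bound $(d_1 q_3' T + N) \sum_n |A_F(1,n)|^2 W(n/N)^2 / n$; the Rankin--Selberg estimate $\sum_{n \leq Y} |A_F(1, n)|^2 \ll_F Y$ makes the inner sum $O_F((qT)^{\e})$. Since $N \leq C(d_1 q_3' T)^{3/2}(qT)^{\e}$, this contribution is $O((d_1 q_3' T)^{3/2}(qT)^{\e})$. Summing over $d_1 \mid q_1$ via $\sum_{d_1 \mid q_1} d_1^{3/2} \ll q_1^{3/2 + \e}$ gives the claimed bound $(q_1 q_3' T)^{3/2}(qT)^{\e}$.

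The main obstacle is the bookkeeping required to pass between the imprimitive $L$-function $L^{q_2/q_3'}(s, F \otimes \psi_1 \psi_3')$ and the primitive $L$-function $L(s, F \otimes \psi^{\ast})$ uniformly as $d_1$ varies: one must ensure that the product of removed local Euler factors at primes dividing $q/(d_1 q_3')$ is bounded in absolute value on the critical line by $(qT)^{\e}$. This is standard, as each local factor is $O(p^{\e})$ individually. A secondary technicality is that the length of the approximate functional equation shortens as $d_1$ decreases; this is precisely what allows $\sum_{d_1 \mid q_1} d_1^{3/2}$ (rather than $q_1 \cdot q_1^{3/2}$) to appear in the conductor sum and so keeps the final bound sharp. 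Aside from these points, the proof is a routine application of the approximate functional equation together with the character-twisted mean value theorem.
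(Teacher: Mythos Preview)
Your proposal is correct and uses the same ingredients as the paper: the approximate functional equation, the Rankin--Selberg bound $\sum_{n \leq N} |A_F(1,n)|^2 \ll_F N^{1+\e}$, and Gallagher's hybrid large sieve. The paper, however, does not decompose by the conductor $d_1$ of $\psi_1$; it simply applies Gallagher's large sieve directly over all characters modulo $q_1 q_3'$ (absorbing the sums over $\psi_1$ and $\psi_3'$ into one character sum, using positivity to drop the primitivity constraint on $\psi_3'$), with Dirichlet polynomials uniformly truncated at length $O_{F,\e}((q_1 q_3' T)^{3/2+\e})$. Your conductor decomposition and the summation $\sum_{d_1 \mid q_1} d_1^{3/2} \ll q_1^{3/2+\e}$ recover exactly the same bound, but the extra stratification is not needed here.
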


\begin{proof}
By a standard application of the approximate functional equation \cite[Theorem 5.3]{IK04}, $L(1/2 + it,F \otimes \psi_1 \psi_3')$ may be written as a sum of two Dirichlet polynomials with coefficients of the form $A_F(1,n) \psi(n) \psi_3'(n) n^{-1/2 - it}$ that are each of length $O_{F,\e}((q_1 q_3' (|t| + 1))^{3/2 + \e})$. The result then follows from the Rankin--Selberg bound $\sum_{n \leq N} |A_F(1,n)|^2 \ll_{F,\e} N^{1 + \e}$ in conjunction with Gallagher's hybrid large sieve \cite[Theorem 2]{Gal70}, which states that for any sequence of complex numbers $(a_n)$ and for $q \in \N$ and $T,N \geq 1$, we have that
\[\sum_{\psi \hspace{-.25cm} \pmod{q}} \int_{-T}^{T} \left|\sum_{n \leq N} a_n \psi(n) n^{-it}\right|^2 \, dt \ll \sum_{n \leq N} (qT + n) |a_n|^2.\qedhere\]
\end{proof}

We turn our attention to the second moment of $L(1/2 - it,\overline{\psi}) \ZZ_{\chi}(\psi;t)$. We shall shortly show the following.

\begin{proposition}
\label{prop:secondmomenthybrid}
Let $q = q_1 q_2$ be a positive integer with $(q_1,q_2) = 1$, and write $q_2 = q_3 q_4$ with $q_3$ squarefree, $q_4$ squarefull, and $(q_3,q_4) = 1$. Let $\chi_1$ be a primitive Dirichlet character of conductor $q_1$, and set $\chi \coloneqq \chi_1 \chi_{0(q_2)}$. For $T \geq 1$ and a divisor $q_3'$ of $q_3$, we have that
\begin{equation}
\label{eqn:secondmomenthybrid}
\sum_{\psi_1 \hspace{-.25cm} \pmod{q_1}} \hspace{.2cm} \sideset{}{^{\star}} \sum_{\psi_3' \hspace{-.25cm} \pmod{q_3'}} \int_{-T}^{T} \left|L^{q_2/q_3'}\left(\frac{1}{2} - it,\overline{\psi_1 \psi_3'}\right) \ZZ_{\chi}(\psi_1 \psi_3' \psi_{0(q_2/q_3')};t)\right|^2 \, dt \ll_{\e} \frac{q^3}{{q_3'}^2} T (qT)^{\e}.
\end{equation}
\end{proposition}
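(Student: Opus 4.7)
The plan is to insert the pointwise bound on $\ZZ_{\chi}(\psi;t)$ furnished by \hyperref[cor:Zbounds]{Corollary \ref*{cor:Zbounds}}, square it, and reduce the left-hand side of \eqref{eqn:secondmomenthybrid} to a weighted second moment of Dirichlet $L$-functions in the spirit of Petrow--Young. Squaring yields
\[|\ZZ_{\chi}|^2 \ll q_1^2 \left(\frac{q_2}{q_3'}\right)^3 q^{\e} \prod_{p^{\beta} \parallel q_1} \left(\delta_{\psi_{p^{\beta}}',\star} \frac{|g(\chi_{p^{\beta}},\psi_{p^{\beta}}')|^2}{p^{2\beta}} + 1\right),\]
and expanding the product records, for each divisor $r \mid q_1$, the primitive support of $\psi_1$ at primes dividing $r$. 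Multiplicativity of $g$ across coprime moduli (immediate from the Chinese remainder theorem decomposition applied to \eqref{eqn:gchipsidefeq}) reshapes the weight of the $r$-term into $|g(\chi_r,\psi_r^{\star})|^2 / r^2$, where we factor $\psi_1 = \psi_r^{\star} \cdot \psi_{q_1/r}$ with $\psi_r^{\star}$ primitive mod $r$ and $\psi_{q_1/r}$ an arbitrary character mod $q_1/r$.

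The crucial input is the hybrid coset second moment bound
\[\sum_{\substack{\psi_r^{\star} \hspace{-.25cm} \pmod{r} \\ \text{primitive}}} |g(\chi_r,\psi_r^{\star})|^2 \sum_{\eta \hspace{-.25cm} \pmod{Q}} \int_{-T}^{T} \left|L\left(\tfrac{1}{2}-it, \overline{\psi_r^{\star} \eta}\right)\right|^2 dt \ll r^3 Q T (qT)^{\e},\]
valid for any modulus $Q$ coprime to $r$. This follows from pairing Gallagher's hybrid large sieve in the $\eta$ direction (controlling the inner double sum by $\ll QT(qT)^\e$ uniformly in $\psi_r^\star$) with the Conrey--Iwaniec/Petrow--Young mean-square estimate $\sum_{\psi_r^{\star}} |g(\chi_r, \psi_r^{\star})|^2 \ll r^{3+\e}$, the latter being the character-sum ingredient powering the Weyl-strength bounds of \cite{PY20,PY23}. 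In our application we take $Q = (q_1/r) q_3'$ and $\eta = \psi_{q_1/r} \psi_3'$; the removal of Euler factors at primes dividing $q_2/q_3'$ costs at most a $(qT)^{\e}$ factor.

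Dividing by the weight $r^2$, the contribution of each $r$ is bounded by $r Q T (qT)^{\e} = q_1 q_3' T (qT)^{\e}$, since $rQ = q_1 q_3'$. Summing over the divisors $r \mid q_1$ absorbs only a factor of $\tau(q_1) \ll q_1^{\e}$, and combining with the prefactor $q_1^2 (q_2/q_3')^3$ coming from $|\ZZ_{\chi}|^2$ gives
\[q_1^2 \left(\frac{q_2}{q_3'}\right)^3 \cdot q_1 q_3' T \cdot (qT)^{\e} = \frac{q^3}{(q_3')^2} T (qT)^{\e},\]
as claimed. The principal obstacle is verifying the coset mean-square estimate above with full uniformity in $\eta$, $Q$, and the shift $t$; this uniformity is implicit in the methods of \cite{PY20,PY23}, though some bookkeeping is required to extract it in the precise form needed here.
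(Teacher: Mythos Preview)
Your argument has a genuine gap at the step you call ``Gallagher's hybrid large sieve in the $\eta$ direction (controlling the inner double sum by $\ll QT(qT)^\e$ uniformly in $\psi_r^\star$).'' This uniformity is false. The character $\psi_r^\star \eta$ has conductor divisible by $r$, so the approximate functional equation for $L(1/2 - it,\overline{\psi_r^\star \eta})$ produces Dirichlet polynomials of length $\asymp (rQT)^{1/2}$, not $(QT)^{1/2}$. Gallagher's large sieve then yields only
\[
\sum_{\eta \hspace{-.25cm} \pmod{Q}} \int_{-T}^{T} \left|L\left(\tfrac{1}{2}-it,\overline{\psi_r^\star \eta}\right)\right|^2 dt \ \ll_\e \ \bigl(QT + (rQT)^{1/2}\bigr)(qT)^\e,
\]
and the second term dominates whenever $r > QT$. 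In your setup $rQ = q_1 q_3'$, so for $r$ close to $q_1$ (e.g.\ $r = q_1$, $Q = q_3'$) and $q_1 > q_3' T$, the bound is $(q_1 q_3' T)^{1/2}$, not $q_3' T$. Feeding this through your estimate and using the mean-square bound $\sum_{\psi_r^\star} |g|^2 \ll r^3$, the $r = q_1$ term alone contributes $q_1^{7/2} q_2^3 / {q_3'}^3$, which exceeds the target $q^3 T / {q_3'}^2$ by a factor $q_1^{1/2}/(q_3' T)$. Replacing Gallagher by \hyperref[thm:secondmomentcoset]{Theorem \ref*{thm:secondmomentcoset}} does not help: in the small-$Q$ range that theorem gives $r^{1/2}$, and the same loss persists.

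The structural point you are missing is that one cannot decouple the size of $|g(\chi_r,\psi_r^\star)|$ from the $L$-function moment. The paper's proof does \emph{not} average $|g|^2$ over all primitive $\psi_r^\star$; instead it stratifies the characters $\psi_1$ by the invariant $a = (\Delta(\psi_1), q_1/\widetilde{q_1})$, using that the characters with $\Delta(\psi_1) \equiv 0 \pmod{a}$ form $O_\e(q^\e)$ cosets of the subgroup of characters modulo $q_1/a$. The crucial trade-off is that characters with large $|g|$ (large $a$) lie in \emph{few, small} cosets, and applying \hyperref[thm:secondmomentcoset]{Theorem \ref*{thm:secondmomentcoset}} with the correspondingly large $q' = (q_1/a)q_3'$ compensates exactly for the growth of $|g|$. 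Your global mean-square $\sum_{\psi_r^\star}|g|^2 \ll r^3$ is dominated by the generic characters with $|g| \approx r$, and it throws away precisely this coset information needed to handle the exceptional characters.
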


\hyperref[prop:secondmomentFlargesieve]{Propositions \ref*{prop:secondmomentFlargesieve}} and \ref{prop:secondmomenthybrid} immediately combine to yield \hyperref[prop:centraldualbounds]{Proposition \ref*{prop:centraldualbounds}}.

\begin{proof}[Proof of {\hyperref[prop:centraldualbounds]{Proposition \ref*{prop:centraldualbounds}}}]
We rewrite the left-hand side of \eqref{eqn:centraldualbounds} as \eqref{eqn:centraldualboundsLHS}. The result then follows by the Cauchy--Schwarz inequality coupled with \hyperref[prop:secondmomentFlargesieve]{Propositions \ref*{prop:secondmomentFlargesieve}} and \ref{prop:secondmomenthybrid}.
\end{proof}

A similar estimate to that in \hyperref[prop:secondmomenthybrid]{Proposition \ref*{prop:secondmomenthybrid}} arises in \cite{PY23} involving the \emph{fourth} moment of Dirichlet $L$-functions weighted by $|\ZZ_{\chi}(\psi;t)|$; in our setting, we instead have a \emph{second} moment of Dirichlet $L$-functions weighted by the \emph{square} of $|\ZZ_{\chi}(\psi;t)|$. Inspired by \cite{PY23}, our strategy towards proving \hyperref[prop:secondmomenthybrid]{Proposition \ref*{prop:secondmomenthybrid}} involves breaking up the sum over Dirichlet characters $\psi_1$ modulo $q_1$ based on the size of $\ZZ_{\chi}(\psi_1 \psi_3' \psi_{0(q_2/q_3')};t)$. From \hyperref[cor:Zbounds]{Corollary \ref*{cor:Zbounds}}, the size of $\ZZ_{\chi}(\psi_1 \psi_3' \psi_{0(q_2/q_3')};t)$ is essentially determined by the size of the character sum \eqref{eqn:gchipsidefeq}. As observed in \cite{PY23}, this character sum has square-root cancellation for \emph{most} Dirichlet characters, yet can be larger for certain exceptional characters; moreover, these exceptional Dirichlet characters form a \emph{coset} of the group of Dirichlet characters modulo $q$. For this reason, we require the following hybrid second moment bound for Dirichlet $L$-functions averaged over cosets of Dirichlet characters.

\begin{theorem}[{Cf.~\cite[Theorem 1.3]{GY25}}]
\label{thm:secondmomentcoset}
Let $\psi$ be a Dirichlet character modulo a positive integer $q$. Then for $T \geq 1$ and $q'$ a divisor of $q$, we have that
\begin{equation}
\label{eqn:secondmomentcosetbounds}
\sum_{\psi' \hspace{-.25cm} \pmod{q'}} \int_{-T}^{T} \left|L\left(\frac{1}{2} + it,\psi \psi'\right)\right|^2 \, dt \ll_{\e} \begin{dcases*}
\frac{q^{1/2}}{{q'}^{1/2}} (qT)^{\e} & if $q' \leq \frac{q^{1/3}}{T^{2/3}}$,	\\
q' T (qT)^{\e} & if $q' \geq \frac{q^{1/3}}{T^{2/3}}$.
\end{dcases*}
\end{equation}
\end{theorem}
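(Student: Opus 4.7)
The plan is to combine the approximate functional equation for $L\left(\tfrac12+it,\psi\psi'\right)$ with character orthogonality on the coset, then split the resulting double sum into diagonal and off-diagonal contributions. More precisely, I would first apply the approximate functional equation (as in \cite[Theorem 5.3]{IK04}) to write
\[
L\left(\tfrac{1}{2}+it,\psi\psi'\right)=\sum_{n\ll N}\frac{\psi\psi'(n)}{n^{\frac{1}{2}+it}}V\!\left(\tfrac{n}{N}\right)+\varepsilon(\psi\psi',t)\sum_{n\ll M}\frac{\overline{\psi\psi'}(n)}{n^{\frac{1}{2}-it}}\widetilde{V}\!\left(\tfrac{n}{M}\right),
\]
with $NM\asymp \operatorname{cond}(\psi\psi')(|t|+1)\le q(|t|+1)$ and $V,\widetilde V$ suitably rapidly decaying. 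Expanding $|L|^2$, exchanging the $\psi'$-sum inside, and using orthogonality
\[
\sum_{\psi'\!\!\!\!\!\pmod{q'}}\psi'(m)\overline{\psi'}(n)=\varphi(q')\,\mathbf{1}_{m\equiv n\!\!\!\pmod{q'},\,(mn,q')=1}
\]
produces a main congruence restriction $m\equiv n\pmod{q'}$ weighted by $\psi(m)\overline{\psi}(n)$.

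The diagonal $m=n$ contributes $\varphi(q')\int_{-T}^{T}\sum_{n\le N}\tfrac{1}{n}\,dt\cdot(\text{smooth weight})\ll q'T(qT)^{\varepsilon}$, which gives precisely the bound in the second case $q'\ge q^{1/3}/T^{2/3}$. For the off-diagonal $m\neq n$ with $m\equiv n\pmod{q'}$, I would write $m=n+kq'$ for nonzero integers $k$ and perform Poisson summation in $n$ modulo $q$ (or equivalently, Mellin–Barnes unfolding followed by a Voronoi-type manoeuvre), exploiting the primitivity of $\psi$ modulo $q$ so that the internal character sum evaluates to a Gauss sum of size $\sqrt{q}$. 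The $t$-integral $\int_{-T}^{T}(m/n)^{it}\,dt=\tfrac{2\sin(T\log(m/n))}{\log(m/n)}$ localises the ranges of $m,n$ in terms of $T,q,q'$. Careful bookkeeping of the dual length (which is of size $N^{2}/(qq')$ up to logarithms) combined with the $\sqrt{q}$ Gauss sum saving yields an off-diagonal contribution of size $\sqrt{q/q'}\,(qT)^{\varepsilon}$, matching the first case $q'\le q^{1/3}/T^{2/3}$. Taking the maximum of the two contributions over the full range of $q'$ gives the claimed hybrid bound.

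The hardest step will be the off-diagonal analysis. The difficulty is that $q'$ divides $q$ but is not assumed coprime to $q/q'$, so the Poisson/Voronoi step has to disentangle the interaction between $\psi\pmod q$ and the coset structure of $\psi'\pmod{q'}$ at every prime dividing $q$. I would handle this by factorising $q$ and $q'$ prime-by-prime, writing the ambient character $\psi$ as a product of local characters $\psi_{p^{\beta}}$, separating the places where the conductor of $\psi\psi'$ drops from those where it does not, and invoking standard evaluations of local Gauss sums and Ramanujan sums, in the spirit of the calculations performed for $\VV_{\chi}(\psi;\cdots)$ in \hyperref[lem:VVchiprimnonprinc]{Lemma \ref*{lem:VVchiprimnonprinc}} and its proof. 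This is the essential analytic input behind \cite[Theorem 1.3]{GY23}; the "Cf." in the statement indicates that the adaptation from Gao--Young amounts to recording their argument in the slightly different form needed here, rather than a new idea.
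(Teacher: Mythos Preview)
Your overall architecture matches the paper: approximate functional equation, dyadic decomposition, orthogonality over the coset to produce the congruence $m\equiv n\pmod{q'}$, diagonal giving $q'T$, then Poisson in $n$ modulo $q$ for the off-diagonal. That part is fine.

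The gap is in the off-diagonal. After Poisson, the character sum that appears is not a Gauss sum: it is the shifted correlation sum
\[
S(q;\psi,4hq',n)=\sum_{u\in\Z/q\Z}\psi(u+4hq')\overline{\psi}(u)\,e\!\left(\frac{nu}{q}\right),
\]
exactly the quantity \eqref{eqn:HBsumdefeq}. This does \emph{not} evaluate to $\tau(\psi)$, and it is not bounded by $\sqrt{q}$ pointwise in general; the crucial extra saving of $(q')^{-1/2}$ comes from the fact that the shift $4hq'$ is divisible by $q'$, and this is only visible \emph{on average} over $h$ and $n$. The paper invokes Heath-Brown's averaged estimates \cite[Lemma~9]{H-B78}, recorded here as \eqref{eqn:HBbounds1}--\eqref{eqn:HBbounds2}, to obtain precisely $\sqrt{q/q'}$ from the off-diagonal. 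Your proposed prime-by-prime treatment with local Gauss and Ramanujan sums does not produce these bounds: the sum $S(q;\psi,h,n)$ does not factor cleanly into local Gauss sums because of the additive shift, and the analogy with $\VV_{\chi}$ from \hyperref[lem:VVchiprimnonprinc]{Lemma~\ref*{lem:VVchiprimnonprinc}} is not apt (those are sums with multiplicative structure in two variables, whereas $S(q;\psi,h,n)$ is a single-variable autocorrelation). In short, the missing ingredient is Heath-Brown's lemma; without it the off-diagonal estimate does not close, and in particular you would not recover the $q'$-dependence in the first line of \eqref{eqn:secondmomentcosetbounds}. (Minor: the reference is Garcia--Young, not Gao--Young.)
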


\begin{remark}
One can improve the bounds \eqref{eqn:secondmomentcosetbounds} in the range $q' \leq q^{1/9} T^{-8/9}$ to $O_{\e}(q' q^{1/3} T^{4/3} q^{\e})$ by applying H\"{o}lder's inequality and invoking the hybrid sixth moment bound
\[\int_{-T}^{T} \left|L\left(\frac{1}{2} + it,\psi\right)\right|^6 \, dt \ll_{\e} qT^2 (qT)^{\e}\]
for primitive Dirichlet characters $\psi$ modulo $q$ due to Petrow and Young \cite[Theorems 1.2 and 1.3]{PY23}. Nonetheless, the weaker bound $O_{\e}(q^{1/2} {q'}^{-1/2} q^{\e})$ in this range that we prove below is more than sufficient for our purposes.
\end{remark}

\hyperref[thm:secondmomentcoset]{Theorem \ref*{thm:secondmomentcoset}} should be compared to \cite[Theorem 1.4]{PY23}, where an analogous fourth moment bound is proven. The proof of this fourth moment bound is quite involved, whereas the second moment bound in \hyperref[thm:secondmomentcoset]{Theorem \ref*{thm:secondmomentcoset}} is comparatively straightforward to show once we appeal to the following estimates due to Heath-Brown.

\begin{lemma}[Heath-Brown {\cite[Lemma 9]{H-B78}}]
Given a primitive Dirichlet character $\psi$ modulo a positive integer $q$ and integers $h,n \in \Z$, define
\begin{equation}
\label{eqn:HBsumdefeq}
S(q;\psi,h,n) \coloneqq \sum_{u \in \Z/q\Z} \psi(u + h) \overline{\psi}(u) e\left(\frac{nu}{q}\right).
\end{equation}
Then for a divisor $q'$ of $q$ and for $A,B \geq 1$, we have that
\begin{align}
\label{eqn:HBbounds1}
\sum_{1\leq h\leq A} |S(q;\psi,4hq',0)| & \ll_{\e} A q' q^{\e},	\\
\label{eqn:HBbounds2}
\sum_{1\leq h\leq A} \sum_{1\leq n\leq B} |S(q;\psi,4hq',n)| & \ll_{\e}
\begin{dcases*}
(Aq')^{1/4} q^{\frac{3}{4} + \e} & if $A B^{4/3} \leq q' q^{1/3}$,	\\
\frac{AB q^{\frac{1}{2} + \e}}{{q'}^{1/2}} & if $A B^{4/3} \geq q' q^{1/3}$.
\end{dcases*}
\end{align}
\end{lemma}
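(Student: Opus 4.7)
The plan is to reduce estimation of $S(q;\psi,h,n)$ to local character sums at prime power moduli via the Chinese remainder theorem, apply Weil-type bounds for the individual local sums, and then sum over $h$ and $n$ using Cauchy--Schwarz and orthogonality. Writing $q = \prod_p p^{e_p}$ and $\psi = \prod_p \psi_p$ with $\psi_p$ a character modulo $p^{e_p}$, an appropriate change of variable gives the factorisation $S(q;\psi,h,n) = \prod_{p \mid q} S(p^{e_p};\psi_p,h,n\overline{q/p^{e_p}})$. Each local factor is then bounded case by case: for $n \equiv 0 \pmod{p^{e_p}}$ an elementary computation yields $|S(p^{e_p};\psi_p,h,0)| \leq (h,p^{e_p})$, with the sum vanishing unless $\psi_p$ becomes trivial on an appropriate subgroup, while for $n \not\equiv 0 \pmod{p^{e_p}}$ with $p \nmid h$, the Weil bound applied to the rational function $u \mapsto u^{-1}(u+h)$ combined with an additive character of modulus $p^{e_p}$ produces $|S(p^{e_p};\psi_p,h,n)| \ll p^{e_p/2+\e}(h,n,p^{e_p})^{1/2}$ after the standard lifting argument for prime powers.

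For \eqref{eqn:HBbounds1}, I would use the pointwise bound $|S(q;\psi,4hq',0)| \leq (4hq',q)$ arising from the $n=0$ analysis, and sum directly:
\[\sum_{h \leq A} (4hq',q) \leq 4q' \sum_{h \leq A} (h, q/q') \ll_\e A q' q^\e,\]
which is the desired estimate, with the factor $q^\e$ coming from the standard divisor bound for the GCD sum.

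For \eqref{eqn:HBbounds2}, the range $AB^{4/3} \geq q' q^{1/3}$ is handled by Cauchy--Schwarz and a second moment computation: opening $|S|^2$ and summing over $n \leq B$ effectively detects $u \equiv v \pmod{q/B}$, and after carrying out the $h$-summation and separating the diagonal contribution $u = v$ one recovers $AB q^{1/2+\e}/{q'}^{1/2}$. The complementary range $AB^{4/3} \leq q' q^{1/3}$ is the hard case: a second moment argument is too lossy, and the exponents $1/4$ and $3/4$ in the target $(Aq')^{1/4} q^{3/4+\e}$ suggest passing to a fourth moment via two applications of Cauchy--Schwarz,
\[\Bigl(\sum_{h,n} |S|\Bigr)^4 \leq (AB)^3 \sum_{h,n} |S|^4,\]
and then opening the square twice and evaluating the $n$-sum by orthogonality. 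This reduces matters to estimating a four-variable character sum restricted to a congruence $u_1 + u_2 \equiv v_1 + v_2 \pmod{q/B}$. The main obstacle is the precise bookkeeping in the degenerate configurations of this four-variable sum, particularly when the underlying rational function acquires coinciding poles or zeros for primes $p \mid q'$ with $p \mid h$; a careful split into generic contributions (handled by iterated Weil) and degenerate contributions (handled by reduction to lower-rank sums which one estimates elementarily) is what ultimately produces the power $q^{3/4+\e}$ on the right-hand side.
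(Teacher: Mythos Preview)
The paper does not supply a proof here; the lemma is quoted from Heath-Brown and used as a black box in the proof of Theorem~\ref{thm:secondmomentcoset}. Any comparison is therefore against Heath-Brown's original argument rather than anything in the present paper.

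For \eqref{eqn:HBbounds1} your approach is correct, and in fact cleaner than you indicate: for primitive $\psi$ one has the \emph{exact} identity $S(q;\psi,h,0)=R_q(h)$ (expand $\psi(u+h)$ and $\overline{\psi}(u)$ by Gauss sums and use additive orthogonality to collapse the double sum), so $|S(q;\psi,h,0)|\leq (h,q)$ holds with no case analysis or appeal to ``$\psi_p$ becoming trivial on a subgroup'', and your gcd sum finishes.

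For \eqref{eqn:HBbounds2} the sketch has a genuine gap, and the moment route you propose diverges from Heath-Brown's. His argument (spread over the lemmas preceding Lemma~9 in \cite{H-B78}) is via sharp \emph{pointwise} bounds for $|S(q;\psi,h,n)|$: after CRT reduction he handles prime moduli by the Weil bound and prime-power moduli by explicit evaluation, obtaining estimates fine enough that the degeneration occurring when $p\mid h$ --- which is forced for every $p\mid q'$ once $h$ is replaced by $4hq'$ --- is visible in the bound. Summing those pointwise bounds over $h$ and $n$ then yields both ranges of \eqref{eqn:HBbounds2} directly, with the $q'^{-1/2}$ saving falling out of the gcd structure. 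Your second-moment sketch does not make clear where this $q'^{-1/2}$ comes from: a Parseval or large-sieve bound on $\sum_n |S|^2$ is blind to the arithmetic of $h$, and your phrase ``after carrying out the $h$-summation'' would need to exhibit genuine cancellation in the resulting character correlation along an arithmetic progression of step $4q'$, which you have not done. The fourth-moment paragraph is vaguer still; the ``degenerate configurations'' you defer are where the entire proof lives, and absent a mechanism for controlling them you do not have an argument.
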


\begin{proof}[Proof of {\hyperref[thm:secondmomentcoset]{Theorem \ref*{thm:secondmomentcoset}}}]
We initially suppose that $\psi$ is a \emph{primitive} Dirichlet character modulo $q$; later, we shall remove this assumption. Let $\Omega_0$ be a fixed smooth nonnegative function that is equal to $1$ on $[-1,1]$ and supported on $[-3/2,3/2]$ with bounded derivatives and let $\Omega_1$ be a smooth real-valued function supported on $[1,2]$ with bounded derivatives. We shall show that for $T \geq 1$, $q' \mid q$, and for $1 \leq N \leq (qT)^{1/2 + \e}$,
\begin{multline}
\label{eqn:secondmomentcosetdesired}
\frac{1}{N} \sum_{\psi' \hspace{-.25cm} \pmod{4q'}} \int_{-\infty}^{\infty} \Omega_0\left(\frac{t}{T}\right) \left|\sum_{n = 1}^{\infty} \Omega_1\left(\frac{n}{N}\right) \psi(n) \psi'(n) n^{-it}\right|^2 \, dt	\\
\ll_{\e} q'T + \begin{dcases*}
\frac{q^{1/2}}{{q'}^{1/2}} q^{\e} & if $q' T \leq N \leq \min\left\{\frac{q^3}{{q'}^6 T^3}, q, (qT)^{1/2}\right\}$,	\\
0 & otherwise.
\end{dcases*}
\end{multline}
The bound \eqref{eqn:secondmomentcosetdesired} shall subsequently be used to prove the desired bound \eqref{eqn:secondmomentcosetbounds}. Note that the second term on the right-hand side of \eqref{eqn:secondmomentcosetdesired} can only occur if $q' \leq \min\{\frac{q^{3/7}}{T^{4/7}},\frac{q}{T},\frac{q^{1/2}}{T^{1/2}}\}$, and that it dominates the first term only if $q' \leq \frac{q^{1/3}}{T^{2/3}}$, which in turn can only occur provided that $T \leq q^{1/2}$.

Opening up the square and evaluating the sum over $\psi' \pmod{4q'}$ via character orthogonality, we find that the left-hand side of \eqref{eqn:secondmomentcosetdesired} is equal to
\[\frac{\varphi(4q') T}{N} \sum_{\substack{n,m = 1 \\ n\equiv m \hspace{-.25cm} \pmod{4q'}}}^{\infty} \psi(m) \overline{\psi}(n) \widehat{\Omega_0}\left(\frac{T}{2\pi}\log \frac{m}{n}\right) \Omega_1\left(\frac{m}{N}\right) \Omega_1\left(\frac{n}{N}\right),\]
where $\widehat{\Omega_0}$ denotes the Fourier transform of $\Omega_0$. We write this expression as $\mathcal{D} + \mathcal{OD}$, where $\mathcal{D}$ is the diagonal term consisting of the summands for which $n = m$, while $\mathcal{OD}$ is the remaining off-diagonal term consisting of the summands for which $n \neq m$.

The diagonal term is easily dealt with: we have that
\[\mathcal{D} \ll \frac{q'T}{N} \widehat{\Omega_0}(0) \sum_{n = 1}^{\infty} \Omega_1\left(\frac{n}{N}\right)^2 \ll q' T.\]
For the off-diagonal term, we write $h \coloneqq \frac{|m - n|}{4q'}$, so that
\[\mathcal{OD} = \frac{2\varphi(4q') T}{N} \Re\left(\sum_{h = 1}^{\infty} \sum_{n = 1}^{\infty} \Omega_{N,T,4hq'}(n) \psi(n + 4hq') \overline{\psi}(n)\right),\]
where
\[\Omega_{N,T,4hq'}(n) \coloneqq \widehat{\Omega_0}\left(\frac{T}{2\pi}\log\left(1 + \frac{4hq'}{n}\right)\right) \Omega_1\left(\frac{n + 4hq'}{N}\right) \Omega_1\left(\frac{n}{N}\right).\]
Since $\Omega_0$ is compactly supported, its Fourier transform $\widehat{\Omega_0}$ is rapidly decaying, which allows us to truncate the sum over $h \in \N$ to $h \leq \frac{N}{q'T} (qT)^{\e}$ at a cost of a negligibly small error term. In particular, the off-diagonal term is negligibly small unless $N \geq q' T (qT)^{-\e}$, which we henceforth assume; since $N \leq (qT)^{1/2 + \e}$, we thereby assume that $q' \leq (q/T)^{1/2} (qT)^{\e}$. We then break up the sum over $n \in \N$ into residue classes $u$ modulo $q$ and apply the Poisson summation formula to this sum, yielding
\[\mathcal{OD} = \frac{2\varphi(4q') T}{Nq} \Re\left(\sum_{1 \leq h \leq \frac{N}{q'T} (qT)^{\e}} \sum_{n = -\infty}^{\infty} \widehat{\Omega_{N,T,4hq'}}\left(\frac{n}{q}\right) S(q;\psi,4hq',n)\right) + o_{\e}(1)\]
for any $\e > 0$, where $S(q;\psi,4hq',n)$ is as in \eqref{eqn:HBsumdefeq}.

We observe that
\[\widehat{\Omega_{N,T,4hq'}}\left(\frac{n}{q}\right) = N \int_{-\infty}^{\infty} \widehat{\Omega_0}\left(\frac{T}{2\pi}\log\left(1+\frac{4hq'}{Nx}\right)\right) \Omega_1\left(x + \frac{4hq'}{N}\right) \Omega_1(x) e\left(-\frac{nNx}{q}\right) \, dx.\]
This is $O(N)$ if $|n| \leq \frac{q}{N} (qT)^{\e}$ and is negligibly small otherwise via repeated integration by parts, which allows us to truncate the sum over $n \in \Z$ to $|n| \leq \frac{q}{N} (qT)^{\e}$. We deduce that
\[\mathcal{OD} \ll_{\e} \frac{q' T}{q} \sum_{1 \leq h \leq \frac{N}{q'T} (qT)^{\e}} \sum_{0 \leq n \leq \frac{q}{N} (qT)^{\e}} |S(q;\psi,4hq',n)| + 1,\]
where for $n < 0$ we have used the fact that $S(q;\psi,k,n) = e\left(-\frac{nk}{q}\right) S(q;\psi,k,-n)$ and made the change of variables $n \mapsto -n$.

It remains to apply the bounds \eqref{eqn:HBbounds1} and \eqref{eqn:HBbounds2} with $A = \frac{N}{q'T} (qT)^{\e}$ and additionally $B = \frac{q}{N} (qT)^{\e}$ if $N \leq q (qT)^{\e}$. We deduce that
\[\mathcal{OD} \ll_{\e} \begin{dcases*}
1 & if $N \leq q' T (qT)^{-\e}$,	\\
\frac{q^{1/2}}{{q'}^{1/2}} (qT)^{\e} & if $q' T (qT)^{-\e} \leq N \leq (qT)^{\e} \min\left\{\frac{q^3}{{q'}^6 T^3}, q, (qT)^{1/2}\right\}$,	\\
\frac{N^{1/4} q' T^{3/4}}{q^{1/4}} (qT)^{\e} & if $\max\left\{q' T (qT)^{-\e},\frac{q^3}{{q'}^6 T^3} (qT)^{\e}\right\} \leq N \leq (qT)^{\e} \min\{q,(qT)^{1/2}\}$,	\\
\frac{N q'}{q} (qT)^{\e} & if $\max\{q'T (qT)^{-\e}, q (qT)^{\e}\} \leq N \leq (qT)^{1/2 + \e}$.
\end{dcases*}\]
This gives the claimed bounds \eqref{eqn:secondmomentcosetdesired}.

Now we show that \eqref{eqn:secondmomentcosetbounds} can be deduced from \eqref{eqn:secondmomentcosetdesired}. We let $q'$ be a divisor of $q$ and we let $\psi$ be a Dirichlet character modulo $q$, which we no longer enforce to be primitive. We may assume without loss of generality that the conductor $q_{\psi}$ of $\psi$ is a multiple of $q'$, since otherwise we may replace $\psi$ with $\psi \chi'$, where $\chi'$ is a Dirichlet character modulo $q'$ such that $\psi \chi'$ is primitive and has conductor that is a multiple of $q'$, and then make the change of variables $\psi' \mapsto \overline{\chi'} \psi'$ on the left-hand side of \eqref{eqn:secondmomentcosetbounds}. We write $q_{\psi} = q_1 q_2$ and $q' = q_1 q_2'$, where $q_1,q_2,q_2'$ are such that $(q_1,q_2) = (q_1,q_2') = 1$, $q_2' \mid q_2$, and $p \mid \frac{q_2}{q_2'}$ whenever $p \mid q_2'$. We may then write $\psi = \psi_1 \psi_2 \psi_{0(q)}$, where $\psi_1$ is a Dirichlet character modulo $q_1$ and $\psi_2$ is a primitive Dirichlet character modulo $q_2$. Finally, for a Dirichlet character $\psi'$ modulo $q'$, we write $\psi' = \psi_1' \psi_2'$, where $\psi_1',\psi_2'$ are Dirichlet characters modulo $q_1,q_2'$. We then have that
\[\sum_{\psi' \hspace{-.25cm} \pmod{q'}} \int_{-T}^{T} \left|L\left(\frac{1}{2} + it,\psi \psi'\right)\right|^2 \, dt \ll_{\e} q^{\e} \sum_{\psi_1' \hspace{-.25cm} \pmod{q_1}} \sum_{\psi_2' \hspace{-.25cm} \pmod{q_2'}} \int_{-T}^{T} \left|L\left(\frac{1}{2} + it,\psi_1 \psi_1' \psi_2 \psi_2'\right)\right|^2 \, dt.\]

We make the change of variables $\psi_1' \mapsto \overline{\psi_1} \psi_1'$ and write $\psi_1' = \psi_{0(q_1)} \psi_3'$, where $\psi_3'$ is a primitive character modulo $q_3'$ for some divisor $q_3'$ of $q_1$, so that this is in turn
\[\ll_{\e} q^{\e} \sum_{q_3' \mid q_1} \hspace{.15cm} \sideset{}{^{\star}} \sum_{\psi_3' \hspace{-.25cm} \pmod{q_3'}} \sum_{\psi_2' \hspace{-.25cm} \pmod{q_2'}} \int_{-T}^{T} \left|L\left(\frac{1}{2} + it,\psi_2 \psi_2' \psi_3'\right)\right|^2 \, dt.\]
Since $\psi_2$ is a primitive Dirichlet character and $p \mid \frac{q_2}{q_2'}$ whenever $p \mid q_2'$, the Dirichlet character $\psi_2 \psi_2' \psi_3'$ is a primitive Dirichlet character modulo $q_2 q_3'$. By applying the approximate functional equation for \cite[Theorem 5.3]{IK04}, a dyadic smooth partition of unity, and the Cauchy--Schwarz inequality applied to the dyadic sum, we therefore find that the left-hand side of \eqref{eqn:secondmomentcosetbounds} is
\begin{multline}
\label{eqn:cosetrearrange2}
\ll_{\e} (qT)^{\e} \sum_{q_3' \mid q_1} \sup_{1 \leq N \leq (q_2 q_3' T)^{1/2 +\e}} \\
\times \frac{1}{N} \sideset{}{^{\star}} \sum_{\psi_3' \hspace{-.25cm} \pmod{q_3'}} \sum_{\psi_2' \hspace{-.25cm} \pmod{q_2'}} \int_{-\infty}^{\infty} \Omega_0\left(\frac{t}{T}\right) \left|\sum_{n = 1}^{\infty} \Omega_1\left(\frac{n}{N}\right) \psi_2(n) \psi_2'(n) \psi_3'(n) n^{-it}\right|^2 \, dt,
\end{multline}
where $\Omega_0,\Omega_1$ are as in \eqref{eqn:secondmomentcosetdesired} (cf.\ \cite[Section 5]{PY23}).

By positivity, we may enlarge the sum over primitive Dirichlet characters $\psi_3'$ modulo $q_3'$ in \eqref{eqn:cosetrearrange2} to a sum over all Dirichlet characters modulo $q_3'$ (which incurs no penalties from the point of view of proving upper bounds), then make the change of variables $\psi_3' \mapsto \psi_3 \psi_3'$, where $\psi_3$ is some primitive Dirichlet character modulo $q_3'$. We write $\widetilde{q} \coloneqq q_2 q_3'$ and $\widetilde{\psi} \coloneqq \psi_2 \psi_3$, which is a primitive Dirichlet character modulo $\widetilde{q}$. We additionally write $\widetilde{q}' \coloneqq q_2' q_3'$ and replace the double sum over Dirichlet characters $\psi_2'$ modulo $q_2'$ and $\psi_3'$ modulo $q_3'$ with a single sum over Dirichlet characters $\widetilde{\psi}' \coloneqq \psi_2' \psi_3'$ modulo $\widetilde{q}'$. We thereby deduce that the second line of \eqref{eqn:cosetrearrange2} is bounded from above by
\[\frac{1}{N} \sum_{\widetilde{\psi}' \hspace{-.25cm} \pmod{\widetilde{q}'}} \int_{-\infty}^{\infty} \Omega_0\left(\frac{t}{T}\right) \left|\sum_{n = 1}^{\infty} \Omega_1\left(\frac{n}{N}\right) \widetilde{\psi}(n) \widetilde{\psi}'(n) n^{-it}\right|^2 \, dt.\]
Finally, we may enlarge the sum over Dirichlet characters modulo $\widetilde{q}'$ to run over Dirichlet characters modulo $4\widetilde{q}'$ by positivity. At this point, we may invoke the bounds \eqref{eqn:secondmomentcosetdesired} with $q,q',\psi,\psi'$ replaced by $\widetilde{q},\widetilde{q}',\widetilde{\psi},\widetilde{\psi}'$ respectively. This yields the desired result.
\end{proof}

With this result in hand, we are now able to prove \hyperref[prop:secondmomenthybrid]{Proposition \ref*{prop:secondmomenthybrid}}.

\begin{proof}[Proof of {\hyperref[prop:secondmomenthybrid]{Proposition \ref*{prop:secondmomenthybrid}}}]
We use \hyperref[cor:Zbounds]{Corollary \ref*{cor:Zbounds}} to bound $\ZZ_{\chi}(\psi_1 \psi_3' \psi_{0(q_2/q_3')};t)$. In this way, we see that the left-hand side of \eqref{eqn:secondmomenthybrid} is
\[\ll_{\e} \frac{q^3}{q_1 {q_3'}^3} (qT)^{\e} \sum_{\psi_1 \hspace{-.25cm} \pmod{q_1}} \prod_{p^{\beta} \parallel q_1} \left(\delta_{\psi_{p^{\beta}},\star} \frac{|g(\chi_{p^{\beta}},\psi_{p^{\beta}})|}{p^{\beta}} + 1\right)^2 \sideset{}{^{\star}} \sum_{\psi_3' \hspace{-.25cm} \pmod{q_3'}} \int_{-T}^{T} \left|L\left(\frac{1}{2} + it,\psi_1 \psi_3'\right)\right|^2 \, dt.\]
By positivity, we may extend the sum over primitive characters $\psi_3'$ modulo $q_3'$ to include imprimitive characters. To proceed further, we break up the sum over characters $\psi_1$ modulo $q_1$ dependent on the size of the product over $p^{\beta} \parallel q_1$. In doing so, we require control over the size of $g(\chi_{p^{\beta}},\psi_{p^{\beta}})$.

If $p = 2$, then $g(\chi_{p^{\beta}},\psi_{p^{\beta}})$ trivially vanishes, as observed in \cite[Remark 3.2]{PY23}. If $p$ is odd and $\beta = 1$, we have that $|g(\chi_p,\psi_p)| \leq 3p$ by \cite[Theorem 6.9]{PY20} (cf.\ \cite[Corollary 1.1]{Xi23}). If $p$ is odd and $\beta \geq 2$, the size of $g(\chi_{p^{\beta}},\psi_{p^{\beta}})$ is controlled by a certain invariant $\Delta(\chi_{p^{\beta}},\psi_{p^{\beta}}) \in \Z / p^{\beta - 1} \mathbb{Z}$ defined in \cite[Theorems 3.3 and 3.4]{PY23}. For $\alpha \in \{0,\ldots,\beta - 1\}$, we then define
\[m_{\chi_{p^{\beta}}}(\alpha) \coloneqq \inf\left\{m \in \frac{1}{2} \Z : \max_{\substack{\psi_{p^{\beta}} \hspace{-.25cm} \pmod{p^{\beta}} \text{ primitive} \\ v_p(\Delta(\chi_{p^{\beta}},\psi_{p^{\beta}})) = \alpha}} \frac{|g(\chi_{p^{\beta}},\psi_{p^{\beta}})|}{p^{\beta}} \leq 3 p^m\right\}.\]
We set $\widetilde{q_1} \coloneqq \prod_{p \mid q_1} p$. For $a \mid \frac{q_1}{\widetilde{q_1}}$, we let $\alpha$ be such that $p^{\alpha} \parallel a$ and let
\begin{equation}
\label{eqn:Mchiaq1defeq}
M_{\chi}(a,q_1) \coloneqq \prod_{p^{\beta} \parallel q_1} p^{m_{\chi_{p^{\beta}}}(\alpha)}.
\end{equation}
For each Dirichlet character $\psi_1$ modulo $q_1$, let $\Delta(\psi_1)$ be the unique positive integer in $\{1,\ldots,q_1/\widetilde{q_1}\}$ for which $\Delta(\psi_1) \equiv \Delta(\chi_{p^{\beta}},\psi_{p^{\beta}}) \pmod{p^{\beta - 1}}$ for each $p^{\beta} \parallel q_1$. Since $v_p(\Delta(\chi_{p^{\beta}},\psi_{p^{\beta}})) = \alpha$, we have that $(\Delta(\psi_1),\frac{q_1}{\widetilde{q_1}}) = a$.

From this, we see that the left-hand side of \eqref{eqn:secondmomenthybrid} is
\[\ll_{\e} \frac{q^3}{{q_3'}^3} (qT)^{\e} \sum_{a \mid \frac{q_1}{\widetilde{q_1}}} \frac{M_{\chi}(a,q_1)^2}{q_1} \sum_{\substack{\psi_1 \hspace{-.25cm} \pmod{q_1} \\ (\Delta(\psi_1),\frac{q_1}{\widetilde{q_1}}) = a}} \sum_{\psi_3' \hspace{-.25cm} \pmod{q_3'}} \int_{-T}^{T} \left|L\left(\frac{1}{2} + it,\psi_1 \psi_3'\right)\right|^2 \, dt.\]
By positivity, we may extend the sum over characters $\psi_1$ modulo $q_1$ for which $(\Delta(\psi_1),\frac{q_1}{\widetilde{q_1}}) = a$ to additionally include all characters $\psi_1$ modulo $q_1$ for which $\Delta(\psi_1) \equiv 0 \pmod{a}$. We then break up this sum over cosets. We let $G \coloneqq \{\psi_1 \pmod{q_1}\}$ denote the group of Dirichlet characters modulo $q_1$ and let $H_a$ be the subgroup $\{\psi_1 \pmod{\frac{q_1}{a}}\}$. If $\psi_1,\psi_1'$ are two characters modulo $q_1$ for which $\Delta(\psi_1) \equiv \Delta(\psi_1') \pmod{a}$, then they lie in the same $H_a$-coset by \cite[Lemma 2.1]{PY23}. It follows that the left-hand side of \eqref{eqn:secondmomenthybrid}
\[\ll_{\e} \frac{q^3}{{q_3'}^3} (qT)^{\e} \sum_{a \mid \frac{q_1}{\widetilde{q_1}}} \frac{M_{\chi}(a,q_1)^2}{q_1} \sum_{\substack{\psi_1 \in G / H_a \\ \Delta(\psi_1) \equiv 0 \hspace{-.25cm} \pmod{a}}} \sum_{\psi' \hspace{-.25cm} \pmod{\frac{q_1}{a} q_3'}} \int_{-T}^{T} \left|L\left(\frac{1}{2} + it,\psi_1 \psi'\right)\right|^2 \, dt.\]
To bound the sum over characters $\psi'$ modulo $\frac{q_1}{a} q_3'$ and integral over $t \in [-T,T]$, we apply \hyperref[thm:secondmomentcoset]{Theorem \ref*{thm:secondmomentcoset}} with $q$ replaced by $q_1 q_3'$, $\psi$ replaced by $\psi_1 \psi_{0(q_3')}$, and $q'$ replaced by $\frac{q_1}{a} q_3'$. Since there are at most $2^{\omega(q_1)}$ characters $\psi_1 \in G/H_a$ satisfying $\Delta(\psi_1) \equiv 0 \pmod{a}$, as discussed in \cite[Proof of Lemma 4.2]{PY23}, we see that the left-hand side of \eqref{eqn:secondmomenthybrid} is
\[\ll_{\e} \frac{q^3}{{q_3'}^2} (qT)^{\e} \left(T \sum_{a \mid \frac{q_1}{\widetilde{q_1}}} \frac{M_{\chi}(a,q_1)^2}{a} + \frac{1}{q_3'} \sum_{a \mid \frac{q_1}{\widetilde{q_1}}} \frac{M_{\chi}(a,q_1)^2 a^{1/2}}{q_1}\right).\]
Using the multiplicativity of $M_{\chi}(a,q_1)$ as a function of $a$ via \eqref{eqn:Mchiaq1defeq}, this is
\[\ll_{\e} \frac{q^3}{{q_3'}^2} (qT)^{\e} \left(T \prod_{p^{\beta} \parallel q_1} \sum_{\alpha = 0}^{\beta - 1} p^{2m_{\chi_{p^{\beta}}}(\alpha) - \alpha} + \frac{1}{q_3'} \prod_{p^{\beta} \parallel q_1} \sum_{\alpha = 0}^{\beta - 1} p^{2m_{\chi_{p^{\beta}}}(\alpha) + \frac{\alpha}{2} - \beta}\right).\]
Thus the result follows from the bounds
\[m_{\chi_{p^{\beta}}}(\alpha) \leq \min\left\{\frac{\alpha + \beta}{2}, \beta - \frac{\alpha}{4}\right\},\]
which are proven (in a more explicit form) in \cite[Proof of Lemma 4.2]{PY23} based on \cite[Lemma 3.1]{PY23}.
\end{proof}

\section{Proofs of \texorpdfstring{\hyperref[thm:subconvexbounds]{Theorems \ref*{thm:subconvexbounds}}}{Theorems \ref{thm:subconvexbounds}} and \ref{thm:firstmomentbounds}}
\label{sect:prooffirstmoment}

We now have all of our tools in place in order to complete the proof of \hyperref[thm:firstmomentbounds]{Theorem \ref*{thm:firstmomentbounds}}.

\begin{proof}[Proof of {\hyperref[thm:firstmomentbounds]{Theorem \ref*{thm:firstmomentbounds}}}]
We first deal with the case where $T - 2U > 2C$, where $C$ is the large fixed positive constant as in \hyperref[sect:testfunctionsII]{Section \ref*{sect:testfunctionsII}}. We take $(h,h^{\hol})$ as in \eqref{eqn:testfunctionchoice}. By \hyperref[lem:testfunction]{Lemma \ref*{lem:testfunction}}, $h(t)$ is nonnegative on $\R \cup i(-1/2,1/2)$ and $h^{\hol}(k)$ is nonnegative on $2\N$, while $h(t) \asymp 1$ if $T - U \leq |t| \leq T + U$ and $h^{\hol}(k) \asymp 1$ if $T - U \leq k \leq T + U$. Moreover, we have the lower bounds $\alpha(q,q',q_{\overline{\chi_1}^2}) \gg_{\e} q^{-\e}$ and 
\begin{align*}
\frac{L_q(1,\ad f)}{L_q\left(\frac{1}{2},F \otimes f \otimes \chi_1\right)} & \gg_{\e} q^{-\e} \quad \text{for $f \in \BB_0^{\ast}(q',\overline{\chi_1}^2)$ or $f \in \BB_{\hol}^{\ast}(q',\overline{\chi_1}^2)$},	\\
\left|\frac{L_q(1 + 2it,\psi_1\overline{\psi_2})}{L_q\left(\frac{1}{2} + it,F \otimes \psi_1 \chi_1\right)}\right|^2 & \gg_{\e} q^{-\e} \quad \text{for $\psi_1,\psi_2$ modulo $q$}.
\end{align*}
This relies on the fact that $L_p(1/2,F \otimes f \otimes \chi_1)$ is positive for all primes $p$, which in turn uses the fact the best-known bound towards the generalised Ramanujan conjecture is strictly less than $1/6$ \cite{Kim03}. From this and the nonnegativity of $L(1/2,F \otimes f \otimes \chi_1)$ \cite[Theorem 1.1]{Lap03}, we see that the left-hand side of \eqref{eqn:firstmomentbounds} is bounded by a constant multiple dependent on $F$ and $\e$ of the product of $(qT)^{\e}$ and the moment \eqref{eqn:centralLHS}. Thus to prove \hyperref[thm:firstmomentbounds]{Theorem \ref*{thm:firstmomentbounds}}, the $\GL_3 \times \GL_2 \leftrightsquigarrow \GL_4 \times \GL_1$ spectral reciprocity formula given in \hyperref[thm:central]{Theorem \ref*{thm:central}} shows that it suffices to prove that the primary main term \eqref{eqn:centralmain} and the secondary main term \eqref{eqn:centralsecondmain} are both $O_{F,\e} (q TU (qT)^{\e})$ and that the dual moment \eqref{eqn:centraldual} is $O_{F,\e}((q_1 T)^{5/4} q_2^{1/2} U^{-1/4} (qT)^{\e})$.

The bound $O_{F,\e} (q TU (qT)^{\e})$ for \eqref{eqn:centralmain} and \eqref{eqn:centralsecondmain} follows from \eqref{eqn:maintermbounds}. To bound \eqref{eqn:centraldual}, we divide the integral over $t \in \R$ into the ranges $|t| \leq 1$ and $2^{n - 1} \leq |t| \leq 2^n$ for each positive integer $n \in \N$. We then apply the triangle inequality and bound $\HH_{\mu_F}^{\pm}(t)$ pointwise via the bounds \eqref{eqn:HHpmbounds}. The ensuing expression is then bounded by means of \hyperref[prop:centraldualbounds]{Proposition \ref*{prop:centraldualbounds}}, which yields the desired estimate.

Finally, we deal with the case $T - 2U \leq 2C$, so that in particular $T$ and $U$ are bounded. Here we simply take $(h,h^{\hol})$ as in \cite[(3.24)]{BK19}, namely
\begin{align*}
h(t) & \coloneqq \frac{b!}{2^b} \prod_{j = 0}^{b} \frac{1}{t^2 + \left(\frac{a + b}{2} - j\right)^2},	\\
h^{\hol}(k) & \coloneqq \frac{b!}{2^b} \prod_{j = 0}^{b} \frac{1}{\frac{(i(k - 1))^2}{4} + \left(\frac{a + b}{2} - j\right)^2} + \delta_{k > a - b} c(a,b) k^{-2b - 1},
\end{align*}
so that by \cite[(3.25)]{BK19}, the associated transform $H$ as in \eqref{eqn:HdefeqKscr} is given by
\[H(x) = \frac{i^{b - 1} (4\pi)^{-b}}{2\pi} \JJ_{a + 1}^{\hol}(x) x^{-b} + \sum_{\substack{k > a - b \\ k \equiv 0 \hspace{-.25cm} \pmod{2}}} \frac{c(a,b)}{k^{2b + 1}} \frac{k - 1}{2\pi^2} \JJ_k^{\hol}(x).\]
Here $a,b \in \N$ are fixed positive integers satisfying $a \equiv b \pmod{2}$, $a - b > \max\{T + U,5\}$, and $b > 3$, while $c(a,b)$ is a positive constant such that $h^{\hol}(k) > 0$ for all $k \in 2\N$. This tuple of test functions is admissible of type $(a - b,2b - 2)$ and is such that both $h$ and $h^{\hol}$ are always nonnegative and additionally $h(t) \gg 1$ and $h^{\hol}(k) \gg 1$ if $|t|,k \leq a - b$. We then proceed by the same argument as in the case for which $T - 2U > 2C$ except that we appeal to the bounds \eqref{eqn:HHpmwzdecay} for $\HH_{\mu_F}^{\pm}(t)$ in place of the bounds \eqref{eqn:HHpmbounds}.
\end{proof}

\hyperref[thm:subconvexbounds]{Theorem \ref*{thm:subconvexbounds}} then follows directly from \hyperref[thm:firstmomentbounds]{Theorem \ref*{thm:firstmomentbounds}}.

\begin{proof}[Proof of {\hyperref[thm:subconvexbounds]{Theorem \ref*{thm:subconvexbounds}}}]
We use \hyperref[thm:firstmomentbounds]{Theorem \ref*{thm:firstmomentbounds}} with $q_2 = (q/q_1)^2$ and take
\[U = \begin{dcases*}
1 & if $q_1 \leq q^{4/5} T^{-1/5}$,	\\
q^{-4/5} q_1 T^{1/5} & if $q^{4/5} T^{-1/5} \leq q_1 \leq q^{4/5} T^{4/5}$,	\\
T & if $q_1 \geq q^{4/5} T^{4/5}$.
\end{dcases*}\]
If $f \in \BB_0^{\ast}(q^2,1)$ is such that $f \otimes \overline{\chi}$ has level dividing $q$, then upon writing $\chi = \chi_1 \chi_2$, where $\chi_1$ and $\chi_2$ are primitive characters modulo $q_1$ and $q_2^{1/2}$ respectively, we must have that $f \otimes \overline{\chi_1} \in \BB_0^{\ast}(q_1' q_2,\overline{\chi_1}^2)$ for some $q_1' \mid q_1$. From this, \hyperref[thmno:subconvexbounds2]{Theorem \ref*{thm:subconvexbounds} \ref*{thmno:subconvexbounds2}} follows by dropping all but one term via positivity, since $L(1/2,F \otimes f \otimes \chi)$ is nonnegative \cite[Theorem 1.1]{Lap03}, together with the upper bound $L(1,\ad f) \ll_{\e} (q(|t_f| + 1))^{\e}$ \cite[Corollary 1]{Li10}. \hyperref[thmno:subconvexbounds1]{Theorem \ref*{thm:subconvexbounds} \ref*{thmno:subconvexbounds1}} and \ref{thmno:subconvexbounds3} follow analogously\footnote{There is an additional subtlety in deducing a subconvex bound for $L(1/2 + it,F \otimes \chi)$ when $t$ is near $0$ and $\chi$ is a real character, since if $\psi_1 = \psi_2$, then $|L(1/2 + it, F \otimes \psi_1 \chi_1)/L(1 + 2it,\psi_1 \overline{\psi_2})|^2$ has a zero of order two at $t = 0$. One can circumvent this obstacle via an application of H\"{o}lder's inequality, as in \cite[pp.~1404--1405]{Blo12}.}.
\end{proof}

\section{The Eisenstein Case}
\label{sect:Eisenstein}

\subsection{Eisenstein Analogues of \texorpdfstring{\hyperref[thm:subconvexbounds]{Theorems \ref*{thm:subconvexbounds}}}{Theorems \ref{thm:subconvexbounds}} and \ref{thm:firstmomentbounds}}

Our method also extends, with some alterations, to the case where the Hecke--Maa\ss{} cusp form $F$ for $\SL_3(\Z)$ is replaced by an Eisenstein series for $\SL_3(\Z)$. When such an Eisenstein series is associated to the minimal parabolic with trivial spectral parameters, we have the following analogue of \hyperref[thm:firstmomentbounds]{Theorem \ref*{thm:firstmomentbounds}}.

\begin{theorem}
\label{thm:firstmomentboundsEis}
Let $q_1, q_2$ be coprime positive integers. Let $\chi_1$ be a primitive Dirichlet character of conductor $q_1$. Then for $T \geq 1$ and $1 \leq U \leq T$, we have that
\begin{equation}
\label{eqn:firstmomentboundsEis}
\begin{drcases*}
\sum_{\substack{q' \mid q_1 q_2 \\ q' \equiv 0 \hspace{-.25cm} \pmod{q_{\overline{\chi_1}^2}}}} \sum_{\substack{\psi_1,\psi_2 \hspace{-.25cm} \pmod{q_1 q_2} \\ \psi_1 \psi_2 = \overline{\chi_1}^2 \\ q_{\psi_1} q_{\psi_2} = q'}} \, \int\limits_{T - U \leq |t| \leq T + U} \left|\frac{L\left(\frac{1}{2} + it,\psi_1 \chi_1\right)^3}{L(1 + 2it,\psi_1\overline{\psi_2})}\right|^2 \, dt \\
\sum_{\substack{q' \mid q_1 q_2 \\ q' \equiv 0 \hspace{-.25cm} \pmod{q_{\overline{\chi_1}^2}}}} \sum_{\substack{f \in \BB_0^{\ast}(q',\overline{\chi_1}^2) \\ T - U \leq t_f \leq T + U}} \frac{L\left(\frac{1}{2},f \otimes \chi_1\right)^3}{L(1,\ad f)} \\
\sum_{\substack{q' \mid q_1 q_2 \\ q' \equiv 0 \hspace{-.25cm} \pmod{q_{\overline{\chi_1}^2}}}} \sum_{\substack{f \in \BB_{\hol}^{\ast}(q',\overline{\chi_1}^2) \\ T - U \leq k_f \leq T + U}} \frac{L\left(\frac{1}{2},f \otimes \chi_1\right)^3}{L(1,\ad f)} \\
\end{drcases*} \ll_{\e} q_1 q_2 TU (q_1 q_2 T)^{\e}.
\end{equation}
\end{theorem}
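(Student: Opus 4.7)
{Theorem \ref*{thm:firstmomentboundsEis}}]
The plan is to mirror the proof of \hyperref[thm:firstmomentbounds]{Theorem \ref*{thm:firstmomentbounds}} with the selfdual cusp form $F$ replaced by the minimal parabolic Eisenstein series $E$ for $\SL_3(\Z)$ of trivial spectral parameters $\mu_E = (0,0,0)$. In this setting, the $\GL_3 \times \GL_2$ and $\GL_3 \times \GL_1$ Rankin--Selberg $L$-functions factorise as $L(w, E \otimes f \otimes \chi_1) = L(w, f \otimes \chi_1)^3$ and $L(w, E \otimes \psi) = L(w, \psi)^3$, respectively, while $L^q(2w, \widetilde{E}) = \zeta^q(2w)^3$. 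The first step is to reestablish the spectral reciprocity identity of \hyperref[prop:absconv]{Proposition \ref*{prop:absconv}} in this Eisenstein setting. The $\GL_3$ Vorono\u{\i} summation formula \eqref{eqn:GL3Voronoi} is to be replaced by an analogue derived from three iterated applications of $\GL_1$ Poisson summation, which has exactly the same structural shape as \eqref{eqn:GL3Voronoi} with $\Gscr_{\mu_F}^\pm$ replaced by $\Gscr_0^\pm$; alternatively, this identity is implicit in Motohashi's formula \cite[Theorem 4.2]{Mot97}. This slots directly into the proofs of \hyperref[lem:Voronoiidentity]{Lemma \ref*{lem:Voronoiidentity}} and \hyperref[prop:absconv]{Proposition \ref*{prop:absconv}} with only cosmetic changes, yielding an identity valid for $7/4 < \Re(w) < 2$ in which the dual moment now features $L(2w - 1/2 - z, \overline{\psi})^3$.

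The second step is the analytic continuation of this identity to $w = 1/2$, along the lines of \hyperref[thm:central]{Theorem \ref*{thm:central}}. Here the key complication is that $L^q(2w, \widetilde{E}) = \zeta^q(2w)^3$ has a triple pole at $w = 1/2$, so the primary main term now consists of Laurent coefficients of $\zeta^q(2w)^3$ against the weighted Weyl integral, of total size $O_\e(q_1 q_2 TU (q_1 q_2 T)^\e)$. Correspondingly, shifting the $z$-contour in the dual moment past $\Re(z) = 0$ picks up triple-order residues at $z = 2w - 1/2$, $z = 2w - 1$, and $z = 2w - 3/2$ arising from the three poles of $L(2w - 1/2 - z, \overline{\psi_{0(q)}})^3$, rather than the simple pole encountered in the cuspidal case. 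These residues conspire, via the Riemann $\zeta$ functional equation and a straightforward Eisenstein analogue of \hyperref[cor:ZZchipsi0hol]{Corollary \ref*{cor:ZZchipsi0hol}} (obtained by Laurent-expanding $\ZZ_\chi(\psi_{0(q)}; w, z)$ about each special point), to produce a collection of secondary main terms, each of which is again $O_\e(q_1 q_2 TU (q_1 q_2 T)^\e)$ by the estimates in \hyperref[lem:ZZchipsiww-32identity]{Lemma \ref*{lem:ZZchipsiww-32identity}} and elementary bounds on $\zeta$.

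With the analytically continued reciprocity identity in hand, we take the same tuple of test functions \eqref{eqn:testfunctionchoice}. The test-function analysis of \hyperref[sect:testfunctionsII]{Sections \ref*{sect:testfunction}} and \ref{sect:testfunctionsII} goes through verbatim with $\mu_F = 0$, so \eqref{eqn:HHpmbounds} still bounds $\HH_0^\pm(t)$. The remaining task is to bound the Eisenstein dual moment
\[\frac{1}{\varphi(q)} \sum_{\psi \hspace{-.25cm} \pmod q} \int_{-T/U}^{T/U} \left|L\left(\tfrac{1}{2}+it,\psi\right)^3 L\left(\tfrac{1}{2}-it,\overline{\psi}\right) \ZZ_\chi(\psi;t)\right| \, dt\]
by $O_\e(q_1 q_2 T (qT)^\e)$; multiplying by $U$ then yields the theorem. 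By the coset decomposition of \hyperref[cor:Zbounds]{Corollary \ref*{cor:Zbounds}} and Cauchy--Schwarz, this reduces to the Weyl-strength hybrid sixth moment bound of Petrow and Young \cite[Theorem 1.2]{PY23} in place of the $\GL_3$ large sieve estimate of \hyperref[prop:secondmomentFlargesieve]{Proposition \ref*{prop:secondmomentFlargesieve}}, combined with the second moment bound of \hyperref[prop:secondmomenthybrid]{Proposition \ref*{prop:secondmomenthybrid}}. The Weyl-strength sixth moment is sharp enough that no deficit of $U^{-1/4}$ arises, which is precisely why the right-hand side of \eqref{eqn:firstmomentboundsEis} consists of only a single term.

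The main obstacle is the second step above, namely the bookkeeping required to identify and estimate the multiple secondary main terms produced by the triple pole of $\zeta^q(2w)^3$ at $w = 1/2$. Each of the three residues at $z \in \{2w - 1/2, 2w - 1, 2w - 3/2\}$ must be extracted, matched against the corresponding Laurent expansion of $\ZZ_\chi(\psi_{0(q)}; w, z)$, and paired with its counterpart in the primary main term; while the calculations are tedious, they are ultimately elementary and produce only contributions of size $O_\e(q_1 q_2 TU (q_1 q_2 T)^\e)$, so that the bound in \eqref{eqn:firstmomentboundsEis} follows.
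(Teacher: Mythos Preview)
Your proposal has the right overall architecture but contains two substantive gaps.

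First, the dual moment bound. You propose to apply Cauchy--Schwarz and invoke the sixth moment bound $\int_{-T}^T |L(\tfrac12+it,\psi)|^6\,dt \ll_\e qT^2(qT)^\e$ of \cite[Theorems 1.2, 1.3]{PY23} together with \hyperref[prop:secondmomenthybrid]{Proposition \ref*{prop:secondmomenthybrid}}. But this sixth moment is for a \emph{single} character; after summing trivially over $\psi$ modulo $q_1 q_3'$ the bound becomes $(q_1 q_3')^2 T^2$, and feeding this into Cauchy--Schwarz yields a dual moment of size roughly $q_1^{3/2} q_2^{1/2} T^{3/2}$ rather than $q_1 q_2^{1/2} T$, so the $U$-saving you claim does not materialise. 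The paper instead treats $|L(\tfrac12+it,\psi)|^4 |\ZZ_\chi(\psi;t)|$ directly, \emph{without} Cauchy--Schwarz, by running the coset machinery of \hyperref[prop:secondmomenthybrid]{Proposition \ref*{prop:secondmomenthybrid}} with the \emph{fourth} moment of Dirichlet $L$-functions along cosets \cite[Theorem 1.4]{PY23} in place of \hyperref[thm:secondmomentcoset]{Theorem \ref*{thm:secondmomentcoset}}; see \hyperref[prop:Eisensteincentraldualbounds]{Proposition \ref*{prop:Eisensteincentraldualbounds}}.

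Second, the main terms. You have the dual-moment factorisation backwards: it is $L(\tfrac12+z,\widetilde{F}\otimes\psi)$ that becomes $\prod_j L(\tfrac12+z-\mu_j,\psi)$ when $F$ is Eisenstein, while $L(2w-\tfrac12-z,\overline\psi)$ remains a single factor; there are no poles at $z=2w-\tfrac12$ or $z=2w-1$. More seriously, you omit two further sources of main terms absent in the cuspidal case: the poles of the Vorono\u{\i} series $\Phi_E(c_1,-b,\ell;\tfrac{s}{2}+w)$ at $s = 2-2w-2\mu_j$ picked up when shifting the $s$-contour in \eqref{eqn:Kloostermanterm}, and (when $q_1=1$) degenerate terms from the continuous spectrum in \eqref{eqn:absoluteLHS}. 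These individual main terms are singular as $\mu \to 0$ and only their \emph{sum} is holomorphic there; the paper keeps $\mu = (z,2z,3z)$ generic on a small circle $|z| = \e$ and recovers the value at $\mu = 0$ via Cauchy's integral formula, rather than working at $\mu = 0$ throughout as you propose.
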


\hyperref[thm:firstmomentboundsEis]{Theorem \ref*{thm:firstmomentboundsEis}} recovers \cite[Theorems 1.2 and 1.3]{PY23} upon taking $q_2 = 1$\footnote{On the other hand, \hyperref[thm:firstmomentboundsEis]{Theorem \ref*{thm:firstmomentboundsEis}} does not supersede the works \cite{PY20,PY23} of Petrow and Young, since the proof of \hyperref[thm:firstmomentboundsEis]{Theorem \ref*{thm:firstmomentboundsEis}} is contingent upon \cite[Theorem 1.4]{PY23}; see \hyperref[prop:Eisensteincentraldualbounds]{Proposition \ref*{prop:Eisensteincentraldualbounds}} below.}. It additionally recovers \cite[Theorem 4.1]{AW23} upon taking $\chi_1$ to be quadratic and recovers \cite[Theorem 1]{PY19} upon taking $k_f$ to be fixed, $q_2$ squarefree, and $\chi_1$ quadratic.

As an immediate consequence of \hyperref[thm:firstmomentboundsEis]{Theorem \ref*{thm:firstmomentboundsEis}}, we obtain the following bounds for individual $L$-functions, which parallel the bounds in \hyperref[thm:subconvexbounds]{Theorem \ref*{thm:subconvexbounds}}.

\begin{theorem}
\label{thm:subconvexboundsEis}
Let $q_1,q_2$ be positive coprime integers. Let $\chi_1$ be a primitive Dirichlet character of conductor $q_1$. Let $q'$ be a divisor of $q_1$ for which $q' \equiv 0 \pmod{q_{\overline{\chi_1}^2}}$.
\begin{enumerate}[leftmargin=*,label=\textup{(\arabic*)}]
\item\label{thmno:subconvexboundsEis1} We have that
\[L\left(\frac{1}{2} + it, \chi_1\right) \ll_{\e} (q_1(|t| + 1))^{\frac{1}{6} + \e}.\]
\item\label{thmno:subconvexboundsEis2} Let $f$ be a Hecke--Maa\ss{} newform of weight $0$, level $q' q_2$, nebentypus $\overline{\chi_1}^2$, and Laplacian eigenvalue $\frac{1}{4} + t_f^2$. We have that
\[L\left(\frac{1}{2},f \otimes \chi_1\right) \ll_{\e} (q_1 q_2 (|t_f| + 1))^{\frac{1}{3} + \e}.\]
\item\label{thmno:subconvexboundsEis3} Let $f$ be a holomorphic Hecke newform of even weight $k_f$, level $q' q_2$, and nebentypus $\overline{\chi_1}^2$. We have that
\[L\left(\frac{1}{2},f \otimes \chi_1\right) \ll_{\e} (q_1 q_2 k_f)^{\frac{1}{3} + \e}.\]
\end{enumerate}
\end{theorem}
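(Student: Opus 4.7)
The plan is to apply \hyperref[thm:firstmomentboundsEis]{Theorem \ref*{thm:firstmomentboundsEis}} with carefully chosen parameters and to extract pointwise subconvex bounds by dropping all but a single term from the relevant nonnegative first moment. In every case, I would set $U = 1$ and $T$ equal to the archimedean parameter of the $L$-function of interest (either $|t|+1$, $|t_f|+1$, or $k_f$), so that the right-hand side of \eqref{eqn:firstmomentboundsEis} collapses to $O_\e((q_1 q_2 T)^{1 + \e})$.

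For part \ref{thmno:subconvexboundsEis1}, I would take $q_2 = 1$ and work with the first sum in \eqref{eqn:firstmomentboundsEis}, which is manifestly nonnegative as a sum of squared absolute values. Isolating the single pair with $\psi_1 = \chi_{0(q_1)}$ and $\psi_2$ the primitive character inducing $\overline{\chi_1}^2$ gives $\psi_1 \chi_1 = \chi_1$ and $\psi_1 \overline{\psi_2} = \chi_1^2$, so that the left-hand side of \eqref{eqn:firstmomentboundsEis} dominates $|L(1/2+it,\chi_1)|^6 / |L(1+2it,\chi_1^2)|^2$. Invoking the standard lower bound $|L(1+2it,\chi_1^2)|^{-1} \ll_\e (q_1(|t|+1))^\e$ and taking sixth roots then yields the claimed Weyl-strength bound.

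For parts \ref{thmno:subconvexboundsEis2} and \ref{thmno:subconvexboundsEis3}, I would isolate the single summand attached to $f$ itself from the second or third sum in \eqref{eqn:firstmomentboundsEis}. Here one must first pass from $L(1/2, f \otimes \chi_1)^3$ to the nonnegative quantity $|L(1/2, f \otimes \chi_1)|^3$; this can be achieved by rerunning the proof of \hyperref[thm:firstmomentboundsEis]{Theorem \ref*{thm:firstmomentboundsEis}} with $|L|^3$ in place of $L^3$, noting that the dual-moment bound in the spirit of \hyperref[prop:centraldualbounds]{Proposition \ref*{prop:centraldualbounds}} is already phrased in terms of absolute values. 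Using $L(1, \ad f)^{-1} \ll_\e (q_1 q_2 T)^\e$ and taking cube roots delivers the advertised bound of size $(q_1 q_2 T)^{1/3 + \e}$.

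The principal technical obstacle is the pole of $L(1+2it, \chi_1^2)$ at $t = 0$ when $\chi_1$ is a quadratic character, which renders the lower bound on $|L(1+2it,\chi_1^2)|^{-1}$ useless in a shrinking neighbourhood of the origin. This I would handle exactly as in the proof of \hyperref[thm:subconvexbounds]{Theorem \ref*{thm:subconvexbounds}} (see the footnote there), namely via a short H\"{o}lder's inequality manoeuvre in the style of \cite[pp.~1404--1405]{Blo12}, which is robust enough to recover the Weyl-strength bound in this residual range.
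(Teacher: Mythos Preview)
Your overall strategy---apply \hyperref[thm:firstmomentboundsEis]{Theorem \ref*{thm:firstmomentboundsEis}} with $U=1$ and drop all but one nonnegative term---is exactly the paper's approach; the paper simply records \hyperref[thm:subconvexboundsEis]{Theorem \ref*{thm:subconvexboundsEis}} as an immediate consequence of \hyperref[thm:firstmomentboundsEis]{Theorem \ref*{thm:firstmomentboundsEis}}. Your treatment of part \ref{thmno:subconvexboundsEis1}, including the H\"{o}lder workaround near $t=0$ for quadratic $\chi_1$, is correct and matches the paper.

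For parts \ref{thmno:subconvexboundsEis2} and \ref{thmno:subconvexboundsEis3}, however, you have overlooked a crucial simplification. Since $f$ has nebentypus $\overline{\chi_1}^2$, the twist $f \otimes \chi_1$ has trivial nebentypus and is therefore selfdual; in particular $L(1/2,f\otimes\chi_1)$ is real, and by Guo's result \cite[Theorem]{Guo96} it is in fact nonnegative. This is precisely what the paper invokes (see the discussion of the moment term in \hyperref[sect:Eisenstein]{Section \ref*{sect:Eisenstein}}). Hence $L(1/2,f\otimes\chi_1)^3 \geq 0$ directly, and one may drop all but the single term for $f$ without any modification of the first moment.

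Your proposed workaround---rerunning the proof of \hyperref[thm:firstmomentboundsEis]{Theorem \ref*{thm:firstmomentboundsEis}} with $|L|^3$ in place of $L^3$---is both unnecessary and problematic as stated. The spectral reciprocity formula underlying \hyperref[thm:firstmomentboundsEis]{Theorem \ref*{thm:firstmomentboundsEis}} is an \emph{exact identity} obtained by expanding $\prod_j L(w+\mu_j,f\otimes\chi_1)$ as a Dirichlet series, applying Kuznetsov/Petersson, and then Vorono\u{\i} summation; none of these steps survives replacing the product of $L$-values by its absolute value. The fact that the dual-moment bound (\hyperref[prop:Eisensteincentraldualbounds]{Proposition \ref*{prop:Eisensteincentraldualbounds}}) is phrased with absolute values only tells you that the dual side is bounded in absolute value, not that the reciprocity identity itself tolerates absolute values on the spectral side.
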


We note that the convexity bound is $O_{\e}((q_1(|t| + 1))^{1/4 + \e})$ in the first case, $O_{\e}((q_1 (|t_f| + 1))^{1/2 + \e} q_2^{1/4 + \e})$ in the second case, and $O_{\e}((q_1 k_g)^{1/2 + \e} q_2^{1/4 + \e})$ in the third case. The bounds in \hyperref[thm:subconvexboundsEis]{Theorem \ref*{thm:subconvexboundsEis}} imply hybrid subconvexity simultaneously in the $q_1$ and $t$, $t_f$, or $k_f$ aspects, where they are of Weyl-strength; however, they fall shy of the convexity bound in the $q_2$ aspect.

When $\chi_1$ is quadratic, hybrid bounds of this form have applications on progress towards the Ramanujan conjecture for half-integral weight automorphic forms; see, for example, \cite[Theorem 2]{PY19}. They also have applications towards proving small-scale equidistribution of geometric invariants associated to quadratic fields, such as Heegner points \cite[Section 2]{You17} and lattice points on the sphere \cite[Theorem 1.5]{HR22}, proving an effective rate of equidistribution of the reduction of CM elliptic curves \cite[Theorem 1.1]{LMY15}, and proving uniform bounds for the error term in the Hardy--Ramanujan--Rademacher formula for the partition function \cite[Theorem 1.1]{AW23}.

\subsection{\texorpdfstring{$\mathrm{GL}_3 \times \mathrm{GL}_2 \leftrightsquigarrow \mathrm{GL}_4 \times \mathrm{GL}_1$}{GL\textthreeinferior \textmultiply GL\texttwoinferior \textleftrightarrow GL\textfourinferior \textmultiply GL\textoneinferior} Spectral Reciprocity}

\hyperref[thm:firstmomentboundsEis]{Theorem \ref*{thm:firstmomentboundsEis}} is a consequence of a $\GL_3 \times \GL_2 \leftrightsquigarrow \GL_4 \times \GL_1$ spectral reciprocity identity akin to \hyperref[thm:central]{Theorem \ref*{thm:central}}. We do not give a proof of this spectral reciprocity identity but merely indicate the key modifications needed. The proof of this spectral reciprocity identity follows the same strategy except that $F$ is replaced by a minimal parabolic Eisenstein series associated to parameters $\mu = (\mu_1,\mu_2,\mu_3) \in \C^3$. Initially, we assume that none of these parameters are equal or are $0$; we also assume that each lies in a small neighbourhood of $0$. Eventually, we analytically continue to the central value, namely $\mu_1 = \mu_2 = \mu_3 = 0$.

Replacing $F$ by a minimal parabolic Eisenstein series has the effect of replacing the Hecke eigenvalues $A_F(1,n)$ and $A_F(m,1)$ by shifted triple divisor functions
\[A_F(1,n) = \sum_{n_1 n_2 n_3 = n} n_1^{-\mu_1} n_2^{-\mu_2} n_3^{-\mu_3}, \qquad A_F(m,1) = \sum_{m_1 m_2 m_3 = m} m_1^{\mu_1} m_2^{\mu_2} m_3^{\mu_3}.\]
In turn, this has the effect of replacing the $L$-functions $L^q(w,F \otimes f \otimes \chi_1)$ and $L^q(w + it,F \otimes \psi_1 \chi_1)$ appearing in \eqref{eqn:absoluteLHS} with $\prod_{j = 1}^{3} L^q(w + \mu_j,f \otimes \chi_1)$ and $\prod_{j = 1}^{3} L(w + it + \mu_j,\psi_1 \chi_1)$. Similarly, $L^q(2w,\widetilde{F})$ is replaced by $\prod_{j = 1}^{3} \zeta^q(2w - \mu_j)$ in \eqref{eqn:absolutemain} and $L(1/2 + z,\widetilde{F} \otimes \psi)$ is replaced by $\prod_{j = 1}^{3} L(1/2 + z - \mu_j,\psi)$ in \eqref{eqn:absolutedual}. This mildly alters the appearance of the moment \eqref{eqn:centralLHS} and the dual moment \eqref{eqn:centraldual}. The main terms \eqref{eqn:centralmain} and \eqref{eqn:centralsecondmain} are significantly altered, however: there are several additional main terms. We discuss below the shapes of the moment term, the dual moment term, and the main terms for this spectral reciprocity identity, as well as how these are treated with regards to the proof of \hyperref[thm:firstmomentboundsEis]{Theorem \ref*{thm:firstmomentboundsEis}}.

\subsubsection{The Moment}

After analytically continuing to $w = 1/2$ and $\mu_1 = \mu_2 = \mu_3 = 0$, the moment \eqref{eqn:centralLHS} is replaced by
\begin{multline*}
\sum_{\substack{q' \mid q \\ q' \equiv 0 \hspace{-.25cm} \pmod{q_{\overline{\chi_1}^2}}}} \alpha(q,q',q_{\overline{\chi_1}^2}) \sum_{f \in \BB_0^{\ast}(q',\overline{\chi_1}^2)} \frac{L^q\left(\frac{1}{2},f \otimes \chi_1\right)^3}{L^q(1,\ad f)} h(t_f)	\\
+ \sum_{\substack{q' \mid q \\ q' \equiv 0 \hspace{-.25cm} \pmod{q_{\overline{\chi_1}^2}}}} \alpha(q,q',q_{\overline{\chi_1}^2}) \sum_{\substack{\psi_1,\psi_2 \hspace{-.25cm} \pmod{q} \\ \psi_1 \psi_2 = \overline{\chi_1}^2 \\ q_{\psi_1} q_{\psi_2} = q'}} \frac{1}{2\pi} \int_{-\infty}^{\infty} \left|\frac{L^q\left(\frac{1}{2} + it,\psi_1 \chi_1\right)^3}{L^q(1 + 2it,\psi_1\overline{\psi_2})}\right|^2 h(t) \, dt	\\
+ \sum_{\substack{q' \mid q \\ q' \equiv 0 \hspace{-.25cm} \pmod{q_{\overline{\chi_1}^2}}}} \alpha(q,q',q_{\overline{\chi_1}^2}) \sum_{f \in \BB_{\hol}^{\ast}(q',\overline{\chi_1}^2)} \frac{L^q\left(\frac{1}{2},f \otimes \chi_1\right)^3}{L^q(1,\ad f)} h^{\hol}(k_f).
\end{multline*}
As in the proof of \hyperref[thm:firstmomentbounds]{Theorem \ref*{thm:firstmomentbounds}}, this provides a lower bound for the left-hand side of \eqref{eqn:firstmomentboundsEis} via an appropriate choice of tuple of test functions $(h,h^{\hol})$, which relies crucially on the nonnegativity of the central $L$-value $L(1/2,f \otimes \chi_1)$ \cite[Theorem]{Guo96}.

\subsubsection{The Dual Moment}

Similarly, after analytically continuing to $w = 1/2$ and $\mu_1 = \mu_2 = \mu_3 = 0$, the dual moment \eqref{eqn:centraldual} is replaced by
\[\frac{1}{\varphi(q)} \sum_{\psi \hspace{-.25cm} \pmod{q}} \frac{1}{2\pi} \int_{-\infty}^{\infty} L\left(\frac{1}{2} + it,\psi\right)^3 L\left(\frac{1}{2} - it,\overline{\psi}\right) \ZZ_{\chi}\left(\psi;\frac{1}{2},it\right) \sum_{\pm} \psi(\mp 1) \HH_{\mu_F}^{\pm}\left(\frac{1}{2},it\right) \, dt.\]
The proof of \hyperref[thm:firstmomentboundsEis]{Theorem \ref*{thm:firstmomentboundsEis}} requires us to bound this dual moment. As in the proof of \hyperref[thm:firstmomentbounds]{Theorem \ref*{thm:firstmomentbounds}}, we begin by breaking up the integral over $t \in \R$ into the ranges $|t| \leq 1$ and $2^{n - 1} \leq |t| \leq 2^n$ for each positive integer $n \in \N$. At this point, however, our treatment of this dual moment deviates from the approach given in \hyperref[sect:secondmomentbounds]{Section \ref*{sect:secondmomentbounds}}. In particular, we do \emph{not} use the Cauchy--Schwarz inequality coupled with second moment bounds in order to obtain the bound \eqref{eqn:centraldualbounds}. Instead, we have the following result.

\begin{proposition}
\label{prop:Eisensteincentraldualbounds}
Let $F$ be a Hecke--Maa\ss{} cusp form for $\SL_3(\Z)$. Let $q = q_1 q_2$ be a positive integer with $(q_1,q_2) = 1$. Let $\chi_1$ be a primitive Dirichlet character of conductor $q_1$, and set $\chi \coloneqq \chi_1 \chi_{0(q_2)}$. For $T \geq 1$, we have that
\[\frac{1}{\varphi(q)} \sum_{\psi \hspace{-.25cm} \pmod{q}} \int_{-T}^{T} \left|L\left(\frac{1}{2} + it, \psi\right)\right|^4 \left|\ZZ_{\chi}(\psi;t)\right| \, dt \ll_{F,\e} q_1 q_2^{\frac{1}{2}} T (qT)^{\e}.\]
\end{proposition}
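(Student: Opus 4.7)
The plan is to follow the template of the proof of Proposition \ref{prop:secondmomenthybrid}, with two key modifications: the pointwise bound on $|\ZZ_\chi(\psi;t)|$ from Corollary \ref{cor:Zbounds} enters with exponent $1$ (rather than squared), and the relevant moment of Dirichlet $L$-functions is now the \emph{fourth} moment over cosets, for which the Weyl-strength bound of Petrow and Young \cite[Theorem 1.4]{PY23} replaces our Theorem \ref{thm:secondmomentcoset}.

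First I would apply Corollary \ref{cor:Zbounds}, which guarantees that $\ZZ_\chi(\psi;t) = 0$ unless $\psi = \psi_1 \psi_3' \psi_{0(q_2/q_3')}$, where $q_2 = q_3 q_4$ is the squarefree-times-squarefull decomposition, $\psi_1 \pmod{q_1}$ is arbitrary, and $\psi_3'$ is primitive of conductor $q_3' \mid q_3$. For such $\psi$, the imprimitive $L$-function $L(1/2+it,\psi)$ differs from $L(1/2+it,\psi_1\psi_3')$ only by bounded Euler factors at primes dividing $q_2/q_3'$. Using the pointwise bound from Corollary \ref{cor:Zbounds}, the left-hand side is then majorised by
\[
\frac{q_1 q^\e}{\varphi(q)} \sum_{q_3' \mid q_3} \Bigl(\frac{q_2}{q_3'}\Bigr)^{3/2} \sum_{\psi_1 \pmod{q_1}} \prod_{p^\beta \parallel q_1}\Bigl(\delta_{\psi_{p^\beta}',\star}\frac{|g(\chi_{p^\beta},\psi_{p^\beta}')|}{p^\beta}+1\Bigr) \sideset{}{^{\star}}\sum_{\psi_3' \pmod{q_3'}} \int_{-T}^{T}\bigl|L\bigl(\tfrac12+it,\psi_1\psi_3'\bigr)\bigr|^4 \, dt.
\]

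Next, I would stratify the sum over $\psi_1 \pmod{q_1}$ by the invariant $\Delta(\psi_1)$ exactly as in the proof of Proposition \ref{prop:secondmomenthybrid}: characters $\psi_1$ satisfying $\Delta(\psi_1) \equiv 0 \pmod{a}$ (for $a \mid q_1/\widetilde{q_1}$) lie in at most $2^{\omega(q_1)}$ cosets of the subgroup $H_a = \{\psi \pmod{q_1/a}\}$, by \cite[Lemma 2.1]{PY23}, and on each such coset the weight factor is controlled by $\prod_{p^\beta \parallel q_1} p^{m_{\chi_{p^\beta}}(\alpha)}$ with $p^\alpha \parallel a$. For each coset I would invoke \cite[Theorem 1.4]{PY23}, which provides a Weyl-strength fourth moment bound over cosets of primitive Dirichlet characters; applied with modulus $q_1 q_3'$ and sub-modulus $(q_1/a) q_3'$, it yields an estimate of the shape $q_1 q_3' T (qT)^\e$ for the inner double sum and integral. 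Combining with the bounds $m_{\chi_{p^\beta}}(\alpha) \leq \min\{(\alpha+\beta)/2,\, \beta - \alpha/4\}$ from \cite[Proof of Lemma 4.2]{PY23} to sum the contributions over $a$, and finally summing over $q_3' \mid q_3$, should deliver the target bound $q_1 q_2^{1/2} T (qT)^\e$ after division by $\varphi(q) \asymp q_1 q_2$.

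The main obstacle is the third step: invoking \cite[Theorem 1.4]{PY23} in the correct form. One must verify that the statement of the Petrow--Young coset fourth moment bound applies uniformly to all cosets that appear after stratification, including those coming from imprimitive characters at primes dividing $q_3'$, and confirm that the combined bounds on $m_{\chi_{p^\beta}}(\alpha)$ interact precisely with the per-coset cost $q_1 q_3' T$ to produce exponent $1$ in both $q_1$ and $T$ (and exponent $1/2$ in $q_2$, arising solely from the $\sum_{q_3' \mid q_3} (q_2/q_3')^{3/2}$ factor weighed against the gain $q_3'$ coming from the fourth moment denominator $\varphi(q_1 q_3')$). A secondary subtlety is the contribution from the principal character $\psi_{0(q)}$, where $|L(1/2+it,\psi_{0(q)})|^4$ has a quadruple pole at $t=0$; this must be extracted and bounded separately as in \cite{PY23}, using that $\ZZ_\chi(\psi_{0(q)};t)$ remains under control near $t=0$ by the analysis of Section \ref{sect:charsumsII}.
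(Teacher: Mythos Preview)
Your proposal is correct and follows essentially the same route as the paper: mimic the proof of Proposition~\ref{prop:secondmomenthybrid}, replacing the second-moment coset bound of Theorem~\ref{thm:secondmomentcoset} by the fourth-moment coset bound of Petrow--Young \cite[Theorem 1.4]{PY23}. One small correction: $L(\tfrac12+it,\psi_{0(q)})$ has no pole at $t=0$ (the pole of $\zeta(s)$ is at $s=1$, not $s=\tfrac12$), so the ``secondary subtlety'' you flag does not in fact arise.
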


The proof is via the same strategy as that of \hyperref[prop:secondmomenthybrid]{Proposition \ref*{prop:secondmomenthybrid}} except that instead of invoking bounds for the \emph{second} moment of Dirichlet $L$-functions along cosets, namely \hyperref[thm:secondmomentcoset]{Theorem \ref*{thm:secondmomentcoset}}, we invoke the much stronger bounds for the \emph{fourth} moment of Dirichlet $L$-functions along cosets due to Petrow and Young \cite[Theorem 1.4]{PY23}. The bounds given in \hyperref[prop:Eisensteincentraldualbounds]{Proposition \ref*{prop:Eisensteincentraldualbounds}} are \emph{stronger} than those \hyperref[prop:centraldualbounds]{Proposition \ref*{prop:centraldualbounds}}; the latter is lossy due to the fact that the bounds obtained in \hyperref[prop:secondmomentFlargesieve]{Proposition \ref*{prop:secondmomentFlargesieve}} for the second moment of $L(1/2 + it,F \otimes \psi)$ are suboptimal. It is for this reason that the upper bound in \eqref{eqn:firstmomentboundsEis} is stronger than that in \eqref{eqn:firstmomentbounds}.

\subsubsection{The Main Terms}

It remains to discuss the main terms. There are several additional main terms that arise.

\begin{enumerate}[leftmargin=*,label=\textup{(\arabic*)}]
\item Additional main terms arise from residues via the Vorono\u{\i} summation formula. A step of the proof of \hyperref[prop:absconv]{Proposition \ref*{prop:absconv}} involves the expression \eqref{eqn:Kloostermanintegrand} given by
\[q^{s - 1} \sum_{\substack{c,\ell = 1 \\ (\ell,q) = 1}}^{\infty} \frac{c^{s - 2}}{\ell^{2w}} \sum_{\substack{a \in \Z/cq\Z \\ (a,q) = 1}} \chi_1(a) S_{\overline{\chi_1}^2}(1,a;cq) \sum_{c_1 \mid cq} \sum_{b \in (\Z/c_1\Z)^{\times}} e\left(\frac{a\overline{b}}{c_1}\right) \Phi_F\left(c_1,-b,\ell;\frac{s}{2} + w\right)\]
that appears in the integrand of \eqref{eqn:Kloostermanterm}. In the proof of \hyperref[prop:absconv]{Proposition \ref*{prop:absconv}}, we shifted the contour of integration to the left, as the expression above is holomorphic in $s$ when $F$ is cuspidal. When $F$ is a minimal parabolic Eisenstein series, on the other hand, this shifting of the contour picks up residues at the poles of $\Phi_F(c_1,-b,\ell;s/2 + w)$. When the spectral parameters $(\mu_1,\mu_2,\mu_3)$ are distinct, there are three simple poles, which occur at $s = 2 - 2w - 2\mu_j$. The residues can be determined via work of Fazzari \cite{Faz24} and give three additional main terms.
\item When $q_1 = 1$, there are three additional main terms arising from the continuous spectrum via the analytic continuation to $w = 1/2$ of the $\GL_2$ moment \eqref{eqn:absoluteLHS}. While the first and third terms in \eqref{eqn:absoluteLHS} analytically continue to $w = 1/2$ with no complications, the second term yields additional degenerate terms if $\psi_1 = \psi_2 = \overline{\chi_1}$, for then the integrand in \eqref{eqn:absoluteLHS} has poles at $t = \pm i(w + \mu_j - 1)$. Note that this can only occur if $q_{\psi_1} = q_{\psi_2} = q_{\chi_1} = q_1$, so that $q' = q_1^2$; since $q' \mid q_1 q_2$ with $(q_1,q_2) = 1$, this can only occur if $q_1 = 1$, so that $\chi_1$ is the trivial character. So long as no two of the parameters $\mu_1,\mu_2,\mu_3$ are equal, the analytic continuation of these three degenerate terms to $w = 1/2$ is
\[-2 \alpha(q_2,1,1) \frac{\varphi(q_2)}{q_2} \sum_{j = 1}^{3} \frac{\prod_{\substack{k = 1 \\ k \neq j}}^{3} \zeta^{q_2}(1 - \mu_j + \mu_k) \prod_{k = 1}^{3} \zeta^{q_2}(\mu_j + \mu_k)}{\zeta^{q_2}(2 - 2\mu_j) \zeta^{q_2}(2\mu_j)} h\left(-i\left(\frac{1}{2} - \mu_j\right)\right).\]
\item The primary main term is essentially the same as that appearing in \eqref{eqn:absolutemain}, namely
\[q \prod_{j = 1}^{3} \zeta^q(2w - \mu_j) \left(\frac{1}{2\pi^2} \int_{-\infty}^{\infty} h(r) r \tanh\pi r \, dr + \sum_{\substack{k = 2 \\ k \equiv 0 \hspace{-.25cm} \pmod{2}}}^{\infty} \frac{k - 1}{2\pi^2} h^{\hol}(k)\right).\]
So long as each $\mu_j$ is nonzero, this extends holomorphically to $w = 1/2$.
\item The secondary main term is slightly different than that appearing in \eqref{eqn:centralsecondmain}. This secondary main term arises due to the pole at $z = 2w - 3/2$ of $L(2w - 1/2 - z,\overline{\psi})$ with $\psi = \psi_{0(q)}$ and is given by the sum of
\begin{multline*}
\frac{1}{q} \prod_{j = 1}^{3} \zeta(2w - 1 - \mu_j) \frac{\ZZ_{\chi}\left(\psi_{0(q)};w,2w - \frac{3}{2}\right)}{\prod_{j = 1}^{3} \zeta_q(2w - 1 - \mu_j)}	\\
\times \frac{1}{2\pi i} \int_{\sigma_3 - i\infty}^{\sigma_3 + i\infty} \widehat{H}(s) \sum_{\pm} \Gscr_{\mu_F}^{\pm}\left(1 - \frac{s}{2} - w\right) G^{\mp}\left(\frac{s}{2} + 3w - 2\right) \, ds,
\end{multline*}
where $2 - 6\Re(w) < \sigma_3 < 4 - 6\Re(w)$, and
\[\frac{2}{q} \prod_{j = 1}^{3} \zeta(2 - 2w + \mu_j) \frac{\ZZ_{\chi}\left(\psi_{0(q)};w,2w - \frac{3}{2}\right)}{\prod_{j = 1}^{3} \zeta_q(2w - 1 - \mu_j)} \widehat{H}(4 - 6w).\]
So long as each $\mu_j$ is nonzero, both of these terms extend holomorphically to $w = 1/2$. Note, however, that the first term need not vanish at $w = 1/2$, since $\prod_{j = 1}^{3} \zeta(-\mu_j)$ need not be zero, whereas the corresponding first term vanishes when $F$ is a selfdual Hecke--Maa\ss{} cusp form for $\SL_3(\Z)$ due to the fact that $L(0,F) = 0$.
\item There are additional secondary main terms that arise due to the poles at $z = 1/2 - \mu_j$ of $\prod_{j = 1}^{3} L(1/2 + z + \mu_j,\psi)$ with $\psi = \psi_{0(q)}$. The sum of the ensuing residues is
\begin{multline*}
\frac{1}{q} \sum_{j = 1}^{3} \prod_{\substack{k = 1 \\ k \neq j}}^{3} \zeta^q(1 + \mu_j - \mu_k) \zeta^q(2w - 1 - \mu_j) \ZZ_{\chi}\left(\psi_{0(q)};w,\frac{1}{2} + \mu_j\right)	\\
\times \sum_{\pm} \frac{1}{2\pi i} \int_{\sigma_2 - i\infty}^{\sigma_2 + i\infty} \widehat{H}(s) \Gscr_{\mu_F}^{\pm}\left(1 - \frac{s}{2} - w\right) G^{\mp}\left(\frac{s}{2} + w + \mu_j\right) \, ds,
\end{multline*}
where $-2\Re(w) < \sigma_2 < 2 - 2\Re(w)$. Again, these extend holomorphically to $w = 1/2$ provided the spectral parameters $(\mu_1,\mu_2,\mu_3)$ are distinct.
\end{enumerate}

The final step is to holomorphically extend the sum of these main terms to the value $(\mu_1,\mu_2,\mu_3) = (0,0,0)$ and subsequently bound this sum. While these additional main terms may \emph{individually} have singularities when one of $\mu_1,\mu_2,\mu_3$ is $0$ or when two of them are equal, the \emph{sum} of all of these additional main terms, which we denote by $\widetilde{h}_{\mu}$, extends holomorphically to $(\mu_1,\mu_2,\mu_3) = (0,0,0)$, since it is equal to a function that is holomorphic at that value, namely the difference of the $\GL_3 \times \GL_2$ moment and the $\GL_4 \times \GL_1$ moment.

To bound the sum of these main terms, we fix $\e > 0$ and set $\mu_j = jz$ with $|z| = \e$, so that by Cauchy's residue formula,
\[\widetilde{h}_{(0,0,0)} = \frac{1}{2\pi i} \oint_{|z| = \e} \frac{\widetilde{h}_{(z,2z,3z)}}{z} \, dz.\]
Thus it suffices to estimate each main term at $\mu = (z,2z,3z)$ with $|z| = \e$, and in every case we find that each main term is $O_{\e}(q_1 q_2 TU (q_1 q_2 T)^{\e})$.

\phantomsection
\addcontentsline{toc}{section}{Acknowledgements}
\hypersetup{bookmarksdepth=-1}

\subsection*{Acknowledgements}

The authors would like to thank Gergely Harcos, Ikuya Kaneko, and Matt Young for useful comments and discussions, as well as the anonymous referee for their thorough reading and constructive feedback. This project began while the third author was a member of the TAN group at \'{E}cole Polytechnique F\'{e}d\'{e}rale de Lausanne and the fourth author was a visiting scholar at Max-Planck Institut f\"{u}r Mathematik; they would like to thank these institutions for their hospitality.

\hypersetup{bookmarksdepth}

\end{document}